\title{Extinction time for the weaker of two competing SIS epidemics}
\author{Fabio Lopes}\thanks{FL was first a postdoctoral research assistant of ML funded by EPSRC Leadership Fellowship EP/J004022/2, and was later supported partly by Conicyt/Fondecyt Proyecto Postdoctorado N.3160163.}}
\address{Pontificia Universidad Cat\'olica de Valparaiso.}
\author{Malwina Luczak}\thanks{ML was supported partly by EPSRC Leadership Fellowship EP/J004022/2 and partly by ARC Future Fellowship FT170100409.}}
\address{School of Mathematics and Statistics, University of Melbourne}
\keywords{stochastic SIS logistic epidemic; competing SIS epidemics; time to extinction; near-critical epidemic}
\subjclass[2000]{60J27, 92D30}
\renewcommand{\P}{\ensuremath{\mathbb P}}
\newcommand{\halmos}{\rule{1ex}{1.4ex}}
\newcommand{\proofbox}{\hspace*{\fill}\mbox{$\halmos$}}
\newcommand{\E}{\ensuremath{\mathbb E}}
\newcommand{\Z}{\mathbb{Z}}
\newcommand{\R}{\mathbb{R}}
\newcommand{\eps}{\varepsilon}
\newcommand{\tx}{\tilde{x}}
\theoremstyle{plain}
\newtheorem{theorem}{Theorem}
\newtheorem{lemma}{Lemma}
\newtheorem{remark}{Remark}
\begin{document}

%\maketitle
%\setlength{\parindent}{0pt}

\begin{abstract}
We consider a simple stochastic model for the spread of a disease caused by two virus strains in a closed homogeneously mixing population of size N. The spread of each strain in the absence of the other one is described by the stochastic logistic SIS epidemic process, and we assume that there is perfect cross-immunity between the two strains, that is, individuals infected by one are temporarily immune to re-infections and infections by the other. For the case where one strain has a strictly larger basic reproductive ratio than the other, and the stronger strain on its own is supercritical (that is, its basic reproductive ratio is larger than 1), we derive precise asymptotic results for the distribution of the time when the weaker strain disappears from the population, that is, its extinction time. We further extend our results to certain parameter values where the difference between the two reproductive ratios may tend to 0 as $N \to \infty$.

In proving our results, we illustrate a new approach to a fluid limit approximation for a sequence of Markov chains
%certain Markov chains by differential equations over long time periods when in the vicinity of a stable fixed point .
%In our proof, we set out an approach a fluid limit approximation for a sequence of Markov chains
in the vicinity of a stable fixed point of the limit. %equations, valid for a time exponential in system size.

%We consider a simple stochastic model for the spread of a disease caused by two virus strains in a closed homogeneously mixing population of size $N$. In our %model, the spread of each strain is described by the stochastic logistic SIS epidemic process in the absence of the other strain, and we assume that there is %perfect cross-immunity between the two virus strains, that is, individuals infected by one strain are  temporarily immune to re-infections and infections by %the other strain. For the case where the infection rate of one of the strains is strictly larger than the other, but the disease has the same recovery rate %for both strains (strictly smaller than each of the infection rates), we derive precise asymptotic results on the distribution of the time when the weaker %strain disappears from the population, that is, its extinction time. We further extend our results to the case where the infection rates get closer and closer %to the recovery rate.

%We also set out a simple and general method establishing a fluid limit approximation for a Markov chain in the vicinity of a stable fixed point of the drift %equations, valid for a time exponential in the population size.
\end{abstract}

\maketitle

\section{Introduction}
\label{intro}

Mathematical models of epidemics provide important tools for understanding the spread of many diseases relevant to public health, and may help health authorities and organizations develop measures to prevent and manage epidemic outbreaks, as well as control the emergence of new infections.

Infectious disease control is constantly challenged by the diversity of pathogen populations and their continuous evolution in response to changing environments, technological advances (e.g.~air travel, antibiotics, see~Schrag and Wiener~1995), %\cite{evolution3}),
interactions with their hosts (e.g.~the structure of the network of contacts, see Leventhal et al.~2015), %~\cite{nowak3}),
as well as interactions with other pathogens (e.g.~interference, see Gart and De Vries~1966). %\cite{vries}).

The vast majority of mathematical models of epidemics in the literature view infectious diseases as caused by a single and stable pathogen strain. Such models tend to be more tractable, and may even yield accurate descriptions of the short-term dynamics of certain diseases. However, they may be inappropriate for predicting the long-term evolutionary dynamics of pathogen populations, see Humplik et al.~(2014), %~\cite{nowak},
or for analysing pathogen infections where host susceptibility may be altered due to infections by other pathogens. Examples include interference between co-morbid diseases (e.g.~yaws and chickenpox, see Gart and De Vries~1966), %~\cite{vries}),
as well as scenarios where successive exposures to different strains of the pathogen may have important consequences for disease infectiousness and severity (e.g.~dengue fever, see Feng and Velasco-Hernandez~1997). %~\cite{dengue}).

Pathogen evolution and the capacity of pathogens to adapt to changes in their environment and hosts are both regarded as important factors for emergence of new diseases (Antia et al.~2003; Arinaminpathy and McLean~2009). %\cite{evolution, evolution2}.
For instance, it is known that pathogen strains which are sufficiently antigenically similar may induce a (partial) cross-protective immune response, so that hosts infected by one of the strains may acquire different degrees of temporary or permanent immunity to re-infections and infections by antigenically similar strains. Thus,
if a certain closed population of hosts is affected by a particular virus strain and a number of individuals infected by an antigenically similar strain are introduced, then the different pathogen strains may interact as if competing for susceptible individuals in the host population.

A long-standing principle in ecology known as the \textit{competitive exclusion principle} (Levin 1970) %\cite{levin}
predicts that, when species sharing the same ecological niche compete for limited resources, then the one with even the slightest advantage will eventually outcompete the others and become dominant.
This form of competition is believed to be particularly important in the evolution of RNA viruses (Domingo et al.~1996; Moya et al.~2004).
%\cite{virus1,rna2}.
For instance, %in \cite{influ}
it is shown by Bahl et al.~(2009) that viral gene flow from Eurasia had led to replacement of endemic avian influenza viruses in North America; moreover, the authors argue that the most likely mechanism for that was competition for susceptible hosts.

%In the past years, the study of infectious disease models taking into account multiple strains have attracted %considerable attention.
A number of deterministic models of coexistence and competitive exclusion in multiple strains pathogen populations have been studied,
%can deal with complex dynamics and
describing the dynamics of important diseases such as gonorrhea~(Castillo-Chavez et al.~1999),
%\cite{gn}
AIDS~(Anderson and May~1996),
%\cite{hiv}
and influenza~(Andreasen et al.~1997);
% \cite{influenza};
see also references within these works. However, deterministic models often fail to provide accurate descriptions of infectious disease dynamics at the beginning or at the end of an outbreak; this is partly because random fluctuations when the number of infectives is small can significantly affect the outcome, in particular, the chances of the disease persisting in the population, as well as the duration of the outbreak.

% One reason for that is that small (random) fluctuations when a pathogen population is near to competitive exclusion %and only a few infected individuals do not have the ´weaker' strain can either drive a ´stronger' strain quickly to %extinction or allow a critical growth of its number of infected individuals allowing it to survive and eventually %outcompete the ´weaker' strain.
%Furthermore, many questions concerning the control of real world epidemics and infectious diseases are not suitably %answered by deterministic models. For example: `what is the probability that a large epidemic outbreak occurs or %that an infection lasts much more than expected?'.\\

In this work, we consider a simple model for the spread of a disease with stochastic susceptible-infective-susceptible (SIS) dynamics caused by two different virus strains with a perfect cross-protective immune response, so individuals infected by one strain are temporarily immune to re-infections and infections by the other strain.
%So, the different strains interact as if competing for the susceptible individuals in the population.
%In this work, we consider a simple model for the spread of a disease with the Susceptible-Infective-Susceptible %dynamics (SIS) caused by different virus strains with a perfect cross-protective immune response i.e. individuals %infected by one strain are temporarily immune to re-infections and infections by the other strain.
We focus on the case where one of the virus strains has some advantage over its competitor (a higher basic reproductive  ratio), and competitive exclusion occurs. Starting with positive but otherwise arbitrary proportions of infected individuals of each virus strain in a large host population, we track the long-term evolution of this process, so as to obtain the distribution of the time until competitive exclusion occurs, that is~the extinction time of the weaker virus strain.

%We focus on the case in which one of the virus strains has an advantage over its competitor (a higher infection %rate).
%A long-standing principle in ecology known as the \textit{competitive exclusion principle} \cite{levin} predicts %that, when species are competing for limited resources, the one with even the slightest advantage will eventually %outcompete the others and become dominant. It is believed that competition may have an important role in the %evolution of RNA viruses \cite{virus1}. For instance, in \cite{influ} it is shown that viral gene flow from Eurasia %has led to replacement of endemic avian influenza viruses in North America; moreover, the authors argue that the %most likely mechanism for this was competition for susceptible hosts. Here, we study the time taken for such %replacement in a large population.

%\smallskip

The simplest stochastic model for a disease with SIS dynamics is the stochastic SIS logistic epidemic model.
 %thereafter referred as SIS model. We recall that, under SIS dynamics,
In that model, each individual within the population is either susceptible or infective. We assume a population of size $N$, and let $\lambda>0$ denote the infection rate. Each infective individual encounters uniformly at random another member of the population at rate $\lambda$; if the encountered individual is susceptible, then he\slash she becomes infective. Also, each infective individual recovers at rate $\mu > 0$ and, once recovered, becomes susceptible again.

Let $Y_N(t)$ denote the number of infective individuals in the population at time $t$; then $(Y_N (t))_{t\geq 0}$ is a continuous-time Markov chain on $\{0,1,\ldots, N\}$ with transition rates from state $Y$ given by
\[\begin{array}{llll}
Y &\rightarrow& Y+1   \quad \quad \hbox{   at rate  }    &\lambda  Y (1- Y\slash N);\\
Y  &\rightarrow & Y-1    \quad \quad \hbox{ at rate  }   & \mu Y.
\end{array}
\]
 The extinction time $\tau_N$ is defined as $\tau_N=\inf \{ t\geq0: Y_N(t)=0\}$, and, since the state space is finite, $\tau_N$ is a.s. finite.
 %The behaviour of $\tau_N$ is well-understood. For instance,
The following theorem summarises asymptotic results for the distribution of $\tau_N$ in the case where the initial epidemic infects a positive proportion of the population, see Andersson and Djehiche (1998), %~\cite{andersson},
as well as Brightwell, House, and Luczak (2018). %~\cite{luczakbrightwell}.

We recall that a random variable $W$ has a standard Gumbel distribution if $\P(W\leq w)= e^{-e^{-w}}$, for all $w\in \mathbb{R}$.
%Note that $\E(G)=\gamma\sim 0.5772\ldots$ (the Euler-Mascheroni constant).
%Note that, if $\lambda > \mu$, then $\log (\lambda/\mu) -1 + \frac{\mu}{\lambda} >0$.

%\MJL
%\marginal{case $\lambda = \mu$?}

\begin{theorem} Let $\lambda, \mu, \alpha>0$, and suppose that $X_N(0)\slash N \rightarrow \alpha$ as $N\rightarrow \infty$.
\label{teomalwina}
\begin{itemize}
\item [(i)] (Supercritical case; Andersson and Djehiche (1998).) If $\lambda> \mu$, then $\tau_N\slash \E(\tau_N)\rightarrow Z$ in distribution, as $N\rightarrow \infty$, where $Z$ is an exponential random variable with parameter 1. Furthermore,
\[ \E(\tau_N) \sim \sqrt{\frac{2 \pi}{N}} \frac{\lambda}{(\lambda-\mu)^2} e^{N v},\]
as $N\rightarrow \infty$, where $v=\log (\lambda/\mu) -1 + \frac{\mu}{\lambda}$.
\item [(ii)] (Subcritical case; Brightwell, House and Luczak (2018).) If $\lambda< \mu$, then as $N \to \infty$
\[ (\mu-\lambda)\tau_N - \left\{\log\alpha + \log N + \log (1-\lambda/\mu)  - \log \left( 1 +\lambda \alpha/(\mu - \lambda) \right) \right\} \rightarrow W\]
in distribution, where $W$ is a standard Gumbel random variable.
% Hence,
%\[ \E(\tau_N) \sim \frac{\log N + \log\alpha + 2 \log (1-\lambda) - \log \left( 1 -\lambda(1-\alpha) \right) + %\gamma + o(1)  }{1-\lambda},  \]
%as $N\rightarrow \infty.$
\end{itemize}
\end{theorem}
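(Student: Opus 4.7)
Since $(Y_N(t))$ is a finite birth--death chain absorbed at $0$, I would handle the two parts of the theorem with rather different strategies. For part~(i), absorption occurs on the exponentially long time scale $e^{Nv}$, so the argument is driven by quasi-stationary behaviour near the stable equilibrium $\tx = 1-\mu/\lambda$ of the fluid ODE $\dot x = (\lambda-\mu)x - \lambda x^2$. For part~(ii), absorption occurs on the $\log N$ scale, so I would combine a deterministic fluid-limit phase with a subcritical linear birth--death approximation.

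For part~(i), the plan is to start from the classical hitting-time formula for a birth--death chain, which expresses $\E_n[\tau_N]$ as a double sum involving the products $\pi_k = \prod_{i=1}^{k-1}\lambda_i/\mu_{i+1}$, with $\lambda_k = \lambda k(1-k/N)$ and $\mu_k = \mu k$. A direct computation gives $\log\pi_k \approx N\phi(k/N) - \log k$, where $\phi(y) = \int_0^y \log\bigl((\lambda/\mu)(1-u)\bigr)\,du$; one checks $\phi(\tx) = v$ and $\phi''(\tx) = -\lambda/\mu$. A Laplace expansion at $k=N\tx$ then produces $\sum_j\pi_j \sim c\sqrt{N}\,e^{Nv}$ for an explicit $c$, and combining with the boundary contribution of the outer sum near $k=1$ yields the prefactor $\sqrt{2\pi/N}\cdot\lambda/(\lambda-\mu)^2$. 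For the exponential limit law, I would invoke Darroch--Seneta quasi-stationary theory: the chain equilibrates to its unique quasi-stationary distribution $\nu_N$ in $O(\log N)$ time, which is negligible compared to $\E(\tau_N) \sim e^{Nv}$, and from $\nu_N$ the extinction time is exactly exponential; a coupling/tightness argument then transfers this to any positive starting proportion $\alpha$.

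For part~(ii), I would split the path at a stopping time $t_1$ at which $Y_N$ first drops below, say, $N^{1/2}$. Up to $t_1$ a functional law of large numbers with martingale error control forces $Y_N(t)/N$ to follow the ODE solution $x(t)$; the substitution $u=1/x$ yields explicitly
\[
    \log\bigl(N x(t_1)\bigr) = \log N + \log\alpha - \log\bigl(1 + \lambda\alpha/(\mu-\lambda)\bigr) - (\mu-\lambda)t_1 + o(1).
\]
After $t_1$ the saturation factor $(1 - Y_N/N)$ is $1 + O(N^{-1/2})$, so $Y_N$ can be sandwiched between two linear subcritical birth--death chains with per-capita rates $\lambda(1\pm o(1))$ and $\mu$. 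For such a chain started from $k$ particles, the explicit generating-function identity gives
\[
    \P(T_k \le t) = \biggl(\frac{\mu - \mu e^{-(\mu-\lambda)t}}{\mu - \lambda e^{-(\mu-\lambda)t}}\biggr)^{\!k},
\]
from which a short calculation shows $(\mu-\lambda)T_k - \log k - \log(1-\lambda/\mu) \to W$ in distribution as $k\to\infty$. Writing $\tau_N = t_1 + T_{Y_N(t_1)}$ and substituting the expansion for $\log(N x(t_1))$ produces precisely the stated limit.

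The main obstacle in (i) is pinning down the exact prefactor $\lambda/(\lambda-\mu)^2$: this requires tracking subleading terms in the Laplace expansion and a careful boundary analysis at $k=1$, and the distributional statement in addition requires tightness estimates showing that the chain reaches the quasi-stationary regime well before extinction. The main obstacle in (ii) is that the limiting Gumbel depends on $\log Y_N(t_1)$, so the fluid-limit error has to be controlled \emph{in probability}, not merely in distribution; I would obtain this via a Doob--Gronwall martingale estimate strong enough to force $\log Y_N(t_1) - \log(N x(t_1)) \to 0$ in probability, which is what lets the Gumbel convergence transfer from the branching phase to the full process.
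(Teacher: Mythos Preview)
The paper does not prove this theorem at all: Theorem~\ref{teomalwina} is stated as a summary of known results, with part~(i) attributed to Andersson and Djehiche (1998) and part~(ii) to Brightwell, House and Luczak (2018), and no proof is given here. So there is nothing in the paper to compare your proposal against directly.

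That said, your sketch for part~(ii) is very much in the spirit of the methodology the paper does develop for its own main result (Theorem~\ref{teomy}): a fluid-limit phase governed by the ODE, followed by a final phase in which the process is sandwiched between two linear subcritical birth--death chains whose extinction time obeys the explicit formula you wrote (this is the paper's Lemma~\ref{extinction}). The paper's decomposition is somewhat finer---it uses an initial ``burn-in'' phase, an intermediate phase exploiting linearisation near the fixed point to extend the approximation over a time of order $\log N$, and then the final birth--death phase---whereas your two-phase split at level $N^{1/2}$ is coarser but would suffice for the single-strain subcritical case since no long intermediate phase near a fixed point is needed there. Your identification of the key difficulty (that the Gumbel limit depends on $\log Y_N(t_1)$, so one needs $\log Y_N(t_1)-\log(Nx(t_1))\to 0$ in probability) is exactly the point; the paper handles the analogous issue via its Lemma~\ref{lem-phase1} and Lemma~\ref{lem-lt-approx}.

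For part~(i) your quasi-stationary plus Laplace-expansion outline is a reasonable description of the Andersson--Djehiche argument, but again the present paper simply cites it.
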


When $\lambda = \mu$, then, for most starting states, the time to extinction is of the order $N^{1/2}$, see Nasell (2011). %~\cite{nasell}.
Brightwell, House and Luczak~(2018) %\cite{luczakbrightwell}
also consider more general initial conditions, as well as determine the extinction time when $\lambda = \lambda (N)$, $\mu = \mu (N)$ satisfy $\mu - \lambda \to 0$ and $(\mu - \lambda)N^{1/2} \to \infty$ (the barely subcritical case).

\smallskip

%\noindent
%We now introduce the competition model that is the subject of the present paper. It is defined as a two-dimensional
%, which is described as a (bivariate)
The stochastic SIS logistic competition model
describes the spread of a disease in a homogeneously mixing population via two different virus strains, say types 1 and 2, which are sufficiently antigenically similar to induce a cross-protective immune response.
An individual infected with strain $i$ ($i=1,2$) stays infected for an exponentially distributed time with rate $\mu_i > 0$, and,  during the infectious period, they independently make an infectious contact to a random individual according to a Poisson process with rate $\lambda_i > 0$; if the individual is currently susceptible, then they become infected with strain $i$ as a result.
The dynamics can thus be described as a two-dimensional
continuous-time Markov chain $(X_N(t))_{t\geq 0}=(X_{N,1}(t), X_{N,2}(t))_{t \geq 0}$ , where $X_{N,1}(t)$ and $X_{N,2}(t)$ denote the numbers of individuals infected with strains of type $1$ and $2$ respectively, at time $t$. %In what follows, we will sometimes suppress the $N$ dependence to lighten the notation.
The state space is $S_N=\{(X_1,X_2)^T: X_1,X_2 \in \Z^+, 0 \leq X_1+X_2 \leq N \}$, and the
%parameters are $\lambda_1,\lambda_2, \mu_1, \mu_2 >0$. The
transition rates from state $(X_1,X_2)$ can be written as follows:
\[
\begin{array}{lllll}
(X_1,X_2)& \rightarrow& (X_1+1,X_2)   &\hbox{   at rate  }&     \lambda_1  X_1 (1- X_1\slash N - X_2 \slash N);\\
(X_1,X_2) & \rightarrow & (X_1,X_2+1)  &  \hbox{ at rate  }&   \lambda_2  X_2 (1- X_1\slash N - X_2 \slash N);\\
(X_1,X_2)  &\rightarrow & (X_1-1,X_2)   & \hbox{ at rate  }&      \mu_1 X_1;\\
(X_1,X_2)  &\rightarrow & (X_1,X_2-1)   &\hbox{ at rate  } &     \mu_2 X_2.
\end{array}
\]
% or $\lambda_2 \slash N$, depending on their own infection subtype.
We note that, in the absence of one of the strains, %we recover
the other strain evolves according to the basic stochastic SIS logistic epidemic model described above.

\smallskip

We assume that $\lambda_1/\mu_1> \lambda_2/\mu_2$, and $\lambda_1/\mu_1 > 1$, as well as that  $X_{N,1}(0)\slash N\rightarrow \alpha$ and  $X_{N,2}(0) \slash N\rightarrow \beta$ as $N\rightarrow \infty$ ($0<\alpha, \beta$, $\alpha + \beta \le 1$).
The assumption $\lambda_1/\mu_1 > \lambda_2/\mu_2$  means that strain 1 has a higher basic reproductive ratio, and is thus more infectious than strain 2.
Since $\lambda_1/\mu_1 > 1$,
%$\lambda_1,\lambda_2 > 1$,
Theorem \ref{teomalwina} implies that the stronger subtype, in the absence of its competitor, would stay endemic in the population for a time that grows exponentially in the size $N$ of the population.
%If additionally $\lambda_2/\mu_2 > 1$, then the weaker strain too would stay endemic for a very long time in the %absence of its competitor.

\smallskip

This model was proposed by Parsons and Quince (2007a,b) %\cite{parsonsA, parsonsB}
as an extension to the Moran model for a haploid population studied in Moran (1958). They assume that both alleles (strains) are supercritical, which in our setting translates to assuming that $\lambda_2/\mu_2 > 1$. Parsons and Quince (2007a) consider the case where one of alleles (strains) is weaker (the case considered in the present paper), while Parsons and Quince (2007b) consider the case where both types of allele (strain) have equal fitness, translating to the case $\lambda_1/\mu_1 = \lambda_2/\mu_2$, not studied here.
Parsons and Quince (2007a,b) study the
%~\cite{moran}
%in which the population size is not fixed, but they pointed out the equivalence to the stochastic logistic SIS competition model.
allele fixation probability for this model, equivalent to the probability of one virus strain displacing the other in the competing epidemic setting. %(i.e.~the probability that strain 1 becomes extinct before strain 2 does, and vice-versa).
Also, Humplik et al. (2014)~%~\cite{nowak}
study the effects of virulence on the probability of strain 2 invading strain 1, mainly for small populations, though they appear to be unaware of the earlier works of Parsons and Quince (2007a,b).
%  by replacing the recovery rates $\mu_i$'s in the competition model by turnover rates which are related to the recovery rates and the virulences of each %strain.

For a closely related model with $K \ge 2$ types, Parsons, Quince and Plotkin (2008) %~\cite{parsonsC}
obtain analytic approximations for the expected fixation time (i.e.~the time until  competitive exclusion occurs), which turns out to be linear in the population size when all the alleles (strains) have the same (supercritical) basic reproductive ratio. These authors further argue that a similar result should hold for the model considered in our paper, and Kogan et al. (2014) have shown this is indeed the case for $K=2$ strains of equal strength.
%This claim is interesting in light of our main result in

Theorem \ref{teomy} below concerns the case where there is a dominant, supercritical, strain and each of the two strains initially affects a positive fraction of the population. Under these conditions, competitive exclusion of the weaker strain by the stronger occurs with high probability (i.e. with probability tending to $1$ as the population size $N \to \infty$). Our result shows that, with high probability,
%with different basic reproductive ratios,
the extinction time for the weaker type scales logarithmically while the time to extinction for the dominant strain scales exponentially with the population size.

The corresponding deterministic SIS logistic competition model is among the simplest epidemic models for infections caused by multiple pathogen strains. %\cite{thieme}.
It is represented by the pair
\begin{eqnarray}
\frac{dx_1(t)}{dt} & = & \lambda_1 x_1(t) (1-x_1 (t) -x_2(t)) - \mu_1 x_1 (t) \nonumber \\
\frac{dx_2(t)}{dt} & = & \lambda_2 x_2 (t) (1-x_1(t) - x_2(t)) - \mu_2 x_2 (t) \label{eq.det-comp}
\end{eqnarray}
of differential equations, and is thus a particular instance of the deterministic Lotka-Volterra system -- see Lotka (1925), Volterra (1931), Zeeman (1995) and Chapter 8 of Renshaw (2011) -- which has found applications, for instance, in biology, ecology, and economics.
%\begin{figure}[H]
%			 \centering
%			 \includegraphics[scale=0.09]{detsto2.jpeg}
%			\caption{Sample paths of a simulation for the stochastic logistic SIS competition model with parameters  $\lambda_1=10$, $\lambda_2=4$, $\mu_1=5$, %$\mu_2=2.5$,  $\alpha=0.3$, $\beta=0.5$, for populations of size $1000$ and $5000$, respectively. The red paths (blue paths) represents the fraction of %individuals infected by strain 1 (strain 2) in the population. These sample paths are compared to the solution of the limiting deterministic version of this %model with the same parameters, which are represented by the black lines. } \label{fig3}
%			\end{figure}
%
%Various stochastic Lotka-Volterra systems have been studied in both discrete and continuous state space settings; in the latter case, the process is defined as %a solution to a system of SDE's~\cite{mele,allen}.
%An early work by Billard~\cite{billard} studies a two-dimensional pure death process with nonlinear  death rates, deriving exact expressions for moments and %state probabilities.
Various stochastic Lotka-Volterra systems have also been studied.
For instance, Kirupaharan and Allen (2004) study a stochastic Lotka-Volterra system with multiple species and demography (i.\ e.\ births and deaths), with a focus on the probability distributions of the numbers of individuals of each species conditioned on non-extinction. They provide numerical examples of competitive exclusion as well as coexistence, and compare the behaviour of the stochastic model to its deterministic version.
Also, Cattiaux and M\'el\'eard (2010) consider a stochastic Lotka-Volterra process on $\mathbb{R}^2_+$, modeling interactions in a  two-type density dependent population as a generalisation of the one-dimensional logistic Feller diffusion. They study the long-term behaviour of the process, proving existence and uniqueness of the quasi-stationary distribution in different regions of the parameter space. For the parameter region in which the two types compete, they show that there is a timescale on which only one type survives,
though they do not consider the distribution of the time until competitive exclusion occurs.

%The first stochastic version of the Lotka-Volterra system was proposed by Feller \cite{feller}, as a continuous-time %Markov chain. Continuous time Markov models with discrete state space were further studied by Bartlett \cite{bart}  %and Billard \cite{billard}, who analyzed such processes in detail and derived expressions for the moments, %population size and survival probabilities; see \cite{renshaw} for an overview. The stochastic logistic SIS %competition model we study here is a particular case of such a model.

%Continuous state space stochastic Lotka-Volterra systems have also been considered using stochastic differential %equations; see \cite{mele} for an overview.

%The deterministic version of the model studied in this work \cite{thieme} is a particular instance of the competitive Lotka-Volterra system; see e.g. %\cite{zeeman}. Therefore, it may be interesting to compare its behavior to those observed in other stochastic versions of the competitive Lotka-Volterra model. %There are different versions these model in continuous state space, where the process is defined as solutions of stochastic differential equations e.g. %\cite{mele, allen}.

 %In this case, they compute probability distributions and
 %There are others versions in discrete state, which as our model also fit in the framework of Markov population %processes. \\

\smallskip

Let $R_{0,1} = \lambda_1/\mu_1$ and $R_{0,2} = \lambda_2/\mu_2$ denote the basic reproductive ratios of the two strains.
$\kappa_N=\inf \{ t\geq0: X_{N,2}(t)=0\}$, the time when the weaker competing species goes extinct.
Let also $\tau_N=\inf \{ t\geq0: X_{N,1}(t) =0\}$, the time the stronger species becomes extinct.
We now state our main result, concerning the distribution of $\kappa_N, \tau_N$. %asymptotically as $N \to \infty$.

\begin{theorem}
\label{teomy}
%Suppose that $\lambda_1>\lambda_2>1$. Suppose further that $X_{N,1}(0)\slash N\rightarrow \alpha$ and  $X_{N,2}(0) %\slash N\rightarrow \beta$ as $N\rightarrow \infty$, where $\alpha,\beta>0$ and $\alpha+\beta\leq1$. Then
%{\small
%\[(1-\frac{\lambda_2}{\lambda_1})\kappa_{N} - \left(\log N + \frac{\lambda_1 \log \beta - \lambda_2 \log %\alpha}{\lambda_1} + \log \frac{\lambda_1-\lambda_2}{\lambda_1} + \frac{\lambda_2}{\lambda_1} \log %\frac{\lambda_1-1}{\lambda_1}\right) \rightarrow G,\]
%}%
%in distribution, as $N\rightarrow \infty$, where $G$ has the standard Gumbel distribution.
%
%Furthermore, there exists $c> 0$ such that the stronger strain persists at least for a time $e^{c N}$ with %probability tending to 1 as $N \to \infty$.
Suppose that $R_{0,1}> R_{0,2}$ and that $R_{0,1} > 1$. Suppose further that $X_{N,1}(0)\slash N\rightarrow \alpha$ and  $X_{N,2}(0) \slash N\rightarrow \beta$ as $N\rightarrow \infty$, where $\alpha,\beta>0$ and $\alpha+\beta\leq1$. Then, as $N \to \infty$,
%{\small
%\begin{eqnarray*}
%(R_{0,1}-R_{0,2})\kappa_N - %(X_N(0)) -
% \Big [ \frac{R_{0,1}}{\mu_2} \log \Big ( N \beta \Big (1 - \frac{R_{0,2}}{R_{0,1}}\Big ) \Big ) + %\frac{R_{0,2}}{\mu_1} \log \Big ( \frac{1-R_{0,1}^{-1}}{\alpha} \Big ) \Big ] \to \frac{R_{0,1}}{\mu_2}W,
%\end{eqnarray*}
%}
{\small
\begin{eqnarray*}
\mu_2 \Big (1-\frac{R_{0,2}}{R_{0,1}} \Big )\kappa_N - %(X_N(0)) -
 \Big [\log \Big ( N \beta \Big (1 - \frac{R_{0,2}}{R_{0,1}}\Big ) \Big ) + \frac{R_{0,2}\mu_2}{R_{0,1}\mu_1} \log \Big ( \frac{1-R_{0,1}^{-1}}{\alpha} \Big ) \Big ] \to W,
\end{eqnarray*}
}
where %$W_N$ converges to a random variable
$W$ has a standard Gumbel distribution.
%{\small
%\[(1-\frac{\lambda_2}{\lambda_1})\kappa_{N} - \left(\log N + \frac{\lambda_1 \log \beta - \lambda_2 \log %\alpha}{\lambda_1} + \log \frac{\lambda_1-\lambda_2}{\lambda_1} + \frac{\lambda_2}{\lambda_1} \log %\frac{\lambda_1-1}{\lambda_1}\right) \rightarrow G,\]
%}%
%in distribution, as $N\rightarrow \infty$, where $G$ has the standard Gumbel distribution.

Furthermore, as $N \to \infty$,
\[ \E(\tau_N) \sim \sqrt{\frac{2 \pi}{N}} \frac{\lambda_1}{(\lambda_1-\mu_1)^2} e^{N v_1},\]
as $N\rightarrow \infty$, where $v_1=\log (\lambda_1/\mu_1) -1 + \frac{\mu_1}{\lambda_1}$,
and $\tau_N/\E(\tau_N)\rightarrow Z$ in distribution, where $Z$ is an exponential random variable with parameter 1.
%Also, there exists $c> 0$ such that $X_{N,1}(t)$ is concentrated around $x_1(t)$ for a time $e^{c N}$ with %probability tending to 1 as $N \to \infty$.
\end{theorem}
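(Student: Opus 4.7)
The plan is to decompose the evolution into a \emph{fluid phase} in which the scaled process $(X_{N,1}/N, X_{N,2}/N)$ closely tracks the ODE \eqref{eq.det-comp} with initial condition $(\alpha, \beta)$, followed by a \emph{branching phase} once $X_{N,2}$ has become small relative to $N$. The relevant stable fixed point of \eqref{eq.det-comp} in our parameter range is $(x_1^*, 0)$ with $x_1^* = 1 - R_{0,1}^{-1}$; the $x_2$-equation linearised there reads $\dot x_2 = -r x_2$ with $r := \mu_2(1 - R_{0,2}/R_{0,1})$, so $x_2(t)$ decays exponentially at rate $r$. Once $X_{N,2}$ is much smaller than $N$ and $X_{N,1}/N$ is close to $x_1^*$, the chain $X_{N,2}$ can be coupled with a subcritical continuous-time linear birth--death process of per-particle birth rate $\lambda_2/R_{0,1}$ and death rate $\mu_2$; a direct computation of $\P(\tau_1 \le t)$ for such a chain yields the Gumbel limit $r\tau_K - \log K - \log(1 - R_{0,2}/R_{0,1}) \to W$, and this is where the first term of the centring in the statement will ultimately come from.

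The first technical step is a quantitative fluid approximation that holds \emph{uniformly} on a time window of length $O(\log N)$. On a compact interval $[0, T_0]$, standard Kurtz-type arguments suffice to give an approximation with errors of order $N^{-1/2+\varepsilon}$ with high probability, and $T_0$ can be chosen so that $(x_1(T_0), x_2(T_0))$ lies well inside an arbitrarily small neighbourhood of $(x_1^*, 0)$. The delicate part is to continue this approximation beyond $T_0$ while $X_{N,2}/N$ itself decays through many orders of magnitude: the ODE approximation cannot directly control $\log X_{N,2}$, because relative fluctuations of $X_{N,2}$ become enormous once $X_{N,2}/N$ is comparable to $N^{-1/2}$. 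This is where the new fluid-limit tool announced in the abstract is needed; the idea is to linearise around $(x_1^*, 0)$ and exploit the exponential contraction in both coordinates (at rates $\lambda_1 - \mu_1$ and $r$) to show that martingale noise of order $N^{-1/2}$ per unit time cannot accumulate in a contractive environment. I expect this long-horizon approximation to be the main obstacle.

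Given such an approximation, the second shift in the statement is obtained by matching the fluid and branching regimes. Writing $b(t) = \lambda_2(1 - x_1(t) - x_2(t))$ for the effective per-particle birth rate of $X_{N,2}$, one has $\log x_2(T_0) - \log \beta = \int_0^{T_0}(b(s) - \mu_2)\,ds = -rT_0 + \lambda_2 \int_0^{T_0}[x_1^* - x_1(s)]\,ds - \lambda_2 \int_0^{T_0} x_2(s)\,ds$. Using the identity $\tfrac{d}{dt}\log x_1 = \lambda_1(x_1^* - x_1) - \lambda_1 x_2$ from the $x_1$-equation of \eqref{eq.det-comp} and letting $T_0 \to \infty$, the difference of the two integrals collapses to
\[
\lambda_2 \int_0^\infty [x_1^* - x_1(s)]\,ds - \lambda_2 \int_0^\infty x_2(s)\,ds \;=\; \frac{\lambda_2}{\lambda_1}\log\Big(\frac{x_1^*}{\alpha}\Big) \;=\; \frac{R_{0,2}\mu_2}{R_{0,1}\mu_1}\log\Big(\frac{1 - R_{0,1}^{-1}}{\alpha}\Big),
\]
which is the second term in the centring. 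Adding this deterministic correction to the branching-phase Gumbel shift $\log(K(1 - R_{0,2}/R_{0,1}))$ with $K \approx N x_2(T_0)$ causes the $-rT_0$ and $\log x_2(T_0)$ contributions to combine cleanly into $\log N + \log \beta$ plus the correction above, yielding the claimed weak limit for $\kappa_N$.

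For the second part of the theorem, on the high-probability event that the branching phase has concluded by time $\kappa_N = O(\log N)$, the chain $X_{N,1}$ from time $\kappa_N$ onwards evolves by the strong Markov property as a single-strain stochastic SIS logistic chain, and $X_{N,1}(\kappa_N) = N x_1^* + O(N^{1/2+\varepsilon})$ with high probability from the extended fluid approximation. Applying Theorem \ref{teomalwina}(i) to this restarted chain and absorbing the $O(\log N)$ delay $\kappa_N$, which is negligible compared with $\E(\tau_N) = e^{\Theta(N)}$, gives both the sharp asymptotics for $\E(\tau_N)$ and the exponential weak limit $\tau_N/\E(\tau_N) \to Z$. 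As noted, the main obstacle throughout is the long-horizon fluid limit around the stable fixed point, where one must balance the decay of $X_{N,2}$ against accumulated stochastic noise \emph{without} losing the precise constants entering the Gumbel shift.
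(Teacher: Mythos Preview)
Your proposal is correct and follows essentially the same three-phase strategy as the paper: a short Kurtz-type burn-in on $[0,t_0]$, a long-horizon fluid approximation near the stable fixed point exploiting contraction (the paper's Lemma~\ref{lem-lt-approx}, which is indeed the main technical ingredient you anticipate), and a final linear birth--death coupling giving the Gumbel limit (Lemma~\ref{lem-final-phase}). Your integral identity for the centring constant is just the differential form of the paper's conserved quantity $x_1^{\lambda_2}/x_2^{\lambda_1} = C e^{(\mu_2\lambda_1-\mu_1\lambda_2)t}$ (relation~\eqref{relation} and the time formula~\eqref{time}), so the bookkeeping that produces the $\frac{R_{0,2}\mu_2}{R_{0,1}\mu_1}\log\big((1-R_{0,1}^{-1})/\alpha\big)$ term is the same computation in slightly different dress.
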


Theorem~\ref{teomy} thus shows that the extinction time $\kappa_N$ of the weaker strain can be written as
$$\kappa_N  = \frac{\log N + Z}{\mu_2 \Big (1-\frac{R_{0,2}}{R_{0,1}} \Big )},$$
where $Z$ is a random variable with a bounded mean and variance, while the extinction time $\tau_N$ of the stronger strain asymptotically has the same distribution as if the weaker strain was absent to begin with.

The long-term behaviour of Markov population processes is of considerable importance in applications. In epidemic models, long-term phenomena include extinction of certain pathogen strains, or replacement of a dominant pathogen strain in the host population by another more adapted pathogen strain introduced into the host population e.g.\ due to mutation or migration. Mathematically, these phenomena are related to the behaviour of the scaled process near fixed points of its approximating differential equation, including absorbing boundaries for one or more coordinates.
Recently, Barbour, Hamza, Kaspi and Klebaner (2015) have shown that, under appropriate conditions, a density dependent Markov population process that starts near an absorbing boundary and manages to escape from it, still can be well-approximated by the deterministic solution as described by the standard theory but with a random time shift, and that the time to escape from such a boundary is random and of order $O(\log N)$, see Theorem 1.1 in Barbour et al.~(2015).
% \cite[Theorem 1.1]{barbour}.
Also, similar to the phenomenon we investigate in the present work, they describe in a very general setting the behaviour of a class of population processes near a fixed point at which one or more coordinates of the process have value 0, i.e.\ they are extinct, and derive the limit distribution for the extinction times for such processes as a standard Gumbel random variable, after scaling and centering, see Theorem 1.2 in Barbour et al.~(2015).
%\cite[Theorem 1.2]{barbour}.
In both results, the randomness when the process is escaping or reaching an absorbing boundary is captured by a branching process approximation. However, at their level of generality, the formulae they obtain contain non-explicit constants, and their bounds on the rate of convergence are too weak to investigate near-critical phenomena. Also, rigorous justification of such a general approximation, based on an abstract coupling of Thorisson (see Theorem 7.3 in Thorisson 2000) is quite involved.
%\cite[Theorem 7.3]{thorisson}, becomes quite involved, and so does their approach, which involves tracking the random process along the deterministic solution %for time periods tending to infinity with $N$.

 %They rewrite equations governing the evolution of both deterministic and stochastic processes using variation of constants, and then use the Gronwall %inequality to obtain good bounds on the behaviour near fixed points.

 In the present work, we develop a related but more direct and optimal approximation to prove an explicit formula for the extinction time of the weaker virus strain in the stochastic logistic SIS competition model. Like the approach of Barbour et al.~(2015), our approach is based on decomposing the drifts of the process into linear and non-linear parts, and using a variation of constants formula. However, we additionally take full advantage of the fact that the non-linear parts are small in the neighbourhood of a fixed point, and provide more refined bounds on the deviations of the martingale transform appearing in the equations.
 %Also like~\cite{barbour}, we use a version of Gronwall's inequality, but our precise bounds on the magnitude of the various terms over time allow us to avoid %an exponential in time `divergence' of the approximation.
 Similar ideas were also used in a different context by Barbour and Luczak (2012) %(with the usual Gronwall inequality)
 and, in discrete time, by Brightwell and Luczak (2012).
% in~\cite{BriLuc} (with an inductive approach to error tracking).

Unlike the approach of Barbour et al.~(2015), the precision of our approximation facilitates study of near-critical phenomena. In Section~\ref{sec:critical}, we
allow the basic reproductive ratios $R_{0,1}$ and $R_{0,2}$ 
%$\lambda_i = \lambda_i (N)$ and $\mu_i = \mu_i (N)$ 
to be functions of the population size $N$. We
show that Theorem~\ref{teomy} below can be extended to certain near-critical regimes where $\lambda_1/\mu_1 - \lambda_2/\mu_2 \to 0$, while $\lambda_1/\mu_1$ may or may not tend to $1$ as $N \to \infty$. We do not cover the entire spectrum of near-critical behaviours: the example considered here is meant as a proof of concept, and a full investigation will be carried out systematically in future work. One challenge of such an investigation will be to understand the behaviour of the approximating deterministic process in various near-critical regimes. Also in future work we intend to study the critical case when the strengths of the two strains are even closer to identical,
%both strains have equal strength, 
and to extend our results to competition of more than $2$ strains. A further project is to rigorously study the probability that the strongest strain wins when starting with only a small number of infected individuals relative to the number of infectives with weaker strains, in particular in near-critical scenarios, where there is likely to be a delicate interplay between initial conditions and the asymptotic differences between the strengths of the different strains.

In Section~\ref{auxiliar}, we present some preliminaries concerning the stability of fixed points
%and the stability of the solutions
of the deterministic logistic SIS competition model.
%Also, we recall a general result of Kurtz \cite{kurtz} which can be applied to approximate the
%competition model by the solutions of its deterministic version on finite time intervals .
Furthermore, we give an overview of the strategy used to prove Theorem \ref{teomy}. The idea is that the stochastic logistic SIS competition process follows closely the corresponding deterministic process for a long time, until the latter one is close to its attractive fixed point at $((\lambda_1-\mu_1)/\lambda_1,0)^T$. From there on, the time to extinction for the second species is short, and well approximated by a linear birth-and-death chain, with the randomness captured by the Gumbel distribution.
%We present the proof of the theorem only in the case where $\mu_1 = \mu_2 = 1$ and $\lambda_1 > \lambda_2$, as computations turn out to be significantly %simpler. While this may on the surface appear to be a significant restriction, it does in fact capture all the qualitative features of the general behaviour, %and all the principles of our proof extend to the general setting.
We break up the analysis of the process into phases, similarly to the approach of Brightwell, House and Luczak~(2018) used to prove a general version of Theorem \ref{teomalwina} $(ii)$. We analyze each of these phases in the subsequent sections. In Section~\ref{sec:proof}, we combine the results obtained in the preceding sections to prove Theorem \ref{teomy}.

In Section~\ref{sec:critical}, we state and prove Theorem~\ref{thm.extinction-nc}, which extends our work to a near-critical case where 
$R_{0,1}= R_{0,1}(N)$ and $R_{0,2} = R_{0,2} (N)$ 
%$\lambda_1= \lambda_1(N)$, $\mu_1 = \mu_1 (N)$, $\lambda_2 = \lambda_2(N)$, $\mu_2 = \mu_2 (N)$ 
are such that $R_{0,1}-R_{0,2} \to 0$, while $R_{0,1}-1$ may or may not tend to $0$ but satisfies $(R_{0,1}-R_{0,2})(R_{0,1}-1)^{-1} \to 0$ as $N \to \infty$. In the particular case we consider, $\mu_1 = \mu_2 = 1$, and $\lambda_1 = \lambda_1 (N)$, $\lambda_2 = \lambda_2 (N)$ are such that $(\lambda_1 - \lambda_2) (\lambda_1-1)^{-1} \to 0$. We further assume that
\begin{equation}
\label{cond-crit-win}
N (\lambda_1-\lambda_2)^3 (\lambda_1-1)^{-1}/\log \log \Big ( N (\lambda_1-\lambda_2)^2 \Big ) \to \infty.
\end{equation}
Condition~\ref{cond-crit-win} is an artefact of our proof technique, and does not define the transition to criticality.
We do believe that, similarly to a single stochastic SIS logistic epidemic, the condition $N(\lambda_1 - \lambda_2)^2 \to \infty$ is necessary and sufficient for the formula to hold. It seems feasible to refine our proof technique by splitting the differential equation approximation phase into subphases, possibly to a great enough extent so as to relax our assumption on separation from criticality to the best possible ; however, in the interest of greater clarity, we do not explore such improvements in the present paper. A more detailed discussion of this as well as of what happens when the condition $N(\lambda_1 - \lambda_2)^2 \to \infty$ is not satisfied is included in Subsection~\ref{sub.discussion}. 
%randomness in the extinction time to have an asymptotic Gumbel distribution.

%Finally, in Section~\ref{sec:ext}, we discuss further possible extensions of our work.
Throughout our proofs, we treat $X_N(t)$ and $x(t)$ as column vectors.

\section{Preliminaries}
\label{auxiliar}

%This section contains some preliminaries that will be used in the remainder of the paper.
%In this section, we recall facts that appear along the paper and give an
In this section, we discuss the deterministic Lotka-Volterra system.
% and state some known results concerning its stability.
For suitable choices of parameter values, this model becomes the deterministic logistic SIS competition model, and approximates the stochastic logistic SIS competition model over certain timescales.

We further outline the proof of our main result, Theorem~\ref{teomy}.

\subsection{A deterministic version of the competition model}

\label{sec.det-comp}

The deterministic competitive Lotka-Volterra system represents a community of $k$ mutually competing species described by equations
\begin{equation}\label{lotta}
\frac{dx_i(t)}{dt} = x_i(t) \left(b_i - \sum_{j=1}^k a_{ij} x_{j}(t)\right), \quad \hbox{  } i=1,\ldots,k,
\end{equation}
where $x_i(t)$ denotes the population size of the $i$-th species at time $t$. It is assumed that
$b_i > 0$ for all $i$, and $a_{ij}> 0$ for all $i,j$.
% for each $i$, $b_i> 0$, $a_{ii}>0$, and that $a_{ij}>0$ for all $i\neq j$,
%which characterizes the competition among the species.
For each $i=1, \ldots, k$, species $i$ would by itself, in the absence of all the other species, exhibit logistic growth, that is, its behaviour would be described by the equation
\[ \frac{d{x}_{i}(t)}{dt} = x_{i}(t) \left(b_i - a_{ii}x_{i}(t)\right), \hspace{0.3cm} b_i,a_{ii}>0.
 \]
This equation has two fixed points: $0$ and $b_i\slash a_{ii}$, the latter being the \textit{carrying capacity} of species $i$.\\
The following result of Zeeman (1995) gives simple algebraic criteria on the parameters $b_i$ and $a_{ij}$ of (\ref{lotta}) which guarantee that, for all strictly positive initial conditions of (\ref{lotta}), all but one of the species is driven to extinction, while the one remaining species in the community stabilizes at its own carrying capacity.

We recall that a fixed point $x^*$ of a system of ordinary differential equations is  {\it globally attractive} on a
%invariant
set $U$ if and only if its basin of attraction is equal to $U$. In other words, $x^*$ is globally attractive if every solution to the system with initial condition in $U$ converges to $x^*$ as $t\rightarrow \infty$. %\cite{livro}.

\bigskip

\begin{theorem} (Zeeman 1995)
%\cite[Theorem 2.1]{zeeman}
\label{thm.zeeman}
Suppose that system (\ref{lotta}) satisfies the inequalities
\begin{eqnarray} \label{lotta2}
\frac{b_j}{a_{jj}} & <& \frac{b_i}{a_{ij}} \quad \forall i<j \nonumber \\ %\hbox{ and  }
\frac{b_j}{a_{jj}} & > & \frac{b_i}{a_{ij}} \quad \forall i>j. \label{lotta2a}
 \end{eqnarray}
Then fixed point $\left( \frac{b_1}{a_{11}},0, \ldots, 0 \right)^T$ is globally attractive on the interior of $\mathbb{R}_{+}^k$.
\end{theorem}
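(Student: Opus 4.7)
The plan is to prove global attractivity of the equilibrium $e^* := (b_1/a_{11}, 0, \ldots, 0)^T$ by combining linearisation at the axial equilibria, a split Lyapunov function to force extinction of the weaker species, and induction on the number of species $k$.

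First I would check forward invariance of the non-negative orthant and derive uniform bounds $\limsup_{t \to \infty} x_i(t) \le b_i/a_{ii}$ via comparison with the scalar logistic equation, so that trajectories are precompact and every $\omega$-limit set is a non-empty, compact, connected invariant set. The axial fixed points are the origin together with the single-species equilibria $v_j$, whose $j$-th coordinate is $b_j/a_{jj}$ and all others zero. A direct computation shows that the Jacobian at $v_j$ has eigenvalues $-b_j$ and $\{b_\ell - a_{\ell j}(b_j/a_{jj}) : \ell \ne j\}$. For $j \ge 2$, the Zeeman inequality $b_j/a_{jj} < b_1/a_{1j}$ forces the $\ell = 1$ eigenvalue to be strictly positive, so $v_j$ has an unstable direction pointing into the interior and cannot attract an interior orbit; at $v_1 = e^*$ the inequalities $b_1/a_{11} > b_\ell/a_{\ell 1}$ for $\ell \ge 2$ make all non-trivial eigenvalues strictly negative, so $e^*$ is locally asymptotically stable.

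The central step is to drive the weakest species to extinction via a split Lyapunov function. For suitably chosen exponents $\gamma_{1}, \ldots, \gamma_{k-1} > 0$, I would set
$$
V_k(x) \;=\; \frac{x_k}{\prod_{i<k} x_i^{\gamma_{i}}},
$$
so that along trajectories
$$
\frac{d}{dt}\log V_k \;=\; \Big(b_k - \sum_{i<k} \gamma_{i} b_i\Big) \;-\; \sum_{\ell=1}^{k} \Big(a_{k\ell} - \sum_{i<k} \gamma_{i} a_{i\ell}\Big)\, x_\ell.
$$
The two families of Zeeman inequalities should permit a choice of $\gamma_{i}$ making this derivative strictly negative on a neighbourhood of the face $\{x_k = 0\}$ within the orthant, which together with boundedness forces $V_k \to 0$ and hence $x_k \to 0$. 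Once $x_k(t) \to 0$, the $\omega$-limit set of any interior orbit lies in the invariant face $\{x_k = 0\}$, on which the system is a $(k-1)$-species Lotka-Volterra system still obeying the Zeeman inequalities; induction on $k$ then yields convergence of the $\omega$-limit to $e^*$, which propagates to convergence of the original trajectory by connectedness of the $\omega$-limit set, with the base case $k=1$ being the scalar logistic equation.

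The main obstacle I anticipate is the simultaneous algebraic choice of the exponents $\gamma_i$, since the Zeeman inequalities directly control only those ratios $a_{k\ell}/a_{i\ell}$ with $\ell \in \{i,k\}$, leaving the coefficients involving $\ell \notin \{i,k\}$ to be handled by restricting attention to a small neighbourhood of the face $\{x_k = 0\}$ where those coordinates can be assumed small, or by exploiting the chain of inequalities inductively. A conceptually cleaner but heavier alternative would be to invoke Hirsch's carrying-simplex theorem, reducing the analysis to a monotone dynamical system on a compact invariant Lipschitz $(k-1)$-manifold, on which the dominance hierarchy encoded by the Zeeman inequalities forces every orbit to converge to $e^*$.
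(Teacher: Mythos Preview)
The paper does not prove this theorem; it is quoted from Zeeman (1995) and used as a black box for the deterministic analysis, so there is no argument in the paper to compare your proposal against.

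On its own merits, your plan has the right scaffolding (precompactness via logistic comparison, linearisation at the axial equilibria $v_j$, induction on $k$), but the core Lyapunov step is misstated in a way that breaks the argument. You propose to choose the $\gamma_i$ so that $\tfrac{d}{dt}\log V_k<0$ ``on a neighbourhood of the face $\{x_k=0\}$'' and then conclude $x_k\to 0$. This is backwards: negativity of the Lyapunov derivative near the \emph{target} face provides no mechanism to bring orbits toward that face; what you need is negativity on a forward-invariant compact set that already absorbs every interior orbit, for instance the box $\prod_i[0,b_i/a_{ii}+\varepsilon]$ produced by your comparison bounds, or the carrying simplex. Your proposed workaround for the uncontrolled coefficients --- that near $\{x_k=0\}$ the coordinates $x_\ell$ with $\ell\notin\{i,k\}$ ``can be assumed small'' --- is also simply false: on that face only $x_k$ is small, while each other $x_\ell$ may sit near $b_\ell/a_{\ell\ell}$. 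And the obstruction you flag is genuine: even with the single ratio $V=x_k/x_1^{\gamma}$, $\gamma=a_{k1}/a_{11}$, one can check that $\tfrac{d}{dt}\log V$ is negative at the origin and at every axial vertex $v_j$ of the absorbing box (this uses exactly the pairs $(k,j)$ and $(1,j)$ of Zeeman inequalities), but for $k\ge 3$ it can be positive at a mixed vertex, so the hypotheses do not by themselves force global negativity of the affine derivative on the box. The carrying-simplex route you mention at the end is the standard way to close this gap, and is the line taken in Zeeman's paper.
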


Clearly, if conditions~(\ref{lotta2a}) are satisfied, and $x_i (0) = 0$ for some $i > 1$, then the solution $x(t)$ still converges to $\left( \frac{b_1}{a_{11}},0, \ldots, 0 \right)^T$ as $t \to \infty$, as this case amounts to eliminating species $i$ from the equations.

%As expected, if $\pi$ is a permutation of the indexes, for which (\ref{lotta2}) holds, then  the %axial fixed point $\left( 0,\ldots,0,\frac{b_{\pi^{-1}(1)}}{a_{\pi^{-1}(1)\pi^{-1}(1)}},0,
%\ldots, 0 \right)$ is globally attracting on the interior of $\mathbb{R}_{+}^n$ \cite{zeeman}.

%For two-dimensional systems (case $k=2$), the algebraic criteria are equivalent to the existence %of no fixed point in the interior of $\mathbb{R}_+^2$.\\

%For $\lambda_1>\lambda_2>1$,
In the case $\lambda_1/\mu_1 > \lambda_2/\mu_2 > 1$, the stochastic competition model can be naturally associated with a particular two-dimensional instance of (\ref{lotta}) with $b_i=\lambda_i-\mu_i$ for $i=1,2$, and $a_{ij}=\lambda_i$ for $i,j=1,2$, which gives the system~(\ref{eq.det-comp})
%$b_2=\lambda_2-1$ and $a_{21}=a_{22}=\lambda_2$. Which can be rewritten as the following system
%\begin{eqnarray*}
%\label{eq1}
%\frac{d x_i(t)}{dt} =  \lambda_i x_i(t) (1 - x_1(t)-x_2(t)) -  \mu_i x_i (t), \hspace{1cm} i=1,2,
%\end{eqnarray*}
with initial conditions $x(0)=(x_1(0),x_2(0))^T\in \mathbb{R}^2_{+}$. In epidemic modelling,  $x_1(t)$ and $x_2(t)$ represent the proportions of individuals who at time $t$ are infected by strains 1 and 2 respectively, in a closed homogeneously mixing population.
%This system does not have an explicit solution, however,
By Theorem \ref{thm.zeeman}, all solutions with $x_1(0) > 0$ converge to $\left(\frac{\lambda_1-\mu_1}{\lambda_1},0 \right)^T$ as $t\rightarrow \infty$.

We claim this still holds even when $\lambda_2/\mu_2 \le 1$. First of all, observe that $x_2(t)^{\mu_1}/x_1(t)^{\mu_2}$ is decreasing in $t$, and hence one can see that $d x_1(t)/dt \ge 0$ if $x_1 (t) \le \eps$ for $\eps > 0$ small enough. (We can choose $\eps$ such that $\eps + (\eps/x_1(0))^{\mu_2/\mu_1}  \le 1-\mu_1/\lambda_1$.) Now consider the Lyapunov function
$$
\phi(x_1,x_2) = \frac{1}{2} \left( x_1 + x_2 - 1 + \frac{\mu_1}{\lambda_1} \right)^2 + x_2 \left(\frac{\mu_2}{\lambda_2} - \frac{\mu_1}{\lambda_1}\right).
$$
This function is non-negative, and is zero only at the fixed point $(x_1,x_2) = \left(1- \frac{\mu_1}{\lambda_1}, 0 \right)$. The derivative is given by
\begin{eqnarray*}
\lefteqn{\frac{d}{dt} \phi(x_1(t),x_2(t))} \\
&=& \left(x_1 + x_2 - 1 +\frac{\mu_1}{\lambda_1} \right) \left ( \frac{d x_1}{dt} + \frac{dx_2}{dt} \right )
+ \left(\frac{\mu_2}{\lambda_2} - \frac{\mu_1}{\lambda_1}\right) \frac{d x_2}{dt} \\
&=& - \lambda_1 x_1 \left(x_1 + x_2 - 1 +\frac{\mu_1}{\lambda_1} \right)^2 +  \left( x_1 + x_2 - 1 +\frac{\mu_2}{\lambda_2} \right) \frac{d x_2}{dt}\\
&=& - \lambda_1 x_1 \left(x_1 + x_2 - 1 +\frac{\mu_1}{\lambda_1} \right)^2 - \lambda_2 x_2 \left(x_1 + x_2 - 1 +\frac{\mu_2}{\lambda_2} \right)^2,
\end{eqnarray*}
and is non-positive everywhere; furthermore, for any $\eps > 0$, it is zero in $\{(x_1,x_2)^T: x_1 \ge \eps, x_2 \ge 0, x_1 + x_2 \le 1 \}$ only at the fixed point
$\left(1- \frac{\mu_1}{\lambda_1}, 0 \right)^T$. Since $dx_1(t)/dt \ge 0$ if $x_1(t) \le \eps$, the set $\{(x_1,x_2)^T: x_1 \ge \eps, x_2 \ge 0, x_1 + x_2 \le 1 \}$ is invariant for the deterministic logistic SIS competition model. As $\eps$ can be taken arbitrarily small, the claim follows.

\smallskip

It is easy to check that each solution $x(t) = (x_1(t), x_2(t))$ to~(\ref{eq.det-comp}) must satisfy
\begin{equation}
\label{relation}
\frac{{(x_1(t))}^{\lambda_2}}{{(x_2(t))}^{\lambda_1}} =   \frac{{(x_1(0))}^{\lambda_2}}{{(x_2(0))}^{\lambda_1}} e^{(\mu_2\lambda_1-\mu_1\lambda_2 )t}, \quad \quad \mbox{ for all }t \ge 0.
\end{equation}
%for all $t\geq0$;
%see \cite{bart}.
%This implies the existence of a function $f(t)$ such that

Given a solution $(x(t))_{t\geq0}$ to (\ref{eq.det-comp}), this relation can be used to calculate the time $t_{a\rightarrow b}$ spent by $(x(t))_{t\geq0}$ to travel from a point $a=(a_1,a_2)^T$ to another point $b=(b_1,b_2)^T$:
\begin{equation}
\label{time}
t_{a\rightarrow b} = \frac{\lambda_2}{\mu_2 \lambda_1-\mu_1 \lambda_2} \log \left( b_1 \slash a_1  \right)  - \frac{\lambda_1}{\mu_2 \lambda_1-\mu_1 \lambda_2} \log \left( b_2 \slash a_2  \right).
\end{equation}

%\smallskip

%Recall that a fixed point $x^*$ for a system $d x(t)/dt = F(x(t))$ of ordinary differential equations in ${\mathbb R}^k$
%is said to be {\it stable} if, for every $\eps >0$, there exists $\delta>0$ such that every solution $(x(t))_{t\geq0}$ with $\|x(0)-x^*\|_2<\delta$ is %well-defined for all $t\geq 0$ and satisfies $\|x(t)-x^*\|_2<\eps$ for all $t \ge 0$. (Here, $\| \cdot \|_2$ denotes the Euclidean norm in ${\mathbb R}^k$.)
%Also, $x^*$
%is {\it asymptotically stable} if it is stable, and if there exists $\delta>0$ such that every solution $(x(t))_{t\geq 0}$ with $\|x(0)-x^*\|_2<\delta$ %satisfies $\lim_{t\rightarrow \infty}\|x(t)-x^*\|_2=0$.
%A well-known stability criterion in Walter (1998) %\cite{wolfgang}
%states that, if all the eigenvalues of the Jacobian of the vector field describing the system evaluated at a fixed point have strictly negative real parts, then %this fixed point is asymptotically stable.
The Jacobian of $(\ref{eq.det-comp})$ at $(\frac{\lambda_1-\mu_1}{\lambda_1},0)^T$ is given by
\begin{eqnarray}
\label{eq-jacobian}
A = \begin{pmatrix}
-(\lambda_1-\mu_1)  & -(\lambda_1-\mu_1)\\
0 & -(\mu_2 - \lambda_2 \mu_1/\lambda_1),
\end{pmatrix}
\end{eqnarray}
and thus has eigenvalues $-(\lambda_1 - \mu_1)$ and $-(\mu_2 - \lambda_2\mu_1/\lambda_1)$,
which are real and strictly negative under the assumptions that $\lambda_1/\mu_1 > 1$ and $\lambda_1/\mu_1 > \lambda_2/\mu_2$. By standard theory, the speed of convergence is determined by $-\min \{\lambda_1-\mu_1, \mu_2 - \lambda_2 \mu_1/\lambda_1\}$. Indeed,
%Note that $\lambda_1-1 > \frac{\lambda_1 - \lambda_2}{\lambda_1}$, so the speed of convergence should be determined by the eigenvalue %$-\frac{(\lambda_1-\lambda_2)}{\lambda_1}$. Indeed,
by Chapter VII \S 29, Theorem VII in Walter (1998), for any
%~\cite[Chapter VII \S 29, Theorem VII]{wolfgang}, for any
%$0<\sigma<\frac{(\lambda_1-\lambda_2)}{\lambda_1}$,
$0 < \sigma < \min \{\lambda_1-\mu_1, \mu_2 - \lambda_2 \mu_1/\lambda_1\}$,
there exist $\eta > 0$, $C>0$ such that, if
$\|(x_1(0),x_2(0))^T-\Big (\frac{\lambda_1-\mu_1}{\lambda_1},0 \Big )^T\|_2<\eta$, then
%\begin{eqnarray}
%\label{bound}
$$\|(x_1(t),x_2(t))^T-(\frac{\lambda_1-\mu_1}{\lambda_1},0)^T\|_2\leq C e^{ -\sigma t} \hbox{ for all }  t\geq0.$$
%\end{eqnarray}
In Subsection~\ref{subs:conv-det} below, we will give a stronger bound, as well as a lower bound on the speed of convergence.
%including one amounting to
%\begin{eqnarray}
%\label{bound}
%$$\|(x_1(t),x_2(t))^T-\Big (\frac{\lambda_1-1}{\lambda_1},0 \Big )^T\|\leq C_0 e^{ - %t(\lambda_1-\lambda_2)/\lambda_1} \hbox{ for all }  t\geq 0,$$
%\end{eqnarray}
%provided that $\|(x_1(0),x_2(0))^T-(\frac{\lambda_1-1}{\lambda_1},0)^T\|$ is small enough, where $C_0$ depends on
%$\|(x_1(0),x_2(0))^T-(\frac{\lambda_1-1}{\lambda_1},0)^T\|$.

%For the rest of the proof of Theorem~\ref{teomy}, we assume that $\mu_1 = \mu_2 = 1$, and the infection rates satisfy $\lambda_1 > 1$ and $\lambda_1 > %\lambda_2$, as the calculations are simpler in this case, which captures all the qualitative features of the general setting.
%%This restriction is done purely to simplify the calculations; all the principles of our proof transfer to the %general case.
%When $\mu_1 = \mu_2 = 1$, then the matrix $A$ becomes
%\begin{eqnarray}
%\label{eq-jacobian-special}
%A = \begin{pmatrix}
%-(\lambda_1-1)  & -(\lambda_1-1)\\
%0 & -\frac{\lambda_1-\lambda_2}{\lambda_1},
%\end{pmatrix}
%\end{eqnarray}
%with eigenvalues $-(\lambda_1 -1)$ and $-(\lambda_1-\lambda_2)/\lambda_1$.

\subsection{Convergence to fixed point}

\label{subs:conv-det}

Let $\eta_1 = \lambda_1 - \mu_1$ and let $\eta_2 = \mu_2 - \lambda_2\mu_1/\lambda_1$. Note that $\eta_1, \eta_2 > 0$ and $-\eta_1, - \eta_2$ are the eigenvalues of $A$.
Let
\begin{equation}
\label{eq-def-a}
a = 1-\frac{\eta_2}{\eta_1},
%a = 1 -\frac{\lambda_1-\lambda_2}{\lambda_1(\lambda_1-1)}.
\end{equation}
%Note that $0 < a < 1$.
and assume that $a \not = 0$, that is $\eta_2 \not = \eta_1$,
%this will be the case as long as $(\lambda_1 -1)^2 \not =  1- \lambda_2$ (so in particular, always holds if either $\lambda_1 \ge 2$ or $\lambda_2 \ge 1$).
When $a = 0$, then the matrix~(\ref{eq-jacobian}) has repeated eigenvalues. We will consider this case at the end of this subsection.

We introduce new co-ordinates
$\tx_1 (t) = x_1(t) - \frac{\lambda_1-\mu_1}{\lambda_1} + \frac{1}{a} x_2(t)$ and $\tx_2 (t) = x_2(t)$, and let $\tx (t) = (\tx_1(t), \tx_2 (t))^T$. In the new co-ordinates, the differential equation~$(\ref{eq.det-comp})$ is expressed as
{\small
\begin{eqnarray}
\frac{d \tx_1 (t)}{dt} & = & -\eta_1 \tx_1 (t) - \lambda_1 \tx_1 (t)^2 - \frac{\eta_2(\lambda_1-\lambda_2)}{\eta_1} \Big (\frac{\tx_2 (t)}{a} \Big )^2 + \Big ( \lambda_1-\lambda_2+ \frac{\lambda_1\eta_2}{\eta_1} \Big ) \tx_1 (t) \frac{\tx_2 (t)}{a} \nonumber \\
\frac{d \tx_2 (t)}{dt} & = & - \eta_2 \tx_2 (t) - \lambda_2 \tx_2 (t) \tx_1 (t) + \frac{\lambda_2}{a} \frac{\eta_2}{\eta_1} \tx_2 (t)^2. \label{eq-diff-eq-eigen}
%\frac{d \tx_1 (t)}{dt} & = & -(\lambda_1 - 1) \tx_1 (t) - \lambda_1 \tx_1 (t)^2 - \frac{(\lambda_1-\lambda_2)^2}{\lambda_1 (\lambda_1-1)} \Big (\frac{\tx_2 %(t)}{a} \Big )^2 + \frac{(\lambda_1 - \lambda_2)\lambda_1}{\lambda_1-1} \tx_1 (t) \frac{\tx_2 (t)}{a} \nonumber \\
%\frac{d \tx_2 (t)}{dt} & = & - \frac{\lambda_1-\lambda_2}{\lambda_1} \tx_2 (t) - \lambda_2 \tx_2 (t) \tx_1 (t) + \frac{\lambda_2}{\lambda_1 a} \frac{\lambda_1 - %\lambda_2}{\lambda_1-1} \tx_2 (t)^2. \label{eq-diff-eq-eigen}
\end{eqnarray}
}

Note the diagonal form of the linear terms in the equation, reflecting the fact that $(1,1/a)$ and $(0,1)$ are the left eigenvectors of the matrix $A$, with eigenvalues $-\eta_1$ and $-\eta_2$ respectively.

\begin{lemma}
\label{lem-sol-decay}
Suppose that $a \not = 0$.
Let $L = \min \{\eta_1, \eta_2\}$. Also, let
$L_1 = (\lambda_1 + |\lambda_1-\lambda_2|) \frac{\eta_1 + \eta_2}{\eta_1}$.
%$L_1 = \lambda_1 + \frac{(\lambda_1-\lambda_2)^2}{\lambda_1 (\lambda_1-1)} + \frac{(\lambda_1 - \lambda_2)\lambda_1}{\lambda_1-1} $.
Suppose $\tx(0)$ is such that
$$y(0)=\max \{ |\tx_1(0)|, \tx_2(0)/|a| \} \le L/2L_1.$$
Then, for all $t \ge 0$, $|\tx_1(t)| \le 2 y(0) e^{-tL}$, and $\tx_2(t) \le 2 |a| y(0) e^{-t L}$.
\end{lemma}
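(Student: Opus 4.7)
My plan is to exploit the already-performed diagonalising change of coordinates in (\ref{eq-diff-eq-eigen}) and reduce the lemma to a scalar Gronwall-type bootstrap on the quantity $y(t) := \max\{|\tx_1(t)|,\tx_2(t)/|a|\}$. A short preliminary remark, which I will need throughout, is that $\tx_2(t)=x_2(t)\ge 0$ for all $t$: the second equation in (\ref{eq-diff-eq-eigen}) takes the form $d\tx_2/dt = \tx_2 \cdot g(\tx_1,\tx_2)$ for a locally bounded $g$, so $\tx_2$ cannot change sign along the solution. In particular, $\tx_2/|a|$ is a legitimate non-negative quantity to take a maximum with.

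The first step is to bound the nonlinear parts of the drift. Writing $d\tx_i/dt = -\eta_i \tx_i + N_i(\tx)$, a direct triangle-inequality estimate against the coefficients appearing in (\ref{eq-diff-eq-eigen}) gives
\begin{equation*}
|N_1(\tx)| \le \Big(\lambda_1 + |\lambda_1-\lambda_2|\Big)\Big(1+\tfrac{\eta_2}{\eta_1}\Big) y^2 = L_1 y^2,
\end{equation*}
and similarly $|N_2(\tx)|/|a| \le L_1 y^2$ after pulling the $|a|$ out of the quadratic terms in $\tx_2$. The key point is that the $1/|a|$ scaling in the definition of $y$ is tailored precisely so that both of these inequalities close with the same constant $L_1$.

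The second step is variation of constants in the diagonal basis: for $i=1,2$,
\begin{equation*}
\tx_i(t) = e^{-\eta_i t}\tx_i(0) + \int_0^t e^{-\eta_i(t-s)} N_i(\tx(s))\, ds.
\end{equation*}
Dominating $e^{-\eta_i(t-s)} \le e^{-L(t-s)}$ because $\eta_i\ge L$, taking absolute values for $i=1$ and dividing through by $|a|$ for $i=2$, and then taking the max of the two resulting estimates, I obtain the scalar inequality
\begin{equation*}
y(t) \le y(0) e^{-Lt} + L_1 \int_0^t e^{-L(t-s)} y(s)^2\, ds.
\end{equation*}

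The final step is a continuity (bootstrap) argument applied to $u(t) := e^{Lt} y(t)$, which satisfies
\begin{equation*}
u(t) \le y(0) + L_1 \int_0^t e^{-Ls} u(s)^2\, ds.
\end{equation*}
Let $M(t) := \sup_{s\le t} u(s)$, so that $M(t) \le y(0) + L_1 M(t)^2/L$. Provided $y(0)$ is small enough that the quadratic $L_1 M^2/L - M + y(0)$ has real roots, $M(t)$ must stay in the region below the smaller root, because $M(0) = y(0)$ already lies there and $M$ is continuous in $t$; the bound $1-\sqrt{1-x}\le x$ then yields $M(t) \le 2 y(0)$. Unwinding the substitution gives $y(t) \le 2 y(0) e^{-Lt}$, which is exactly the conclusion. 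The main obstacle is ensuring the smallness threshold comes out to the stated $L/(2L_1)$; the naive uniform estimate $u(s)^2 \le M(t)^2$ produces $L/(4L_1)$, so to recover the constant in the lemma I expect to need a slightly sharper insertion—either bounding $y(s)^2 \le 2 y(0) e^{-Ls} y(s)$ inside the integral and closing with $\int_0^t y(s)\,ds \le 2y(0)/L$, or treating the two coordinates separately with their own decay rates $\eta_1$ and $\eta_2$ before coarsening to $L$, which can reclaim the missing factor via the cross-term cancellations hidden inside $N_1$.
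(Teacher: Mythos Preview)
Your approach is essentially the same as the paper's: you set up the variation-of-constants identity in the diagonal coordinates, bound the quadratic remainder by $L_1 y^2$, and arrive at the same scalar inequality
\[
u(t) \le y(0) + L_1 \int_0^t e^{-Ls}\,u(s)^2\,ds, \qquad u(t)=e^{Lt}y(t),
\]
which is exactly what the paper obtains. The only place you diverge is the final closure step, and there you have a genuine gap.

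Your quadratic bootstrap $M(t)\le y(0)+L_1 M(t)^2/L$ indeed only yields the threshold $y(0)\le L/(4L_1)$, as you correctly note. Neither of your suggested repairs recovers the factor of two. The first one (insert $y(s)\le 2y(0)e^{-Ls}$ into one factor of $y(s)^2$ and then bound $\int_0^t y(s)\,ds\le 2y(0)/L$) still uses the bootstrap hypothesis twice and leads back to $4L_1 y(0)/L\le 1$. The second one (keep the separate rates $\eta_1,\eta_2$) does not address the issue either: the loss comes from replacing the nonconstant $u(s)$ by its running supremum inside the weighted integral, not from coarsening the linear rates.

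The paper closes the gap in one line by comparing with the \emph{exact} solution of the associated Riccati integral equation
\[
z(t)=z(0)+L_1\int_0^t e^{-Ls}\,z(s)^2\,ds,
\qquad
z(t)=\frac{Lz(0)}{L+L_1 z(0)\big(e^{-Lt}-1\big)},
\]
for which $z(t)\le 2z(0)$ for all $t\ge 0$ holds precisely when $z(0)\le L/(2L_1)$. A standard comparison (look at $z-u$, or equivalently differentiate and use that $z$ satisfies the equality) then gives $u(t)\le z(t)\le 2y(0)$. Replacing your bootstrap with this explicit comparison is all that is missing.
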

\begin{proof}
We can write, as is standard,
$$\tx(t) = \tx(0) + \int_0^t F(\tx(s))ds,$$
where $F: \R^2 \to \R^2$ is given by
\begin{eqnarray}
\label{eq.drift}
F(x) = \begin{pmatrix}
F_1(x) \\
F_2 (x)
\end{pmatrix}
=
\begin{pmatrix}
-\eta_1 x_1 - \lambda_1 x_1^2 - \frac{\eta_2(\lambda_1-\lambda_2)}{\eta_1} \Big (\frac{x_2}{a} \Big )^2 + \Big ( \lambda_1-\lambda_2+ \frac{\lambda_1\eta_2}{\eta_1} \Big ) x_1 \frac{x_2}{a}\\
 - \eta_2 x_2 - \lambda_2 x_2 x_1  + \frac{\lambda_2}{a} \frac{\eta_2}{\eta_1} x_2^2
\end{pmatrix}.
\end{eqnarray}

%Let $0 < \epsilon < 1$. Recall that we are given that
%$\|x_t - x_*\| < \eps$ for all $t \ge 0$.
%For $x$ in the neighbourhood of the fixed point $x^*$, it is advantageous to decompose
We then decompose
$$F(x) = \tilde{A} \begin{pmatrix} x_1 \\ x_2 \end{pmatrix} + \tilde{F} (x),$$
where
\begin{eqnarray}
\label{eq.matrix}
\tilde{A} = \begin{pmatrix}
-\eta_1  &  0\\
0 & -\eta_2,
\end{pmatrix}
\end{eqnarray}
and
\begin{equation}
\label{eq-error}
\tilde{F}(x) = \begin{pmatrix} - \lambda_1 x_1^2 - \frac{\eta_2(\lambda_1-\lambda_2)}{\eta_1} \Big (\frac{x_2}{a} \Big )^2 + \Big ( \lambda_1-\lambda_2+ \frac{\lambda_1\eta_2}{\eta_1} \Big ) x_1 \frac{x_2}{a} \\
- \lambda_2 x_2 x_1  + \frac{\lambda_2}{a} \frac{\eta_2}{\eta_1} x_2^2
 \end{pmatrix}.
\end{equation}
It is then not hard to check that the solution $\tx(t)$ satisfies
%$$\begin{pmatrix}  x_1(t) - \frac{\lambda_1 -1}{\lambda_1} \\ x_2 (t) \end{pmatrix}  =
%\begin{pmatrix}  x_1(0) - \frac{\lambda_1 -1}{\lambda_1} \\ x_2(0) \end{pmatrix}
% + \int_0^t F(x(s))ds,$$
%can also be written as
\begin{eqnarray*}
\tx (t) = e^{t\tilde{A}} \tx (0) + \int_0^t e^{(t-s)\tilde{A}} \tilde{F} (\tx (s)) ds,
\end{eqnarray*}
or, equivalently,
\begin{eqnarray*}
&& \begin{pmatrix} \tx_1(t)\\ \tx_2(t) \end{pmatrix} =
\begin{pmatrix} e^{-t\eta_1} \tx_1(0)  \\ e^{-t \eta_2}x_2(0) \end{pmatrix}\\
%+ \int_0^t e^{A (t-s)} \tilde{F} (x(s)) ds \\
%& = & e^{t A}
%\begin{pmatrix} x_1(0) - \frac{\lambda_1 -1}{\lambda_1} \\ x_2(0) \end{pmatrix}\\
&& {} +   \int_0^t \begin{pmatrix} e^{-(t-s)\eta_1} [- \lambda_1 \tx_1(s)^2 - \frac{\eta_2(\lambda_1-\lambda_2)}{\eta_1} \Big (\frac{\tx_2(s)}{a} \Big )^2 + \Big ( \lambda_1-\lambda_2+ \frac{\lambda_1\eta_2}{\eta_1} \Big ) \tx_1(s) \frac{\tx_2(s)}{a}] \\
e^{-(t-s)\eta_2}[- \lambda_2 \tx_2(s) \tx_1(s) + \frac{\lambda_2}{a} \frac{\eta_2}{\eta_1} \tx_2(s)^2]\end{pmatrix} ds.
\end{eqnarray*}
Let $y_1(t) = |\tx_1(t)| e^{Lt}$, let $y_2(t) = \frac{\tx_2(t)}{|a|} e^{Lt}$ and let $y(t) = \max \{y_1(t), y_2(t)\}$. Then, from the above,
\begin{eqnarray*}
y_1(t) & \le & y_1 (0) + \int_0^t e^{Ls} \Big [\lambda_1 \tx_1(s)^2 + \frac{\eta_2|\lambda_1-\lambda_2|}{\eta_1}  \Big (\frac{\tx_2(s)}{a} \Big )^2 \\
&&{} + \Big | \lambda_1-\lambda_2+ \frac{\lambda_1\eta_2}{\eta_1} \Big | |\tx_1(s)| \frac{\tx_2(s)}{|a|} \Big ]ds\\
& \le & y_1 (0) + \Big (\lambda_1 + \frac{\eta_2|\lambda_1-\lambda_2|}{\eta_1} +   | \lambda_1-\lambda_2 |+ \frac{\lambda_1\eta_2}{\eta_1} \Big ) \int_0^t y(s)^2 e^{-Ls} ds\\
& \le & y_1 (0) + \frac{\eta_1 + \eta_2}{\eta_1} \Big (\lambda_1 + |\lambda_1-\lambda_2|  \Big ) \int_0^t y(s)^2 e^{-Ls} ds,
\end{eqnarray*}
and also
\begin{eqnarray*}
y_2(t) \le y_2 (0) +  \lambda_2 \frac{\eta_1 + \eta_2}{\eta_1} \int_0^t y(s)^2 e^{-Ls}ds,
\end{eqnarray*}
so that
%So, since $\lambda_1 > 1$ and $\lambda_1 > \lambda_2$,
\begin{eqnarray*}
y(t) \le y(0) +  L_1 \int_0^t y(s)^2 e^{-Ls} ds.
\end{eqnarray*}
Now, the equation
\begin{eqnarray*}
z(t) = z(0) +  L_1 \int_0^t z(s)^2 e^{-Ls} ds
\end{eqnarray*}
is solved by
$$z(t) = \frac{L z(0)}{L + L_1 z(0) (e^{-Lt}-1)},$$
for all $t$, as long as $z(0) < L/L_1$,
and, if $z(0) \le L/2L_1$, then $z(t) \le 2z(0)$ for all $t$. Now a standard argument, considering the difference $z(t) - y(t)$, shows that $y(t) \le z(t) \le 2 z(0)$ provided that $y(0) \le z(0) \le L/2L_1$.

So $|\tx_1 (t)| \le 2 \max \{\tx_1(0),\tx_2 (0)/|a|\} e^{-Lt}$ and $\tx_2 (t) \le 2|a| \max \{\tx_1(0),\tx_2 (0)/|a|\} e^{-Lt}$ for all $t$, as required.
\end{proof}

\begin{lemma}
\label{lem-sol-decay-1}
Suppose that $a \not = 0$.
Let $L,L_1$ be as in Lemma~\ref{lem-sol-decay}. Suppose $\tilde{x}(0)$ is such that
$$y(0) = \max \{ |\tilde{x}_1(0)|, \tx_2(0)/|a| \} \le L/8L_1.$$
%is such that
%$$y(0) \le \frac{\lambda_1 - \lambda_2}{4 \lambda_1} \min \{\frac{1}{\lambda_1 +\lambda_2}, %\frac{1}{4\lambda_2}\}.$$
Then, for all $t \ge 0$, $x_2(t) \le 2 x_2(0) e^{-t \eta_2}$, and
$x_2(t) \ge \frac12 x_2(0) e^{-t \eta_2}$.
\end{lemma}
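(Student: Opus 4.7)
The plan is to exploit the multiplicative structure of the evolution equation for $\tx_2 = x_2$. From the second line of~$(\ref{eq-diff-eq-eigen})$, we can write
$$
\frac{d\tx_2(t)}{dt} = \tx_2(t)\Bigl[-\eta_2 - \lambda_2 \tx_1(t) + \tfrac{\lambda_2 \eta_2}{a\eta_1}\tx_2(t)\Bigr].
$$
Since $\tx_2(0) > 0$, the solution $\tx_2(t)$ stays strictly positive for all $t$ (one sees this from $(\ref{relation})$, or directly from the multiplicative form), so we may integrate the logarithmic derivative and obtain the representation
$$
x_2(t) = x_2(0)\, e^{-\eta_2 t}\exp\!\Bigl(\int_0^t \bigl[-\lambda_2 \tx_1(s) + \tfrac{\lambda_2 \eta_2}{a\eta_1}\tx_2(s)\bigr]\,ds\Bigr).
$$
The desired two-sided bound then reduces to showing that the integral in the exponent lies in $[-\log 2,\log 2]$.

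Next I would bound the two terms in the integrand in absolute value using Lemma~\ref{lem-sol-decay}. Since the assumption $y(0)\le L/(8L_1)$ is stronger than the hypothesis $y(0)\le L/(2L_1)$ of that lemma, we get $|\tx_1(s)|\le 2 y(0) e^{-Ls}$ and $\tx_2(s)\le 2|a| y(0) e^{-Ls}$ for all $s\ge 0$. Substituting and using $\int_0^\infty e^{-Ls}\,ds = 1/L$, the integral in the exponent is bounded in absolute value by
$$
2\lambda_2 y(0)\Bigl(1 + \tfrac{\eta_2}{\eta_1}\Bigr)\!\cdot\! \frac{1}{L} = \frac{2\lambda_2 y(0)(\eta_1+\eta_2)}{\eta_1 L}.
$$

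Finally, I would plug in the explicit value of $L_1 = (\lambda_1+|\lambda_1-\lambda_2|)(\eta_1+\eta_2)/\eta_1$ together with the assumption $y(0)\le L/(8L_1)$, which cancels the factors $L$ and $(\eta_1+\eta_2)/\eta_1$ and leaves the bound
$$
\frac{\lambda_2}{4(\lambda_1+|\lambda_1-\lambda_2|)} \le \frac{1}{4},
$$
where the last inequality holds because $\lambda_2 \le \lambda_1+|\lambda_1-\lambda_2|$ (check separately in the cases $\lambda_1\ge \lambda_2$ and $\lambda_1<\lambda_2$). Since $e^{1/4} < 2$ and $e^{-1/4} > 1/2$, this yields both $x_2(t) \le 2 x_2(0) e^{-\eta_2 t}$ and $x_2(t) \ge \tfrac12 x_2(0) e^{-\eta_2 t}$, as required.

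There is really no hard step here: the entire argument is bookkeeping, the only substantive ingredient being the quadratic-decay bound from Lemma~\ref{lem-sol-decay}. The only mild subtlety is choosing the constant in the hypothesis ($L/(8L_1)$ rather than $L/(2L_1)$) large enough to absorb the factor $2$ coming from the bounds of the previous lemma and still end up well inside $(\!-\!\log 2,\log 2)$; the factor of $8$ is exactly what makes this work.
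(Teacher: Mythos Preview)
Your proof is correct. It differs from the paper's in the technical execution, though both rely on the same key input (the decay bounds from Lemma~\ref{lem-sol-decay}) and end up with essentially the same numerical estimate in the exponent.

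The paper works with the variation-of-constants integral form
\[
x_2(t) = x_2(0)e^{-\eta_2 t} + \int_0^t e^{-(t-s)\eta_2}\Bigl[-\lambda_2 \tx_1(s)\tx_2(s) + \tfrac{\lambda_2\eta_2}{a\eta_1}\tx_2(s)^2\Bigr]\,ds,
\]
bounds the integral using Lemma~\ref{lem-sol-decay}, and then applies Gr\"onwall's inequality to the rescaled quantity $\tilde y_2(t) = x_2(t)e^{\eta_2 t}$, obtaining $\tilde y_2(t)\le \tilde y_2(0)\exp(2L_1 y(0)/L)\le \tilde y_2(0)e^{1/4}$. The lower bound is then derived by substituting the upper bound back into the integral inequality. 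Your approach instead exploits the fact that the equation for $\tx_2$ is \emph{scalar and multiplicative}: you integrate the logarithmic derivative directly, so the result drops out in one line without Gr\"onwall and with the upper and lower bounds obtained simultaneously. This is more economical here, though the paper's variation-of-constants framework is the one reused throughout (e.g.\ in Section~\ref{sec:lt-approx}), so there is some value in keeping the arguments uniform. Either way the constants match: both routes reduce to $2L_1 y(0)/L\le 1/4$ and $e^{1/4}<2$.
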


\begin{proof}
%In fact, stronger bounds on $x_2(t)$ hold. With
%$y(0) = \max \{\Big |x_1(0) - \frac{\lambda_1-1}{\lambda_1} \Big |, x_2(0)/a \}$, assuming $y(0) %\le \frac{\lambda_1 - \lambda_2}{4\lambda_1(\lambda_1 +\lambda_2)}$, and
%
%Assume that for all $s< t$, $x_s^2 \le 2 x_0^2 e^{-s (\lambda_1 - \lambda_2)/\lambda_1}$. Then,
Defining $y(t)$ as in the proof of Lemma~\ref{lem-sol-decay},
%\begin{eqnarray*}
%x_t^2 & \le & x_0^2 e^{-t (\lambda_1 - \lambda_2)/\lambda_1}\\
%& + &  8 \lambda_2  \int_0^t e^{-(t-s) (\lambda_1 -\lambda_2)/\lambda_1} x_0^2 \eps e^{-2s
%(\lambda_1-\lambda_2)/\lambda_1} ds\\
%& \le &  e^{-t (\lambda_1 - \lambda_2)/\lambda_1}  x_0^2 \\
%& + & 8\lambda_2 x_0^2 e^{-t (\lambda_1 - \lambda_2)/\lambda_1} \eps \int_0^t e^{-s (\lambda_1 - %\lambda_2)/\lambda_1} ds\\
%& \le &  e^{-t (\lambda_1 - \lambda_2)/\lambda_1}x_0^2
%+ \frac{8 \lambda_1 \lambda_2 x_0^2}{\lambda_1 - \lambda_2}  e^{-t (\lambda_1 - \lambda_2)/
%\lambda_1} \eps\\
%& \le & 2 x_0^2 e^{-t (\lambda_1 - \lambda_2)/\lambda_1},
%\end{eqnarray*}
\begin{eqnarray*}
x_2(t) & \le & x_2(0) e^{-t \eta_2}
 +   \lambda_2  \int_0^t e^{-(t-s) \eta_2} | \tx_1(s) | x_2(s) ds
 +  \frac{\lambda_2}{|a|}\frac{\eta_2}{\eta_1}  \int_0^t e^{-(t-s) \eta_2}\tx_2 (s)^2 ds\\
& \le &  x_2 (0) e^{-t \eta_2}
 +  2\lambda_2 \frac{\eta_1+\eta_2}{\eta_1} e^{-t \eta_2} y(0) \int_0^t x_2 (s) e^{s \eta_2} e^{-sL} ds.
\end{eqnarray*}
%\\
%& \le &  e^{-t (\lambda_1 - \lambda_2)/\lambda_1}x_0^2
%+ \frac{8 \lambda_1 \lambda_2 x_0^2}{\lambda_1 - \lambda_2}  e^{-t (\lambda_1 - \lambda_2)/
%\lambda_1} \eps\\
%& \le & 2 x_0^2 e^{-t (\lambda_1 - \lambda_2)/\lambda_1},
%\end{eqnarray*}
%Defining $L$ and $L_1$ as in Lemma~\ref{lem-sol-decay}, and
Letting $\tilde{y}_2(t) = x_2(t) e^{t \eta_2}$, and using the fact that $\lambda_2 \le \lambda_1 + |\lambda_1 - \lambda_2|$,
%assumptions that $\lambda_1 > \lambda_2$ and $\lambda_1 > 1$ as well as Lemma~\ref{lem-sol-decay}, we have
\begin{eqnarray*}
\tilde{y}_2(t) & \le & \tilde{y}_2(0)
 +  2 L_1  y(0) \int_0^t \tilde{y}_2 (s) e^{-s L} ds,
\end{eqnarray*}
so, by Gronwall's lemma,
$$\tilde{y}_2 (t) \le \tilde{y}_2 (0) \exp (2 L_1 y(0) \int_0^t e^{-s L} ds )\le  \tilde{y}_2 (0) \exp (2 L_1y(0)/L),$$
so $y_2(t) \le 2 y_2 (0)$, for all $t$, as required, since $y(0) \le L/8L_1$.

%for all $t$, again, provided $\frac{8 \lambda_1 \lambda_2}{\lambda_1 - \lambda_2} \eps \le 1$.

%Using the bound $x_t^2 \le 2 x_0^2 e^{-t (\lambda_1 - \lambda_2)/\lambda_1}$, we can also lower
%bound the speed of convergence of $x_t^2$ to $0$, as follows.
Furthermore, for all $t$,
\begin{eqnarray*}
x_2(t) & \ge & x_2(0) e^{-t \eta_2}
 -  \lambda_2  \int_0^t e^{-(t-s) \eta_2} |\tx_1(s)|   x_2(s)  ds
 -  \frac{\lambda_2}{|a|}\frac{\eta_2}{\eta_1}  \int_0^t e^{-(t-s) \eta_2}\tx_2 (s)^2 ds\\
& \ge & x_2(0) e^{-t \eta_2}
 -   2L_1 e^{-t\eta_2} y(0) \int_0^t x_2(s) e^{s\eta_2} e^{-Ls} ds\\
& \ge &  x_2(0) e^{-t \eta_2}
-  4L_1 y(0)x_2(0) e^{-t \eta_2}  \int_0^t e^{-s L} ds\\
& \ge &  x_2(0) e^{-t \eta_2}
- \frac{4L_1 y(0)}{L} x_2(0)  e^{-t \eta_2} \\
& \ge & \frac12 x_2(0) e^{-t \eta_2}.
\end{eqnarray*}
%for all $t$. %provided $y(0) \le (\lambda_1 - \lambda_2)/16\lambda_1 \lambda_2$.
\end{proof}

%The above conditions on $\eps$ are all satisfied as long as~(\ref{eq-eps}) holds.
%We have shown, in particular, that $\frac12 e^{-t (\lambda_1 - \lambda_2)/\lambda_1} x_0^2 \le
%x_t^2 \le 2 e^{-t (\lambda_1 - \lambda_2)/\lambda_1} x_0^2$ for all $t$.

\smallskip

%\begin{remark}
%\label{remark}
%Given any starting state $x (0)$ ($x_1(0), x_2(0) \ge 0$, $x_1(0) + x_2(0) \le 1$), there exists time $t_0$ such %that $\max \{|\tx_1 (t_0)|, \tx_2(t_0)/|a| \} \le L/8L_1$. Using the Lyapunov function defined in %Section~\ref{sec.det-comp}, it is easy to show that we can find a $t_0$ that works for all $x(0)$ in the eligible %set.
%
% i.e. the time the solution $(\tilde{x}_t)_{t\geq0}=(\tilde{x}^{(1)}_t,\tilde{x}^{(2)}_t )_{t
%\geq0}$ spends to travel from $\tilde{x}_{0}$ until a state for which $\tilde{x}_t^{(2)}$
%reaches $N^{-1\slash 4}$.\\
%The following lemma says that $X^N_{t_1^N}$ is well-concentrated around $x_{t^N_1} N$.
%
%It follows that, given any $x(0)$, we have $|\tx_1 (t)| \le \min \{2,\frac{L}{4L_1} e^{-L(t-t_0)}\}$ and $x_2(t) %\le \min \{1,2 e^{-(t-t_0)(\lambda_1-\lambda_2)/\lambda_1}\}$ for all $t$.
%\end{remark}

%\medskip

%Now suppose that $1 < \lambda_1 < 2$, and $\lambda_2 = 1-(\lambda_1-1)^2$. Then $\lambda_1-1 = (\lambda_1-\lambda_2)/\lambda_1$ and $a=0$. In this case
In the case $\eta_1 = \eta_2$, we work with the original variables $x_1(t),x_2(t)$, and write
\begin{eqnarray*}
\lefteqn{\begin{pmatrix} x_1(t) - \frac{\lambda_1 -\mu_1}{\lambda_1} \\ x_2(t) \end{pmatrix}  =  e^{t A}
\begin{pmatrix} x_1(0) - \frac{\lambda_1 -\mu_1}{\lambda_1} \\ x_2(0) \end{pmatrix}
+ \int_0^t e^{A (t-s)} \tilde{F} (x(s)) ds} \\
& = & e^{t A}
\begin{pmatrix} x_1(0) - \frac{\lambda_1 -\mu_1}{\lambda_1} \\ x_2(0) \end{pmatrix}\\
&&{} +   \int_0^t e^{A (t-s)} \begin{pmatrix} - \lambda_1 \Big (x_1(s) - \frac{\lambda_1-\mu_1}{\lambda_1} \Big )^2  - \lambda_1 (x_1(s) - \frac{\lambda_1-\mu_1}{\lambda_1} ) x_2(s) \\ - \lambda_2 (x_1(s) - \frac{\lambda_1-\mu_1}{\lambda_1}) x_2(s) - \lambda_2 (x_2(s))^2\end{pmatrix} ds,
\end{eqnarray*}
where
%In the above, the matrix exponential $e^{tA}$ is given by
\begin{eqnarray*}
e^{tA}
%& = & \begin{pmatrix}  e^{-(\lambda_1-1)t} & -(\lambda_1-1)t e^{-(\lambda_1-1)t} \\ 0 & -e^{-(\lambda_1-1)t} %\end{pmatrix}
%\begin{pmatrix}  e^{-t(\lambda_1 -1)}  & 0 \\ 0 & e^{-t (\lambda_1 - \lambda_2)/\lambda_1} \end{pmatrix}
%\begin{pmatrix}  1 & \frac{1}{a} \\ 0 & -\frac{1}{a} \end{pmatrix} \\
& = & \begin{pmatrix}  e^{-t(\lambda_1 -\mu_1)}  & -(\lambda_1-\mu_1)t e^{-t(\lambda_1 -\mu_1)} \\ 0 & e^{-t (\lambda_1 - \mu_1)} \end{pmatrix}.
\end{eqnarray*}

Let $y(t) = e^{t(\lambda_1-\mu_1)/2} \max \{|x_1(t)-(\lambda_1-\mu_1)/\lambda_1|,x_2(t)\}$.
\begin{lemma}
\label{lem-sol-decay-2}
Suppose that $a = 0$.
%Let $L = \min \{\lambda_1-1, (\lambda_1-\lambda_2)/\lambda_1\}$. Also, let
%$L_1 = \lambda_1 + \frac{(\lambda_1-\lambda_2)^2}{\lambda_1 (\lambda_1-1)} + \frac{(\lambda_1 - %\lambda_2)\lambda_1}{\lambda_1-1} $.
Assume that $y(0) \le (\lambda_1-\mu_1)/32 (\lambda_1+\lambda_2)$.
%Suppose $x(0)$ is such that
%$$y(0)\le L/2L_1.$$
Then, for all $t \ge 0$, $|x_1(t) - (\lambda_1-\mu_1)/\lambda_1|\le 4 y(0) e^{-t(\lambda_1-\mu_1)/2}$ and
%\max \{|x_1(0)- (\lambda_1-1)/\lambda_1|, x_2(0) \} e^{-t(\lambda_1-1)/2}$ and
$x_2(t) \le 4 y(0) e^{-t(\lambda_1-\mu_1)/2}$.
%\max \{|x_1(0)-(\lambda_1-1)/\lambda_1|, x_2(0) \} e^{-t(\lambda_1-1)/2}$
%$|x_1(t)| \le 2 y(0) e^{-tL}$, and $x_2(t) \le 2 |a| y(0) e^{-t L}$.
\end{lemma}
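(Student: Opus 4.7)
The plan is to mirror the strategy used in the proof of Lemma~\ref{lem-sol-decay}, while accounting for the Jordan block structure of $A$ in the case of repeated eigenvalues. First I would write the solution via the variation of constants formula, using the explicit expression for $e^{tA}$ given just above the lemma statement, so that with $\xi(t) := (x_1(t) - (\lambda_1-\mu_1)/\lambda_1, x_2(t))^T$ one obtains an integral equation whose nonlinear piece $\tilde{F}$ is quadratic, with $|\tilde{F}_1(\xi)| \le \lambda_1(\xi_1^2 + |\xi_1|\xi_2)$ and $|\tilde{F}_2(\xi)| \le \lambda_2(|\xi_1|\xi_2 + \xi_2^2)$. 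Setting $L := \lambda_1 - \mu_1$, I would multiply each component equation through by $e^{Lt/2}$ and work with $y_1(t) := |\xi_1(t)| e^{Lt/2}$, $y_2(t) := \xi_2(t) e^{Lt/2}$, $y(t) := \max\{y_1(t), y_2(t)\}$; observe that the hypothesis controls $y(0)$.

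The key elementary estimate needed to handle the Jordan block is $Lt\, e^{-Lt/2} \le 2/e$ for all $t \ge 0$. This absorbs the linear off-diagonal term: the contribution $-Lt\, e^{-Lt}\xi_2(0)$ to $\xi_1(t)$ is bounded, after multiplication by $e^{Lt/2}$, by $(2/e)\, y(0)$. The same estimate, in the form $L(t-s) e^{-(t-s)L} \le (2/e)\, e^{-(t-s)L/2}$, handles the corresponding nonlinear off-diagonal integral $L\int_0^t (t-s) e^{-(t-s)L}\tilde{F}_2(\xi(s))\, ds$. Since $\xi_1(s)^2, \xi_2(s)^2, |\xi_1(s)|\xi_2(s) \le y(s)^2 e^{-Ls}$, the bounds $|\tilde{F}_i(\xi(s))| \le 2\lambda_i y(s)^2 e^{-Ls}$ reduce every integral that appears to either $\int_0^t e^{-Ls/2}\, ds \le 2/L$ or $t\, e^{-Lt/2} \le 2/(eL)$.

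Rather than solve a Riccati comparison ODE as in Lemma~\ref{lem-sol-decay}, I would close the argument by a bootstrap. Let $T := \sup\{t \ge 0 : y(s) \le 4 y(0) \text{ for all } s \in [0,t]\}$. Substituting $y(s) \le 4 y(0)$ into the integral bounds produces estimates of the form $y_1(t) \le (1 + 2/e)\, y(0) + C_1 (\lambda_1+\lambda_2)\, y(0)^2/L$ and $y_2(t) \le y(0) + C_2 \lambda_2\, y(0)^2/L$ for all $t \le T$, with explicit constants $C_1, C_2$. Under the hypothesis $y(0) \le L/(32(\lambda_1+\lambda_2))$, both right-hand sides are strictly less than $4 y(0)$, and continuity forces $T = \infty$; this yields $|\xi_1(t)|, \xi_2(t) \le 4 y(0) e^{-Lt/2}$ as claimed. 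The main obstacle, compared to the distinct eigenvalue setting of Lemma~\ref{lem-sol-decay}, is precisely the $t$ factor in $e^{tA}$: it forces the weaker decay rate $L/2$ and destroys the clean $y(s)^2 e^{-Ls}$ integrand that permitted the closed-form Riccati comparison there. The estimate $Lt\, e^{-Lt/2} \le 2/e$ is exactly what neutralises this obstruction, at the cost of replacing the factor $2$ of Lemma~\ref{lem-sol-decay} by $4$ in the conclusion.
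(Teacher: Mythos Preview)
Your proposal is correct and follows essentially the same route as the paper: variation of constants with the explicit Jordan-block form of $e^{tA}$, the elementary bound on $Lt\,e^{-Lt/2}$ to neutralise the off-diagonal $t$ factor (the paper packages this as $[(\lambda_1-\mu_1)t+1]e^{-t(\lambda_1-\mu_1)/2}\le 2$), and reduction to a scalar inequality for $y(t)$. The only cosmetic difference is the closing step: you use a bootstrap/continuity argument on $\{t:y(s)\le 4y(0)\ \forall s\le t\}$, whereas the paper reuses the Riccati comparison from Lemma~\ref{lem-sol-decay}; both yield the same bound under the same hypothesis $y(0)\le(\lambda_1-\mu_1)/32(\lambda_1+\lambda_2)$.
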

%So, defining $y(t) = e^{t(\lambda_1-1)/2} \max \{x_1(t),x_2(t)\}$,
\begin{proof}
Since $\lambda_2[(\lambda_1-\mu_1)t +\lambda_1/\lambda_2]e^{-t(\lambda_1-\mu_1)/2} \le 2(\lambda_1+\lambda_2)$ and $[(\lambda_1-\mu_1)t +1]e^{-t(\lambda_1-\mu_1)/2} \le 2$, we have
\begin{eqnarray*}
\lefteqn{|x_1(t) - (\lambda_1-\mu_1)/\lambda_1|} \\
&\le& 2 y(0) e^{-t(\lambda_1-\mu_1)/2} + 4(\lambda_1+\lambda_2) e^{-t(\lambda_1-\mu_1)/2} \int_0^t y(s)^2 e^{-s(\lambda_1-1)/2} ds,
\end{eqnarray*}
and
\begin{eqnarray*}
x_2 (t) \le y(0) e^{-t(\lambda_1-\mu_1)/2} + 2 \lambda_2 e^{-t(\lambda_1-\mu_1)/2}\int_0^t  y(s)^2 e^{-s(\lambda_1-\mu_1)/2} ds.
\end{eqnarray*}
It follows using the same argument as in the proof of Lemma~\ref{lem-sol-decay} that
$$y(t) \le 2 y(0) + 4(\lambda_1+\lambda_2) \int_0^t y(s)^2 e^{-s(\lambda_1-\mu_1)/2}ds,$$
so
$$y(t) \le \frac{(\lambda_1-\mu_1) y(0)}{(\lambda_1-\mu_1)/2 + 8(\lambda_1+\lambda_2)y(0) (e^{-t(\lambda_1-\mu_1)/2}-1)}.$$
Thus if $y(0) \le (\lambda_1-\mu_1)/32 (\lambda_1+\lambda_2)$, $y(t) \le 4 y(0)$, so
$|x_1(t) - (\lambda_1-\mu_1)/\lambda_1|\le 4 \max \{x_1(0), x_2(0) \} e^{-t(\lambda_1-\mu_1)/2}$ and $x_2(t) \le 4 \max \{x_1(0), x_2(0) \} e^{-t(\lambda_1-\mu_1)/2}$.
\end{proof}

\begin{lemma}
\label{lem-sol-decay-3}
Suppose that $a = 0$.
Assume that %$y(0) \le (\lambda_1-1)/16 (\lambda_1+1)$ and
$y(0) \le (\lambda_1-\mu_1)/32(\lambda_1+\lambda_2)$.
Then, for all $t \ge 0$,
$x_2(t) \le 2 x_2(0) e^{-t(\lambda_1-\mu_1)}$ and $x_2(t) \ge \frac12 x_2(0) e^{-t(\lambda_1-\mu_1)}$.
%\max \{|x_1(0)-(\lambda_1-1)/\lambda_1|, x_2(0) \} e^{-t(\lambda_1-1)/2}$
%$|x_1(t)| \le 2 y(0) e^{-tL}$, and $x_2(t) \le 2 |a| y(0) e^{-t L}$.
\end{lemma}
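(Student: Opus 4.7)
The plan is to prove Lemma~\ref{lem-sol-decay-3} by a variation of constants argument applied directly to the $x_2$-coordinate, using Lemma~\ref{lem-sol-decay-2} as the input to control the nonlinear perturbation. Note that since $a=0$ means $\eta_1=\eta_2$, and $\eta_2=\mu_2-\lambda_2\mu_1/\lambda_1$ equals $\lambda_1-\mu_1$, the linearised decay rate of $x_2$ near the fixed point is exactly $\lambda_1-\mu_1$.

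First I would rewrite the equation for $x_2$ as
\[
\frac{dx_2}{dt} = -(\lambda_1-\mu_1) x_2(t) - \lambda_2 x_2(t)\Bigl[x_1(t)-\tfrac{\lambda_1-\mu_1}{\lambda_1}\Bigr] - \lambda_2 x_2(t)^2,
\]
obtained by adding and subtracting $\lambda_2(1-\mu_1/\lambda_1)x_2 = (\mu_2 - \eta_2)x_2$ on the right-hand side and using $\eta_2=\lambda_1-\mu_1$. The variation of constants formula then gives
\[
x_2(t) = x_2(0)e^{-t(\lambda_1-\mu_1)} - \lambda_2 \int_0^t e^{-(t-s)(\lambda_1-\mu_1)} x_2(s)\Bigl[\,x_1(s)-\tfrac{\lambda_1-\mu_1}{\lambda_1}+x_2(s)\,\Bigr]\,ds.
\]

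The key step is to use Lemma~\ref{lem-sol-decay-2}, whose hypothesis is identical to the one here, to bound $|x_1(s)-(\lambda_1-\mu_1)/\lambda_1|$ and $x_2(s)$ by $4y(0)e^{-s(\lambda_1-\mu_1)/2}$. Setting $g(t)=x_2(t)e^{t(\lambda_1-\mu_1)}$ turns the integral inequality into
\[
g(t) \le x_2(0) + 8\lambda_2 y(0)\int_0^t g(s)\,e^{-s(\lambda_1-\mu_1)/2}\,ds,
\]
so Gronwall's lemma yields $g(t)\le x_2(0)\exp\bigl(16\lambda_2 y(0)/(\lambda_1-\mu_1)\bigr)$. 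Under the assumed bound $y(0)\le(\lambda_1-\mu_1)/32(\lambda_1+\lambda_2)$, the exponent is at most $1/2<\log 2$, delivering the upper bound $x_2(t)\le 2x_2(0)e^{-t(\lambda_1-\mu_1)}$.

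For the lower bound, I would feed this upper bound back into the variation of constants identity: the negative correction term is bounded above by
\[
8\lambda_2 y(0)\int_0^t e^{-(t-s)(\lambda_1-\mu_1)} \cdot 2x_2(0)e^{-s(\lambda_1-\mu_1)}\cdot e^{-s(\lambda_1-\mu_1)/2}\,ds
 \le \frac{32\lambda_2 y(0)}{\lambda_1-\mu_1}\,x_2(0)\,e^{-t(\lambda_1-\mu_1)},
\]
and $32\lambda_2 y(0)/(\lambda_1-\mu_1)\le \lambda_2/(\lambda_1+\lambda_2)\le 1/2$ under the assumed smallness of $y(0)$, giving $x_2(t)\ge\tfrac12 x_2(0)e^{-t(\lambda_1-\mu_1)}$.

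The only mild obstacle is tracking that the constants line up: one has to be careful to use $\eta_1=\eta_2=\lambda_1-\mu_1$ consistently (otherwise the decay rate extracted linearly from $x_2$ would be $\mu_2-\lambda_2\mu_1/\lambda_1$), and that the geometric-series type bound coming from Gronwall survives with room to spare so that the two-sided multiplicative bounds $\tfrac12$ and $2$ actually hold. No new idea beyond Lemma~\ref{lem-sol-decay-2} and Gronwall is needed.
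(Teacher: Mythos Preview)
Your approach is essentially identical to the paper's: both set $\tilde y_2(t)=x_2(t)e^{t(\lambda_1-\mu_1)}$, use Lemma~\ref{lem-sol-decay-2} to bound the nonlinear term $\lambda_2 x_2\bigl[(x_1-\tfrac{\lambda_1-\mu_1}{\lambda_1})+x_2\bigr]$, apply Gronwall for the upper bound, and then feed that upper bound back into the integral identity for the lower bound. One small slip in your constant tracking: the inequality $\lambda_2/(\lambda_1+\lambda_2)\le 1/2$ in your last step requires $\lambda_1\ge\lambda_2$, which is not part of the hypotheses; the paper obtains $\lambda_2/(2(\lambda_1+\lambda_2))\le 1/2$ instead (via a coefficient $4$ rather than your $8$ in the Gronwall step), so the factor $\tfrac12$ goes through without that extra assumption.
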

\begin{proof}
%Since $a=0$ implies that $\lambda_1 \le 2$, the condition $y(0) \le (\lambda_1-1)/128$ implies the assumption $y(0) \le (\lambda_1-1)/16 (\lambda_1+1)$ of %Lemma~\ref{lem-sol-decay-2}.
Letting $\tilde{y}_2 (t) = x_2(t) e^{(\lambda_1-\mu_1)t}$, we have, using Lemma~\ref{lem-sol-decay-2},
$$\tilde{y}_2 (t) \le \tilde{y}_2 (0) + 4\lambda_2  y(0) \int_0^t \tilde{y}_2(s) e^{-s(\lambda_1-\mu_1)/2}ds,$$
so
$$\tilde{y}_2 (t) \le \tilde{y}_2 (0) \exp (8\lambda_2 y(0)/(\lambda_1-\mu_1)),$$
so $x_2(t) \le 2 x_2(0) e^{-t(\lambda_1-\mu_1)}$,
since $y(0) \le (\lambda_1-\mu_1)/32\lambda_2$. Finally,
\begin{eqnarray*}
\tilde{y}_2 (t) & \ge & \tilde{y}_2 (0) - 4 \lambda_2 y(0) \int_0^t \tilde{y}_2(s) e^{-s(\lambda_1-\mu_1)/2}ds \\
& \ge & \tilde{y}_2 (0) - 8 \lambda_2 y(0) x_2 (0) \int_0^t e^{-s(\lambda_1-\mu_1)/2}ds\\
& \ge & x_2 (0) - 16\lambda_2 y(0) x_2(0)/(\lambda_1-\mu_1),
\end{eqnarray*}
so
$y_2 (t) \ge \frac12 x_2(0) e^{-t(\lambda_1-\mu_1)}$.
\end{proof}

%\begin{remark}
%\label{remark-1}
%Again, for general initial condition $x(0)$, we can find $t_0$ such that $x_1(t) \le \min \{1, %\frac{\lambda_1-1}{4(\lambda_1+1)} e^{-(t-t_0) (\lambda_1-1)}\}$ and $x_2 (t) \le \min \{1, 2 e^{-(t-t_0) %(\lambda_1-1)}\}$.
%\end{remark}

\subsection{Proof strategy}

As we mentioned in the introduction, we break up the analysis of the competition process into phases as follows.

\textbf{Initial phase (`burn-in' period)}:
By standard theory, see for instance Kurtz (1970) or Darling and Norris (2008), over a fixed length (i.e. independent of $N$) interval $[0,t_0]$, $X_N(t)/N$, is well approximated by the solution
%We apply Kurtz theorem to approximate the sequence of scaled competition processes by the
%solution
$x(t)$ of~$(\ref{eq.det-comp})$ starting from the same (or nearby) initial condition. We will choose $t_{0}$ such that $x_1(t_0)$ is close to $\frac{\lambda_1-\mu_1}{\lambda_1}$ and
$x_2(t_0)$ is very small. (This is possible by Theorem~\ref{thm.zeeman} and the discussion following it, since the fixed point $((\lambda_1-\mu_1)/\lambda_1, 0)^T$ is stable.)
%The idea is that, after time $t_0$, we are able to work with the differential equation~$(\ref{eq.det-comp})$ %linearised around $((\lambda_1-1)/\lambda_1, 0)^T$.

\textbf{Intermediate phase}:
%We show that, once within the ``good subset'', the process stays there for a relatively long period of time, during which $X^N_t/N$ still follows the solution $x_t$ to~(\ref{eq1}), at least until $X^{N,2}_t/N$ gets down to about $N^{-1/4}$ (while $X^{N,1}_t$ gets very close to $(\lambda_1-1)/\lambda_1$).
After time $t_0$, we linearise~$(\ref{eq.det-comp})$ and its stochastic analogue around $((\lambda_1-\mu_1)/\lambda_1, 0)^T$, and use this to
%Using the linearisation, we are able to
show that $x_N(t) = X_N(t)/N$ follows the solution $x(t)$ to~$(\ref{eq.det-comp})$ for quite a long time after $t_0$. Our approach here is a variation on standard martingale techniques adapted to exploit the proximity of a stable fixed point.
 %and is also used to handle a logistic birth-and-death chain in~\cite{luczakbrightwell}, although here we have a few additional complications.
% due to the fact that our process lives in two dimensions.

We choose the time $t_{N,1}$ as the time when $x_2(t)$ drops down to $N^{-1/4}$ (so $X_{N,2}(t)$ will be around $N^{3/4}$).
%Note that, if $(x^1_t,x^2_t)^T$ is that close enough to $\Big (\frac{\lambda_1-1}{\lambda_1},0 \Big )^T$, then
%$dx^2_t/dt < 0$.
%
% has a negative drift.
%The idea here is thus to bring the random process $X^N_t$ to a ``good subset'' of the state space, from where %$X^{N,2}_t$ has a tendency to move towards zero.

%Using (\ref{time}),
%the time $t^N_1$ for this to happen is approximately
%\[ t^{N}_1 \approx \frac{\lambda_2}{\lambda_1-\lambda_2} \log \left( \frac{\lambda_1-1}{\lambda_1} \slash %x^{(1)}_{t_0} \right)  + \frac{\lambda_1}{\lambda_1-\lambda_2} \log x^{(2)}_{t_0}  +  %\frac{\lambda_1}{\lambda_1-\lambda_2} \frac{1}{4} \log N.\]

\textbf{Final phase}: This phase starts with $X_{N,1}(t)$  near $\frac{\lambda_1-\mu_1}{\lambda_1}N$ and $X_{N,2}(t)$  near $N^{3\slash 4}$. From then onwards, `logistic effects' can be ignored, and the path of $X_{N,2}(t)$ can be sandwiched between the paths of two subcritical linear birth and death processes also starting near $N^{3\slash 4}$. Since the time to extinction of a linear birth and death process is well known, we obtain the distribution of the remaining time until the extinction of $X_{N,2}(t)$.

Theorem \ref{teomy} follows by adding up the times spent in each phase.

%\section{Intermediate phase}
%\label{inter}

%Let $0 < \eps < 1$ and $t_0 = t_0 (\alpha_N, \beta_N, \eps)$ be as in the previous section. We recall that %$\|x(t_0) - x^*\|_1 < \eps$.
% for all $t \ge t_0$. Also, $X_{t_0}^N \in S_{N,\eps}$.
%We define
%\begin{equation}
%\label{eq-time-inter}
%t_{N,1}=\inf\{t\geq 0: x_{2}(t_0+t_{N,1})\le N^{-1/4} \}.
%\end{equation}
%Note that $t_{N,1}$ is deterministic, and, since $x_1(t)$ stays bounded away from 0 if $\alpha > 0$, and also %using formula~(\ref{time}),
%$$t_{N,1} = \frac{\lambda_1}{4(\lambda_1 - \lambda_2)} \log N (1 + o(1)).$$
%Alternatively, by~(\ref{time}), we can write the length of the second phase as
%$$\frac{\lambda_2}{\lambda_1 - \lambda_2} \log (x_1(t_{N,1})/x_1(t_0)) - \frac{\lambda_1}{\lambda_1 - \lambda_2} %\log (x_2 (t_{N,1})/x_2(t_0)).$$

%The aim of this section is to show that $x_N(t)=X_N(t)/N$ stays concentrated around the solution $x(t)$ of the %deterministic system (\ref{eq1}) for a long period of time after $t_0$, until time $t_0+t_{N,1}$.

%For the rest of this section, we will assume without loss of generality that $t_0 = 0$ in order to lighten the %notation. We will take $x(0) = x(t_0)$ from the previous section as initial condition for~(\ref{eq1}). Also, the %value $x_N(t_0)$ will be taken as the initial condition $x_N(0)$ for the scaled random process.

\section{Initial phase}
\label{initial}

\begin{lemma}
\label{lem-phase1}
%Let $x^* = ((\lambda_1-1)/\lambda_1,0)^T$, and
Let $x_N(t) = X_N(t)/N$. Let $t_0 > 0$, let $0 < \delta \le (\log 4) t_0 (\lambda_1  + 1)$, and
%
%suppose $x_N(0)=x(0)=(\alpha_N, \beta_N)^T$ where $\alpha_N \to \alpha$ and $\beta_N \to \beta$ ($\alpha, \beta > %0$, $\alpha + \beta \le 1$).
%$X_N(0)=({\lceil \alpha N \rceil}, \lceil \beta N\rceil)$ and $x(0) = (\alpha, \beta)$.
%Let $t_0 > 0$ be fixed. Then
%Let $\eps > 0$. There exists $t_0$ such that $\| x(t_0) - x^*\|_1 < \eps$ and
assume that $\|x_N(0) - x(0)\|_1 \le \delta$. Then
$$\P (\sup_{t \le t_0} \|x_N(t) - x(t)\|_1 \le 2 \delta e^{(5 \lambda_1 +1)t_0}) \le 4e^{-\delta^2N/4t_0 (\lambda_1  + 1)}.$$
%$$\P (\|x_N(t_0) - x(t_0)\|_1 > N^{-2/5}) \to 0,$$
%as $N \to \infty$.
\end{lemma}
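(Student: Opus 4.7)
The statement as literally written bounds $\P(\sup_{t \le t_0}\|x_N(t)-x(t)\|_1 \le 2\delta e^{(5\lambda_1+1)t_0})$ by a quantity that tends to zero in $N$, which runs counter to standard fluid-limit theory and to the way this lemma is clearly meant to be applied in the burn-in argument of Subsection 2.3; I therefore read the inequality inside the probability as reversed (a typo for $>$), and sketch a plan to prove the intended concentration bound
\[
\P\bigl(\sup_{t \le t_0} \|x_N(t) - x(t)\|_1 > 2 \delta e^{(5 \lambda_1 +1)t_0}\bigr) \le 4e^{-\delta^2 N/4t_0 (\lambda_1+1)}.
\]

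The plan is the standard semimartingale / Gronwall / Doob-Freedman route, tuned to extract the explicit constants $5\lambda_1+1$ and $4t_0(\lambda_1+1)$. Write the Dynkin-type decomposition
\[
x_N(t) = x_N(0) + \int_0^t F(x_N(s))\,ds + \tfrac{1}{N} M_N(t),
\]
where $F$ is the drift of~$(\ref{eq.det-comp})$ and $M_N=(M_{N,1},M_{N,2})^T$ is the compensated pure-jump martingale obtained by subtracting the compensator from the sum of the four Poisson point processes driving $X_N$. Subtracting the integral equation for $x(t)$ gives
\[
x_N(t)-x(t) = (x_N(0)-x(0)) + \int_0^t \bigl(F(x_N(s))-F(x(s))\bigr)\,ds + \tfrac{1}{N} M_N(t).
\]
On the invariant simplex $\{x_1,x_2 \ge 0,\ x_1+x_2\le 1\}$ the Jacobian of $F$ has column sums bounded by, for instance, $(\lambda_1+\mu_1)+\lambda_2$ and $\lambda_1+(\lambda_2+\mu_2)$, giving an $\ell_1$-Lipschitz constant that one bounds by $5\lambda_1+1$ under the running conventions on $\mu_i$ and $\lambda_i$ (the $+1$ absorbing a contribution of the form $\mu_1+\mu_2$). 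Taking $\ell_1$ norms, applying Gronwall to $u(t):=\sup_{s\le t}\|x_N(s)-x(s)\|_1$, and using $\|x_N(0)-x(0)\|_1\le \delta$, yields
\[
\sup_{t\le t_0}\|x_N(t)-x(t)\|_1 \;\le\; e^{(5\lambda_1+1)t_0}\Bigl(\delta + \sup_{t\le t_0}\|M_N(t)\|_1/N\Bigr).
\]
So it suffices to show that with probability at least $1-4e^{-\delta^2 N/4t_0(\lambda_1+1)}$ we have $\sup_{t\le t_0}\|M_N(t)\|_1/N \le \delta$.

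Each coordinate $M_{N,i}$ is a pure-jump martingale with unit jumps, and its predictable quadratic variation on $[0,t_0]$ is bounded by the integrated total jump rate in coordinate $i$, itself bounded by $(\lambda_i+\mu_i)Nt_0 \le 2(\lambda_1+1)Nt_0$. Applying Freedman's maximal inequality for martingales with jumps bounded by $1$ gives, for each coordinate and each sign,
\[
\P\Bigl(\sup_{t\le t_0} \pm M_{N,i}(t) \ge \tfrac{\delta N}{2}\Bigr) \;\le\; \exp\!\Bigl(-\frac{(\delta N/2)^2}{2\cdot 2(\lambda_1+1)Nt_0 + \tfrac{2}{3}\cdot \tfrac{\delta N}{2}}\Bigr).
\]
The side condition $\delta \le (\log 4)\, t_0(\lambda_1+1)$ is exactly what is needed to absorb the linear-in-$\delta$ Bernstein correction into the variance term, reducing the exponent to the clean sub-Gaussian form $-\delta^2 N/4t_0(\lambda_1+1)$. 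A union bound over $i\in\{1,2\}$ and the two signs contributes the factor $4$, and the resulting event (martingale sup $\le \delta/2$ in each coordinate, hence $\|M_N\|_1/N\le \delta$) delivers, via the Gronwall bound above, $\sup_t \|x_N-x\|_1 \le 2\delta e^{(5\lambda_1+1)t_0}$.

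The main obstacle is bookkeeping on constants: the Lipschitz constant for $F$, the quadratic-variation prefactor, and the Freedman large-deviation correction all have to be bounded just tightly enough so that the side condition on $\delta$ forces the Bernstein bound into Gaussian form with the exact exponent $\delta^2 N/4t_0(\lambda_1+1)$. Once those are in hand, the argument is a standard and essentially deterministic fluid-limit calculation in the spirit of Kurtz~(1970) and Darling--Norris~(2008), and does not require any of the finer near-fixed-point machinery developed in later sections.
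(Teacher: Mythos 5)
Your reading of the statement (the inequality inside the probability is a typo for $>$) is correct, and your overall route is the same as the paper's: the Dynkin decomposition $x_N(t)=x_N(0)+\int_0^t F(x_N(s))ds+M_N(t)$, an $\ell_1$-Lipschitz bound $5\lambda_1+1$ for $F$ on the simplex, Gr\"onwall giving $\sup_{t\le t_0}\|x_N(t)-x(t)\|_1\le(\|x_N(0)-x(0)\|_1+\sup_{t\le t_0}\|M_N(t)\|_1)e^{(5\lambda_1+1)t_0}$, and then a maximal exponential bound on each coordinate of the martingale with a union bound over two coordinates and two signs. The only methodological difference is that you invoke Freedman's inequality, whereas the paper hand-rolls the exponential martingale $Z_N(t,\theta)$, uses $e^z-1-z=z^2\int_0^1 e^{rz}(1-r)dr$ with jumps of size $1/N$, and optional stopping with the tilt $\theta_i=N\delta/2t_0(\lambda_1+1)$.

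The genuine gap is in the constants, which you yourself flag as the main obstacle: your numbers do not produce the stated exponent. With per-coordinate threshold $\delta N/2$ and variance bound $2(\lambda_1+1)Nt_0$, Freedman gives an exponent of roughly $\delta^2N/16\,t_0(\lambda_1+1)$ even before the Bernstein correction, a factor $4$ (and more) short of $\delta^2N/4t_0(\lambda_1+1)$; the side condition $\delta\le(\log 4)\,t_0(\lambda_1+1)$ cannot repair this. Indeed its role in the paper is different from what you assert: it guarantees that the chosen tilt satisfies $\theta_i\le(\log 2)N$, so that the jump moment-generating-function factor $\int_0^1 e^{r\theta^Ty}(1-r)dr\le\frac12 e^{\gamma}$ with $\gamma=\log 2$ applies; it is not there to absorb a Bernstein term. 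To recover the claimed exponent you should do what the paper does: allow each coordinate a deviation of $\delta$ (not $\delta/2$) and use the tighter per-coordinate variance bound $(\lambda_i+\mu_i)Nt_0\le(\lambda_1+1)Nt_0$; then either Freedman or the tilted-martingale argument yields $e^{-\delta^2N/4t_0(\lambda_1+1)}$ per coordinate and sign. Note this gives $\sup_t\|M_N\|_1\le 2\delta$ and hence the prefactor $3\delta e^{(5\lambda_1+1)t_0}$ rather than $2\delta e^{(5\lambda_1+1)t_0}$ --- a slippage that is present in the paper's own proof as well and is immaterial for the later application (where $\delta=N^{-5/12}$) --- so your instinct to aim for $2\delta$ by halving the threshold is what costs you the exponent; you cannot have both with this argument as set up.
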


\begin{proof}
The general method %followed in the remainder of the proof of this lemma
is described in, for instance, Darling and Norris (2008). In the next section, we will develop a variant adapted to the case where the solution $x(t)$ is in the neighbourhood of a stable fixed point.
%A related approach is used in~\cite{luczakbrightwell} to analyse the standard stochastic logistic SIS epidemic %models.

As is standard, we write
$$x(t) = x(0) + \int_0^t F(x(s))ds,$$
where $F: \R^2 \to \R^2$ is given by
$$F(x) = \begin{pmatrix}
F_1(x) \\
F_2 (x)
\end{pmatrix}
=
\begin{pmatrix}
\lambda_1 x_1 (1-x_1 - x_2) - x_1\\
\lambda_2 x_2 (1-x_1 - x_2) - x_2
\end{pmatrix}.
$$
%Then, by standard theory,
Also, $x_N(t) = X_N(t)/N$ satisfies
$$x_N(t) = x_N(0) + \int_0^t F(x_N(s))ds+M_N(t),$$
where $(M_N(t))$ is a zero-mean martingale.

%Let $t_0 > 0$ be as above. By the above, for any norm $\| \cdot \|$ on ${\mathbb R}^2$,
We can take $5 \lambda_1 +1$ for
%Let $K$ be the
a Lipschitz constant of $F$ with respect to $\| \cdot \|_1$ in the subset of $\R^2$ given by $\{x= (x_1,x_2)^T: 0 \le x_1, x_2 \le 1\}$. Then for $t \le t_0$,
\begin{eqnarray*}
\lefteqn{\|x_N(t) - x(t)\|_1} \\
& \le & \|x_N(0) - x(0)\|_1 + \int_0^{t_0} \|F(x_N(s)) - F(x(s))\|_1 ds + \sup_{t \le t_0} \|M_N(t)\|_1 \\
& \le & \|x_N(0) - x(0)\|_1 + (5 \lambda_1 +1) \int_0^{t_0} \sup_{u \le s} \|x_N(u)- x(u)\|_1 ds + \sup_{t \le t_0} \|M_N(t)\|_1\\
& \le & ( \|x_N(0) - x(0) \|_1 + \sup_{t \le t_0} \|M_N(t) \|_1 ) e^{(5 \lambda_1 +1) t_0},
\end{eqnarray*}
so
\begin{eqnarray}
\label{eq.lln}
\sup_{t \le t_0} \|x_N(t) - x(t)\|_1 \le ( \|x_N(0) - x(0) \|_1 + \sup_{t \le t_0} \|M_N(t) \|_1 ) e^{(5 \lambda_1 +1 ) t_0}.
\end{eqnarray}

For $\theta \in {\mathbb R}^2$, let
\begin{eqnarray*}
\lefteqn{Z_N(t,\theta)} \\
& = & \exp \Big (\theta^T (x_N(t) -x_N(0)) - \int_0^t ds \sum_y q_N (x_N(s),x_N(s)+y) (e^{\theta^T y} -1) \Big )\\
& = & \exp \Big ( \theta^T M_N(t) - \int_0^t ds \sum_y q_N (x_N(s),x_N(s)+ y) (e^{\theta^T y} -1 - \theta^T y) \Big ),
\end{eqnarray*}
where $q_N(x,x+y)$ denotes the rate of $y$ jumps of $x_N(t)$ when in state $x$. Then $(Z_N(t,\theta))$ is a mean 1 martingale. Note that, for our model, the jumps $y$ are of the form $(\pm 1/N,0)^T$ and $(0, \pm 1/N)^T$, and $F(x) = \sum_y q_N (x,x+y) y$.

Using the identity $e^z-1 - z = z^2 \int_{r=0}^1 e^{rz}(1-r)dr$, we write $Z_N(t,\theta)$ as
\begin{eqnarray*}
\lefteqn{Z_N(t,\theta)}\\
& = &  \exp \Big  (  \theta^T M_N(t) - \int_0^t \sum_y q_N(x_N(s),x_N(s) + y) (\theta^T y)^2 (\int_0^1 e^{r\theta^T y}(1-r)dr )  ds    \Big ).
\end{eqnarray*}
As the jumps $y$ are of the form $(\pm 1/N,0)^T$ and $(0, \pm 1/N)^T$,
$$\int_0^1 e^{r\theta^T y}(1-r)dr \le \frac12 e^{\gamma},$$
for $\|\theta \| \le \gamma N$.
It follows that, for all $t$,
$$Z_N (t,\theta)  \ge  \exp \Big  (  \theta^T M_N(t) - \frac12 e^{\gamma } \int_0^t \sum_y q_N(x_N(s),x_N(s)+y) (\theta^T y )^2  ds\Big ).$$

In particular, let $\theta_1, \theta_2 \in \R$, and let $\theta^i = \theta_i e_i$ (where $e_i$ is the unit vector with 1 in the $i$-th co-ordinate).
% and 0 in the other co-ordinate).
Then, for all $t$, for $i=1,2$,
$$Z_N (t,\theta^{i}) \ge \exp \Big  (  \theta_i M_{N,i}(t) - \frac12 e^{\gamma }\theta_i^2 t(\lambda_1  +1)\frac{1}{N}\Big ).$$
For $\delta > 0$, let $T^i_+(\delta) = \inf \{t \ge 0: M_{N,i}(t) > \delta \}$ and let $T^i_-(\delta) = \inf \{t \ge 0: M_{N,i}(t) < -\delta \}$.
By optional stopping and Markov inequality,
$$\P (T^i_+(\delta) \le  t_0 ) \le  \exp (-\theta_i \delta +\frac12 e^{\gamma } \theta_i^2 t_0 (\lambda_1  + 1)\frac{1}{N}).$$
Choosing $\gamma = \log 2$, $\theta_i = N \delta/2t_0 (\lambda_1  +1)$
%$\theta_i = N \delta e^{-\gamma}/ t_0 (\lambda_1 + \lambda_2 +1)$,
$0 \le \theta_i \le \gamma N$, as long as $\delta \le (\log 4) t_0 (\lambda_1  + 1)$.
% is at most a small enough constant.
We then obtain, for $i=1,2$,
$$\P (T^i_+(\delta) \le  t_0 ) \le e^{-\delta^2N/4t_0 (\lambda_1  + 1)}.$$
Arguing similarly about negative $\delta$, we see that, for $i=1,2$,
%$$\P (T^i_-(\delta) \le  t_0 ) \le e^{-\delta^2N/2e^{\mu } t_0 (\lambda_1 + \lambda_2 + 1)}.$$
%It follows that
$$\P ( \sup_{t \le t_0} |M_{N,i}(t)| > \delta ) \le 2 e^{-\delta^2N/4t_0 (\lambda_1  + 1)}.$$
Then the lemma follows from~(\ref{eq.lln}).
%
%In particular, with respect to the infinity norm $\| \cdot \|_1$, choosing $\delta = \delta (N) = %N^{-2/5}e^{-Kt_0}/2$, and assuming that $\|x_N(0) - x(0)\|_1 \le \delta (N)$, we obtain that
%$$\P (\sup_{t \le t_0} \|x_N(t) - x(t)\|_1 > N^{-2/5}) \to 0,$$
%as $N \to \infty$.

%For $N\geq 1$, we take
%consider a sequence $(X_t^N)_{t\geq0}$ of competition processes started at
%Given $\alpha, \beta > 0$ and $\eps > 0$, we choose
%$t_0 = t_0 (\alpha, \beta, \eps)$, defined at the start of the section.
\end{proof}

%\smallskip

%By~(\ref{time}), the length $t_0$ of the first phase also satisfies
%$$t_0 = \frac{\lambda_2}{\lambda_1 - \lambda_2} \log (x_1(t_0)/x_1(0)) - \frac{\lambda_1}{\lambda_1 - \lambda_2} %\log (x_2(t_0)/x_2(0)).$$

\section{Intermediate phase: long-term differential equation approximation}

\label{sec:lt-approx}

As in the previous section, we use $x_N(t)$ to denote $X_N(t)/N$. The aim of this section is to show that $x_N(t)$ stays concentrated around the solution $x(t)$ of the deterministic system~(\ref{eq.det-comp}) for a long period of time, provided $x_N(0)$ and $x(0)$ are close to each other, and $x(0)$ is close to the fixed point $((\lambda_1-\mu_1)/\lambda_1,0)^T$.
%after $t_0$, until time $t_0+t_{N,1}$.

We will treat in detail only the case where the eigenvalues of matrix $A$ are distinct, so that $a \not = 0$.
By analogy with the notation in Section~\ref{subs:conv-det}, we let $\tilde{x}_{N,1}(t) = x_{N,1}(t) - \frac{\lambda_1-\mu_1}{\lambda_1} + \frac{1}{a}x_{N,2}(t)$, $\tilde{x}_{N,2}(t) = x_{N,2}(t)$, and we let $\tilde{x}_N(t)$ be the column vector with components $\tilde{x}_{N,1}(t)$ and $\tilde{x}_{N,2}(t)$.

As in Section~\ref{subs:conv-det}, let $L = \min \{\eta_1, \eta_2\}$ and let $L_1 = \Big (\lambda_1 + |\lambda_1-\lambda_2| \Big ) \frac{\eta_1 + \eta_2}{\eta_1}$.
%We also let $y(t) = \max \{|\tx_1(t)|, \tx_2(t)/|a|\}$.

Let $b = \frac{|a|+1}{|a|}$, let $a_1 = (2 \eta_1)^{-1}b^2 (\lambda_1 + \mu_1 + \lambda_2 + \mu_2)$
%$a_1 = \frac{b^2 (\lambda_1+1)}{2}\Big (\frac{1}{\lambda_1-1} + \frac{\lambda_1}{\lambda-1-\lambda_2} \Big )$
and let $a_2 = (2\eta_2)^{-1} (\lambda_2 + \mu_2)$. Let also $\tilde{L} = \max \{\eta_1,\eta_2\}$.

\begin{lemma}
\label{lem-lt-approx}
Let $\omega$ satisfy $0 < \omega < 4 (\log 2)^2 N a_i/b^2$ for $i=1,2$.
%Let
%$$a_1 = \frac{\lambda_1 (1+\lambda_2)}{(\lambda_1 - \lambda_2)}.$$
For $t > 0$, let
$$f_N(t) = \max \{ |\tx_{N,1}(t) - \tx_1 (t)|, |\tx_{N,2}(t) - \tx_2(t)| |a|^{-1} \},$$
and suppose that
$$f_N(0) \le e^{\tilde{L}} \Big (\frac{\omega (a_1+a_2)}{N} \Big )^{1/2}.$$
Suppose also that $y(0) = \max \{\tilde{x}_1(0), |a|^{-1}\tilde{x}_2(0) \} \le L/8L_1$.
Then
$$\P \Big (\sup_{t \le \lceil e^{\omega/8} \rceil } f_N(t) > 8 e^{\tilde{L}} \Big (\frac{\omega (a_1+a_2)}{N} \Big )^{1/2} \Big ) \le 8e^{-\omega/8}.$$
\end{lemma}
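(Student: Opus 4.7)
The plan is to lift the variation-of-constants argument from Lemma~\ref{lem-sol-decay} to the stochastic process $\tx_N$, exploiting the exponential damping provided by the negative eigenvalues of $\tilde A$ to obtain time-uniform deviation bounds on a window of exponential length. I begin by writing the semimartingale decomposition
$$
\tx_N(t) = \tx_N(0) + \int_0^t \bigl[\tilde A\,\tx_N(s) + \tilde F(\tx_N(s))\bigr]\,ds + \tilde M_N(t),
$$
with $\tilde A$ and $\tilde F$ as in~(\ref{eq.matrix})--(\ref{eq-error}), and $\tilde M_N$ the vector martingale arising from compensation of the jump rates. Subtracting the corresponding identity for $\tx(t)$ and applying variation of constants coordinatewise yields
$$
\tx_{N,i}(t)-\tx_i(t) = e^{-\eta_i t}(\tx_{N,i}(0)-\tx_i(0)) + \int_0^t e^{-\eta_i(t-s)}\bigl[\tilde F_i(\tx_N(s))-\tilde F_i(\tx(s))\bigr]\,ds + M^*_{N,i}(t),
$$
where $M^*_{N,i}(t)=\int_0^t e^{-\eta_i(t-s)}\,d\tilde M_{N,i}(s)$ is the martingale transform I need to control.

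The core of the argument is a time-uniform Gaussian tail bound on $M^*_{N,i}$. The jumps of $\tx_{N,i}$ have magnitude at most $b/N$ and total rate bounded by $N(\lambda_1+\mu_1+\lambda_2+\mu_2)$, so the predictable quadratic variation of $\tilde M_{N,i}$ grows at rate at most $b^2(\lambda_1+\mu_1+\lambda_2+\mu_2)/N$. The decisive consequence of the change of coordinates is that $\langle M^*_{N,i}\rangle_t\le a_i/N$ \emph{uniformly} in $t$, rather than growing linearly, since $\int_0^t e^{-2\eta_i(t-s)}\,ds \le 1/(2\eta_i)$; this is exactly what the constants $a_i$ in the statement encode. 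A time-dependent exponential martingale of the form $\exp\bigl(\int_0^s \theta\,e^{-\eta_i(t-u)}\,d\tilde M_{N,i}(u) - \Psi_{s,t}(\theta)\bigr)$, analogous to the one used in Lemma~\ref{lem-phase1}, together with the hypothesis $\omega\le 4(\log 2)^2 Na_i/b^2$ (which keeps $\theta b/N$ small enough that $e^\theta-1-\theta$ is controlled by $\theta^2$), gives a pointwise Gaussian tail of the shape $\P(|M^*_{N,i}(t)|>\delta)\le 2\exp(-\delta^2 N/(4a_i))$. To upgrade this to a bound uniform over $t\in[0,\lceil e^{\omega/8}\rceil]$ I would take a unit grid and note that between grid points the integrand in the definition of $M^*_{N,i}$ either decays (the old part loses mass at rate $\eta_i\le \tilde L$) or picks up a fresh contribution whose quadratic variation is only $O(1/N)$; a union bound with $\delta\sim\sqrt{\omega a_i/N}$ absorbs the $\lceil e^{\omega/8}\rceil$ grid points with total failure probability at most $8e^{-\omega/8}$ (two coordinates and two signs), and the $e^{\tilde L}$ factor in the statement provides the needed slack to interpolate between grid points.

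On the complementary event, a bootstrap argument closes the estimate on $f_N$. Since $\tilde F$ vanishes at the origin together with its derivative, $|\tilde F(y)-\tilde F(z)|\le C(|y|+|z|)|y-z|$ for $y,z$ in a neighbourhood of $0$; Lemma~\ref{lem-sol-decay} provides $\|\tx(s)\|\le 2y(0)\le L/(4L_1)$ for all $s$, and the inductive hypothesis that $f_N(s)\le 8e^{\tilde L}\sqrt{\omega(a_1+a_2)/N}$ on $[0,t]$ controls $|\tx_N(s)|$ similarly. Combined with $\int_0^t e^{-\eta_i(t-s)}\,ds\le 1/\eta_i$, the nonlinear integral is then a small multiple of the bootstrap bound, and adding the (decayed) initial term and the Gaussian-bounded $M^*_{N,i}(t)$ gives $f_N(t)\le 8e^{\tilde L}\sqrt{\omega(a_1+a_2)/N}$ as required. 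The main technical obstacle is coordinating these pieces at the exponential time scale $e^{\omega/8}$: the time-uniform bound $\langle M^*_{N,i}\rangle_t\le a_i/N$, which is the essential payoff from linearising around the stable fixed point and working in eigenvector coordinates, must be strong enough that the union bound over exponentially many grid points does not overwhelm the pointwise Gaussian tail, while the nonlinear remainder must stay within the \emph{small} neighbourhood where $\tilde F$ is effectively quadratic. Closing the bootstrap at this timescale is precisely the refinement over the framework of Barbour, Hamza, Kaspi and Klebaner that the authors advertise in the introduction.
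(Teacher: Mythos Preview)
Your proposal is correct and follows essentially the same route as the paper: variation of constants in the diagonalising coordinates, a time-uniform bound on the martingale transform $\int_0^t e^{-\eta_i(t-s)}\,d\tilde M_{N,i}(s)$ via an exponential martingale plus a unit grid in $[0,\lceil e^{\omega/8}\rceil]$ (the paper packages this step as a separate generic Lemma~\ref{lem.mart-dev}, using the identity $M^t(t)=e^{\tilde A(t-\tau)}M^\tau(t)$ to interpolate and producing exactly the $e^{\tilde L}$ factor you anticipate), and then a stopping-time bootstrap on $f_N$ exploiting the quadratic vanishing of $\tilde F$ together with the decay of $\tx(s)$ from Lemma~\ref{lem-sol-decay}. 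The only cosmetic difference is that the paper closes the bootstrap via Gr\"onwall on $g_N(t)=e^{Lt}f_N(t)$ with an auxiliary stopping time at level $10e^{\tilde L}\sqrt{\omega(a_1+a_2)/N}$ before dropping to the final constant $8$, rather than your direct ``inductive hypothesis'' phrasing, but the content is the same.
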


The proof of Lemma~\ref{lem-lt-approx} will follow shortly. By standard theory,
$$\tilde{x}_N(t) = \tilde{x}_N(0) + \int_0^t F(\tilde{x}_N(s))ds + M_N(t),$$
where $(M_N(t))$ is a martingale, and $F(x)$
is the drift of $(\tilde{x}_N(t))$ when in state $x$, given as in~(\ref{eq.drift}). Analogously to the deterministic process $x(t)$,
%in a neighbourhood of the fixed point $((\lambda_1-1)/\lambda_1,0)^T$,
%we can decompose
\begin{eqnarray*}
%\begin{pmatrix} \tilde{x}_{N,1}(t) \\ \tilde{x}_{N,2}(t) \end{pmatrix}%
\lefteqn{\tilde{x}_N(t)  =  e^{\tilde{A} t} \tilde{x}_{N,0}
%\begin{pmatrix} \tilde{x}_{N,1}(0) \\ \tilde{x}_{N,2}(0) \end{pmatrix}
+ \int_0^t e^{\tilde{A} (t-s)} \tilde{F} (X_N(s)) ds
+ \int_0^t e^{\tilde{A} (t-s)} dM_N(s)} \\
& = &
\begin{pmatrix} e^{-t \eta_1} \tilde{x}_{N,1}(0) \\ e^{-t \eta_2}\tilde{x}_{N,2}(0) \end{pmatrix}\\
& + &  \int_0^t  \begin{pmatrix} e^{-(t-s)\eta_1} \Big [- \lambda_1 \tilde{x}_{N,1}(s)^2 - \frac{\eta_2(\lambda_1-\lambda_2)}{\eta_1} \Big (\frac{\tilde{x}_{N,2}(s)}{a} \Big )^2 + \Big ( \lambda_1-\lambda_2 + \frac{\lambda_1 \eta_2}{\eta_1}  \Big )  \frac{\tilde{x}_{N,1}(s)\tilde{x}_{N,2}(s)}{a} \Big ] \\
e^{-(t-s)\eta_2} \Big [- \lambda_2 \tilde{x}_{N,2}(s) \tilde{x}_{N,1}(s) + \frac{\lambda_2 \eta_2}{\eta_1 a}  \tilde{x}_{N,2}(s)^2 \Big ]\end{pmatrix} ds\\
& + & + \int_0^t e^{\tilde{A} (t-s)} dM_N(s),
\end{eqnarray*}
where $\tilde{A}$ is as in~(\ref{eq.matrix}) and $\tilde{F}$ is as in~(\ref{eq-error}). (This formula is proven in the same way as the `variation of constants' formula in Lemma 4.1 in Barbour and Luczak (2012).)

The following analysis of the martingale transform $\int_0^t e^{(t-s)\tilde{A}} dM_N(s)$ is generic, and applicable in the context of any finite dimensional Markov chain.
%We apply it to $(\tilde{x}_N(t))$, writing $\tilde{q}_N(x,x+y)$ to denote the rate of jump from $x$ to $x+y$.

%Fix a time $\tau > 0$, and consider $M_N^{\tau}$ given by
%$$M_N^{\tau}(t) = \int_0^{t\land \tau} e^{\tilde{A} (\tau-s)} dM_N(s) = \int_0^{t \land \tau} e^{\tilde{A}(\tau - s)} (d\tx_N(s) - \sum_y y %\tilde{q}_N(\tx_N(s),\tx_N(s)+y) ds).$$
%Then $(M_N^{\tau}(t))$ is a zero mean martingale. Also, for each $t \ge 0$,
%$\int_0^t e^{\tilde{A} (t-s)} dM_N(s) = M_N^t (t)$,
%so $\int_0^t e^{-(t-s)(\lambda_1-1)} dM_{N,1}(s) = M_{N,1}^t (t)$,
%and $\int_0^t e^{-(t-s)(\lambda_1-\lambda_2)/\lambda_1} dM_{N,2}(s) = M_{N,2}^t (t)$.

%\begin{lemma}
%\label{lem.mart-dev}
%Let $\eta_1 = \lambda_1-1$ and let $\eta_2 = (\lambda_1-\lambda_2)/\lambda_1$.
%For $i=1,2$, let $v_{i} (x,u) = \sum_y \tilde{q}_N(x,x+y) (e^{\tilde{A}u}y)_i )^2$. Given $K > 0$, let $S_i (K)= \inf \{t \ge 0: \int_0^t v_{i} (\tx_N(s),t-s) %ds >K \}$.
%Then, given $\sigma > 0$, $K_1, K_2 > 0$ and $0 < \omega < 4 (\log 2)^2 N^2 K_i/b^2$,
%$0 < \omega < (2 \gamma N)^2 L_i$,
%for $i=1,2$,
%$$ \P (\sup_{t \le \sigma \lceil e^{\omega/8} \rceil  \land S_i (K_i)} |M^t_{N,i}(t)|%|\int_0^t e^{-\eta_i(t-s)} dM_{N,i}(s) ds |
%> e^{\sigma \eta_i}\sqrt{\omega K_i})  \le 4 e^{-\omega/8}.$$
%$$ \P (\{ \sup_{t \le e^{\omega/8}} |M^t_{N,i}(t)|%|\int_0^t e^{-\eta_i(t-s)} dM_{N,i}(s) ds |
%> e^{\eta_i}\sqrt{\omega K_i} \} \cap \{\int_0^t v_{i} (\tx_N(s),t-s) ds \le K_i: t \le e^{\omega/8}\} )  \le 4 e^{-\omega/8}.$$
%\end{lemma}

\begin{lemma}
\label{lem.mart-dev}
Let $(X(t))$ be a Markov chain with state space $S \subseteq {\mathbb R}^k$, where $k$ is a positive integer. For $x,y \in {\mathbb R}^k$ such that $x, x+y \in S$, let $\tilde{q}(x,x+y)$ denote the rate of jump $y$ from $x$, and assume that $\|y\| \le B$ for each possible jump $y$. Suppose further that, for each $x \in S$, the drift $F(x):=\sum_y \tilde{q}(x,x+y) y$ at $x$ can be written in the form
$$F(x) = \tilde{A}x + \tilde{F}(x),$$
where $\tilde{A}$ is a $k \times k$ matrix with non-positive eigenvalues. Let $(M(t))$ be the corresponding Dynkin martingale, that is,
$$X(t) = X(0) + \int_0^t (\tilde{A}X(s) + \tilde{F} (X(s)))ds + M(t).$$
%For $i=1, \ldots, k$,
Given a vector ${\bf e} \in {\mathbb R}^k$ with $\|{\bf e}\| =1$, let $v_{{\bf e}} (x,u) = \sum_y \tilde{q}(x,x+y) ({\bf e}^T e^{\tilde{A}u}y )^2$, and, for $K > 0$,
%$i = 1, \ldots k$,
let $S_{{\bf e}} (K)= \inf \{t \ge 0: \int_0^t v_{{\bf e}} (X(s),t-s) ds > K \}$. Let $S_i (K) = S_{{\bf e}_i} (K)$, where ${\bf e}_i$ is a unit vector with $1$ in the $i$-th co-ordinate.

Suppose ${\bf e}$ is a unit eigenvector of $\tilde{A}$ with eigenvalue $-\eta$, where $\eta \ge 0$. Then, given $K,\sigma > 0$, and $0 < \omega < 4 (\log 2)^2 K/B^2$
$$\P( \sup_{t \le \sigma \lceil e^{\omega/8}\rceil  \land S_{\bf e} (K)} |\int_0^t {\bf e}^T e^{\tilde{A} (t-s)}dM(s)| > e^{\sigma \eta}\sqrt{\omega K} ) \le 4 e^{-\omega/8}.$$

Given numbers $K_1, \ldots, K_k > 0$, let $S(K_1, \ldots, K_k) = \land_{i=1}^k S_i (K_i)$. Let~$\omega$ satisfy $0 < \omega < 4 (\log 2)^2 K_i/B^2$ for each $i$. Then, for an arbitrary unit vector ${\bf e}$,
%$0 < \omega < (2 \gamma N)^2 L_i$,
%for $i=1, \ldots, k$,
\begin{eqnarray*}
\P \Big(\sup_{t \le \sigma \lceil e^{\omega/8} \rceil  \land S (K_1, \ldots, K_k)} |\int_0^t {\bf e}^T e^{\tilde{A} (t-s)}dM(s)|%|\int_0^t e^{-\eta_i(t-s)} dM_{N,i}(s) ds |
> e^{\sqrt{k}\sigma \min \{\|\tilde{A}\|_1, \|\tilde{A} \|_{\infty}}\sqrt{\omega} \sqrt{\sum_i K_i}\Big) \\
\le 4k e^{-\omega/8}.
\end{eqnarray*}
%$$ \P (\{ \sup_{t \le e^{\omega/8}} |M^t_{N,i}(t)|%|\int_0^t e^{-\eta_i(t-s)} dM_{N,i}(s) ds |
%> e^{\eta_i}\sqrt{\omega K_i} \} \cap \{\int_0^t v_{i} (\tx_N(s),t-s) ds \le K_i: t \le e^{\omega/8}\} )  \le 4 e^{-\omega/8}.$$
\end{lemma}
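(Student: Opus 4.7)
The plan is a Bernstein--type exponential martingale argument combined with a discretization of time into blocks of length $\sigma$. The obstacle is that $N(t) := \int_0^t {\bf e}^T e^{\tilde A(t-s)} dM(s)$ is not a martingale in $t$, since its integrand depends on the upper limit. However, for each fixed $T$ the auxiliary process $r \mapsto M_T(r) := \int_0^r {\bf e}^T e^{\tilde A(T-s)} dM(s)$ is a genuine martingale on $[0,T]$. With $T_n = n\sigma$ and $N_0 = \lceil e^{\omega/8} \rceil$, I would bound $\sup_{t \in [T_n, T_{n+1}]} |N(t)|$ separately on each block via $M_{T_{n+1}}(\cdot)$ and then union-bound over the $N_0$ blocks.

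In the eigenvector case, the identity ${\bf e}^T e^{\tilde A u} = e^{-\eta u}{\bf e}^T$ factors out the time dependence: setting $M_n(r) := \int_0^r e^{-\eta(T_{n+1}-s)}{\bf e}^T dM(s)$, one has $N(t) = e^{\eta(T_{n+1}-t)} M_n(t)$, so
\[
\sup_{t \in [T_n, T_{n+1}]} |N(t)| \le e^{\sigma \eta} \sup_{t \le T_{n+1}} |M_n(t)|.
\]
For the Dynkin exponential martingale $Z_n(\cdot, \theta)$ of $M_n$, the estimate $e^z - 1 - z \le z^2$ for $|z| \le \log 2$ (cf. the proof of Lemma~\ref{lem-phase1}) gives, whenever $|\theta| \le (\log 2)/B$,
\[
Z_n(t,\theta) \ge \exp\Bigl(\theta M_n(t) - \theta^2 \int_0^t v_{\bf e}(X(s), T_{n+1}-s)\, ds\Bigr).
\]
Since $u \mapsto v_{\bf e}(x,u) = e^{-2\eta u}\sum_y \tilde q(x,x+y)({\bf e}^T y)^2$ is decreasing in $u \ge 0$, the compensator is at most $\theta^2 K$ on $\{t \le T_{n+1} \wedge S_{\bf e}(K)\}$. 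Doob's maximal inequality for the non-negative supermartingale $Z_n(\cdot,\theta)$, optional stopping, and the optimization $\theta = \sqrt{\omega/(4K)}$ (admissible thanks to $\omega < 4(\log 2)^2 K/B^2$) then give $\P(\sup_{t \le T_{n+1} \wedge S_{\bf e}(K)}|M_n(t)| > \sqrt{\omega K}) \le 2 e^{-\omega/4}$, and a union bound over $n$ (with $N_0 \le e^{\omega/8}+1$) yields the first statement.

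For the general unit vector ${\bf e}$, I would decompose through the coordinate martingales. Writing $N(t) = {\bf e}^T e^{\tilde A(t-T_{n+1})} P_n(t)$ with $P_n(t) = \int_0^t e^{\tilde A(T_{n+1}-s)}\, dM(s) \in \R^k$, Cauchy--Schwarz and $\|{\bf e}\|_2 = 1$ give
\[
|N(t)| \le \|e^{\tilde A(t-T_{n+1})}\|_2\, \|P_n(t)\|_2 \le e^{\sigma \|\tilde A\|_2} \sqrt{\textstyle\sum_i M_{n,i}(t)^2}
\]
for $t \in [T_n, T_{n+1}]$, where $M_{n,i} := {\bf e}_i^T P_n$. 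In the setting in which the lemma is applied (where, after the change of variables of Section~\ref{subs:conv-det}, $\tilde A$ is diagonal in the standard basis), each ${\bf e}_i$ is an eigenvector, so the single-coordinate bound of the first part applies with $K = K_i$ to each $M_{n,i}$, yielding $|M_{n,i}(t)| \le \sqrt{\omega K_i}$ with probability at least $1 - 2 e^{-\omega/4}$; hence $\sqrt{\sum_i M_{n,i}(t)^2} \le \sqrt{\omega \sum_i K_i}$. The norm inequality $\|\tilde A\|_2 \le \sqrt{k} \min\{\|\tilde A\|_1, \|\tilde A\|_\infty\}$ supplies the stated exponent, and a union bound over $i \in \{1,\dots,k\}$ and $n \in \{0,\dots,N_0-1\}$ collects the $4k e^{-\omega/8}$.

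The delicate point I expect to be the main obstacle is that the compensator of the exponential martingale comes naturally with the frozen block-endpoint $T_{n+1}$ in $v_{\bf e}(X(s), T_{n+1}-s)$, whereas the stopping time $S_{\bf e}(K)$ is defined with the moving endpoint $t-s$. In the eigenvector case this is handled cleanly by the explicit monotonicity of $v_{\bf e}(x, \cdot)$; for coordinate directions relative to a non-diagonal $\tilde A$ one would have to absorb a further transient-growth factor of $\|e^{\tilde A \sigma}\|_2$, which is precisely why the cleanest form of the second claim appears with the $\sqrt{k}\min\{\|\tilde A\|_1, \|\tilde A\|_\infty\}$ exponent and is most natural in the diagonal situation actually occurring in Lemma~\ref{lem-lt-approx}.
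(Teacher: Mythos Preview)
Your proposal is correct and follows essentially the same approach as the paper's proof: both fix block endpoints $\tau_j = j\sigma$, use the exponential (Dynkin) martingale for the frozen-endpoint process $\int_0^t e^{\tilde A(\tau_j - s)}\,dM(s)$ together with optional stopping at the quadratic-variation stopping time, optimize the Chernoff parameter to get a $2e^{-\omega/4}$ tail per block, and then union-bound over $\lceil e^{\omega/8}\rceil$ blocks; the relation $M^t(t)=e^{\tilde A(t-\tau)}M^\tau(t)$ (your $N(t)=e^{\eta(T_{n+1}-t)}M_n(t)$ in the eigenvector case) converts the frozen-endpoint bound into the stated moving-endpoint bound with the extra $e^{\sigma\eta}$ factor. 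Your explicit use of the monotonicity $u\mapsto v_{\bf e}(x,u)$ to pass from the frozen-endpoint compensator to the stopping time $S_{\bf e}(K)$ is exactly the step the paper encapsulates in the claim ``if $t<S_{\bf e}(K)$ then $S^t_{\bf e}(K)=\infty$,'' and your remark that the coordinate version of the second statement is cleanest when $\tilde A$ is diagonal is well taken and matches how the lemma is actually applied.
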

\begin{proof}
Fix a time $\tau > 0$, and consider $M^{\tau}$ given by
$$M^{\tau}(t) = \int_0^{t\land \tau} e^{\tilde{A} (\tau-s)} dM(s) = \int_0^{t \land \tau} e^{\tilde{A}(\tau - s)} (dX(s) - \sum_y y \tilde{q}(X(s),X(s)+y) ds).$$
Then $(M^{\tau}(t))$ is a zero mean martingale. Also, for each $t \ge 0$,
$\int_0^t e^{\tilde{A} (t-s)} dM(s) = M^t (t)$.
%so $\int_0^t e^{-(t-s)(\lambda_1-1)} dM_{N,1}(s) = M_{N,1}^t (t)$,
%and $\int_0^t e^{-(t-s)(\lambda_1-\lambda_2)/\lambda_1} dM_{N,2}(s) = M_{N,2}^t (t)$.
We now define %, for $t \le \tau$,
\begin{eqnarray*}
\lefteqn{Y^{\tau}(t)} \\
& = & \int_0^{t \land \tau} e^{\tilde{A}(\tau - s)} Fs(X(s))ds + M^{\tau}(t)
 =  \int_0^{t \land \tau}  \sum_y e^{\tilde{A} (\tau - s)}  y \tilde{q}(X(s),X(s)+y) ds\\
& + & \int_0^{t \land \tau} e^{\tilde{A}(\tau - s)} (dX(s) - \sum_y y \tilde{q}(X(s),X(s)+y) ds)
 =  \int_0^{t \land \tau} e^{\tilde{A}(\tau - s)} dX(s).
\end{eqnarray*}
%and $Y_N^{\tau}(t) = Y_N^{\tau}(\tau )$ for $t \ge \tau$.

For $\theta \in \R^k$, let $R^{\tau}(t,\theta)$ be defined by
$$R^{\tau}(t,\theta) = e^{\theta^T Y^{\tau}(t)} - \int_0^{t \land \tau} \sum_y \tilde{q}(X(r),X(r)+y) \Big (e^{\theta^T[Y^{\tau}(r) + e^{\tilde{A}(\tau-r)}y]} -e^{\theta^T Y^{\tau}(r)} \Big ) dr.$$
 %\quad t \le \tau,$$
%and $R_N^{\tau}(t,\theta) = R_N^{\tau}(\tau,\theta)$ for $t \ge \tau$.
Then $(R^{\tau}(t,\theta))$ is a martingale.
Also, for $\theta \in \R^k$, $(Z^{\tau} (t,\theta))$ given by
$$Z^{\tau} (t,\theta)= e^{\theta^T Y^{\tau}(t)} \exp \Big (- \int_0^{t \land \tau} \sum_y \tilde{q}(X(s),X(s)+y) (e^{\theta^T e^{\tilde{A}(\tau-s)}y} -1)   ds \Big )$$
% \quad t \le \tau,$$
%and $Z_N^{\tau}(t, \theta) = Z_N^{\tau} (\tau, \theta)$ for $t \ge \tau$,
is a mean 1 martingale, since, for all $t$, using integration by parts,
\begin{eqnarray*}
\lefteqn{Z^{\tau}(t,\theta) =  e^{\theta^T Y^{\tau}(0)}} \\
&&{} + \int_0^{t} \exp \Big (- \int_0^{r \land \tau} \sum_y \tilde{q}(X(s),X(s)+y) (e^{\theta^T e^{\tilde{A}(\tau-s)}y} -1)   ds \Big ) d e^{\theta^T Y^{\tau}(r)}\\
&&{} -  \int_0^{t \land \tau} e^{\theta^T Y^{\tau}(r)}\sum_y \tilde{q}(X(r),X(r)+y) (e^{\theta^T e^{\tilde{A}(\tau-r)}y} -1) \\
&&{} \times \exp \Big (- \int_0^{r \land \tau} \sum_y \tilde{q}(X(s),X(s)+y) (e^{\theta^T e^{\tilde{A}(\tau-s)}y} -1)   ds \Big ) dr\\
& = & 1  +  \int_0^{t \land \tau} \exp \Big (- \int_0^{r \land \tau} \sum_y \tilde{q}(X(s),X(s)+y) (e^{\theta^T e^{\tilde{A}(\tau-s)}y} -1)   ds \Big ) \\
&&{} \times  \Big ( d e^{\theta^T Y^{\tau}(r)} -  \sum_y \tilde{q}(X(r),X(r)+y) \Big (e^{\theta^T[Y^{\tau}(r) + e^{\tilde{A}(\tau-r)}y]} -e^{\theta^T Y^{\tau}(r)} \Big )dr \Big )\\
& = & 1+ \int_0^{t \land \tau} \exp \Big (- \int_0^{r \land \tau} \sum_y \tilde{q}(X(s),X(s)+y) (e^{\theta^T e^{\tilde{A}(\tau-s)}y} -1)   ds \Big ) dR^{\tau}(r, \theta).
\end{eqnarray*}

Note that%, for $t \le \tau$,
\begin{eqnarray*}
\lefteqn{Z^{\tau}(t,\theta)} \\
& = & \exp \Big (\theta^T Y^{\tau}(t)- \int_0^{t \land \tau} \sum_y \tilde{q} (X(s),X(s)+y) \theta^T e^{\tilde{A} (\tau-s)}y ds \Big )\\
& \times & \exp \Big ( -\int_0^{t \land \tau} \sum_y \tilde{q}(X(s),X(s)+y) (e^{\theta^T e^{\tilde{A}(\tau-s)}y} -1- \theta^T e^{\tilde{A}(\tau-s)}y )   ds \Big )\\
& = &  \exp \Big  (  \theta^T M^{\tau}(t) - \int_0^{t \land \tau} \sum_y \tilde{q}(X(s),X(s)+y) (e^{\theta^T e^{\tilde{A}(\tau-s)}y} -1- \theta^T e^{\tilde{A}(\tau-s)}y ) ds  \Big ).
\end{eqnarray*}
Using the identity $e^z-1 - z = z^2 \int_{r=0}^1 e^{rz}(1-r)dr$, we can rewrite that as
\begin{eqnarray*}
\lefteqn{Z^{\tau}(t,\theta)  =   \exp \Big  (  \theta^T M^{\tau}(t)} \\
&-&\int_0^{t \land \tau} \sum_y \tilde{q}(X(s),X(s)+y) (\theta^T e^{\tilde{A}(\tau-s)}y)^2 (\int_0^1 e^{r\theta^T e^{\tilde{A}(\tau-s)}y}(1-r)dr) ds \Big ).
\end{eqnarray*}
%
%The possible jumps $y$ of $(\tx_N(t))$ are of the form $\pm (1/N,0)^T$ and $\pm (1/aN, 1/N)^T$, so $\|e^{\tilde{A}(\tau-s)}y\| \le b/N$ for $s \le \tau$, where %$b = (|a|+1)/|a|$.
We have assumed that each jump $y$ satisfies $\|y\| \le B$, so, since $\tilde{A}$ has negative eigenvalues, $\|e^{\tilde{A}(\tau-s)}y\|$ is also always bounded by $B$. Hence, for $\|\theta \| \le \Gamma$,
$$\int_0^1 e^{r\theta^T e^{\tilde{A}(\tau-s)}y}(1-r)dr \le
%\frac12 e^{2\mu/a}
\frac12 e^{B \Gamma}.$$

It follows that, for all $t$, %$t \le \tau$,
$$Z^{\tau} (t,\theta) \ge \exp \Big  (  \theta^T M^{\tau}(t) - \frac12 e^{B\Gamma} \int_0^{t \land \tau} \sum_y \tilde{q}(X(s),X(s)+y) (\theta^T e^{\tilde{A}(\tau-s)}y )^2  ds\Big ).$$

%In particular, let $\theta_1, \theta_2 \in \R$, and let $\theta^i = \theta_i e_i$. Then, for all $t$,
%$t \le \tau$,
Writing $\theta = \|\theta \| {\bf e}$, for a unit vector ${\bf e} \in {\mathbb R}^k$,
$$Z^{\tau}(t,\theta) \ge \exp \Big  (  \|\theta \| {\bf e}^T M^{\tau}(t) - \frac12 e^{B\Gamma } \|\theta\|^2\int_0^{t \land \tau} v_{{\bf e}} (X(s),\tau-s)ds \Big ).$$
%\sum_y q_N(\tilde{x}_N(s),\tilde{x}_N(s)+y)  ((e^{\tilde{A}(\tau-s)}y)_i )^2  ds\Big ).$$
Recall that for $K >0$,
$S_{\bf e} (K)= \inf \{t \ge 0: \int_0^t v_{\bf e} (X(s),t-s) ds > K \}$. Given any path of the process $(X(t))$, $\int_0^t v_{\bf e} (X(s),t-s) ds$ is continuous in $t$, and so $S_{\bf e} (K)$ is a stopping time. Also, for $\tau > 0$, $K > 0$, let
$$S^{\tau}_{\bf e} (K)= \inf \{t \ge 0: \int_0^{t \land \tau} v_{\bf e}(X(s),\tau-s) ds > K \}.$$
%\sum_y \tilde{q}_N(\tx_N(s),\tx_N(s)+y)  ((e^{\tilde{A}(\tau-s)}y)_i )^2  ds \ge K \}.$$
Then necessarily either $S^{\tau}_{\bf e} (K) \le \tau$ or $S^{\tau}_{\bf e} (K) = \infty$; and $S^{\tau}_{\bf e} (K) \le \tau$ if and only if $S_{\bf e}(K) \le \tau$. Therefore, also, if $S_{\bf e} (K) > \tau$, then $S^t_{\bf e} (K) > t$ for all $t \le \tau$.
%Given $L_1, L_2 \in \R^+$, %for $t \le \tau \land T^{\tau}_i (L_i)$,
%$$Z_N^{\tau}(t,\theta^i) \ge \exp \Big  (  \theta_i M_{N,i}^{\tau}(t) - \frac12 e^{b\gamma } \theta_i^2 L_i\Big ).$$

For $t,\tau> 0$, unit vector ${\bf e}$, let $M_{\bf e}^{\tau}(t) = e^T M^{\tau}(t)$. Also given
%Given $\tau > 0$,
$\delta > 0$, let $T^{\tau,+}_{\bf e}(\delta) = \inf \{t \ge 0: M_{\bf e}^{\tau}(t) > \delta \}$, and let
$T^{\tau,-}_i (\delta) = \inf \{t \ge 0: M_{\bf e}^{\tau}(t) < -\delta \}$. Then $T^{\tau,\pm}_{\bf e}(\delta) \le \tau$ or $T^{\tau,\pm}_{\bf e}(\delta) =\infty$.
%Let $T^{\tau} (\delta) = T^{\tau,+}_i(\delta) \land T^{\tau,-}_i(\delta)$.

Given $K > 0$, on the event $\{T^{\tau,+}_{\bf e}(\delta) \le  \tau \land S^{\tau}_{\bf e} (K) \}$,
%$\{T^{\tau,+}_i(\delta) \le T^{\tau}_i (L_i) \}$
%$\{T^{\tau,i}_+(\delta) \le T^{\tau,i} (L_i) \land \tau \}$,
$$Z^{\tau} (T^{\tau,+}_{\bf e}(\delta), \|\theta \|{\bf e}) \ge \exp (\|\theta \| \delta -\frac12 e^{B\Gamma} \|\theta \|^2 K).$$
By optional stopping and Markov inequality,
$$\P (T^{\tau,+}_{\bf e}(\delta) \le  \tau \land S^{\tau}_{\bf e} (K)  ) \le  \exp (-\|\theta \| \delta +\frac12 e^{B\Gamma} \|\theta \|^2 K).$$
%$$\P (T^{\tau,i}_+(\delta) \le  T^{\tau,i} (L_i) \land \tau ) \le  \exp (-\theta_i \delta +\frac12 e^{2\alpha/|a|} %\theta_i^2 L_i).$$
Choosing $\|\theta \| = \delta e^{-B\Gamma}/K$, and assuming $\delta \le \Gamma K e^{B\Gamma}$ so that
%(this will be at most $\mu N$ for suitable choices of $\delta$),
$\|\theta \| \le \Gamma$, we obtain
% for suitable choices of $\delta$. We then obtain
$$\P (T^{\tau,+}_{\bf e}(\delta) \le  \tau \land S^{\tau}_{\bf e} (K)  ) \le e^{-\delta^2/2K e^{B \Gamma}},$$
%$$\P (T^{\tau,i}_+(\delta) \le  T^{\tau,i} (L_i) \land \tau ) \le e^{-\delta^2/2e^{2\alpha/|a|} L_i},$$
and, similarly,
$$\P (T^{\tau,-}_{\bf e}(\delta) \le  \tau \land S^{\tau}_{\bf e} (K) ) \le e^{-\delta^2/2Ke^{B\Gamma}}.$$
%$$\P (T^{\tau,i}_-(\delta) \le  T^{\tau,i} (L_i) \land \tau ) \le e^{-\delta^2/2e^{2\alpha/|a|} L_i}.$$
Letting $T^{\tau}_{\bf e}(\delta) = T^{\tau,+}_{\bf e}(\delta) \land T^{\tau,-}_{\bf e}(\delta)$, it follows that
$$\P (T^{\tau}_{\bf e}(\delta) \le  \tau \land S^{\tau}_{\bf e} (K) ) \le 2e^{-\delta^2/2Ke^{B\Gamma}}.$$
%and, in particular,
%$$\P ( \{ \sup_{t \le \tau} |M_{N,i}^{\tau}(t)| > \delta \} \cap \{T_i (K_i) > \tau \}) \le 2 e^{-\delta^2/2K_i e^{b \gamma}}.$$
Choosing $\Gamma = B^{-1} \log 2$ and $\delta = \sqrt{\omega K}$, with $0 < \omega < 4 (\log 2)^2 K/B^2$, we have
% (for $\omega$ such that this choice allows the condition
$\|\theta \| \le \Gamma$, so
% to be satisfied for $N$ large enough),
% provided that $N$ is large enough and $\omega = \omega (N) \le c \sqrt{N}$ for a small enough constant $c$),
%$$\P (\{ \sup_{t \le \tau} |M_{N,i}^{\tau}(t)| > \sqrt{\omega K_i}\} \cap \{T_i (K_i) \ge \tau \}) \le 2 e^{-\omega/4}.$$
$$\P ( \sup_{t \le \tau \land S^{\tau}_{\bf e} (K)} |M_{\bf e}^{\tau}(t)| > \sqrt{\omega K}) \le 2 e^{-\omega/4}.$$
We will now apply this to all times $\tau_1 = \sigma, \tau_2 = 2 \sigma, \ldots, \tau_{\lceil e^{\omega/8} \rceil}= \lceil e^{\omega/8} \rceil \sigma$, up to time $\sigma \lceil e^{\omega/8} \rceil$. Since $e^{\omega/8} \ge 1$,
\begin{eqnarray*}
\P (\exists j: T^{\tau_j}_{\bf e}(\sqrt{\omega K}) \le \tau_j \land S^{\tau_j}_{\bf e} (K) ) \le 4 e^{-\omega/8}.
\end{eqnarray*}

Now, for $t \le \tau$,
$M^t(t)=e^{\tilde{A}(t - \tau)} M^{\tau}(t)$,
and so, $M^t_{\bf e} (t) = {\bf e}^T e^{\tilde{A}(t - \tau)} M^{\tau}(t)$. In particular, if ${\bf e}$ is an eigenvector of $\tilde{A}$ with eigenvalue $-\eta$, then $M^t_{\bf e} (t) = e^{-\eta(t - \tau)} M_{\bf e}^{\tau}(t)$. In general,
\begin{eqnarray*}
M^t_{\bf e} (t) & = & {\bf e}^T e^{\tilde{A}(t - \tau)} M^{\tau}(t) \le \| {\bf e}^T e^{\tilde{A}(t - \tau)} \| \|M^{\tau}(t)\| \\
& \le & \|e^{\tilde{A}(t - \tau)} \| \sqrt{\sum_i (M^{\tau}_i (t))^2} \le e^{\nu (t-\tau)} \sqrt{\sum_i (M^{\tau}_i (t))^2}\\
& \le & e^{\|\tilde{A}\| (t-\tau)}  \sqrt{\sum_i (M^{\tau}_i (t))^2} \le e^{\min \{\|\tilde{A}\|_1, \|\tilde{A}\|_{\infty}\} \sqrt{k}(t-\tau)}  \sqrt{\sum_i (M^{\tau}_i (t))^2},
\end{eqnarray*}
where $M^{\tau}_i (t) = M^{\tau}_{{\bf e}_I} (t)$ with ${\bf e}_i$ a unit vector with $1$ in the $i$-th co-ordinate, $\nu$ is the largest eigenvalue of $(\tilde{A} + \tilde{A}^T)/2$, $\|\tilde{A}\|_{\infty} = \max_i \sum_j |\tilde{A}_{ij}|$ and $\|\tilde{A}\|_1 = \max_j \sum_i |\tilde{A}_{ij}|$.

Choosing $\sigma < \tau$, if ${\bf e}$ is an eigenvector of ${\tilde A}$ with eigenvalue $-\eta$, then
$$\sup_{\tau-\sigma \le t \le \tau} |M_{\bf e}^{t}(t)|   \le e^{\sigma \eta} \sup_{\tau-\sigma \le t \le \tau} |M_{\bf e}^{\tau}(t)| .$$
For general ${\bf e}$,
$$\sup_{\tau-\sigma \le t \le \tau} |M_{\bf e}^{t}(t)|\le e^{\min \{\|\tilde{A}\|_1, \|\tilde{A}\|_{\infty}\} \sqrt{k}\sigma }  \sqrt{\sum_i (M^{\tau}_i (t))^2}.$$
%$$\sup_{\tau-1 \le t \le \tau} |M_{N,2}^t(t)| \le e^{(\lambda_1-\lambda_2)/\lambda_1} \sup_{\tau-1 \le t \le %\tau} |M_{N,2}^{\tau}(t)|,$$
%and
%$$\sup_{\tau-1 \le t \le \tau} |M_{N,1}^t(t)| \le e^{\eta_1} \sup_{\tau-1 \le t \le \tau} |M_{N,1}^{\tau}(t)|.$$
%
%$$\sup_{\tau-1 \le t \le \tau} |M_{N,1}^t(t)| \le e^{\lambda_1 -1} \sup_{\tau-1 \le t \le \tau} %|M_{N,1}^{\tau}(t)|.$$
% + \frac{1}{a}(e^{\lambda_1 -1} - e^{(\lambda_1-\lambda_2)/\lambda_1} )\sup_{\tau-1 \le t \le \tau} %|M_{N,2}^{\tau}(t)|.$$

% with a sequence of bounds $L_{j,i}$ for $i=1,2$ and $j=1, \ldots, \lceil e^{\omega/8}\rceil $.
From the above, since if $t < S_{\bf e} (K)$, then $S^t_{\bf e} (K)= \infty$, for ${\bf e}$ an eigenvector of $\tilde{A}$ with eigenvalue $\eta$,
$$\P( \sup_{t \le \sigma \lceil e^{\omega/8}\rceil  \land S_{\bf e} (K)} |M_{\bf e}^t(t)| > e^{\sigma \eta}\sqrt{\omega K} ) \le 4 e^{-\omega/8}.$$
%Given $L > 0$, let $S_i (L)$ denote $S_{{\bf e}_i}(L)$, and,
Recalling that, given numbers $K_1, \ldots, K_k > 0$, $S (K_1, \ldots, K_k) = \land_{i=1}^k S_i (K_i)$, we have, for a general vector ${\bf e}$,
$$\P( \sup_{t \le \sigma \lceil e^{\omega/8}\rceil  \land S(K_1, \ldots, K_k)} |M_{\bf e}^t(t)| > e^{\min \{\|\tilde{A}\|_1, \|\tilde{A}\|_{\infty}\} \sqrt{k}\sigma } \sqrt{\omega} \sqrt{\sum_i K_i} ) \le 4k e^{-\omega/8}.$$
\end{proof}

\smallskip

%We now use the above to show that $x_N(t)$ tracks $x(t)$ over a long time interval.

%Let $T_1$ be the infimum of times $t$ such that
%$$x_t^{N,2} > x_t^2+ 2 \eps |a| e^{-t(\lambda_1-\lambda_2)/\lambda_1}$$
%or
%$$\Big |x_t^{N,1}- \frac{\lambda_1 -1}{\lambda_1} \Big | > \Big |x_t^1- \frac{\lambda_1-1}{\lambda_1} \Big |+ 2 \eps %e^{-t(\lambda_1-\lambda_2)/\lambda_1}.$$
%On the event $T_1 \ge t$,

%\begin{proof}{Lemma~\ref{lem-lt-approx}}
{\bf Proof of Lemma~\ref{lem-lt-approx}}.
The possible jumps $y$ of $(\tx_N(t))$ are of the form $\pm (1/N,0)^T$ and $\pm (1/aN, 1/N)^T$, so $\|e^{\tilde{A}(\tau-s)}y\| \le B:=b/N$ for $s \le \tau$, where $b = (|a|+1)/|a|$.

We bound $\int_0^t v_{i} (\tx_N(s),t-s)ds = \int_0^t \sum_y \tilde{q}_N(\tx_N(s),\tx_N(s)+y) (e^{\tilde{A}(t-s)}y)_i )^2ds$:
% for $i=1,2$.
%We have
\begin{eqnarray*}
\lefteqn{\int_0^t v_{1} (\tx_N(s),t-s)ds} \\
%\sum_y q_N(x_N(s),x_N(s)+y) (e^{\tilde{A}(t-s)}y)_1 )^2 ds
& \le &   \frac{\lambda_1}{N} \int_0^t x_{N,1}(s) (1-x_{N,1}(s) - x_{N,2}(s))e^{-2\eta_1(t-s)} ds \\
& + &  \frac{\mu_1}{N} \int_0^t x_{N,1}(s) e^{-2\eta_1(t-s)}ds + \frac{\mu_2}{N a^2}  \int_0^t x_{N,2}(s) e^{-2 \eta_1 (t-s)} ds\\
& + & \frac{\lambda_2}{Na^2} \int_0^t x_{N,2}(s) (1-x_{N,1}(s) - x_{N,2}(s))e^{-2\eta_1(t-s)}ds \\
% e^{-\frac{2(\lambda_1-\lambda_2)(t-s)}{\lambda_1}}ds\\
%& \times & \frac{1}{a^2}(e^{-\frac{\lambda_1-\lambda_2}{\lambda_1}(t-s)} - e^{-(\lambda_1-1)(t-s)})^2 ds \\
%& + & \frac{\mu_2}{N a^2}  \int_0^t x_{N,2}(s) e^{-2 \eta_1 (t-s)} ds \\
%e^{-\frac{2(\lambda_1-\lambda_2)(t-s)}{\lambda_1}}ds\\
%-e^{-(\lambda_1-1)(t-s)})^2 ds\\
& \le & N^{-1} (\lambda_1 + \mu_1) \int_0^t  e^{-2\eta_1(t-s)} ds
 +  N^{-1} \frac{(\lambda_2 + \mu_2)}{a^2} \int_0^t e^{-2\eta_1(t-s)} ds \\
% e^{-\frac{2(\lambda_1-\lambda_2)(t-s)}{\lambda_1}}ds\\
%(e^{-\frac{\lambda_1-\lambda_2}{\lambda_1}(t-s)} - e^{-(\lambda_1-1)(t-s)})^2 ds\\
%& \le & N^{-1} \frac{\lambda_1 +1}{2(\lambda_1 -1)}\\
%& + & N^{-1} \frac{(1+\lambda_2)\lambda_1(\lambda_1-1)}{2(\lambda_1 - \lambda_2)[(\lambda_1-\lambda_2)/\lambda_1 + (\lambda_1-1)]}\\
& \le & N^{-1} (2 \eta_1)^{-1} \Big [(\lambda_1 +\mu_1) + a^{-2}(\lambda_2 + \mu_2) \Big ]  \le \frac{a_1}{N};
%\Big (  \frac{1}{\lambda_1-1} + \frac{\lambda_1}{\lambda_1-\lambda_2} \Big ) :=\frac{a_1}{N};
%& \le & N^{-1} \Big ( \frac{\lambda_1 +1}{2(\lambda_1 -1)} + %\frac{\lambda_1(1+\lambda_2)}{2a^2(\lambda_1-\lambda_2)} \Big ) \le N^{-1} %\frac{\lambda_1(1+\lambda_2)}{\lambda_1-\lambda_2}.
%
%& \le & N^{-1} \frac{4 \eps\lambda_1}{\lambda_1 - \lambda_2} (1 + %\lambda_2)e^{-\frac{\lambda_1-\lambda_2}{\lambda_1}t} +  N^{-1} \frac{\lambda_1 + 1}{2(\lambda_1 -1)}\\
%& \le & N^{-1} \frac{\lambda_1 +1}{\lambda_1 -1}.
%& \le & N^{-1} \frac{8 (\lambda_1 +1)^2}{\lambda_1 - \lambda_2}.
\end{eqnarray*}
\begin{eqnarray*}
\lefteqn{\int_0^t v_{2} (\tx_N(s),t-s)ds} \\
& \le &
%\sum_y q_N(x_N(s),x_N(s)+y) (e^{\tilde{A}(t-s)}y)_2 )^2 ds & \le &
 \frac{\lambda_2}{N} \int_0^t x_{N,2}(s)
 (1-x_{N,1}(s) - x_{N,2}(s))
 e^{-2\eta_2(t-s)} ds
 +   \frac{\mu_2}{N} \int_0^t x_{N,2}(s) e^{-2\eta_2(t-s)} ds\\
& \le &  N^{-1} (\lambda_2 + \mu_2) \int_0^t  e^{-2\eta_2(t-s)} ds
 \le  N^{-1} \frac{\lambda_2 + \mu_2}{2 \eta_2} = \frac{a_2}{N}.
%& \le & 4|a| \eps (1 + \lambda_2)e^{-2\frac{\lambda_1-\lambda_2}{\lambda_1}t} N^{-1} \int_0^t %e^{\frac{\lambda_1-\lambda_2}{\lambda_1}s} ds\\
%& \le &  N^{-1} \frac{4 |a| \eps\lambda_1}{\lambda_1 - \lambda_2} (1 + %\lambda_2)e^{-\frac{\lambda_1-\lambda_2}{\lambda_1}t}\\
%& \le & \frac{|a|}{2N} e^{-\frac{\lambda_1-\lambda_2}{\lambda_1}t}.
\end{eqnarray*}
%Let $T_1$ be the first time $t$ that one of the inequalities
%$$X_t^{N,(i)} \le x_t^{(i)} + 2 \eps |a| e^{-t(\lambda_1-\lambda_2)/\lambda_1}$$ and
%and
%$$|X_t^{N,1}- \frac{\lambda_1 -1}{\lambda_1}| \le|x_t^1- \frac{\lambda_1-1}{\lambda_1}|+2 \eps e^%{-t(\lambda_1-\lambda_2)/\lambda_1}$$
%fails.
%Also, for all times $t$,
%for $s \le t$,
%where
%$$a_1 = \frac{\lambda_1}{2(\lambda_1 - \lambda_2)} + \frac{1}{2(\lambda_1-1)} - %\frac{\lambda_1}{\lambda_1-\lambda_2+ \lambda_1(\lambda_1-1)},$$
%and
%$$a_2 = \frac{\lambda_1 +1}{2(\lambda_1 -1)} + \frac{(1+\lambda_2)|a_1|}{a^2}.$$
%Let
%$$f_N(s) = \max \{ |x_{N,1}(s) - x_1(s)  |,   |x_{N,2}(s) - x_2(s)  |a^{-1} \}.$$
%
%$$f(s) = \sup_{u \le s} \max \{ |x_u^{N,1} - x_u^1  |,   |x_u^{N,2} - x_u^2  | \},$$
%and note that $f_N(s) \le 1$.
Now, %with $f_N(t)$ defined as in the statement of the lemma,
\begin{eqnarray*}
|\tx_{N,2}(t) - \tx_2(t)| & \le & e^{-t \eta_2} |\tx_{N,2}(0) -\tx_2(0)| + \Big |\int_0^t e^{-(t-s)\eta_2} dM_{N,2}(s) ds \Big |\\
& + & \frac{\lambda_2}{|a|} \frac{\eta_2}{\eta_1}\int_0^t e^{-(t-s) \eta_2}  |(\tx_{N,2}(s))^2-(\tx_2(s))^2| ds\\
& + & \lambda_2 \int_0^t e^{-(t-s) \eta_2} \Big | \tx_{N,1}(s) \tx_{N,2}(s) - \tx_1(s) \tx_2(s) \Big | ds,\\
%& + & \Big |\int_0^t (e^{\tilde{A}(t-s)} dM_N(s))_2 ds \Big |.
\end{eqnarray*}
and
\begin{eqnarray*}
\lefteqn{|\tx_{N,1}(t) - \tx_1(t)|} \\
& \le & e^{-t \eta_1} |\tx_{N,1}(0) -\tx_1(0)| +  \Big |\int_0^t e^{-(t-s) \eta_1} dM_{N,1}(s) ds \Big |\\
%+ \frac{1}{a} (e^{-t(\lambda_1 - \lambda_2)/\lambda_1} - e^{-t (\lambda_1 -1)} ) |x_{N,2}(0) -x_2(0)|\\
& + & \lambda_1 \int_0^t e^{-(t-s) \eta_1} \Big | \tx_{N,1}(s)^2- \tx_1(s)^2 \Big | ds\\
%& + & \int_0^t e^{-(t-s) (\lambda_1 - 1)} \lambda_1 \Big |\Big (x_{N,1}(s)- \frac{\lambda_1-1}{\lambda_1} \Big )^2-\Big (x_1(s) - \frac{\lambda_1-1}{\lambda_1} %\Big )^2 \Big | ds\\
& + & \frac{\eta_2|\lambda_1-\lambda_2|}{\eta_1 a^2}\int_0^t e^{-(t-s) \eta_1}  \Big | \tx_{N,2}(s)^2 - \tx_2(s)^2 \Big | ds\\
%& + & \int_0^t e^{-(t-s) (\lambda_1 - 1)} \lambda_1 \Big | \Big (x_{N,1}(s)-\frac{\lambda_1-1}{\lambda_1} \Big )x_{N,2}(s) - \Big %(x_1(s)-\frac{\lambda_1-1}{\lambda_1} \Big )x_2(s) \Big | ds\\
& + & \frac{1}{|a|}\Big (|\lambda_1-\lambda_2| + \frac{\lambda_1 \eta_2}{\eta_1} \Big )\int_0^t e^{-(t-s) \eta_1}
%& + & \frac{\lambda_2}{a} \int_0^t (e^{-(t-s) (\lambda_1 - \lambda_2)/\lambda_1} - e^{-(t-s) (\lambda_1 -1)} )\\
\Big | \tx_{N,1}(s)\tx_{N,2}(s) -  \tx_1(s) \tx_2(s) \Big | ds.\\
 % + \Big |(x_{N,2}(s))^2-(x_2(s))^2 \Big | \Big ]ds\\
%& + & \Big |\int_0^t (e^{\tilde{A}(t-s)} dM_N(s))_1 ds \Big |.
\end{eqnarray*}

%As before, let $L = \min \{\lambda_1-1, \frac{\lambda_1-\lambda_2}{\lambda_1} \}$ and let $L_1 = \lambda_1 + %\frac{(\lambda_1-\lambda_2)^2}{\lambda_1 (\lambda_1-1)} + \frac{(\lambda_1-\lambda_2) \lambda_1}{\lambda_1-1}$.
Then, using Lemma~\ref{lem-sol-decay}, %and~\ref{lem-sol-decay-1}
%(assuming the assumptions of Lemma~\ref{lem-sol-decay} are satisfied),
\begin{eqnarray*}
\lefteqn{\frac{1}{|a|}|\tx_{N,2}(t) - \tx_2(t)|} \\
& \le & e^{-t \eta_2} f_N(0) + \Big |\int_0^t e^{- (t-s)\eta_2} dM_{N,2}(s) ds \Big |\\
& + &  \frac{\lambda_2}{ |a|} \frac{\eta_2}{\eta_1}\int_0^t e^{-(t-s) \eta_2}  f_N(s) (2 \tx_2(s) + |a| f_N(s)) ds\\
& + & \frac{1}{|a|} \int_0^t e^{-(t-s) \eta_2} \lambda_2
f_N(s) \Big (\tx_2(s) + |a| f_N(s) +  |a| |\tx_1(s)| \Big )ds\\
%& + & \Big |\int_0^t e^{- (t-s)(\lambda_1-\lambda_2)/\lambda_1} dM_{N,2}(s) ds \Big |\\
& \le & e^{-t \eta_2} f_N(0) + \lambda_2 \Big (  \frac{\eta_2}{\eta_1} + 1 \Big )\int_0^t (f_N(s))^2 e^{-(t-s) \eta_2} ds \\
& + & \lambda_2 \Big (  \frac{2 \eta_2}{\eta_1} + 1 \Big )\int_0^t f_N(s) e^{-(t-s) \eta_2} \frac{1}{|a|} \tx_2(s)  ds
 +  \lambda_2 \int_0^t f_N(s) e^{-(t-s) \eta_2} |\tx_1(s)| ds\\
& + & \Big |\int_0^t e^{-(t-s) \eta_2} dM_{N,2}(s) ds \Big |\\
& \le & e^{-t \eta_2} f_N(0) + L_1 \int_0^t (f_N(s))^2 e^{-(t-s) \eta_2} ds
 +
4L_1 y(0) e^{-tL}\int_0^t f_N(s)  ds\\
%+ 2 \lambda_2 y(0)  e^{-t L} \int_0^t f_N(s) ds
& + & \Big |\int_0^t e^{-(t-s) \eta_2} dM_{N,2}(s) ds \Big |,
\end{eqnarray*}
and
%It follows, using Lemma~\ref{lem-sol-decay}, that
\begin{eqnarray*}
\lefteqn{|\tx_{N,1}(t) - \tx_1(t)|} \\
& \le &  e^{-t \eta_1}f_N(0)
+ \Big |\int_0^t e^{-(t-s)\eta_1} dM_{N,1}(s) ds \Big |\\ %+ \frac{1}{a} (e^{-t(\lambda_1 - \lambda_2)/\lambda_1} - e^{-t (\lambda_1 -1)} )a f_N(0)\\
& + & \lambda_1 \int_0^t e^{-(t-s) \eta_1} f_N(s) (2 |\tx_1(s)| + f_N(s))ds\\
& + & \frac{|\lambda_1-\lambda_2| \eta_2}{\eta_1}\int_0^t e^{-(t-s) \eta_1} f_N(s) \Big ( 2 \frac{\tx_2(s)}{|a|} +f_N(s) \Big ) ds\\
& + & \Big ( |\lambda_1-\lambda_2| + \frac{\lambda_1 \eta_2}{\eta_1} \Big ) \int_0^t e^{-(t-s) \eta_1}
f_N(s) \Big (\frac{\tx_2(s)}{|a|} + f_N(s) + |\tx_1(s)| \Big )ds\\
%\Big (2 \Big |x_1(s)- \frac{\lambda_1-1}{\lambda_1} \Big | +f_N(s) \Big ) ds\\
%
%& + & \lambda_1 \int_0^t e^{-(t-s) (\lambda_1 - 1)} a f_N(s) \Big (x_2(s)a^{-1} + \Big | x_1(s)-\frac{\lambda_1-1}{\lambda_1} \Big | +f_N(s) \Big ) ds\\
%& + & \frac{\lambda_2}{a} \int_0^t (e^{-(t-s) (\lambda_1 - \lambda_2)/\lambda_1} - e^{-(t-s) (\lambda_1 -1)} )af_N(s) \\
%& \times & \Big [ \Big (x_2(s)a^{-1} + \Big | x_1(s)-\frac{\lambda_1-1}{\lambda_1} \Big | +f_N(s) \Big )+ (2x_2(s) + a f_N(s)) \Big ]ds\\
%& + & \Big |\int_0^t e^{-(t-s)(\lambda_1-1)} dM_{N,1}(s) ds \Big |.
%\end{eqnarray*}
%So, on the event $T_1 \ge t$,
%we have
%\begin{eqnarray*}
%|x_t^{N,2} - x_t^2| & \le & e^{-t(\lambda_1 - \lambda_2)/\lambda_1} |a|^{1/2} f(0)\\
%& + &  6|a|^{3/2} \eps \lambda_2 e^{-t (\lambda_1 - \lambda_2)/\lambda_1}  \int_0^t f(s) ds\\
%& + & 6 |a|^{1/2} \eps \lambda_2  e^{-t (\lambda_1 - \lambda_2)/\lambda_1}\int_0^t  f(s) ds\\
%& + & \int_0^t (e^{A(t-s)} dM^N_s)^2 ds\\
%& \le & e^{-t(\lambda_1 - \lambda_2)/\lambda_1} |a|^{1/2} f(0) + 12 \lambda_2 |a|^{1/2} \eps e^{-t (\lambda_1 - %\lambda_2)/\lambda_1}\int_0^t  f(s) ds\\
%& + & \int_0^t (e^{A(t-s)} dM^N_s)^2 ds.
%\end{eqnarray*}
%So
%\begin{eqnarray*}
%|\tx_{N,1}(t) - \tx_1(t)|
& \le &  e^{-t \eta_1} f_N(0)
 +  L_1\int_0^t e^{-(t-s) \eta_1}(f_N(s))^2 ds\\
& + &  4L_1 e^{-tL} y(0) \int_0^t f_N(s) ds
%& + & \lambda_2 (1+a) \int_0^t (e^{-(t-s) (\lambda_1 - \lambda_2)/\lambda_1} - e^{-(t-s) (\lambda_1 -1)} )(f_N(s))^2 ds\\
%& + & \lambda_1 \int_0^t e^{-(t-s) (\lambda_1 - 1)}f_N(s) \Big (x_2(s) + 3 \Big |x_1(s)- \frac{\lambda_1-1}{\lambda_1} \Big | \Big ) ds\\
%& + & \frac{\lambda_2}{a} \int_0^t (e^{-(t-s) (\lambda_1 - \lambda_2)/\lambda_1} - e^{-(t-s) (\lambda_1 -1)} )f_N(s) \Big (3x_2(s) + a \Big |x_1(s)- \frac{\lambda_1-1}{\lambda_1} \Big | \Big ) ds\\
 +  \Big |\int_0^t e^{-(t-s)\eta_1} dM_{N,1}(s) ds \Big |.
%
%& + & 12 \lambda_1\eps\int_0^t e^{-(t-s) (\lambda_1 - 1)} e^{-s(\lambda_1 - \lambda_2)/\lambda_1} f(s) ds\\
%& + & 12 \lambda_2 \eps e^{-t (\lambda_1 - \lambda_2)/\lambda_1}  \int_0^t f(s)ds\\
%& + & \int_0^t (e^{A(t-s)} dM^N_s)^1 ds\\
%& \le & e^{-t(\lambda_1 - \lambda_2)/\lambda_1} f(0) + 24 \lambda_1 \eps e^{-t (\lambda_1 - \lambda_2)/\lambda_1} %\int_0^t f(s)ds\\
%& + & \int_0^t (e^{A(t-s)} dM^N_s)^1 ds.
\end{eqnarray*}
%Hence, using Lemma~\ref{lem-sol-decay} again,
%\begin{eqnarray*}
%|x_{N,1}(t) - x_1(t)| & \le &  e^{-t(\lambda_1 - \lambda_2)/\lambda_1} f_N(0)
% + 2 \lambda_1 \int_0^t e^{-(t-s) (\lambda_1 - 1)}(f_N(s))^2 ds\\
%& + & 2\lambda_2 \int_0^t (e^{-(t-s) (\lambda_1 - \lambda_2)/\lambda_1} - e^{-(t-s) (\lambda_1 -1)} )(f_N(s))^2 ds\\
%& + & 8 \lambda_1 \eps e^{-t (\lambda_1 - \lambda_2)/\lambda_1} \int_0^t f_N(s) ds\\
%& + & 8\lambda_2 \eps  e^{-t (\lambda_1 - \lambda_2)/\lambda_1} \int_0^t f_N(s) ds\\
%& + & \Big |\int_0^t (e^{A(t-s)} dM_N(s))_1 ds \Big |.
%\end{eqnarray*}

%Let
%$$a_1 = \frac{\lambda_1 (1+\lambda_2)}{(\lambda_1 - \lambda_2)}.$$
%
%and note that $1< a_3 < a_2$.

Let $T_1$ be the infimum of times $t$ such that
$$\Big |\int_0^t e^{-(t-s)\eta_i} dM_{N,i}(s) ds \Big |> \Big (\frac{\omega a_i}{N} \Big )^{1/2} e^{\eta_i},$$
for $i=1$ or $i=2$.
%or
%$$\Big | \int_0^t (e^{-(t-s)\eta_2} dM_{N,2}(s) ds \Big |> \Big (\frac{\omega a_2}{N} \Big )^{1/2} e^{\eta_2}.$$
%
%So, additionally, on the event that
%$$\int_0^t (e^{A(t-s)} dM^N_s)^2 ds \le \Big (\frac{2}{N} \Big )^{-1/2}  \frac{e^{\lambda_1-1}}{|a|} %e^{-\frac{\lambda_1-\lambda_2}{2\lambda_1}t} \sqrt{\omega}$$
%
%$$\int_0^t (e^{A(t-s)} dM^N_s)^2 ds \le N^{-1/2} \frac{8(\lambda_1 +1)}{\sqrt{\lambda_1 - \lambda_2}} %e^{\lambda_1-1}\sqrt{\eps/|a|} e^{-\frac{\lambda_1-\lambda_2}{2\lambda_1}t} \sqrt{\omega}$$
%and
%$$\int_0^t (e^{A(t-s)} dM^N_s)^1 ds \le N^{-1/2} \sqrt{\frac{\lambda_1+1}{\lambda_1-1}} \frac{2e^{\lambda_1-1}}{|a|} %\sqrt{\omega},$$
%
%$$\int_0^t (e^{A(t-s)} dM^N_s)^1 ds \le N^{-1/2} \frac{8 (\lambda_1 +1)}{\sqrt{\lambda_1 - \lambda_2}} e^{\lambda_1-1} %|a|^{-1}\sqrt{\omega},$$
On the event $t < T_1$,%, for $N$ large enough (since the jumps of $\tx_N(t)$ are of size $O(1/N)$),
\begin{eqnarray*}
\frac{|\tx_{N,2}(t) - \tx_2(t)|}{|a|}
%e^{-t(\lambda_1 - \lambda_2)/\lambda_1} |a|^{1/2} f(0) + 12 \lambda_2 |a|^{1/2} \eps e^{-t (\lambda_1 - %\lambda_2)/\lambda_1}\int_0^t  f(s) ds\\
%& + & \Big (\frac{|a|}{2N} \Big )^{1/2} e^{-\frac{\lambda_1-\lambda_2}{2\lambda_1}(t-2)} \sqrt{\omega}
& \le & e^{-t \eta_2} f_N(0) + L_1 \int_0^t  e^{-(t-s) \eta_2} (f_N(s))^2 ds \\
& + & 4L_1 y(0) e^{-t L} \int_0^t f_N(s) ds + \Big (\frac{\omega a_2}{N} \Big )^{1/2} e^{\eta_2},
\end{eqnarray*}
and
\begin{eqnarray*}
|\tx_{N,1}(t) - \tx_1(t)|
%e^{-t(\lambda_1 - \lambda_2)/\lambda_1} f(0) + 24 \lambda_1 \eps e^{-t (\lambda_1 - \lambda_2)/\lambda_1} \int_0^t %f(s)ds\\
& \le & e^{-t\eta_1} f_N(0)
 +  L_1 \int_0^t e^{-(t-s) \eta_1}(f_N(s))^2 ds\\
%& + & \frac{2\lambda_2}{a} \int_0^t (e^{-(t-s) (\lambda_1 - \lambda_2)/\lambda_1} - e^{-(t-s) (\lambda_1 -1)} %)(f^N(s))^2 ds\\
& + & 4L_1 y(0) e^{-t L} \int_0^t f_N(s) ds
%& + & \frac{8\lambda_2 \eps }{a} e^{-t (\lambda_1 - \lambda_2)/\lambda_1} \int_0^t f^N(s) ds\\
 +  \Big (\frac{\omega a_1}{N} \Big )^{1/2} e^{\eta_1}.
% \Big ( \frac{1}{a}  (e^{\lambda_1 -1} - e^{(\lambda_1-\lambda_2)/\lambda_1} ) +  e^{\lambda_1-1} \Big ).
\end{eqnarray*}
%Since $a_1 \ge a_2$, we then have,
Hence, for $t < T_1$, %for $N$ large enough,
\begin{eqnarray*}
f_N(t) &\le& e^{-tL} f_N(0) + L_1 \int_0^t e^{-(t-s)L}(f_N(s))^2 ds \\
&&{} + 4L_1 y(0) e^{-t L} \int_0^t f_N(s) ds + \Big (\frac{2\omega (a_1+a_2)}{N} \Big )^{1/2} e^{\tilde{L}}.
\end{eqnarray*}
%where $\tilde{L} = \max \{\lambda_1-1, \frac{\lambda_1-\lambda_2}{\lambda_1} \}$.
Let $T_2 = \inf \{t: f_N(t) > 10e^{\tilde{L}} \Big (\frac{\omega (a_1+a_2}{N} \Big )^{1/2}\}$. Then on the event $t < T_1 \land T_2$,
\begin{eqnarray*}
f_N(t) &\le& e^{-tL} f_N(0) + 100 e^{2 \tilde{L}} \frac{L_1}{L} \frac{\omega (a_1+a_2)}{N}  + 4L_1 y(0) e^{-t L} \int_0^t f_N(s) ds \\
&&{}+ \Big (\frac{2\omega (a_1+a_2)}{N} \Big )^{1/2} e^{\tilde{L}},
\end{eqnarray*}
so, for $N$ large enough,
\begin{eqnarray*}
f_N(t) \le e^{-tL} f_N(0)  + 4L_1 y(0) e^{-t L} \int_0^t f_N(s) ds + 2\Big (\frac{2\omega (a_1+a_2)}{N} \Big )^{1/2} e^{\tilde{L}}.
\end{eqnarray*}
Letting $g_N(t) = f_N(t) e^{tL}$, we see that, for large $N$, on the event $t < T_1 \land T_2$,
%$$g_N(t) \le g_N(0) + L_1 \int_0^t e^{-sL} g_N(s)^2 ds + 4L_1 y(0) \int_0^t e^{-sL} g_N(s) ds + e^{tL} \Big (\frac{\omega a_2}{N} \Big )^{1/2} e^{\tilde{L}}.$$
$$g_N(t) \le g_N(0)  + 4L_1 y(0) \int_0^t e^{-sL} g_N(s) ds + 2 e^{tL} \Big (\frac{2\omega (a_1+a_2)}{N} \Big )^{1/2} e^{\tilde{L}}.$$
By Gr\"onwall's lemma, for large $N$, for $t < T_1 \land T_2$,
\begin{eqnarray*}
g_N(t)
& \le & \Big (g_N(0) + 2 e^{tL + \tilde{L}} \Big (\frac{2\omega (a_1+a_2)}{N} \Big )^{1/2} \Big ) e^{\frac{4L_1 y(0)}{L}},
% + 18 \lambda_1 \eps \int_0^t g_N(s)e^{-s (\lambda_1 - \lambda_2)/\lambda_1} ds\\
%& + & e^{t (\lambda_1 - \lambda_2)/\lambda_1} \Big (\frac{\omega a_1}{N} \Big )^{1/2} \frac{a+1}{a} e^{\lambda_1 %-1}.
\end{eqnarray*}
so, if $y(0) \le L/8L_1$ and $f_N(0) \le e^{\tilde{L}} (\omega (a_1+a_2)/N)^{1/2}$, then
$$f_N(t) \le 2 \Big (f_N(0) + 2 e^{\tilde{L}} \Big (\frac{2\omega (a_1+a_2)}{N} \Big )^{1/2} \Big ) \le 8 e^{\tilde{L}} \Big (\frac{\omega (a_1+a_2)}{N} \Big )^{1/2}.$$
Fix $0 < t_0 \le e^{\omega/8}$.
Let $T_3 = \inf \{t: f_N(t) > 8e^{\tilde{L}} \Big (\frac{\omega (a_1+a_2)}{N} \Big )^{1/2}\}$. %and let $T = t_0 \land T_1 \land T_2 \land T_3$.
%Further, let $T = t_0 \land T_1 \land T_2$. Then,
%$\P (T= T_2) = 0$, as long as
%$$\frac{2(a+1)}{a} \Big (\frac{\omega a_1}{N} \Big )^{1/2}  e^{\frac{19 \lambda_1^2}{\lambda_1 - \lambda_2}} %e^{t_0(\lambda_1 - \lambda_2)/\lambda_1} \le \eps.$$
%Also,
%for $t_0 \le e^{\omega/16}$, $\P (T= T_1) \le 6 e^{-\omega/16}$, since this probability is at most the sum of $\P ({\mathcal E}_1)$ and $\P ({\mathcal E}_2)$
We now apply Lemma~\ref{lem.mart-dev} to $(\tilde{x}_N(t))$, with matrix $\tilde{A}$ as in~\eqref{eq.matrix} and $\tilde{F}$ as in~\eqref{eq-error}, $B=b/N$, $\sigma = 1$. We take $\eta$ to be equal to $\eta_i$ and $K$ to be equal to $K_i = a_i N^{-1}$, and ${\bf e}$ to be equal to ${\bf e}_i$, for  $i=1,2$, and note that we have shown that $\P (\int_0^t v_i (\tx_N(s),t-s) ds \le K_i \quad i=1,2, \quad \forall t \le  \lceil e^{\omega/8}\rceil \}) =1$. Lemma~\ref{lem.mart-dev} then implies that
$%\P (T = T_1) \le
\P (T_1 \le  \lceil e^{\omega/8}\rceil ) \le 8 e^{-\omega/8}$.
%with $L_1 = a_1 N^{-1}$ and $L_2 = a_2 N^{-1}$,
%noting that we have shown that $\P (\int_0^t v_{t,i} (\tx_N(s),s) ds \le K_i \quad i=1,2, \quad \forall t \le e^{\omega/8}\}) =1$.

Also, since jumps are of size $O(1/N)$, $\P (T_2 \le T_3) = 0$ for large $N$.
Furthermore, we showed above that $\P (T_3 < T_1 \land T_2) = 0$. Then we can only have $T_3 < T_1$ if $T_3 \ge T_2$, and hence $\P (T_3 < T_1) = 0$.
It follows that
$$\P (T_3 \le \lceil e^{\omega/8} \rceil ) \le \P (T_1 \le \lceil e^{\omega/8} \rceil ) + \P (T_3 < T_1 \land T_2 ) + \P (T_2 \le T_3) \le %\P (T_3 \le T_2) + \P (T_3 \le T_1 \land T_2) \le
8 e^{-\omega/8},$$
which completes the proof of Lemma~\ref{lem-lt-approx}.

\begin{remark}
\label{remark-2}
When $a=0$, the matrix $A$ has a repeated eigenvalue, and is not diagonalisable. We instead work with the original variables $x_{N,1}(t)$ and $x_{N,2}(t)$. Letting $f_N(t) = \max \{ |x_{N,1}(t) - x_1(t)|, |x_{N,2}(t)-x_2(t)|\}$, we can show, using Lemmas~\ref{lem-sol-decay-2} and~\ref{lem-sol-decay-3}, and the fact that $\lambda_2e^{-(\lambda_1-\mu_1)t/2} [(\lambda_1-\mu_1)t +\lambda_1/\lambda_2] \le 2(\lambda_1 + \lambda_2)$ for all $t$, that
\begin{eqnarray*}
f_N(t) & \le & 2 e^{-(\lambda_1-\mu_1)t/2} f_N(0) + 4 (\lambda_1 + \lambda_2) \int_0^t e^{-(\lambda_1-\mu_1)(t-s)/2} f_N(s)^2 ds \\
& + & 32 (\lambda_1 + \lambda_2) y(0)e^{-(\lambda_1-\mu_1)t/2} \int_0^t f_N(s) ds + M,
%\\
%& + & |\int_0^t e^{-(\lambda_1-1)(t-s)} dM_{N,1}(s) | + 2|\int_0^t e^{-(\lambda_1-1)(t-s)/2} dM_{N,2}(s) |,
\end{eqnarray*}
where $$M = \max \Big \{\Big |\int_0^t (e^{A(t-s)} dM_N(s))_1 ds \Big |, \Big |\int_0^t (e^{A(t-s)} dM_N(s))_2 ds \Big |  \Big \},$$
\begin{eqnarray*}
e^{uA}
%& = & \begin{pmatrix}  e^{-(\lambda_1-1)t} & -(\lambda_1-1)t e^{-(\lambda_1-1)t} \\ 0 & -e^{-(\lambda_1-1)t} %\end{pmatrix}
%\begin{pmatrix}  e^{-t(\lambda_1 -1)}  & 0 \\ 0 & e^{-t (\lambda_1 - \lambda_2)/\lambda_1} \end{pmatrix}
%\begin{pmatrix}  1 & \frac{1}{a} \\ 0 & -\frac{1}{a} \end{pmatrix} \\
& = & \begin{pmatrix}  e^{-u(\lambda_1 -\mu_1)}  & -(\lambda_1-\mu_1)u e^{-u(\lambda_1 -\mu_1)} \\ 0 & e^{-u (\lambda_1 - \mu_1)} \end{pmatrix},
\end{eqnarray*}
$y(0) = \max \{|x_1 (0) - (\lambda_1-\mu_1)/\lambda_1|, x_2(0)\}$.
The remainder of the analysis can then be carried out in a way analogous to the case with distinct eigenvalues. We apply Lemma~\ref{lem.mart-dev}, taking ${\bf e} = {\bf e}_i$ for $i=1,2$, and noting that ${\bf e}_2$ is an eigenvector of $A$. We further take $B=3/N$, $K_1 = K_2 = N^{-1} (2\eta)^{-1} (\lambda_1 + \mu_1 + \lambda_2 + \mu_2)$, where $\eta = \eta_1 = \eta_2 = \lambda_1 - \mu_1$.
Then we can bound the size of the martingale deviation $M$ by $e^{3(\lambda_1-\mu_1)/2}N^{-1/2} \sqrt{\omega (\lambda_1 + \mu_1+\lambda_2 + \mu_2)/(\lambda_1-\mu_1)}$, and hence we can show that
$$f_N(t) \le 8 e^{3(\lambda_1-\mu_1)/2}N^{-1/2} \sqrt{\omega (\lambda_1 + \mu_1+\lambda_2 + \mu_2)/(\lambda_1-\mu_1)},$$
provided $f_N(0) \le e^{3(\lambda_1-\mu_1)/2}N^{-1/2} \sqrt{\omega (\lambda_1 + \mu_1+\lambda_2 + \mu_2)/(\lambda_1-\mu_1)}$ and $y(0) \le (\lambda_1-\mu_1)/128(\lambda_1 + \lambda_2)$.
\end{remark}

\section{Final Phase}

We will prove the following lemma.
\begin{lemma}
\label{lem-final-phase}
%Let $t_0$ be as in Lemma~\ref{lem-phase1}, let $t_{N,1}$ be as in~(\ref{eq-time-inter}), and let $t_{N,2} = t_0+t_{N,1}$.
For $w \in {\mathbb R}$, let
$t_N(y,w) = (\mu_2-\lambda_2\mu_1/\lambda_1)^{-1} (\log y + \log (1-\lambda_2\mu_1/\lambda_1\mu_2)+w)$.
Let $0 < \varepsilon < 1/4$.
Suppose that $|x_{N,1}(0) - \frac{\lambda_1-\mu_1}{\lambda_1}| \le N^{-\eps}$ and $|x_{N,2}(0) - N^{-1/4}| \le N^{-1/3}$.
Then, as $N \to \infty$,
$$\P (\kappa_N \le t_N(N^{3/4},w)) \to e^{-e^{-w}}.$$
\end{lemma}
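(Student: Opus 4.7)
The plan is to reduce the analysis of $X_{N,2}(t)$ during the final phase to a subcritical linear birth-and-death process (BDP). Once $x_{N,1}(t)$ is close to $(\lambda_1-\mu_1)/\lambda_1$ and $x_{N,2}(t)$ is small, each strain-2 individual produces infections at per-capita rate $\lambda_2(1 - x_{N,1}(t) - x_{N,2}(t)) \approx \lambda_2\mu_1/\lambda_1$ and recovers at per-capita rate $\mu_2$. Since $\lambda_1/\mu_1 > \lambda_2/\mu_2$, this yields a subcritical BDP with decay rate $\eta_2 = \mu_2 - \lambda_2\mu_1/\lambda_1$, whose extinction time starting from $k$ individuals is of the classical Gumbel form $\eta_2^{-1}\bigl(\log(k(1-\lambda_2\mu_1/(\lambda_1\mu_2))) + W\bigr)$.

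Fix $w \in \R$ and set $T^* = t_N(N^{3/4},w) = O(\log N)$. I would apply Lemma~\ref{lem-lt-approx} with $\omega = C_1 \log\log N$ (chosen so that $\lceil e^{\omega/8}\rceil > T^*$), together with Lemma~\ref{lem-sol-decay} applied to the deterministic trajectory starting from $x_N(0)$, to show that the event
\[
E_N = \Bigl\{\sup_{t \le T^*} \bigl( |x_{N,1}(t) - (\lambda_1-\mu_1)/\lambda_1| + x_{N,2}(t) \bigr) \le \epsilon_N \Bigr\}
\]
with $\epsilon_N = C_2 N^{-1/4}$ occurs with probability $1-o(1)$: the deterministic trajectory stays within $O(N^{-\eps}+N^{-1/4})$ of the target values by Lemma~\ref{lem-sol-decay}, while the stochastic deviation is bounded by $O(\sqrt{(\log\log N)/N}) = o(N^{-1/4})$. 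On a common probability space, I would then build $X_{N,2}(t)$ alongside two linear BDPs $Z_N^-(t)$ and $Z_N^+(t)$, both starting from $X_{N,2}(0)$, with common per-individual death rate $\mu_2$ and per-individual birth rates $\beta^\pm = \lambda_2\mu_1/\lambda_1 \pm \lambda_2\epsilon_N$, using standard coupled thinning of independent Poisson clocks so that $Z_N^-(t) \le X_{N,2}(t) \le Z_N^+(t)$ up to the first failure of the environment bound. For $N$ large, $\beta^+ < \mu_2$, so both $Z_N^\pm$ are subcritical.

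For a subcritical linear BDP with per-individual rates $\beta < \delta$ starting from $k$ individuals, the extinction time $T_\text{ext}$ satisfies $\P(T_\text{ext} \le t) = (1-p(t))^k$ with $p(t) = (\delta-\beta) e^{-(\delta-\beta)t}/(\delta - \beta e^{-(\delta-\beta)t})$. Evaluated at $t = T^*$ with $k = X_{N,2}(0) = N^{3/4}(1 + O(N^{-1/12}))$ and $\beta = \beta^\pm$, and using $\epsilon_N \log N = O(N^{-1/4} \log N) = o(1)$ to absorb the rate perturbations, a direct calculation yields $\P(T_\text{ext}^\pm \le T^*) \to \exp(-e^{-w})$. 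Combining with the sandwich inclusions $\{Z_N^+(T^*) = 0\} \cap E_N \subseteq \{\kappa_N \le T^*\} \subseteq \{Z_N^-(T^*) = 0\} \cup \bar E_N$, we obtain
\[
\P(T_\text{ext}^+ \le T^*) - \P(\bar E_N) \le \P(\kappa_N \le T^*) \le \P(T_\text{ext}^- \le T^*) + \P(\bar E_N),
\]
which squeezes $\P(\kappa_N \le T^*)$ to $e^{-e^{-w}}$.

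The main obstacle is constructing a coupling that remains valid over the full $O(\log N)$ window: the deterministic $x_2(t) \sim N^{-1/4} e^{-\eta_2 t}$ falls well below $1/N$ before $T^*$, so the intermediate-phase approximation cannot track $x_{N,2}$ precisely near extinction, and it is the BDP sandwich rather than the ODE that must capture the integer-valued fluctuations. This forces $\epsilon_N$ to be simultaneously large enough to accommodate the intrinsic $O(N^{-1/4})$ size of $x_{N,2}(t)$ and small enough so that $\epsilon_N\log N = o(1)$, ensuring the rate perturbation in $\beta^\pm$ does not shift the Gumbel centering; fortunately $\epsilon_N = CN^{-1/4}$ meets both requirements.
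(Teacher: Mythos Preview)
Your approach is essentially the same as the paper's: control the environment via Lemma~\ref{lem-sol-decay} and Lemma~\ref{lem-lt-approx}, sandwich $X_{N,2}$ between two subcritical linear birth--death processes with perturbed birth rates, and squeeze using the explicit extinction-time formula (packaged in the paper as Lemma~\ref{extinction}). One minor slip: since the hypothesis only gives $|x_{N,1}(0)-(\lambda_1-\mu_1)/\lambda_1|\le N^{-\eps}$ with $\eps<1/4$, Lemma~\ref{lem-sol-decay} yields a deterministic environment bound of order $N^{-\eps}$, not $N^{-1/4}$, so you should take $\epsilon_N = C N^{-\eps}$ rather than $C N^{-1/4}$; the argument still goes through since $N^{-\eps}\log N \to 0$ for any $\eps>0$, and this is exactly the perturbation size the paper uses.
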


Let $(Y(t))_{t\geq0}$ be a subcritical linear birth and death chain, with birth and death rates $\lambda$ and $\mu$, where $\mu>\lambda$.
%and  starts at $y^N$, where $y^N(\mu-\lambda)  \rightarrow \infty$.
Let $T^Y=\inf\{ t\geq 0: Y(t)=0\}$.
It is a well known fact, see for example Renshaw (2011), that, for $t\geq 0$,
\begin{eqnarray*}
\P\left(T^Y\le t \right) &=& \P\left( Y(t) = 0 \right)
= \left( 1 -\frac{(\mu-\lambda)e^{-(\mu-\lambda)t}}{\mu-\lambda e^{-(\mu-\lambda)t}}  \right)^{Y(0)}.
\end{eqnarray*}
Assume that $Y(0) = y$, and let
$$t(y,w) = \frac{\log y + \log (\mu - \lambda) - \log \mu + w}{\mu - \lambda},$$
for $y$ and $w$ such that this is positive. Then $e^{-(\mu - \lambda) t(y,w)} = \mu e^{-w}/(\mu - \lambda) y$, so
\begin{eqnarray*}
\P\left(T^Y\le t(y,w) \right) &=& \left ( 1- \frac{e^{-w}/y}{1 - \lambda  e^{-w}/(\mu - \lambda) y} \right )^y.
\end{eqnarray*}

Now, consider a sequence $(Y_N(t))$ of linear birth and death chains with birth rate $\lambda = \lambda (N)$ and death rate $\mu = \mu (N)$, where $\mu (N) > \lambda (N)$. Assume further that $Y_N(0) = y(N)$, where $y (\mu - \lambda) \to \infty$.
Then
\begin{eqnarray*}
\P\left(T^{Y_N}\le t(y(N),w) \right) &=& \left ( 1- \frac{e^{-w}/y(N)}{1 - \lambda  e^{-w}/(\mu - \lambda) y(N)} \right )^{y(N)}\\
& = & \left ( 1- \frac{e^{-w}}{y(N) - \lambda  e^{-w}/(\mu - \lambda)}\right )^{y(N)} \to e^{-e^{-w}},
\end{eqnarray*}
as $N \to \infty$. In other words,
the following result holds for the asymptotic distribution of the extinction times of a sequence of subcritical linear birth and death chains.
% see e.g.~\cite{grimmett,sagitov}, and also see~\cite{luczakbrightwell} for an application of this in the context of the simple logistic SIS epidemic.

\begin{lemma}
\label{extinction}
Let $(Y_N(t))$ be a sequence of subcritical linear birth and death chains with birth and death rates $\lambda (N)$ and $\mu (N)$, respectively,
% Suppose that the processes are subcritical, that is
where $\mu (N) >\lambda (N)$. Suppose further that $Y_N(0)=y(N)$, where $y(\mu - \lambda)  \rightarrow \infty$.  Let $T^{Y_N}=\inf\{ t\geq 0: Y_N(t)=0\}$. Then, as $N \to \infty$,
\[ (\mu (N)-\lambda(N)) T^{Y_N} - \left(\log y(N) + \log (\mu (N)-\lambda (N)) - \log \mu (N) \right) \rightarrow G,\]
in distribution, where $G$ has a standard Gumbel distribution.
%Furthermore,
%\[ \E T^{Y^N} = \frac{\log y^N  + \gamma + \log (1-\lambda\slash \mu) }{\mu - \lambda} + o(1), \]
%as $N\rightarrow \infty$, where $\gamma$ is the Euler-Mascheroni constant.
\end{lemma}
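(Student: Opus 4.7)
The proof is essentially already carried out in the computation presented immediately before the statement of the lemma, and my plan is simply to formalize that calculation and verify the limit rigorously.

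First, I would record the explicit formula for the extinction time of a subcritical linear birth and death chain. Using the standard generating function method (or Kolmogorov backward equations, or by writing the chain as a sum of $y$ independent one-particle chains), one obtains
\[
\P(T^Y \le t \mid Y(0) = y) = \left(1 - \frac{(\mu-\lambda) e^{-(\mu-\lambda)t}}{\mu - \lambda e^{-(\mu-\lambda)t}}\right)^{y},
\]
which is the formula quoted from Renshaw (2011) in the excerpt. Next, I would substitute $t = t(y(N),w)$, where $t(y,w) = (\log y + \log(\mu-\lambda) - \log \mu + w)/(\mu - \lambda)$, so that $e^{-(\mu-\lambda)t(y,w)} = \mu e^{-w}/((\mu-\lambda) y)$. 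A direct algebraic simplification then yields
\[
\P\bigl(T^{Y_N} \le t(y(N),w)\bigr) = \left(1 - \frac{e^{-w}/y(N)}{1 - \lambda e^{-w}/((\mu-\lambda) y(N))}\right)^{y(N)}.
\]

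The next step uses the hypothesis $y(\mu-\lambda) \to \infty$. Under this assumption, $\lambda e^{-w}/((\mu-\lambda) y(N)) \to 0$, so the denominator in the fraction converges to $1$ and the expression inside the parentheses equals $1 - e^{-w}/y(N) + o(1/y(N))$. Since $y(\mu-\lambda) \to \infty$ also forces $y(N) \to \infty$ (as $\mu$ is bounded above, else extinction is instantaneous), the standard limit $(1 - a_N/n_N)^{n_N} \to e^{-a}$ whenever $a_N \to a$ and $n_N \to \infty$ gives
\[
\P\bigl(T^{Y_N} \le t(y(N),w)\bigr) \to e^{-e^{-w}}
\]
for every fixed $w \in \mathbb{R}$.

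Finally, since $e^{-e^{-w}}$ is the CDF of the standard Gumbel distribution, which is continuous on all of $\mathbb{R}$, pointwise convergence of distribution functions is equivalent to convergence in distribution; this yields the claim that $(\mu-\lambda) T^{Y_N} - (\log y(N) + \log(\mu-\lambda) - \log \mu) \to G$ in distribution. There is no real obstacle here: the only technical points are the algebraic rearrangement to isolate the $(1 - \text{small}/y)^y$ form and the verification that the correction term in the denominator is negligible, both of which are immediate from $y(\mu-\lambda) \to \infty$.
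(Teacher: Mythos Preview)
Your proposal is correct and follows exactly the same approach as the paper, which in fact presents the entire computation in the paragraph immediately preceding the lemma statement; the lemma is simply a summary of that calculation. The only minor quibble is your parenthetical justification for $y(N)\to\infty$: the reason is not that unbounded $\mu$ would make extinction instantaneous, but rather that in all applications of the lemma the rates $\lambda(N),\mu(N)$ are bounded (so $y(\mu-\lambda)\to\infty$ forces $y\to\infty$); the paper leaves this implicit as well.
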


{\bf Proof of Lemma~\ref{lem-final-phase}.}\
%Note that, by Lemmas~\ref{lem-phase1},~\ref{lem-sol-decay} and~\ref{lem-sol-decay-1},
%$$t_{N,2} = \frac{\lambda_1}{4(\lambda_1-\lambda_2)} \log N (1+o(1)).$$
%By definition, $x_2(t_{N,2}) = N^{-1/4}$. Also, using Lemmas~\ref{lem-phase1} and~\ref{lem-sol-decay}, if $N$ is large enough, then for all $t \ge t_{N,2}$
%$$-N^{-1/5} \le x_1(t) - \frac{\lambda_1 -1}{\lambda_1} \le N^{-1/5}.$$
Let $x_1(0) = x_{N,1}(0)$ and $x_2(0) = x_{N,2} (0)$, so $|x_1(0) - (\lambda_1-\mu_1)/\lambda_1| \le N^{-\eps}$ and $x_2 (0) \le 2N^{-1/4}$.

%As earlier, let $b = (|a|+1)/|a|$.
By Lemma~\ref{lem-sol-decay-1}, for large enough $N$, for all $t \ge 0$, $x_2 (t) \le 4 N^{-1/4}$.
Also, by Lemma~\ref{lem-sol-decay}, if $N$ is large enough,
for all $t \ge 0$,
$|x_1(t) -(\lambda_1-\mu_1)/\lambda_1| \le 4 N^{-\eps}$.

Let $Z_N$ be a linear birth and death process defined as follows. The death rate is $\mu_2$, the birth rate is
$$\lambda_2 \Big (1 - \frac{\lambda_1 - \mu_1}{\lambda_1} + 5N^{-\eps} \Big ) = \frac{\lambda_2\mu_1}{\lambda_1} + 5 \lambda_2 N^{-\eps},$$
and $Z_N(0) = N^{3/4} + N^{2/3}$. By Lemma~\ref{extinction}, as $N \to \infty$, in distribution,
\begin{eqnarray*}
\lefteqn{ \Big (\mu_2-\frac{\lambda_2\mu_1}{\lambda_1} - 5\lambda_2 N^{-\eps} \Big ) T^{Z_N}}\\
&&{} - \left(\log (N^{3/4} + N^{2/3}) + \log \Big (\mu_2-\frac{\lambda_2\mu_1}{\lambda_1} - 5\lambda_2 N^{-\eps}  \Big ) - \log \mu_2 \right)
\rightarrow G,
\end{eqnarray*}
where $G$ has a standard Gumbel distribution, and so, for $w \in {\mathbb R}$,
$$\P  \Big (T^{Z_N} \le \frac{\log (N^{3/4} + N^{2/3}) + \log \Big (\mu_2-\frac{\lambda_2\mu_1}{\lambda_1} - 5\lambda_2 N^{-\eps} \Big ) - \log \mu_2 +w}{\mu_2-\frac{\lambda_2\mu_1}{\lambda_1} - 5\lambda_2 N^{-\eps}}\Big ) \to e^{-e^{-w}}.$$
This means that
$$\P  \Big (T^{Z_N} \le \frac{\log N^{3/4} + \log \Big (\mu_2-\frac{\lambda_2\mu_1}{\lambda_1}\Big )- \log \mu_2 +w+o(1)}{\mu_2-\frac{\lambda_2\mu_1}{\lambda_1}}\Big ) \to e^{-e^{-w}},$$
and so also
$$\Big (\mu_2-\frac{\lambda_2\mu_1}{\lambda_1}  \Big ) T^{Z_N} - \left(\log N^{3/4} + \log \Big (\mu_2-\frac{\lambda_2\mu_1}{\lambda_1} \Big ) - \log \mu_2  \right) \rightarrow G.$$

Let $W_N$ be a linear birth and death process defined as follows. The death rate is $\mu_2$, the birth rate is
$$\lambda_2 \Big (1 - \frac{\lambda_1 - \mu_1}{\lambda_1} - 6 N^{-\eps} \Big ) = \frac{\lambda_2\mu_1}{\lambda_1} - 6\lambda_2N^{-\eps},$$
and $W_N(0) = N^{3/4} - N^{2/3}$. By Lemma~\ref{extinction}, in distribution,
\begin{eqnarray*}
\lefteqn{\Big (\mu_2-\frac{\lambda_2\mu_1}{\lambda_1} + 6\lambda_2 N^{-\eps} \Big ) T^{W_N}
 -  \log (N^{3/4} (1- N^{-1/12}))} \\
&&{} - \log \Big (\mu_2-\frac{\lambda_2\mu_1}{\lambda_1}  +  6\lambda_2 N^{-\eps}  \Big ) + \log \mu_2
 \rightarrow G,
\end{eqnarray*}
%tends to $G$ in distribution, as $N\rightarrow \infty$,
where $G$ has a standard Gumbel distribution. As above, it follows also that
$$ \Big (\mu_2-\frac{\lambda_2\mu_1}{\lambda_1}  \Big ) T^{W_N} - \left(\log N^{3/4} + \log \Big (\mu_2-\frac{\lambda_2\mu_1}{\lambda_1} \Big ) - \log \mu_2 \right) \rightarrow G.$$

%Then $dx_2(t)/dt < 0$ and so also
%$x_2(t) \le N^{-1/4}$ for all $t \ge t_{N,2}$.

Let $f_N(t)$ be as in Lemma~\ref{lem-lt-approx}, and let $E_t$ be the event that $f_N(s) \le N^{-1/3}$ for all $s < t$. For $N$ large enough, on the event $E_t$, for all $s < t$,
$$-5N^{-\eps} \le x_{N,1}(s) - \frac{\lambda_1 -1}{\lambda_1} \le 5N^{-\eps},$$
and, furthermore,
$$0 \le x_{N,2}(s) \le 4N^{-1/4} + N^{-1/3} \le N^{-\eps}.$$
Therefore, on the event $E_t$, we can couple $Z_N$, $W_N$ and $X_{N,2}$ in such a way that, for $s  \le t$,
$$W_N(s) \le X_{N,2}(s) \le Z_N(s).$$
It follows that, on the event $E_t$, $T^{Z_N} \le t$ implies $\kappa_N \le t$, and $\kappa_N \le t$ implies $T^{W_N} \le t$. Also, by Lemma~\ref{lem-lt-approx} (with any $\omega = \omega (N)$ such that $\omega (N)/N^{1/3} \to 0$, $\P (\overline{E_t}) \to 0$ as long as $t \le e^{\omega/8}$. So, choosing $\omega (N) = N^{1/4}$, for $t \ge 0$,
$$\P (\{T^{Z_N} \le t \} \cap E_t) \le \P (\{\kappa_N \le t \} \cap E_t) \le \P (\{T^{W_N} \le t \} \cap E_t).$$
%Letting $t_{N,3}(w) = t_{N,2}+(1-\lambda_2/\lambda_1)^{-1} (\log N^{3/4} + \log (1-\lambda_2/\lambda_1)+w)$,
Hence, for any fixed $w$,
$$\P (\kappa_N \le t_N(N^{3/4},w) ) \le \P (T^{W_N} \le t_N(N^{3/4},w)) + \P (\overline{E_{t_N(N^{3/4},w)}}) \to e^{-e^{-w}},$$
and
$$\P (\kappa_N \le t_N(N^{3/4},w) ) \ge
%\P (\{T^{Z_N} \le t_N(N^{3/4},w) \} \cap E_t)  \ge
\P (T^{Z_N} \le t_N(N^{3/4},w)) - \P (\overline{E_{t_N(N^{3/4},w)}}) \to e^{-e^{-w}},$$
which completes the proof of Lemma~\ref{lem-final-phase}.
\proofbox

\section{Proof of Theorem~\ref{teomy}}

\label{sec:proof}

%Now, if $X_N(0)=({\lceil \alpha N \rceil}, \lceil \beta N\rceil)$ and $x(0) = (\alpha, \beta)$, then
%there exists $N_0$ such that $\|x(0) - X_N(0)/N\|_1 \le \delta (N)$ for all $N \ge N_0$, and the conclusion follows.

%The aim of this section is to show that $x_N(t)=X_N(t)/N$ stays concentrated around the solution $x(t)$ of the %deterministic system (\ref{eq1}) for a long period of time after $t_0$, until time $t_0+t_{N,1}$.

%For the rest of this section, we will assume without loss of generality that $t_0 = 0$ in order to lighten the %notation. We will take $x(0) = x(t_0)$ from the previous section as initial condition for~(\ref{eq1}). Also, the %value $x_N(t_0)$ will be taken as the initial condition $x_N(0)$ for the scaled random process.

By assumption, $x_N(0) = (\alpha_N, \beta_N)^T$, where $\alpha_N \to \alpha$ and $\beta_N \to \beta$ as $N \to \infty$. We let $x(0) = x_N(0)$ as the initial condition for~$(\ref{eq.det-comp})$.
By Theorem~\ref{thm.zeeman} and the discussion following it, if $\lambda_1/\mu_1 > \lambda_2/\mu_2$ and $\lambda_1/\mu_1 > 1$, then the fixed point $x^* = \left( \frac{\lambda_1-\mu_1}{\lambda_1},0 \right)^T$ of~$(\ref{eq.det-comp})$ is asymptotically stable, so that there exists $t_0 > 0$ such that, with $y(t)= e^{Lt}\max \{\|\tilde{x}_1 (t)\|, \tilde{x}_2(t) |a|^{-1}$, $L = \min \{\eta_1, \eta_2\}$, as defined in Section~\ref{subs:conv-det},
%given $\eps >0$,
%there exists $\delta = \delta (\eps) >0$ such that every solution $(x_t)_{t \geq 0 }$ of (\ref{eq1}) with $\| x_0  %-x^* \| <\delta$ satisfies $\| x_t - x^* \|<\eps$ for all $t\geq 0$.
%Furthermore,
%if $x(0) = (\alpha_N, \beta_N)^T$, then $x(t) \to x^*$ as $t \to \infty$, and, so, for such a solution $(x(t))$, there exists $t_0 = t_0 (\alpha_N, \beta_N, %\eps)$ such that
%And, for the solution $(x_t)_{t\geq 0}$ of (\ref{eq1}) starting in $(\alpha,\beta)$,  there
%exists $t_0>0$ such that
$|y(t_0)| \le L/8L_1$. % for all $t \geq t_0$.\\
%(The $\|\cdot \|_1$ and $\|\cdot \|_2$ norms are equivalent in ${\mathbb R}^2$.)
It is also not hard to see that we can choose a finite $t_0$ that works for every value of $N$, for $N$-dependent initial conditions as above.

Let $t_N = \inf \{t \ge t_0: x_2(t) \le N^{-1/4}\}$. Lemma~\ref{lem-sol-decay-1} implies that, as $N \to \infty$,
$$t_N = \frac{1}{4\eta_2} \log N + O(1).$$
It then also follows from Lemmas~\ref{lem-sol-decay} and~\ref{lem-sol-decay-1} that there exists $0 < \varepsilon < 1/4$ such that, if $N$ is large enough, then
$$\Big |x_1 (t_N) - \frac{\lambda_1 -\mu_1}{\lambda_1} \Big | \le \frac12 N^{-\varepsilon}.$$

By Lemma~\ref{lem-phase1} with $\delta = N^{-5/12}$ and Lemma~\ref{lem-lt-approx} with $\omega = N^{1/4}$, if $N$ is large enough, then
\begin{eqnarray}
\sup_{t \le t_N} |x_{N,1} (t) - x_1(t)| & \le & \frac12 N^{-1/3} \le \frac12 N^{-\varepsilon} \nonumber \\
\sup_{t \le t_N} |x_{N,2} (t) - x_2(t)| & \le & \frac12 N^{-1/3}, \label{final-phase-ic}
\end{eqnarray}
with probability at least $1-e^{-N^{1/12}}$.

On the event that~\eqref{final-phase-ic} holds, we can use Lemma~\ref{lem-final-phase} with $x_{N,1}(t_N)$ and $x_{N,2}(t_N)$ as initial values, since these values satisfy its hypotheses in this case.

\smallskip

By~\eqref{time}, the length $t_0$ of the first phase can be written as
$$\frac{\lambda_2}{\mu_2\lambda_1 - \mu_1 \lambda_2} \log (x_1(t_0)/x_1(0)) - \frac{\lambda_1}{\mu_2\lambda_1 - \mu_1\lambda_2} \log (x_2(t_0)/x_2(0)),$$
and the length $t_{N,1}-t_0$ of the second phase can be written as
$$\frac{\lambda_2}{\mu_2\lambda_1 - \mu_1 \lambda_2} \log (x_1(t_{N,1})/x_1(t_0)) - \frac{\lambda_1}{\mu_2\lambda_1 - \mu_1\lambda_2} \log (x_2(t_{N,1})/x_2(t_0)).$$
On the event ${\mathcal E}(t_N)$ that~\eqref{final-phase-ic} holds, by Lemma~\ref{lem-final-phase}, the length of the third phase is
$$\frac{\lambda_1}{\mu_2\lambda_1 - \mu_1\lambda_2} \left(\log N^{3/4} + \log \Big (1-\frac{\mu_1\lambda_2}{\lambda_1\mu_2} \Big )  \right) + \frac{\lambda_1}{\mu_2\lambda_1 - \mu_1\lambda_2} G_N,$$
where $G_N$ converges in distribution to a standard Gumbel variable $G$.

It follows that, on the event ${\mathcal E}(t_N)$, the total time $\kappa_N$ until the extinction of $X_{N,2}$ is
\begin{eqnarray*}
\frac{\lambda_2}{\mu_2\lambda_1 - \mu_1 \lambda_2} \log (x_1(t_{N,1})/\alpha_N) - \frac{\lambda_1}{\mu_2 \lambda_1 - \mu_1 \lambda_2} \log (N^{-1/4}/\beta_N )\\
+\frac{\lambda_1}{\mu_2\lambda_1 - \mu_1 \lambda_2} (\log N^{3/4}
+ \log \Big (1-\frac{\mu_1\lambda_2}{\mu_2\lambda_1} \Big )  ) + \frac{\lambda_1}{\mu_2\lambda_1 - \mu_1\lambda_2} G_N.
\end{eqnarray*}
Since $\P ({\mathcal E}(t_{N,1})) \to 1$, $\alpha_N \to \alpha$, $\beta_N \to \beta$ as $N \to \infty$, and, for large $N$, $|x_1(t_{N,1}) - (\lambda_1-\mu_1)/\lambda_1 | \le N^{-\eps}$, for an $\eps > 0$, we conclude
\begin{eqnarray}
\label{extinction-formula}
\frac{\mu_2\lambda_1 - \mu_1 \lambda_2}{\lambda_1}\kappa_N - \Big ( \log N \beta \Big (1 -\frac{\mu_1\lambda_2}{\mu_2\lambda_1} \Big )   + \frac{\lambda_2}{\lambda_1} \log \Big (  \frac{1 - \mu_1/\lambda_1}{\alpha} \Big )   \Big ) \to G,
\end{eqnarray}
in distribution, where $G$ is a standard Gumbel, %Formula~\eqref{extinction-formula} is precisely the formula from Theorem~\ref{teomy}, %in the case $\mu_1 = \mu_2 = 1$,
so our proof of the first part of Theorem~\ref{teomy} is complete.

The second part of Theorem~\ref{teomy} follows using Theorem~\ref{teomalwina}, since, after the extinction of the weaker species, the stronger species evolves as a single supercritical logistic epidemic, and we have shown that $\kappa_N$ is with high probability negligible in comparison with $\tau_N$.

\section{Near-critical phenomena}

\label{sec:critical}

In this section, we will show that Theorem~\ref{teomy} can be extended to near-criticality.
As a proof of concept, we consider the following special case where $\mu_1 = \mu_2 = 1$, $\lambda_1 > \lambda_2 > 1$, and
$\lambda_1 - \lambda_2 = \lambda_1(N) - \lambda_2(N) \to 0$ as $N \to \infty$ (while $\lambda_1 = \lambda_1(N)$ may or may not tend to $1$).
This may for example model a real-world scenario where a slightly more infectious strain emerges during an outbreak, for instance, via a mutation, and we want to know the time taken for it to supplant the weaker one in the population.

%For instance, this is a model of an epidemic where a slightly more infective
%strain emerges via mutation, and we are interested in the time taken for the new strain to supplant the weaker one in the population.
We assume that $(\lambda_1-\lambda_2)(\lambda_1-1)^{-1} \to 0$. We further assume that
%the following conditions are satisfied:
%\begin{enumerate}
%\item
%
%$\frac{\lambda_1-\lambda_2}{\lambda_1-1} \log^2 (\lambda_1-1)^{-1}\to 0$, and
%\item
%$N(\lambda_1-\lambda_2)^4 (\lambda_1-1)^{-2}  \to \infty$ fast enough that also
%
%$$N(\lambda_1-\lambda_2)^4 (\lambda_1-1)^{-2}/\log \Big ( \frac{\lambda_1-1}{\lambda_1-\lambda_2}\Big ) \to \infty.$$
$$N(\lambda_1-\lambda_2)^3 (\lambda_1-1)^{-1} (\log \log (N(\lambda_1-\lambda_2)^2))^{-1} \to \infty.$$
%
%and
%$$N(\lambda_1-\lambda_2)^4 (\lambda_1-1)^{-2}/\log \log N(\lambda_1-\lambda_2)^2 \to \infty.$$
We believe that the last condition is not best possible, and that it is only necessary to have $N(\lambda_1-\lambda_2)^2 \to \infty$ for the formula in Theorem~\ref{thm.extinction-nc} to hold.
% are simply an artefact of our proof technique, instead of corresponding to real phenomena.

Note that under our assumptions $(\lambda_2 -1)(\lambda_1-1)^{-1} \to 1$.
% and also $\lambda_2 > 1$ for $N$ large enough.
%\end{enumerate}
Also, the quantity $a$ defined in~\eqref{eq-def-a} satisfies $0 < 1 \le a$ for $N$ large enough, and converges to 1 as $N \to \infty$.
As before, we assume that $X_{N,1}(0) = \alpha_N$, $X_{N,2}(0) = \beta_N$, where $\alpha_N \to \alpha > 0$ and $\beta_N \to \beta > 0$.
%We also assume that $\alpha + \beta > (\lambda_1 -1)/\lambda$.
%GB  Condition added above: analysis requires that we start above the fixed point.
We will prove the following result, which is an extension of Theorem~\ref{teomy} to this case.

\begin{theorem}
\label{thm.extinction-nc}
Under the above assumptions,
$$\kappa_N = \frac{\lambda_1}{\lambda_1-\lambda_2} \Big (  \log \Big (N\frac{(\lambda_1-1)(\lambda_1-\lambda_2)}{\lambda_1^2}
\frac{\beta}{\alpha} \Big )  + G_N \Big ),$$
%GB -- formula adjusted
where $G_N$ is a random variable converging in distribution to a Gumbel random variable $G$.
\end{theorem}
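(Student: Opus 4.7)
The plan is to adapt the three-phase strategy used to prove Theorem~\ref{teomy}, recalibrating every threshold and error tolerance to the near-critical scaling. Under the standing hypotheses, $L := \eta_2 = (\lambda_1-\lambda_2)/\lambda_1 \to 0$ while $\eta_1 = \lambda_1 - 1$ satisfies $\eta_2/\eta_1 \to 0$, so $a \to 1$ and $L_1$ stays bounded; in Lemma~\ref{lem-lt-approx} one therefore has $a_1+a_2 \asymp 1/L$ and $e^{\tilde{L}} = O(1)$. The whole problem lives on the long timescale $\kappa_N \sim \lambda_1 (\log N)/(\lambda_1-\lambda_2)$, so the three phases must collectively cover this horizon while keeping the stochastic deviations under control.

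\textbf{Initial phase.} Let $t_0 = t_0(N)$ be the first time at which the deterministic orbit $x(t)$ starting at $(\alpha_N,\beta_N)$ enters the basin where Lemma~\ref{lem-sol-decay-1} applies, i.e.\ $y(t_0) \le L/(8L_1)$. By the exponential rate of attraction this requires $t_0 = O(L^{-1}\log(1/L))$. A single application of Lemma~\ref{lem-phase1} on $[0,t_0]$ is too lossy because of the factor $e^{(5\lambda_1+1)t_0}$; instead, I would partition $[0,t_0]$ into $O(t_0)$ windows of unit length and chain together applications of Lemma~\ref{lem-phase1}, so that the terminal deviation is still a negative power of $N$ (absorbed by the condition $N(\lambda_1-\lambda_2)^2 \to \infty$ implied by~\eqref{cond-crit-win}) and the initial condition for Lemma~\ref{lem-lt-approx} at time $t_0$ satisfies its hypothesis.

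\textbf{Intermediate phase.} Choose a threshold $K_N \to \infty$ with $K_N = o(N)$ and $K_N L \to \infty$ (needed for the Gumbel limit in the final phase), and define $t_{N,1}$ as the deterministic hitting time $\inf\{t \ge t_0 : x_2(t) = K_N/N\}$. Apply Lemma~\ref{lem-lt-approx} starting at $t_0$ with $\omega \asymp \log\log(N(\lambda_1-\lambda_2)^2)$ large enough that $\lceil e^{\omega/8}\rceil \ge t_{N,1}$. The resulting bound $f_N \lesssim \sqrt{\omega/(NL)}$ must simultaneously be small compared to $K_N/N$ (to preserve the $X_{N,2}$ accuracy) and to $\lambda_1 - 1$ (to preserve accuracy around the fixed point $((\lambda_1-1)/\lambda_1,0)^T$ whose first coordinate itself shrinks when $\lambda_1 \to 1$); these two constraints together with $K_N L \to \infty$ collapse to precisely the hypothesis~\eqref{cond-crit-win}. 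Using the conservation identity~\eqref{time}, the duration evaluates explicitly as
\begin{equation*}
t_{N,1} = \frac{\lambda_1}{\lambda_1-\lambda_2}\Bigl[\log N - \log K_N + \log\bigl((\lambda_1-1)/\lambda_1\bigr) + \log(\beta/\alpha) + o(1)\Bigr].
\end{equation*}

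\textbf{Final phase and conclusion.} With $(X_{N,1},X_{N,2})(t_{N,1}) = (N(\lambda_1-1)/\lambda_1, K_N)(1 + o(1))$ with high probability, repeat the sandwich argument of Lemma~\ref{lem-final-phase}: bound $X_{N,2}$ below and above by linear birth-and-death chains of death rate $1$ and birth rate $\lambda_2/\lambda_1 \pm o(1)$, both starting near $K_N$ with drift $(\lambda_1-\lambda_2)/\lambda_1\,(1+o(1))$. Lemma~\ref{extinction} applies because $K_N L \to \infty$, contributing
\begin{equation*}
\frac{\lambda_1}{\lambda_1-\lambda_2}\Bigl[\log K_N + \log\bigl((\lambda_1-\lambda_2)/\lambda_1\bigr) + G_N\Bigr].
\end{equation*}
Summing the three phase durations, the $\pm\log K_N$ terms cancel and the formula claimed in Theorem~\ref{thm.extinction-nc} emerges after combining the two logarithms into $\log((\lambda_1-1)(\lambda_1-\lambda_2)/\lambda_1^2)$.

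\textbf{Main obstacle.} The intermediate phase is the delicate step. The Lemma~\ref{lem-lt-approx} bound $f_N \lesssim \sqrt{\omega/(NL)}$ degrades with the small eigenvalue $L$, and the relative tolerance on $X_{N,1}$ near the fixed point is only $\lambda_1 - 1$, which itself may be vanishing; balancing these tolerances against the unavoidable $\omega \asymp \log\log(N(\lambda_1-\lambda_2)^2)$ is exactly what forces the technical condition~\eqref{cond-crit-win}. As noted in Subsection~\ref{sub.discussion}, a more refined subdivision of the intermediate phase into sub-intervals on which the linearisation is applied afresh should allow one to relax the hypothesis to the expected sharp threshold $N(\lambda_1-\lambda_2)^2 \to \infty$, but I would not pursue this refinement here.
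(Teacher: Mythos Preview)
Your three-phase outline has the right shape, but the initial phase contains a genuine gap that cannot be patched as written. Chaining Lemma~\ref{lem-phase1} over unit windows gives no improvement over a single application: each window multiplies the accumulated deviation by $e^{5\lambda_1+1}>1$, so after $t_0$ windows the error is still of order $N^{-1/2}e^{(5\lambda_1+1)t_0}$, exactly as if you had applied the lemma once on $[0,t_0]$. In the near-critical regime condition~\eqref{cond-crit-win} permits $\lambda_1-\lambda_2$ to be a negative power of $N$, so $t_0\asymp L^{-1}\log(1/L)$ is a positive power of $N$ and $e^{O(t_0)}$ swamps any polynomial in $N$. The ``absorbed by $N(\lambda_1-\lambda_2)^2\to\infty$'' claim is therefore false. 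The paper handles this phase by an entirely different idea: it decouples the two-timescale dynamics, controlling the \emph{sum} $X_{N,1}+X_{N,2}$ by sandwiching it between two one-dimensional SIS epidemics (Lemmas~\ref{lem.stoch-dom},~\ref{lem.supercrit-log},~\ref{lem.supercrit-log-1}), which equilibrate on the fast $(\lambda_1-1)^{-1}$ scale, while showing via a direct martingale bound (Lemma~\ref{lem.ratio}) that the \emph{ratio} $X_{N,1}/X_{N,2}$ barely moves over a time $(\lambda_1-1)^{-1/2}(\lambda_1-\lambda_2)^{-1/2}$ that is long enough for the sum to relax.

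The intermediate phase as you describe it also does not go through with Lemma~\ref{lem-lt-approx} off the shelf. First, the hypothesis $y(0)\le L/(8L_1)$ with $L=\eta_2\to 0$ forces $x_2$ already to be $O(\lambda_1-\lambda_2)$ before the lemma applies, which pushes essentially all of the descent back into the broken initial phase. Second, your choice $\omega\asymp\log\log(N(\lambda_1-\lambda_2)^2)$ gives a time horizon $\lceil e^{\omega/8}\rceil$ of order a power of $\log\log N$, far short of the required $t_{N,1}\asymp(\lambda_1-\lambda_2)^{-1}\log N$. The paper instead proves new near-critical versions (Lemmas~\ref{lem-lt-approx-nc} and~\ref{lem-lt-approx-nc-acc}) that start from the weaker hypothesis that only the \emph{sum} has equilibrated, use a rescaled error norm weighting the first coordinate by $(\lambda_1-1)/(\lambda_1-\lambda_2)$ to exploit the eigenvalue separation, and take $\sigma=(\lambda_1-1)^{-1}$ in Lemma~\ref{lem.mart-dev} to lengthen the time horizon. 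It is the interplay of these refinements, not the balance you sketch, that produces condition~\eqref{cond-crit-win}; in particular the constraint you identify, $\sqrt{\omega/(NL)}\ll\lambda_1-1$, is not the same as~\eqref{cond-crit-win}.
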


\smallskip

To prove Theorem~\ref{thm.extinction-nc}, we first prove long-term estimates on the total number of infectives of either type and on the ratio between the number of infectives of the two types.

Let $\omega = \omega (N)> 0$ satisfy %$\omega \le 4 (\log 2)^2 N  (\lambda_2 + 1)$ and also
$N(\lambda_2-1)^2/\omega \to \infty$ as $N \to \infty$. (Note that our assumptions imply that $N(\lambda_2-1)^2 \to \infty$.) We will show that, with high probability (i.e with probability tending to 1 as $N \to \infty$),
 $x_{N,1}(t) + x_{N,2}(t) \ge (\lambda_2-1)/2\lambda_2$ for all $t \le (\lambda_2-1)^{-1} e^{\omega/8}$.

\begin{lemma}
\label{lem.stoch-dom}
Let $Y_N(t)$ denote the number of infectives in a stochastic logistic SIS epidemic with infection rate $\lambda_2$ and recovery rate $1$.
Suppose that $X_{N,1}(0)+X_{N,2}(0) \ge Y_N(0)$.
Then, for all $t$, $X_{N,1}(t) + X_{N,2}(t)$ stochastically dominates $Y_N(t)$.

Further, let $Z_N(t)$ denote the number of infectives in a stochastic SIS logistic epidemic with infection rate $\lambda_1$ and recovery rate $1$.
Suppose that $X_{N,1}(0)+X_{N,2}(0) \le Z_N(0)$.
Then, for all $t$, $X_{N,1}(t) + X_{N,2}(t)$ is stochastically dominated by $Z_N(t)$.
\end{lemma}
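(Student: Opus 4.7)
The plan is to prove both stochastic dominations simultaneously via a single graphical (Poisson-clock) coupling, on which all three processes are built so as to maintain pathwise set-containment between their infective sets. For the construction, introduce for each ordered pair $(i,j)$ of distinct individuals two independent Poisson processes $P^{(2)}_{ij}$ and $P^{(\ast)}_{ij}$ of rates $\lambda_2/N$ and $(\lambda_1-\lambda_2)/N$ respectively, and for each individual $i$ a recovery clock $R_i$ of rate $1$, all mutually independent. Drive the competition process $(X_{N,1},X_{N,2})$ by letting $R_i$ return any infective $i$ to the susceptible pool; letting a ring of $P^{(2)}_{ij}$ place a susceptible $j$ into the type-$k$ infective set $I_k$ whenever $i\in I_k$ for some $k\in\{1,2\}$; and letting a ring of $P^{(\ast)}_{ij}$ do the same only if $i\in I_1$. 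A direct check shows that type-$1$ infectives attempt infection at total rate $\lambda_1/N$ per susceptible target and type-$2$ infectives at rate $\lambda_2/N$, so the marginal law agrees with the competition model.

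For the lower bound, drive $Y_N$ using only the clocks $P^{(2)}_{ij}$ and $R_i$: a ring of $P^{(2)}_{ij}$ places $j$ into the $Y_N$-infective set $I_Y$ whenever $i\in I_Y$ and $j\notin I_Y$, while $R_i$ removes $i$ from $I_Y$. The marginal is the $(\lambda_2,1)$-SIS chain. By exchangeability of individuals I may choose an initial $I_Y(0)\subseteq I_1(0)\cup I_2(0)$, which is consistent with the hypothesis $Y_N(0)\le X_{N,1}(0)+X_{N,2}(0)$. Then the invariant $I_Y(t)\subseteq I_1(t)\cup I_2(t)$ is preserved at every jump: recoveries clear $i$ on both sides simultaneously; rings of $P^{(\ast)}$ affect only the competition side and can only enlarge $I_1\cup I_2$; and at a ring of $P^{(2)}_{ij}$ that adds $j$ to $I_Y$, the source $i$ lies in $I_Y\subseteq I_1\cup I_2$, so the same ring either finds $j$ already in $I_1\cup I_2$ or deposits it there.

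For the upper bound, drive $Z_N$ using both clock families $P^{(2)}_{ij}$ and $P^{(\ast)}_{ij}$ (combined rate $\lambda_1/N$ per ordered pair, giving the correct $(\lambda_1,1)$-SIS marginal) together with the $R_i$: any ring of either clock places $j$ into the $Z_N$-infective set $I_Z$ whenever $i\in I_Z$ and $j\notin I_Z$. Starting with $I_1(0)\cup I_2(0)\subseteq I_Z(0)$, the symmetric invariant $I_1(t)\cup I_2(t)\subseteq I_Z(t)$ is preserved by the same case analysis: any ring that adds $j$ to $I_1\cup I_2$ has source $i\in I_1\cup I_2\subseteq I_Z$, so $Z_N$ either already contains $j$ or adds it at that instant. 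Passing from the pathwise orderings to the stated stochastic dominations is then immediate. The only genuinely substantive step is the decomposition of the type-$1$ infection rate $\lambda_1/N$ into a $\lambda_2/N$ piece shareable with $Y_N$ and a residual $(\lambda_1-\lambda_2)/N$ piece needed to bring $Z_N$'s rate up to $\lambda_1/N$; after that, the invariants are verified by a routine case check on each jump type.
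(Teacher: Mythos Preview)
Your proof is correct. It differs in style from the paper's argument, which works entirely at the level of the aggregate count $S_N(t)=X_{N,1}(t)+X_{N,2}(t)$: the paper simply observes that $S_N$ jumps up at rate $\lambda_1 X_{N,1}(1-S_N/N)+\lambda_2 X_{N,2}(1-S_N/N)\ge \lambda_2 S_N(1-S_N/N)$ and down at rate $S_N$, and then invokes the standard monotone birth--death coupling (share as many up-jumps and down-jumps as possible) to keep $S_N\ge Y_N$ pathwise. Your graphical construction instead builds all three processes on the same Poisson clocks at the \emph{particle} level, obtains the stronger set-containment invariants $I_Y\subseteq I_1\cup I_2\subseteq I_Z$, and deduces the count ordering. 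The paper's route is shorter and relies on the reader recognising the generic birth--death comparison; your route is more explicit and self-contained, gives a single probability space carrying all three processes, and sidesteps the minor subtlety that $S_N$ is not itself Markov (its up-rate depends on the full state $(X_{N,1},X_{N,2})$, though the relevant inequality depends only on the sum). Either approach is perfectly adequate here.
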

\begin{proof}
The process $X_{N,1}(t) + X_{N,2}(t)$ jumps by $+1$ at rate at least $\lambda_2 (X_{N,1}+X_{N,2}) (1 - X_{N,1}-X_{N,2})$ and jumps by $-1$ at rate $X_{N,1}+X_{N,2}$. We can then couple $X_{N,1}+X_{N,2}$ and $Y_N(t)$ so that they always jump down together as much as possible, and jump up together as much as possible, and otherwise each jumps on its own with any excess rate in either direction. With this coupling, $X_{N,1}(t) + X_{N,2}(t) \ge Y_N(t)$.
The second part can be proved analogously.
\end{proof}

\smallskip

\begin{lemma}
\label{lem.supercrit-log}
Let $Y_N(t)$ denote the number of infective individuals in a stochastic SIS logistic epidemic with infection rate $\lambda = \lambda (N)$ and recovery rate $\mu = \mu (N)$. Let $y(t)$ denote the number of infectives in the corresponding deterministic SIS logistic epidemic.
We assume that $\lambda = \lambda (N)$ and $\mu = \mu (N)$, where $\lambda, \mu$ are bounded, $\lambda > \mu >0$, and $(\lambda (N)- \mu (N))^2 N \to \infty$ as $N \to \infty$.
Let $\omega = \omega (N)> 0$ satisfy %$\omega \le 8 \lambda^{-1}(\log 2)^2 N  (\lambda + \mu)$ and also
$N(\lambda-\mu)^2/\omega \to \infty$ as $N \to \infty$.

We assume that $y(0):=y_N \le 2 (\lambda-\mu)/\lambda$, and $y_N/(\lambda - \mu)$ is bounded away from $0$ as $N \to \infty$.
%$(\lambda-\mu)/\lambda \le y(0) \le 2 (\lambda-\mu)/\lambda$,
Further, let $Y_N(0)$ be such that $|N^{-1} Y_N(0) - y(0)| \le \frac12 \sqrt{\frac{2\omega (\lambda + \mu)}{N\lambda}}$.
%$Y_N(0) = \lceil N \psi_0 (\lambda - \mu)/\lambda \rceil$ and $y(0) = N^{-1} Y_N(0)$, where $1 < \psi_0 < 2$.
If $y_N \ge (\lambda-\mu)/\lambda$, then, for $N$ sufficiently large,
\begin{eqnarray*}
\P \Big ( \sup_{t \le (\lambda -\mu)^{-1} e^{\omega/8}} |N^{-1} Y_N(t) - y(t)| > 4 e^4 \sqrt{\frac{2\omega (\lambda + \mu)}{N\lambda}}  \Big ) \le 4 e^{-\omega/8}.
\end{eqnarray*}
If $y_N < (\lambda-\mu)/\lambda$, then, for $N$ sufficiently large,
\begin{eqnarray*}
\P \Big ( \sup_{t \le (\lambda -\mu)^{-1} e^{\omega/8}} |N^{-1} Y_N(t) - y(t)| > 4 e^{4(\lambda-\mu)/\lambda y(0)} \sqrt{\frac{2\omega (\lambda + \mu)}{N\lambda}}  \Big ) \le 4 e^{-\omega/8}.
\end{eqnarray*}

Suppose further that $(\lambda-\mu)/2\lambda \le y(0)$.
Then, in particular, since $N (\lambda - \mu)^2/\omega \to \infty$, %and since $y(t) \ge (\lambda - \mu)/\lambda$ for all $t$,
$y_N(t) = N^{-1} Y_N(t)$ remains well concentrated around $y(t)$ until time at least $(\lambda - \mu)^{-1} e^{\omega/8}$.
Consequently, if $N$ is sufficiently large, $(\lambda - \mu)/4\lambda \le y_N(t) \le 4(\lambda - \mu)/\lambda$ for all $t \le (\lambda - \mu)^{-1}  e^{\omega/8}$ with probability at least $1-4 e^{-\omega/8}$.
\end{lemma}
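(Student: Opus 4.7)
The strategy is the variation-of-constants approach of Section~\ref{sec:lt-approx}, specialised to one dimension. Let $y^\ast = (\lambda-\mu)/\lambda$ denote the endemic fixed point and set $u(t) = y(t) - y^\ast$, $u_N(t) = y_N(t)-y^\ast$ and $w(t) = y_N(t)-y(t)$. The deterministic ODE becomes $du/dt = -(\lambda-\mu)u - \lambda u^2$, and $u_N$ obeys the same equation driven by a Dynkin martingale $M_N$ with jumps of size $1/N$. Subtracting the two variation-of-constants identities gives
\[
w(t) = e^{-(\lambda-\mu)t}w(0) - \lambda\int_0^t e^{-(\lambda-\mu)(t-s)}\bigl(u_N(s)+u(s)\bigr)w(s)\,ds + M^\ast(t),
\]
where $M^\ast(t) = \int_0^t e^{-(\lambda-\mu)(t-s)}\,dM_N(s)$. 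The three terms will be controlled separately, closed off by a bootstrap stopping-time argument.

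\textbf{Key estimates.}
First, I apply Lemma~\ref{lem.mart-dev} in one dimension with $\tilde A = -(\lambda-\mu)$, $\eta = \lambda-\mu$, $\sigma = (\lambda-\mu)^{-1}$ and jump bound $B = 1/N$. On the stopping event $\{y_N(s)\le 3y^\ast\}$ the variance integral satisfies $\int_0^t v(y_N(s),t-s)\,ds = O((\lambda+\mu)/(N\lambda))$ by a direct calculation from $\lambda x(1-x)+\mu x \le x(\lambda+\mu)$, so Lemma~\ref{lem.mart-dev} delivers $|M^\ast(t)| \le e\sqrt{\omega K}$ uniformly for $t\le(\lambda-\mu)^{-1}\lceil e^{\omega/8}\rceil$ with probability at least $1-4e^{-\omega/8}$, where $K$ is a constant multiple of $(\lambda+\mu)/(N\lambda)$. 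Second, solving the Bernoulli ODE for $u$ explicitly gives $\lambda|u(s)|\le C_\ast e^{-(\lambda-\mu)s}$ with $C_\ast = \lambda-\mu$ in case~(i) ($u(0)\ge 0$, since then $du/dt\le-(\lambda-\mu)u$), and $C_\ast = (\lambda-\mu)^2/(\lambda y(0))$ in case~(ii) ($u(0)<0$, using the explicit formula $u(t)=A(\lambda-\mu)e^{-(\lambda-\mu)t}/(1-A\lambda e^{-(\lambda-\mu)t})$ with $A=u(0)/(\lambda y(0))$); the extra factor $(\lambda-\mu)/(\lambda y(0))$ in case~(ii) is exactly what produces the $e^{4(\lambda-\mu)/(\lambda y(0))}$ in the statement.

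\textbf{Bootstrap and Gr\"onwall.}
Introduce the stopping time $T=\inf\{s:|w(s)|>\delta\}$, where $\delta$ is the target threshold from the lemma. On $\{t<T\}$, the identity for $w$ combined with $|u_N+u|\le 2|u|+|w|$ produces two nonlinear pieces. The quadratic piece $\lambda\int_0^t e^{-(\lambda-\mu)(t-s)}|w(s)|^2\,ds$ is treated as an \emph{additive} tail, bounded by $\lambda\delta^2/(\lambda-\mu)$, which is $o\bigl(\sqrt{\omega(\lambda+\mu)/(N\lambda)}\bigr)$ by the hypothesis $N(\lambda-\mu)^2/\omega\to\infty$. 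The remaining ``linear-in-$|w|$'' piece has the exponentially decaying coefficient $2\lambda|u(s)|\le 2C_\ast e^{-(\lambda-\mu)s}$, so the inequality reduces to
\[
|w(t)| \le A + \int_0^t e^{-(\lambda-\mu)(t-s)}\,2C_\ast e^{-(\lambda-\mu)s}\,|w(s)|\,ds,
\]
with $A := |w(0)| + \lambda\delta^2/(\lambda-\mu) + e\sqrt{\omega K}$. Multiplying by $e^{(\lambda-\mu)t}$ and applying Gr\"onwall to the non-decreasing driving data $Ae^{(\lambda-\mu)t}$ yields $|w(t)|\le A\exp(2C_\ast/(\lambda-\mu))$. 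Since $2C_\ast/(\lambda-\mu) = 2$ in case~(i) and $2(\lambda-\mu)/(\lambda y(0))$ in case~(ii), and $A$ is bounded by a constant multiple of $\sqrt{\omega(\lambda+\mu)/(N\lambda)}$, the claimed bounds $4e^4\sqrt{2\omega(\lambda+\mu)/(N\lambda)}$ and $4e^{4(\lambda-\mu)/(\lambda y(0))}\sqrt{2\omega(\lambda+\mu)/(N\lambda)}$ follow with room to spare, so $|w(t)|<\delta$ throughout the window and the bootstrap closes via $T=\infty$ on the good event; the circular dependency with the stopping time $\{y_N\le 3y^\ast\}$ is resolved jointly, as in the proof of Lemma~\ref{lem-lt-approx}.

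\textbf{Main obstacle and final clause.}
The crucial technical point is that the Gr\"onwall coefficient must be taken \emph{time-dependent and exponentially decaying}: a naive constant-coefficient bound $\lambda|u_N+u|\le\mathrm{const}$ would produce a factor $\exp(\mathrm{const}\cdot t)$ blowing up like $\exp(e^{\omega/8})$ at the time horizon, and keeping $\lambda w^2$ inside the Gr\"onwall as an extra $\lambda\delta|w|$ (rather than as an additive tail) would similarly contribute an unacceptable $\exp(\lambda\delta(\lambda-\mu)^{-1}e^{\omega/8})$. Exploiting the explicit Bernoulli decay of $|u|$ and separating out the quadratic part are what keep the Gr\"onwall factor bounded by $O(1)$. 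The ``in particular'' clause is then immediate: when $y(0)\ge y^\ast/2$, the deterministic trajectory is confined monotonically to $[y^\ast/2,2y^\ast]$, and the concentration bound just proved, of order $\sqrt{\omega(\lambda+\mu)/(N\lambda)} = o(y^\ast)$ under $N(\lambda-\mu)^2/\omega\to\infty$, sandwiches $y_N(t)$ in $[y^\ast/4,4y^\ast]$ with probability at least $1-4e^{-\omega/8}$.
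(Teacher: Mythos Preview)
Your proposal is correct and follows essentially the same route as the paper's proof: variation of constants around the fixed point, Lemma~\ref{lem.mart-dev} with $\sigma=(\lambda-\mu)^{-1}$ and $K=O((\lambda+\mu)/(N\lambda))$ for the martingale transform, the explicit Bernoulli decay of $|u|$ to make the Gr\"onwall coefficient integrable, treatment of the quadratic piece as an additive $o(\cdot)$ term via $N(\lambda-\mu)^2/\omega\to\infty$, and the same bootstrap with nested stopping times. The only cosmetic differences are that the paper uses the stopping event $\{y_N(t)\le 2y(t)\}$ rather than $\{y_N\le 3y^\ast\}$ for the variance bound, and its slightly looser constant in the $|u|$-decay yields the stated $e^4$ directly rather than your $e^2$ ``with room to spare''.
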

\begin{proof}
Note that $(\lambda - \mu)/\lambda$ is an attractive fixed point for
\begin{equation}
\label{ode.logistic}
\frac{dy(t)}{dt} = \lambda y(t) (1-y(t)) - \mu y(t),
\end{equation}
so $y(t) \to (\lambda - \mu)/\lambda$. %and, since $y(0) \ge (\lambda - \mu)/\lambda$, $y(t) \ge (\lambda - \mu)/\lambda$ for all $t$.

Let $\tilde{y}(t) = y(t) - (\lambda-\mu)/\lambda$ and let $\tilde{y}_N(t) = N^{-1} Y_N(t) - (\lambda-\mu)/\lambda$. Then
\begin{eqnarray*}
\tilde{y}(t) = \tilde{y}(0) - (\lambda - \mu) \int_0^t \tilde{y}(s) ds - \lambda \int_0^t \tilde{y}(s)^2 ds,
\end{eqnarray*}
and
\begin{eqnarray*}
\tilde{y}_N(t) = \tilde{y}_N(0) - (\lambda - \mu) \int_0^t \tilde{y}_N(s) ds - \lambda \int_0^t \tilde{y}_N(s)^2 ds + m_N(t),
\end{eqnarray*}
where $(m_N(t))$ is a zero-mean martingale.

%We have assumed that $y(0) = N^{-1} Y_N(0) = N^{-1} \lceil N\psi_0 (\lambda - \mu)/\lambda \rceil$, where $1< \psi_0 < 2$ is a constant.
%\begin{eqnarray*}
%e_N(t) & = & -(\lambda - \mu) \int_0^t e_N(s) ds - \lambda \int_0^t e_N(s) (\tilde{y} (s) + \tilde{y}_N(s) ds + m_N(t)\\
%& = & - \lambda \int_0^t e_N(s) \Big ( \frac{\lambda - \mu}{\lambda} + \tilde{y}(s) + \tilde{y}_N(s) \Big ) ds + m_N(t).
%\end{eqnarray*}
%Since $y(t) \ge (\lambda - \mu)/\lambda$ for all $t$, $\tilde{y}(t) \ge 0$ for all $t$. Also, clearly, $\tilde{y}_N(t) + (\lambda - \mu)/\lambda \ge 0$, and so %the term $\Big ( \frac{\lambda - \mu}{\lambda} + \tilde{y}(s) + \tilde{y}_N(s) \Big )$ is always non-negative. Thus $e_N(t)$ satisfies an equation of the form
%$u(t) = m(t) - v(t) \int_0^t r(s) u(s) ds$, with $r,v$ non-negative and $v$ decreasing.
%By Lemma 3.2 in \cite{luczakbrightwell}, for any $t_0 \ge 0$,
%\begin{eqnarray*}
%\sup_{t \le t_0} |e_N(t)| \le 2 \sup_{t \le t_0} |m_N(t)|
%\end{eqnarray*}
%

%As we did earlier for the stochastic logistic SIS competition model, we can rewrite the above equations as
It follows that
\begin{eqnarray*}
\tilde{y}(t) = e^{-(\lambda - \mu) t} \tilde{y}(0)  - \lambda \int_0^t e^{-(\lambda - \mu)(t-s)} \tilde{y}(s)^2 ds,
\end{eqnarray*}
and
\begin{eqnarray*}
\tilde{y}_N(t) = e^{-(\lambda - \mu) t} \tilde{y}_N(0)  - \lambda \int_0^t e^{-(\lambda - \mu)(t-s)} \tilde{y}_N(s)^2 ds + \int_0^t e^{-(\lambda - \mu)(t-s)} dm_N(s).
\end{eqnarray*}
%
%Writing $e_N(t) = \tilde{y}_N(t) - \tilde{y}(t)$, we then have
%\begin{eqnarray*}
%e_N(t) & = &
%-(\lambda - \mu) \int_0^t e_N(s) ds
%- \lambda \int_0^t e_N(s) e^{-(\lambda - \mu)(t-s)} (\tilde{y} (s) + \tilde{y}_N(s)) ds + \int_0^t e^{-(\lambda - \mu)(t-s)} dm_N(s).
%\\
%& = & - \lambda \int_0^t e_N(s) \Big (\tilde{y}(s) + \tilde{y}_N(s) \Big ) ds + \int_0^t e^{-(\lambda - \mu)(t-s)} dm_N(s).
%\end{eqnarray*}
%Since $y(t) \ge (\lambda - \mu)/\lambda$ for all $t$, $\tilde{y}(t) \ge 0$ for all $t$. Let $t_0$ be such that $y(t_0) = 2 (\lambda - \mu)/\lambda$. Let $T_1$ %be the infimum of times $t$ such that $|\tilde{y}_N(t) - \tilde{y}(t)| > (\lambda - \mu)/2\lambda$.
%clearly, $\tilde{y}_N(t) + (\lambda - \mu)/\lambda \ge 0$, and so %the term $\Big ( \frac{\lambda - \mu}{\lambda} + \tilde{y}(s) + \tilde{y}_N(s) \Big )$ is %always non-negative. Thus
%On the event that $T_1 \ge t_0$, for $t \le t_0$,
%$e_N(t)$ satisfies an equation of the form
%$u(t) = m(t) - v(t) \int_0^t r(s) u(s) ds$, with $r,v$ non-negative and $v$ decreasing.
%By Lemma 3.2 in \cite{luczakbrightwell},
%\begin{eqnarray}
%\label{eq.bound-logistic}
%\sup_{t \le t_0} |e_N(t)| \le 2 \sup_{t \le t_0} |\int_0^t e^{-(\lambda - \mu)(t-s)} dm_N(s)|
%\end{eqnarray}
%

Letting $f_N(t) = |\tilde{y}(t) - \tilde{y}_N(t)|= |y(t) - y_N(t)|$, we thus have
\begin{eqnarray*}
f_N(t) \le f_N(0) + \lambda \int_0^t e^{-(\lambda - \mu)(t-s)} f_N(s) |\tilde{y}_N(s) + \tilde{y}(s)|ds + \Big |\int_0^t e^{-(\lambda - \mu)(t-s)} dm_N(s)\Big |,
\end{eqnarray*}
and so
\begin{eqnarray*}
f_N(t) & \le & f_N(0) + \lambda \int_0^t e^{-(\lambda - \mu)(t-s)} (f_N(s))^2 ds + 2 \lambda \int_0^t e^{-(\lambda - \mu)(t-s)} f_N(s)|\tilde{y}(s)|ds \\
& + & \Big |\int_0^t e^{-(\lambda - \mu)(t-s)} dm_N(s) \Big |.
\end{eqnarray*}
%\begin{eqnarray*}
%f_N(t) \le \lambda \int_0^t e^{-(\lambda - \mu)(t-s)} (f_N(s))^2 ds + 2 \lambda \tilde{y}(0)e^{-(\lambda - \mu)t}\int_0^t  f_N(s)ds + \Big |\int_0^t %e^{-(\lambda - \mu)(t-s)} dm_N(s) \Big |.
%\end{eqnarray*}
To estimate the deviations of the martingale transform $\int_0^t e^{-(\lambda - \mu)(t-s)} dm_N(s)$, we let $T_1 = \inf \{t: y_N(t) > 2 y(t)\}$;
on the event $t < T_1$,
\begin{eqnarray*}
\int_0^t v(\tilde{y}_N(s), t-s) & := & \frac{\lambda}{N} \int_0^t y_N(s) (1-y_N(s)) e^{-2(\lambda-\mu)(t-s)}ds \\
&& + \frac{\mu}{N} \int_0^t y_N(s)e^{-2(\lambda-\mu)(t-s)}ds\\
& \le & \frac{2(\lambda + \mu)}{N} \int_0^t  e^{-2(\lambda-\mu)(t-s)} y(s) ds.
\end{eqnarray*}
As is well known,
\begin{equation}
\label{eq.log-sol}
y(t) = \frac{y(0)(\lambda-\mu)}{\lambda y(0) + \lambda e^{-(\lambda - \mu)t} \Big (\frac{\lambda - \mu}{\lambda}-y(0)\Big )},
\end{equation}
and so
\begin{equation}
\label{eq.log-sol2}
\tilde{y}(t) = \frac{\tilde {y}(0) e^{-(\lambda -\mu)t}}
{y(0) \frac{\lambda}{\lambda-\mu} \big(1-e^{-(\lambda-\mu)t}\big) + e^{-(\lambda-\mu)t} }.
%\tilde{y}(0) e^{-(\lambda -\mu)t},
\end{equation}
We restrict attention for the moment to the case where $y(0) \ge (\lambda-\mu)/\lambda$, when
we have $\tilde{y}(t) \le \tilde{y}(0) e^{-(\lambda-\mu)t}$, and
$(\lambda-\mu)/\lambda \le y(t) \le y(0) \le 2 (\lambda - \mu)/\lambda$.
(If $y(0) \le (\lambda-\mu)/\lambda$, then we have $\tilde{y}(t) \le \frac{\lambda-\mu}{\lambda y(0)} \tilde{y}(0) e^{-(\lambda -\mu)t}$, and
$y(0) \le y(t) \le (\lambda -\mu)/\lambda$.)
%$(\lambda-\mu)/2\lambda \le y(0) \le y(t) \le (\lambda - \mu)/\lambda$ if $(\lambda-\mu)/2\lambda \le y(0) \le (\lambda - \mu)/\lambda$.)
 %(that is, $\tilde{y}(0) \ge 0$, then
%\begin{eqnarray*}
%\tilde{y}(t) \le e^{-(\lambda - \mu)t} \tilde{y}(0).
%\end{eqnarray*}
It follows that, on the event $t < T_1$,
\begin{eqnarray*}
\int_0^t v(\tilde{y}_N(s), t-s) \le \frac{2 (\lambda + \mu)}{N\lambda }.
\end{eqnarray*}

Given $\omega = \omega (N)> 0$, let $T_2$ be the infimum of times $t$ such that
$$\Big |\int_0^t e^{-(\lambda - \mu)(t-s)} dm_N(s)\Big |> 3 \sqrt{\frac{2\omega (\lambda + \mu)}{N \lambda}}.$$
 By Lemma~\ref{lem.mart-dev} applied to $(y_N(t))$, with $B=1/N$, $\tilde{A} = -(\lambda - \mu)$, $\sigma = 1/(\lambda - \mu)$, $\eta = \lambda - \mu$, $K= \frac{2 (\lambda + \mu)}{N\lambda }$, we see that,
% to extend the bound on the deviations from a moment in time to an interval, we see that,
if $\omega \le 8 (\log 2)^2 N  (\lambda + \mu)/\lambda$ (which will hold for $N$ large enough if $N(\lambda-\mu)^2/\omega \to \infty$) and $(\lambda - \mu)^{-1} \lceil e^{\omega/8} \rceil \ge t_0 = t_0(N)$, then $$\P (T_2 \le T_1 \land t_0 ) \le 4 e^{-\omega/8}.$$

Also, by the above, and using the assumption that $f_N(0) \le \frac12 \sqrt{\frac{2\omega (\lambda + \mu)}{N\lambda }}$, on the event $t_0 < T_1 \land T_2$, for all $t \le t_0$,
\begin{eqnarray*}
f_N(t) &\le& \lambda \int_0^t e^{-(\lambda - \mu)(t-s)} (f_N(s))^2 ds + 2 \lambda \int_0^t e^{-(\lambda - \mu)(t-s)} f_N(s)|\tilde{y}(s)|ds \\
&& + \frac72 \sqrt{\frac{2\omega (\lambda + \mu)}{N\lambda }}.
\end{eqnarray*}
Let $T_3$ be the infimum of times $t$ such that $f_N(t) > 5 e^{4} \sqrt{\frac{2\omega (\lambda + \mu)}{N\lambda}}$. Then, if $N$ is large enough, on the event $t_0 < T_1 \land T_2 \land T_3$, for $t \le t_0$,
\begin{eqnarray*}
f_N(t) \le \frac{\lambda}{\lambda - \mu}  \frac{50 e^{8}\omega  (\lambda + \mu)}{N\lambda}  + 2 \lambda \int_0^t e^{-(\lambda - \mu)(t-s)} f_N(s)|\tilde{y}(s)|ds + \frac72 \sqrt{\frac{2\omega (\lambda + \mu)}{N\lambda}}.
\end{eqnarray*}
Since $N (\lambda - \mu)^2/\omega \to \infty$, then, for $N$ large enough,
$$\frac{\lambda}{\lambda - \mu}  \frac{50 e^{8}\omega (\lambda + \mu)}{N} \le \frac12 \sqrt{\frac{2\omega (\lambda + \mu)}{N\lambda}},$$
and so, for $N$ large enough, on the event $t_0 < T_1 \land T_2 \land T_3$, for $t \le t_0$,
\begin{eqnarray*}
f_N(t) e^{(\lambda - \mu) t} \le 2 \lambda \int_0^t e^{(\lambda - \mu)s} f_N(s)|\tilde{y}(s)|ds + 4 e^{(\lambda - \mu) t} \sqrt{\frac{2\omega (\lambda + \mu)}{N \lambda}}.
\end{eqnarray*}
From~\eqref{eq.log-sol2},
$\tilde{y}(t) \le e^{-(\lambda - \mu)t} \tilde{y}(0) \le 2e^{-(\lambda - \mu)t}  (\lambda - \mu)/\lambda$, so, by Gr\"onwall's inequality, on the event $t_0 < T_1 \land T_2 \land T_3$, for $t \le t_0$,
\begin{eqnarray*}
f_N(t)  \le 4  \sqrt{\frac{2\omega (\lambda + \mu)}{N \lambda }} e^{4}.
\end{eqnarray*}
Letting $T_4$ be the infimum of $t$ with $f_N (t) > 4  \sqrt{\frac{2\omega (\lambda + \mu)}{N\lambda }} e^{4}$, from the above
\begin{eqnarray*}
\P (T_4 \le t_0) & \le & \P (T_1 \land T_2 \land T_3 \le T_4 \land t_0),
\end{eqnarray*}
and so, since $\P (T_3 \le T_4) = 0$ and $\P (T_1 \le T_4) = 0$ for $N$ large enough,
\begin{eqnarray*}
\P (T_4 \le t_0)
& \le & \P (T_1 \le T_4) + \P (T_3 \le T_4) + \P (T_2 \le T_1 \land t_0) \le 4 e^{-\omega/8},
\end{eqnarray*}
%By~\eqref{eq.bound-logistic}, for $N$ large enough, $\P (T_1 \le T_2 \land t_0) = 0$, since
%$$6 \sqrt{\frac{\omega(\lambda + \mu)}{2N(\lambda - \mu)}} < \frac{\lambda - \mu}{2\lambda}.$$
%On the event that $t_0 < T_1 \land T_2$,
%\begin{eqnarray*}
%\sup_{t \le t_0} |e_N(t)| \le 6 \sqrt{\frac{\omega(\lambda + \mu)}{2N(\lambda - \mu)}}.
%\end{eqnarray*}
and so, as claimed, for $N$ large enough,
\begin{eqnarray*}
\P \Big (\sup_{t \le (\lambda - \mu)^{-1} \lceil e^{\omega/8} \rceil } |y_N (t) - y(t)| > 4 e^4 \sqrt{\frac{2\omega (\lambda + \mu)}{N\lambda }} \Big ) \le 4 e^{-\omega/8},
\end{eqnarray*}
and the remaining conclusions also follow in the case when $y(0) \ge (\lambda - \mu)/\lambda$.

The case $y(0) \le (\lambda- \mu)/\lambda$ is similar, using the inequality from~\eqref{eq.log-sol2} that
$$|\tilde{y}(t)| \le \frac{|\tilde{y}(0)|(\lambda-\mu)}{y(0) \lambda} e^{-(\lambda- \mu)t}.$$
\end{proof}

\begin{lemma}
\label{lem.supercrit-log-1}
Let $Y_N(t)$ denote the number of infective individuals in a stochastic SIS logistic epidemic with infection rate $\lambda = \lambda (N)$ and recovery rate $\mu = \mu (N)$. Let $y(t)$ denote the number of infectives in the corresponding deterministic SIS logistic epidemic.
We assume that $\lambda = \lambda (N)$ and $\mu = \mu (N)$, where $\lambda, \mu$ are bounded, $\lambda > \mu >0$, and $(\lambda (N)- \mu (N))^2 N \to \infty$ as $N \to \infty$.
%Let $\omega = \omega (N)> 0$ satisfy %$\omega \le 8 \lambda^{-1}(\log 2)^2 N  (\lambda + \mu)$ and also
%$N(\lambda-\mu)^2/\omega \to \infty$ as $N \to \infty$.

We further assume that $y_N(0) = N^{-1}Y_N(0) > 2 (\lambda-\mu)/\lambda$, and we let $y(0) = y_N(0)$.
%that $Y_N(0)$ is such that $|N^{-1} Y_N(0) - y(0)| \le \frac12 \sqrt{\frac{2\omega (\lambda + \mu)}{N\lambda}}$.
%$Y_N(0) = \lceil N \psi_0 (\lambda - \mu)/\lambda \rceil$ and $y(0) = N^{-1} Y_N(0)$, where $1 < \psi_0 < 2$.
Let $\tau$ be such that $y(\tau) = 2 (\lambda-\mu)/\lambda$.
Then, for $N$ sufficiently large,
\begin{eqnarray*}
\P \Big ( \sup_{t \le \tau} |y_N(t) - y(t)| > 2 (N(\lambda-\mu)^2)^{1/16}\sqrt{\frac{y_N(0)(\lambda + \mu)}{N(\lambda-\mu)}}  \Big ) \le 2 e^{-(N(\lambda-\mu)^2)^{1/8}/8}.
\end{eqnarray*}
\end{lemma}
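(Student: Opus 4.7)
The plan is to adapt the variation-of-constants strategy of Lemma~\ref{lem.supercrit-log}, but linearising around the trajectory $y(t)$ itself rather than around the fixed point, since we are starting above $2(\lambda-\mu)/\lambda$. First I would use the explicit formula~\eqref{eq.log-sol} to solve $y(\tau)=2(\lambda-\mu)/\lambda$, obtaining $e^{(\lambda-\mu)\tau}=2-2(\lambda-\mu)/(\lambda y(0))\in(1,2)$, whence $\tau\le(\log 2)/(\lambda-\mu)$ and $2(\lambda-\mu)/\lambda\le y(t)\le y(0)$ throughout $[0,\tau]$. Setting $e_N(t):=y_N(t)-y(t)$ and $F(y):=\lambda y(1-y)-\mu y$, the exactness of the Taylor expansion (since $F''\equiv -2\lambda$) gives
\begin{equation*}
e_N(t)=\int_0^t F'(y(s))\, e_N(s)\,ds \;-\; \lambda\int_0^t e_N(s)^2\,ds \;+\; m_N(t),
\end{equation*}
with $m_N$ the Dynkin martingale. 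Defining $\phi(t,s):=\exp\bigl(\int_s^t F'(y(u))\,du\bigr)$, the key observation is that $F'(y(u))=(\lambda-\mu)-2\lambda y(u)\le -3(\lambda-\mu)$ on $[0,\tau]$, so $\phi(t,s)\le e^{-3(\lambda-\mu)(t-s)}$, and variation of constants yields
\begin{equation*}
e_N(t)=-\lambda\int_0^t\phi(t,s)\, e_N(s)^2\,ds \;+\; \int_0^t\phi(t,s)\,dm_N(s).
\end{equation*}

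For the stochastic integral, set $\psi:=\phi(\cdot,0)^{-1}$ and $M(t):=\int_0^t\psi(s)\,dm_N(s)$, so that the transform equals $\phi(t,0)M(t)$. On the event $\{y_N(s)\le 2y(0):s\le t\}$ its predictable quadratic variation is bounded by
\begin{equation*}
\phi(t,0)^2\int_0^t\psi(s)^2\cdot\frac{(\lambda+\mu)y_N(s)}{N}\,ds \;\le\; \frac{2(\lambda+\mu)y(0)}{N}\int_0^t\phi(t,s)^2\,ds \;\le\; \frac{(\lambda+\mu)y(0)}{3N(\lambda-\mu)}.
\end{equation*}
An adaptation of the exponential-martingale argument of Lemma~\ref{lem.mart-dev} to the time-dependent kernel $\phi(t,s)$ in place of $e^{\tilde A(t-s)}$ (the compensator estimate only uses $|\phi(t,s)y|\le 1/N$ for jumps $y=\pm 1/N$, which holds since $\phi\le 1$) then gives, with $\omega:=(N(\lambda-\mu)^2)^{1/8}$, a probability at most $2e^{-\omega/8}=2e^{-(N(\lambda-\mu)^2)^{1/8}/8}$ that $\sup_{t\le\tau}|\phi(t,0)M(t)|$ exceeds $\sqrt{\omega(\lambda+\mu)y(0)/(3N(\lambda-\mu))}$, which matches the advertised probability.

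Finally, with $\delta:=2(N(\lambda-\mu)^2)^{1/16}\sqrt{(\lambda+\mu)y(0)/(N(\lambda-\mu))}$ and $T:=\inf\{t:|e_N(t)|>\delta\}$, on $\{t<T\wedge\tau\}$ the nonlinear integral is at most $\lambda\delta^2/(3(\lambda-\mu))$ and the martingale deviation is at most $\delta/2$, giving $|e_N(t)|\le\delta$ provided the nonlinear term is itself $\le\delta/2$; this also bootstraps $y_N(s)\le 2y(0)$. The main obstacle is precisely this last step: naively it requires $\delta\lesssim(\lambda-\mu)/\lambda$, which is a non-trivial constraint under the mere assumption $N(\lambda-\mu)^2\to\infty$. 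Closing it likely requires a finer iteration in which one first derives the crude bound of order $\sqrt{\omega(\lambda+\mu)y(0)/(N(\lambda-\mu))}$ on a first pass and then feeds this improved estimate back into the nonlinear integral, exploiting that $|e_N(s)|$ is much smaller for $s$ near $0$; the mild exponent $1/16$ in the target bound is chosen precisely to leave enough room for this bootstrap to close.
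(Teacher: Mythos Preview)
Your approach linearises $F(y_N)-F(y)$ around the deterministic trajectory $y$, producing a linear term $F'(y)e_N$ plus a quadratic remainder $-\lambda e_N^2$, and the quadratic term is exactly where you get stuck: closing $\lambda\delta^2/(3(\lambda-\mu))\le\delta/2$ requires $\delta\lesssim(\lambda-\mu)/\lambda$, which fails in part of the near-critical regime (roughly when $\lambda-\mu\sim N^{-a}$ with $a>7/22$), and the iterative bootstrap you sketch is neither carried out nor obviously convergent.

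The paper sidesteps this entirely by not linearising around $y$. Since $F$ is quadratic, one has the \emph{exact} factoring
\[
F(y_N)-F(y)=-\lambda\bigl(y_N-y\bigr)\Bigl(y_N+y-\tfrac{\lambda-\mu}{\lambda}\Bigr),
\]
so $e_N$ satisfies $de_N=-c(t)e_N\,dt+dm_N$ with the \emph{random} but non-negative coefficient $c(t)=\lambda(y_N(t)+y(t)-(\lambda-\mu)/\lambda)\ge0$ on $[0,\tau]$ (because $y(t)\ge 2(\lambda-\mu)/\lambda$ there). There is no quadratic remainder at all. The integration-by-parts identity
\[
e_N(t)=e^{-\int_0^t c}\,e_N(0)+m_N(t)-\int_0^t m_N(s)\,c(s)\,e^{-\int_s^t c}\,ds,
\]
together with $\int_0^t c(s)e^{-\int_s^t c}\,ds\le 1$, gives immediately $\sup_{t\le\tau}|e_N(t)|\le 2|e_N(0)|+2\sup_{t\le\tau}|m_N(t)|$ (this is Lemma~3.2 of Brightwell--House--Luczak). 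Since $e_N(0)=0$, the whole problem reduces to bounding $\sup_{t\le\tau}|m_N(t)|$; the quadratic variation over $[0,\tau]$ is controlled via the explicit $\int_0^\tau y(s)\,ds$ (or simply $y(0)\tau\le y(0)\log 2/(\lambda-\mu)$), and a standard exponential-martingale argument with $\phi=(N(\lambda-\mu)^2)^{1/8}$ gives the stated bound. The factor~$2$ and the exponent $1/16=\tfrac12\cdot\tfrac18$ in the statement are artifacts of this reduction, not of a bootstrap.

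In short: replace your Taylor split $F'(y)e_N-\lambda e_N^2$ by the exact product $-\lambda(y_N+y-(\lambda-\mu)/\lambda)e_N$, and the obstacle disappears.
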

\begin{proof}
%As in the proof of Lemma~\ref{lem.supercrit-log}, $\tilde{y}(t) = y(t) - (\lambda-\mu)/\lambda$ and $\tilde{y}_N(t) = y_N(t) - (\lambda-\mu)/\lambda$ satisfy
%\begin{eqnarray*}
%\tilde{y}(t) = e^{-(\lambda - \mu) t} \tilde{y}(0)  - \lambda \int_0^t e^{-(\lambda - \mu)(t-s)} \tilde{y}(s)^2 ds,
%\end{eqnarray*}
%and
%\begin{eqnarray*}
%\tilde{y}_N(t) = e^{-(\lambda - \mu) t} \tilde{y}_N(0)  - \lambda \int_0^t e^{-(\lambda - \mu)(t-s)} \tilde{y}_N(s)^2 ds + \int_0^t e^{-(\lambda - \mu)(t-s)} %dm_N(s),
%\end{eqnarray*}
%so
We have
%\begin{eqnarray*}
%\tilde{y}_N(t)- \tilde{y}(t) & = & e^{-(\lambda - \mu) t} (\tilde{y}_N(0) - \tilde{y}(0))  - \lambda \int_0^t e^{-(\lambda - \mu)(t-s)} %(\tilde{y}_N(s)-\tilde{y}(s)) (\tilde{y}_N(s) + \tilde{y}(s)) ds\\
%& + & \int_0^t e^{-(\lambda - \mu)(t-s)} dm_N(s).
%\end{eqnarray*}
\begin{eqnarray*}
y_N(t)- y(t) & = & (y_N(0) - y(0))  - \lambda \int_0^t  (y_N(s)- y(s)) \Big (y_N(s) + y(s) - \frac{\lambda-\mu}{\lambda} \Big ) ds \\
&& {} + m_N(s).
\end{eqnarray*}

For $t \le \tau$, $y(t) \ge 2(\lambda - \mu)/\lambda$ and so $y_N(s) + y(s)-(\lambda-\mu)/\lambda \ge 0$. By Lemma 3.2 in Brightwell, House and Luczak (2018), %~\cite{luczakbrightwell},
$$\sup_{t \le \tau} |y_N(t)- y(t)| \le 2|y_N(0)- y(0)| + 2 \sup_{t \le \tau} |m_N(s)|.$$
From~\eqref{eq.log-sol},
$$e^{(\lambda-\mu) \tau}-1 = 1 - \frac{2(\lambda - \mu)}{\lambda y(0)}.$$

Arguing as in the proof of Lemma 3.1 in Brightwell, House and Luczak (2018),
%~\cite{luczakbrightwell},
using standard martingale techniques and the fact that
$$\int_0^t y(s) ds = \frac{1}{\lambda}  \log (\lambda y(0) (e^{(\lambda_\mu)t} -1 ) + (\lambda-\mu)) - \frac{1}{\lambda} \log (\lambda - \mu),$$
% and the fact (easily verified from~\eqref{eq.log-sol}) that $y(t) \le y(0) e^{-(\lambda - \mu)t}$ if $y(0) > (\lambda-\mu)/\lambda$,
we see that, if $\phi = \phi (N) \le N^{1/2}$, then
\begin{eqnarray*}
\P \Big (\sup_{t \le \tau} |y_N(t) - y(t)| >  2 \sqrt{y_N(0)\phi (\lambda+ \mu)/N (\lambda-\mu)} \Big ) \le 2 e^{-\phi/8},
\end{eqnarray*}
and so, taking $\phi = (N (\lambda-\mu)^2)^{1/8}$,
%$\phi = y_N(0)/(\lambda-\mu)$,
%\begin{eqnarray*}
%\P \Big (\sup_{t \le \tau} |y_N(t) - y(t)| >  \frac{2y_N(0)}{\lambda-\mu} \sqrt{\frac{\lambda+ \mu}{N}} \Big ) \le 2 e^{-y_N(0)/8(\lambda-\mu)}.
%\end{eqnarray*}
\begin{eqnarray*}
\P \Big (\sup_{t \le \tau} |y_N(t) - y(t)| >  2(N(\lambda-\mu)^2)^{1/16} \sqrt{\frac{y_N(0)(\lambda+ \mu)}{N(\lambda-\mu)}} \Big ) \le 2 e^{-(N(\lambda-\mu)^2)^{1/8}/8}.
\end{eqnarray*}
\end{proof}

Next we consider the ratio $Q(X_N(t)) = X_{N,1}(t)/X_{N,2}(t)$. We will show that the value of $Q(X_N(t))$ does not change very much over a time period of length `nearly' $(\lambda_1 - \lambda_2)^{-1}$.

\begin{lemma}
\label{lem.ratio}
Let $\psi = \psi (N) \to \infty$ in such a way that $\psi (\lambda_1-\lambda_2) \to 0$ as $N \to \infty$. Let $t_0  = (\lambda_1-\lambda_2)^{-1} \psi (N)^{-1}$. Then, for $N$ large enough,
$$\P \Big (\sup_{t \le t_0} \Big | \frac{X_{N,1}(t)}{X_{N,2}(t)} - \frac{X_{N,1}(0)}{X_{N,2}(0)} \Big | > 2 \psi (N)^{-1/4}  \Big ) \le 2 e^{-\psi (N)^{1/2}\beta^2/128\alpha (\alpha+\beta)} + 4 e^{-\sqrt{N} (\lambda_2-1)}.$$
\end{lemma}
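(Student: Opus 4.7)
The plan is to study the ratio $R(t) := X_{N,1}(t)/X_{N,2}(t)$ directly via Dynkin's formula. A short calculation over the four possible jumps of $X_N$ yields
\[
R(t) = R(0) + \int_0^t D(X_N(s))\,ds + M(t),
\]
where
\[
D(X) = (\lambda_1 - \lambda_2)\,\frac{X_1}{X_2}\Big(1-\frac{X_1+X_2}{N}\Big) + O\!\left(\frac{1}{X_2}\right),
\]
and $(M(t))$ is a zero-mean martingale whose instantaneous variance rate is at most $C(1+R^2)/X_2$. The key observation is that, once we know $R$ stays close to $\alpha/\beta$ and $X_2$ stays of order $N$, both the drift integral over $[0,t_0]$ and the martingale fluctuations become easy to estimate, and the proof reduces to a bootstrap argument via a stopping time.

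First I would establish a lower bound on $X_{N,1}(t)+X_{N,2}(t)$ valid on $[0,t_0]$. By Lemma~\ref{lem.stoch-dom}, this sum stochastically dominates a single-strain supercritical logistic epidemic with parameters $(\lambda_2,1)$ started at $N(\alpha+\beta)$, and is dominated by one with parameters $(\lambda_1,1)$. Applying Lemma~\ref{lem.supercrit-log} to each with the choice $\omega := 8\sqrt{N}(\lambda_2-1)$, the hypotheses $N(\lambda_2-1)^2/\omega = \sqrt{N}(\lambda_2-1)/8 \to \infty$ and $(\lambda_2-1)^{-1}e^{\omega/8} \gg t_0 = \psi^{-1}(\lambda_1-\lambda_2)^{-1}$ both hold under our assumptions. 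Hence, with probability at least $1 - 4e^{-\sqrt{N}(\lambda_2-1)}$, the sum $X_{N,1}(t)+X_{N,2}(t)$ stays inside a band of the form $[cN(\lambda_2-1),\,CN(\lambda_1-1)]$ for all $t \le t_0$; call this event $\mathcal{G}$.

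Now define the stopping time $\tau := \inf\{t \ge 0 : |R(t)-R(0)| > 2\psi(N)^{-1/4}\}$. On $\mathcal{G}\cap\{t<\tau\}$, the identity $X_{N,2} = (X_{N,1}+X_{N,2})/(1+R)$ together with $R(t) = \alpha/\beta + o(1)$ gives $X_{N,2}(t) \ge c'(\alpha,\beta)N$, and similarly for $X_{N,1}$, so all jumps of $R$ are $O(1/N)$. The drift contribution is then bounded by
\[
\int_0^{t\land \tau}|D(X_N(s))|\,ds \;\le\; 2(\lambda_1-\lambda_2)\,\tfrac{\alpha}{\beta}\,t_0 + O(t_0/N) \;=\; O(\psi^{-1}),
\]
which is at most $\tfrac12\psi^{-1/4}$ for $N$ large. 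For the martingale part, the instantaneous variance rate is at most $C\alpha(\alpha+\beta)/(\beta^2 N)$ on $\mathcal{G}\cap\{s<\tau\}$, so the predictable quadratic variation up to $t_0 \land \tau$ is at most $C\alpha(\alpha+\beta)/(\beta^2\,\psi\, N(\lambda_1-\lambda_2))$. An exponential-martingale (Freedman/Bernstein) inequality analogous to the one inside the proof of Lemma~\ref{lem.mart-dev}, with jump bound $B = O(1/N)$ and $\delta = \tfrac12\psi^{-1/4}$, then yields
\[
\P\Big(\sup_{t \le t_0 \land \tau}|M(t)| > \tfrac12\psi^{-1/4}\Big) \;\le\; 2\exp\!\left(-\frac{\psi(N)^{1/2}\beta^2}{128\,\alpha(\alpha+\beta)}\right).
\]
Combining the drift and martingale bounds forces $\tau > t_0$ on the intersection of $\mathcal{G}$ with the martingale good event, which together with $\P(\mathcal{G}^c) \le 4e^{-\sqrt{N}(\lambda_2-1)}$ yields the stated estimate.

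The main obstacle is the circular character of the required estimates: the martingale analysis needs $X_{N,2}$ bounded below by a constant multiple of $N$, whereas stochastic domination only controls the \emph{sum} $X_{N,1}+X_{N,2}$. The stopping time $\tau$ resolves this circularity, since on $\{t<\tau\}$ the ratio $R$ is pinned near $\alpha/\beta$, which in turn transfers the sum bound into a lower bound on each component individually. A secondary subtlety is the choice of $\omega$ in Lemma~\ref{lem.supercrit-log} that simultaneously produces the $e^{-\sqrt{N}(\lambda_2-1)}$ tail and accommodates the time horizon $t_0 = \psi^{-1}(\lambda_1-\lambda_2)^{-1}$; the separation-from-criticality hypothesis $(\lambda_1-\lambda_2)(\lambda_1-1)^{-1} \to 0$ ensures this is feasible.
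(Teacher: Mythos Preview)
Your approach is essentially the same as the paper's: decompose the ratio via Dynkin's formula, control the drift by $O((\lambda_1-\lambda_2)R)$, bound the martingale via an exponential inequality once $X_{N,2}$ is known to be large, and obtain the latter from the stochastic-domination lower bound on the sum $X_{N,1}+X_{N,2}$ (Lemmas~\ref{lem.stoch-dom} and~\ref{lem.supercrit-log}) together with the pinning of $R$ near $\alpha/\beta$ before time $\tau$. The paper organises the bootstrap with one extra stopping time $T_1=\inf\{t:X_{N,2}(t)-1<N(\lambda_1-\lambda_2)\}$ and then shows $\P(T_1\le t_0\wedge T_3)\le\P(T_4\le t_0)$ by exactly your ``sum plus ratio $\Rightarrow$ each component'' argument, so the two proofs differ only in bookkeeping.

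One slip to correct: in this section $\lambda_2-1$ may tend to $0$, so on $\mathcal G$ the sum is only guaranteed to be $\ge cN(\lambda_2-1)$, and hence on $\mathcal G\cap\{t<\tau\}$ you get $X_{N,2}(t)\ge c'(\alpha,\beta)\,N(\lambda_2-1)$, not $c'(\alpha,\beta)\,N$. This feeds through to a variance rate of order $1/(N(\lambda_2-1))$ and a predictable quadratic variation of order $1/\big(N(\lambda_2-1)(\lambda_1-\lambda_2)\psi\big)$; the resulting Bernstein tail bound then carries an extra factor $N(\lambda_2-1)(\lambda_1-\lambda_2)\to\infty$ in the exponent, which is only \emph{stronger} than what the lemma asserts, so the stated inequality still follows. (The paper avoids tracking this by using the cruder lower bound $X_{N,2}>N(\lambda_1-\lambda_2)$, which gives variance rate $O(\lambda_1-\lambda_2)$ and produces the constant $128$ exactly.)
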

\begin{proof}
%Let $Q_N(X_N(t)) = X_{N,1}(t)/X_{N,2}(t)$.
Given a vector $X = (X_1, X_2)^T$ with non-negative integer components, such that $X_2 > 0$ and $X_1 + X_2 \le N$, the drift $g_N(X)$ in $Q(X)$ is
\begin{eqnarray*}
\frac{1}{X_2} \lambda_1 X_1 \Big (1-\frac{X_1}{N}-\frac{X_2}{N} \Big ) - \frac{X_1}{X_2} \\+ X_1 \Big (\frac{1}{X_2 +1} - \frac{1}{X_2} \Big ) \lambda_2 X_2 \Big (1-\frac{X_1}{N}-\frac{X_2}{N} \Big ) + X_1 \Big (\frac{1}{X_2 -1} - \frac{1}{X_2} \Big ) X_2\\
=  \Big (1-\frac{X_1}{N}-\frac{X_2}{N} \Big ) \Big ( \frac{\lambda_1 X_1}{X_2} - \frac{\lambda_2 X_1}{X_2+1} \Big ) - \frac{X_1}{X_2} + \frac{X_1}{X_2 -1}.
\end{eqnarray*}
Clearly, we see that $g_N(X_N(t)) \ge 0$ for all $t$ at most the weaker species extinction time $\kappa_N$. Also,
\begin{eqnarray*}
g_N(X) & = & \frac{X_1}{X_2} (\lambda_1 - \lambda_2) + \lambda_2 X_1 \Big ( \frac{1}{X_2} - \frac{1}{X_2+1} \Big ) - X_1 \Big ( \frac{1}{X_2} - \frac{1}{X_2-1} \Big )\\
&  +  & \Big (\frac{X_1}{N}+\frac{X_2}{N} \Big ) X_1 \Big (  \frac{\lambda_2}{X_2+1} - \frac{\lambda_1}{X_2} \Big )\\
& \le & \frac{X_1}{X_2} (\lambda_1 - \lambda_2) + \lambda_2 X_1 \Big ( \frac{1}{X_2} - \frac{1}{X_2+1} \Big ) - X_1 \Big ( \frac{1}{X_2} - \frac{1}{X_2-1} \Big )\\
& \le & \frac{X_1}{X_2} \Big (\lambda_1 - \lambda_2 + \frac{\lambda_2}{X_2+1} + \frac{1}{X_2-1} \Big )
\end{eqnarray*}

Let $T_1$ be the infimum of times $t$ such that $X_{N,2}(t)-1 < N (\lambda_1 - \lambda_2)$. Then, since $N(\lambda_1-\lambda_2)^2 \to \infty$, $0 \le g_N (X_N(t)) \le 3 Q(X_N(t)) (\lambda_1 - \lambda_2)$ for $t < T_1$, if $N$ is large enough.
% we have $0 \le g_N (X_N(t)) \le 3 Q(X_N(t)) (\lambda_1 - \lambda_2)$.

%Let $Q_N(X_N(0)) = q_0$.
We write $Q(X_N(t)) = Q(X_N(0)) + \int_0^t g_N (X_N(s)) ds + M_N(t)$, where $M_N(t)$ is a martingale.
%Thus on the event $t_0 < T_1$, for $t \le t_0$,
%\begin{eqnarray*}
%M_N(t) \le Q_N(X_N(t)) - Q_N(X_N(0)) \le 3 (\lambda_1 - \lambda_2) \int_0^t Q_N(X_N(s))ds + M_N(t),
%\end{eqnarray*}
%so
%\begin{eqnarray*}
%\sup_{t \le t_0} |Q_N(X_N(t)) - Q_N(X_N(0))
%\end{eqnarray*}
Let $R_N(X)$ be given by
\begin{eqnarray*}
\frac{\lambda_1 X_1}{X_2^2}  \Big (1-\frac{X_1}{N}-\frac{X_2}{N} \Big ) + \frac{X_1}{X_2^2} + X_1^2 \Big (\frac{1}{X_2 +1} - \frac{1}{X_2} \Big )^2 \lambda_2 X_2 \Big (1-\frac{X_1}{N}-\frac{X_2}{N} \Big )\\
 + X_1^2 \Big (\frac{1}{X_2 -1} - \frac{1}{X_2} \Big )^2 X_2.
\end{eqnarray*}
Let $T_2$ be the infimum of times $t$ such that $Q(X_N(t)) > 2 Q (X_N(0))$. For $t < T_1 \land T_2$, if $N$ is large enough,
\begin{eqnarray*}
R_N(X_N(t)) & \le & 6 Q (X_N(0)) (\lambda_1-\lambda_2) + 4 (Q(X_N(0)))^2(\lambda_1-\lambda_2) \\
&& {} + 4 (Q(X_N(0)))^2 (\lambda_1-\lambda_2) \\
& \le & 8 Q(X_N(0)) (1+ Q(X_N(0))) (\lambda_1-\lambda_2).
\end{eqnarray*}
Let $T(\delta) = \inf \{t \ge 0: |M_N(t)| > \delta \}$, and denote $Q(X_N(0)) = q_0$. Then a standard exponential martingale argument, using the bound on the quantity $R_N(X_N(t))$ above, shows that, given $t_0 > 0$, $N$ large enough and $0 \le \delta \le 16 \log 2 t_0  q_0 (1 + q_0)$,
$$\P (T(\delta) \le t_0 \land T_1 \land T_2) \le 2 e^{-\delta^2/32t_0q_0(1+q_0)(\lambda_1-\lambda_2)}.$$
Also, by Gr\"onwall's inequality, on the event $t_0 < T_1 \land T(\delta)$,
\begin{eqnarray*}
\sup_{t \le t_0} Q_N(X_N(t)) & \le & (Q_N(X_N(0)) + \sup_{t \le t_0} |M_N(t)|) e^{3 (\lambda_1 - \lambda_2) t_0}
 \le  (q_0 + \delta) e^{3 (\lambda_1 - \lambda_2) t_0}.
\end{eqnarray*}
Furthermore, on the event $t_0 < T_1 \land T(\delta)$,
$\inf_{t \le t_0} Q_N(X_N(t)) \ge q_0 - \delta$.
In other words, on the event $t_0 < T_1 \land T(\delta)$,
$$\sup_{t \le t_0} |Q_N(X_N(t)) - q_0| \le \delta e^{3 (\lambda_1 - \lambda_2) t_0} + q_0 (e^{3 (\lambda_1 - \lambda_2) t_0} -1).$$
Let $\psi = \psi (N) \to \infty$ as $N \to \infty$,
in such a way that $\psi (\lambda_1-\lambda_2) \to 0$,
and let $t_0 (N) = (\lambda_1-\lambda_2)^{-1} \psi (N)^{-1}$. We also let $\delta = \psi_N^{-1/4}$, so  $\delta \le 16 t_0 \log 2 q_0 (1 + q_0)$ for $N$ sufficiently large. It follows that, for $N$ sufficiently large, %for any $t_1 \le t_0$,
on the event $t_0 < T_1 \land T(\psi (N)^{-1/4})$,
$$\sup_{t \le t_0} |Q_N(X_N(t)) - q_0| \le 2 \psi (N)^{-1/4}.$$
Let $T_3 = \inf \{t \ge 0: |Q_N(X_N(t) - q_0| > 2 \psi (N)^{-1/4}\}$. Clearly, $\P (T_2 < T_3) = 0$.
Then we have shown that, with $t_0 = (\lambda_1-\lambda_2)^{-1} \psi (N)^{-1}$ as above,
\begin{eqnarray*}
\P (T_3 \le t_0) & \le & \P (T_1 \land T(\psi (N)^{-1/4}) \le t_0 \land T_3) \\
& \le & \P (T(\psi (N)^{-1/4}) \le t_0 \land T_1 \land T_3) + \P (T_1 \le t_0 \land T_3)\\
& \le & 2 e^{-\psi (N)^{1/2}/32q_0(1+q_0)} + \P (T_1 \le t_0 \land T_3).
\end{eqnarray*}
%so we only need to estimate $\P (T_1 \le t_0 \land T_3)$.
Let $T_4 = \inf \{t \ge 0: X_{N,1}(t) + X_{N,2}(t) < N (\lambda_1-1)/4  \}$. Then, if $N$ is sufficiently large, $\P (T_1 \le t_0 \land T_3) \le \P (T_4 \le t_0)$. We will use Lemma~\ref{lem.stoch-dom}, and Lemma~\ref{lem.supercrit-log}, with $\lambda = \lambda_2$, $\mu = 1$, $\omega = 8\sqrt{N(\lambda_2-1)^2}$. Note $(\lambda_2-1)^{-1} e^{\sqrt{N(\lambda_2-1)^2}} \ge (\lambda_1-\lambda_2)^{-1} \ge t_0$ for large $N$, since
$$e^{\sqrt{N(\lambda_2-1)^2}} \ge e^{\Big (\frac{N(\lambda_2-1)^2}{N(\lambda_1-\lambda_2)^2}   \Big )^{1/2}} \ge  \Big (\frac{N(\lambda_2-1)^2}{N(\lambda_1-\lambda_2)^2} \Big )^{1/2} = \frac{\lambda_2-1}{\lambda_1-\lambda_2}.$$
%By Lemma~\ref{lem.supercrit-log},
Hence $\P (T_4 \le t_0) \le 4 e^{-\sqrt{N}(\lambda_2-1)}$, and the result follows, as $q_0 \le 2 \alpha/\beta$ for large $N$.
\end{proof}

\smallskip

For the next phase, after time $(\lambda_1-\lambda_2)^{-1} \psi^{-1}$, we approximate vector $\tx_N(t)$ by the solution $\tx (t)$ to~\eqref{eq-diff-eq-eigen}.
When $\mu_1 = \mu_2 = 1$, then equation~\eqref{eq-diff-eq-eigen} takes the form:
\begin{eqnarray*}
\frac{d \tx_1 (t)}{dt} & = & -(\lambda_1 - 1) \tx_1 (t) - \lambda_1 \tx_1 (t)^2 - \frac{(\lambda_1-\lambda_2)^2}{\lambda_1 (\lambda_1-1)} \Big (\frac{\tx_2 (t)}{a} \Big )^2 + \frac{(\lambda_1 - \lambda_2)\lambda_1}{\lambda_1-1} \tx_1 (t) \frac{\tx_2 (t)}{a}\\
\frac{d \tx_2 (t)}{dt} & = & - \frac{\lambda_1-\lambda_2}{\lambda_1} \tx_2 (t) - \lambda_2 \tx_2 (t) \tx_1 (t) + \frac{\lambda_2}{\lambda_1 a} \frac{\lambda_1 - \lambda_2}{\lambda_1-1} \tx_2 (t)^2.
\end{eqnarray*}

\begin{lemma}
\label{lem-lt-approx-nc}
Assume that $X_{N,1}(0) + X_{N,2}(0) \le 2 N (\lambda_1-1)/\lambda_1$, $X_{N,1}(0)/X_{N,2}(0) = \alpha/\beta + \eps_N$, %and $X_{N,2} (0) = \beta_N + \eps_{N,2}$,
where $\eps_N \to 0$ %and $\eps_{N,2} \to 0$
as $N \to \infty$. Let $x_i(0) = N^{-1} X_{N,i}(0)$ for $i=1,2$, with $\tilde{x}_i (0)$ being derived from $x_i (0)$ according to the change of variables.
%$X_{N,1}(0) = (\lambda_2-1) \alpha ( 1+ \eps_{N,1})$ and $X_{N,2}(0) = (\lambda_2-1) \alpha ( 1+ \eps_{N,2})$, where $\eps_{N,1} \to 0$ and $\eps_{N,2} \to 0$ %as $N \to \infty$.
%
%Suppose that, for a small enough $0 < \delta < \min \{1/2, \beta/\alpha \}$, $\tx_1(0) = \delta (\lambda_1-1)$ and $\tx_2 (0) =  (\lambda_1-1) (\delta + 1) %\frac{\beta}{\alpha + \beta}(1+o(1))$).
Let $\omega = \omega (N)> 0$, where $\omega (N) \to \infty$ with $N$ and
%Assume that %$\omega (N) \le 4 (\log 2)^2 N$ and
$N (\lambda_1-\lambda_2)^2/\omega(N) \to \infty$.
% and $N (\lambda_1-\lambda_2)^4 (\lambda_1-1)^2/\omega(N) \to \infty$.
For $t \ge 0$, let
$$f_N(t) = \max \Big \{ \frac{\lambda_1-1}{\lambda_1-\lambda_2}|\tx_{N,1}(t) - \tx_1 (t)|, |\tx_{N,2}(t) - \tx_2(t)| \Big \}.$$
%and suppose that
%$$f_N(0) \le \frac{\lambda_1-1}{\lambda_1-\lambda_2} \Big (\frac{\omega_2 }{N} \Big )^{1/2}.$$
Then, for $N$ large enough,
$$\P \Big (\sup_{t \le (\lambda_1-1)^{-1}e^{\omega/8}} f_N(t) > 8 \frac{\lambda_1-1}{\lambda_1-\lambda_2}\sqrt{\frac{\omega}{N}}
e^{32 (\lambda_1 \frac{\beta}{\alpha} +1)} \Big )\le 12 e^{-\omega/8}.$$
%GB -- constants amended
\end{lemma}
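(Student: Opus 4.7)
The plan is to follow the variation-of-constants framework of Lemma~\ref{lem-lt-approx}, suitably adapted to the near-critical regime where the eigenvalues $\eta_1 = \lambda_1-1$ and $\eta_2 = (\lambda_1-\lambda_2)/\lambda_1$ are of very different orders. Writing $\Delta_N(t) = \tx_N(t) - \tx(t)$, the Dynkin decomposition yields
$$\Delta_N(t) = \int_0^t e^{\tilde{A}(t-s)}[\tilde{F}(\tx_N(s))-\tilde{F}(\tx(s))]\,ds + \int_0^t e^{\tilde{A}(t-s)}\,dM_N(s),$$
with $\tilde{A}$ and $\tilde{F}$ as in Section~\ref{sec:lt-approx}. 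The strategy has three steps: (i) estimate the martingale transform via Lemma~\ref{lem.mart-dev}; (ii) expand the nonlinear remainder in terms of $\Delta_{N,1}$, $\Delta_{N,2}$ and their products with the deterministic solution; (iii) close the bound by a Gr\"onwall argument running on a suitable stopping-time bootstrap.

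For step (i), I would apply Lemma~\ref{lem.mart-dev} with $\sigma = (\lambda_1-1)^{-1}$, so that the reach of the lemma covers the target timescale $(\lambda_1-1)^{-1}e^{\omega/8}$ with $\sigma\eta_1 = 1$ while $\sigma\eta_2 = (\lambda_1-\lambda_2)/[\lambda_1(\lambda_1-1)] \to 0$ by assumption, and with $B = O(1/N)$. Using the \emph{a priori} pointwise bound $\tx_{N,2}(s) \le C(\lambda_1-1)\beta/\alpha$ (to be established by the bootstrap jointly with the main estimate), the predictable quadratic variations along the two eigen-directions satisfy
$$\int_0^t v_1(\tx_N(s),t-s)\,ds \lesssim \frac{1}{N},\qquad \int_0^t v_2(\tx_N(s),t-s)\,ds \lesssim \frac{\lambda_1-1}{N(\lambda_1-\lambda_2)},$$
the asymmetry being produced by the different rates of decay of the kernels $e^{-2\eta_i(t-s)}$. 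Lemma~\ref{lem.mart-dev} then yields, on $[0,(\lambda_1-1)^{-1}e^{\omega/8}]$ with probability at least $1-8e^{-\omega/8}$,
$$\Big|\int_0^t e^{-(t-s)\eta_1}\,dM_{N,1}(s)\Big| \lesssim \sqrt{\omega/N},\qquad \Big|\int_0^t e^{-(t-s)\eta_2}\,dM_{N,2}(s)\Big| \lesssim \frac{\lambda_1-1}{\lambda_1-\lambda_2}\sqrt{\omega/N},$$
which already match (up to a constant) the target bound once the weighting in $f_N$ is applied.

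For step (ii), I would expand $\tilde{F}_i(\tx_N)-\tilde{F}_i(\tx)$ into sums of products of differences, as in Lemma~\ref{lem-lt-approx}, and observe that the weighting by $(\lambda_1-1)/(\lambda_1-\lambda_2)$ in $f_N$ is chosen precisely so that the cross terms arising from $\tfrac{(\lambda_1-\lambda_2)\lambda_1}{\lambda_1-1}\tx_1\tx_2$ in the $\tx_1$ equation and $\lambda_2\tx_2\tx_1$ in the $\tx_2$ equation produce matched contributions across the two coordinates. For step (iii), the dominant Gr\"onwall coefficient turns out to be of order $\lambda_1\beta/\alpha$: it comes from integrating $\lambda_2\tx_2(s)\sim\lambda_1(\lambda_1-1)\beta/\alpha$ against the resolvent kernel $e^{-(t-s)\eta_2}$ (total mass $\lambda_1/(\lambda_1-\lambda_2)$) and then applying the weighting $(\lambda_1-\lambda_2)/(\lambda_1-1)$ on $|\Delta_{N,1}|$ inside the inequality. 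Gr\"onwall together with absorbing the quadratic remainder (which is small precisely because $N(\lambda_1-\lambda_2)^3/(\lambda_1-1) \to \infty$ by hypothesis) and the usual stopping-time bootstrap then produce the stated bound with the exponent $32(\lambda_1\beta/\alpha+1)$.

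The principal obstacle is that Lemmas~\ref{lem-sol-decay} and~\ref{lem-sol-decay-1} of Section~\ref{subs:conv-det} require $y(0) \le L/(8L_1)$, which fails in the near-critical regime: here $y(0) = O(\lambda_1-1)$ while $L/L_1 = O((\lambda_1-\lambda_2)/\lambda_1^2)$. I would therefore first establish a near-critical analogue of these lemmas, using the conservation relation~\eqref{relation} which, for $\mu_1=\mu_2=1$, becomes $x_2(t) = x_2(0)[x_1(t)/x_1(0)]^{\lambda_2/\lambda_1}e^{-\eta_2 t}$. Combining this with bounds on $x_1(t)$ (which remains bounded away from $0$ and from $1$ thanks to Lemmas~\ref{lem.stoch-dom} and~\ref{lem.supercrit-log}, applied to deterministic upper and lower envelopes), one gets $\tx_2(t) = x_2(t) \le C x_2(0) e^{-\eta_2 t}$ with $C$ depending only on $\alpha/\beta$, and a corresponding pointwise bound on $\tx_1(t)$. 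These deterministic estimates feed into the stochastic analysis sketched above and close the proof of Lemma~\ref{lem-lt-approx-nc}.
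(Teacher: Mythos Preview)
Your proposal is correct and follows essentially the same route as the paper: variation of constants, martingale deviations via Lemma~\ref{lem.mart-dev} with $\sigma=(\lambda_1-1)^{-1}$ and the asymmetric variance bounds $K_1\asymp N^{-1}$, $K_2\asymp (\lambda_1-1)/N(\lambda_1-\lambda_2)$, decay of $\tx_2$ via relation~\eqref{relation} in place of Lemmas~\ref{lem-sol-decay}--\ref{lem-sol-decay-1}, and a Gr\"onwall closure producing the exponent $O(\lambda_1\beta/\alpha)$. Two small points to sharpen: the a~priori control needed for the quadratic-variation bounds is on the \emph{stochastic} total $x_{N,1}+x_{N,2}$ and comes not from a bootstrap with the main estimate but from an independent argument via Lemmas~\ref{lem.stoch-dom}--\ref{lem.supercrit-log} (giving a stopping time $T_1$ with $\P(T_1\le t_0)\le 4e^{-\omega/8}$, which accounts for the $12$ in the final bound), and the quadratic remainder is absorbed using only the lemma's own hypothesis $N(\lambda_1-\lambda_2)^2/\omega\to\infty$, not the stronger section-wide assumption you cite.
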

\begin{proof}
%Since $\tx_1 (0) = \delta (\lambda_1 -1)$,
%$x_1 (0) = (\lambda_1-1) (\delta + 1) \frac{\alpha}{\alpha + \beta}(1+o(1))< (\lambda_1-1)$ for $N$ large enough.
%Also, $dx_1 (t)/dt \le 0$ when $x_1(t) \ge (\lambda_1-1)/\lambda_1$, so $x_1 (t) \le (\lambda_1 -1)$ for all $t$. By~(\ref{relation}),
%$$\tx_2 (t) = x_2 (t) = (x_1(t)/x_1(0))^{\lambda_2/\lambda_1} x_2(0) e^{-t\frac{\lambda_1 - \lambda_2}{\lambda_1}} \le \frac{\beta}{\alpha}(\lambda_1-1) %(1+o(1)) e^{-t\frac{\lambda_1 - \lambda_2}{\lambda_1}}.$$
%
%Note that, using also~(\ref{eq-relation}), the final term in the equation for $d \tx_1(t)/dt$ in~\ref{eq-diff-eq-eigen}) (which may be positive) is always %smaller in modulus, for $N$ large enough, than the second term. It follows that,
%while $\tx_1 (t) \ge 0$, we have $d \tx_1(t)/dt \le -(\lambda_1-1) \tx_1(t)$, so $\tx_1(t) \le \tx_1(0) e^{-t (\lambda_1-1)}$. Also, if $\delta$ is small %enough, then the following holds. When $\tx_1(t) \le 0$ and $|\tx_1 (t)| = O(\lambda_1-\lambda_2)$, then
%$$\frac{d \tx_1(t)}{dt} \ge - \frac{\lambda_1-1}{2} \tx_1 (t) - \frac{(\lambda_1-\lambda_2)^2}{\lambda_1 (\lambda_1-1)} \tx_2 (t),$$
%and it is easily seen, using the above upper bound on $\tx_2 (t)$, that
%$$\tx_1 (t) \ge
%- c(\delta) e^{-t\frac{\lambda_1-1}{2}} (\lambda_1- \lambda_2) (1-e^{-t\frac{\lambda_1-\lambda_2}{\lambda_1}}),$$
%for a constant $c(\delta)$ tending to 0 as $\delta \to 0$. So we may assume that
%$|\tx_1 (t)| \le \frac12 (\lambda_1-1)e^{-t\frac{\lambda_1-1)}{2}}$ for all $t \ge 0$.
Using the integral form of~\eqref{eq-diff-eq-eigen} and its stochastic analogue as in Section~\ref{sec:lt-approx}, writing $\eta_1 = \lambda_1-1$ and $\eta_2 = (\lambda_1-\lambda_2)/\lambda_1$ and noting that $\eta_1 > \eta_2$ for large $N$, we can write
\begin{eqnarray*}
|\tx_{N,1}(t)-\tx_1(t)| & \le & |\tx_{N,1}(0)-\tx_1 (0)|e^{-t\eta_1} + \Big |\int_0^t e^{-\eta_1(t-s)} d M_{N,1}(s) \Big |\\
&  + & \lambda_1 \int_0^t e^{-\eta_1(t-s)} |\tx_{N,1}(s)-\tx_1(s)||\tx_{N,1}(s)+\tx_1(s)| ds\\
& + & \frac{(\lambda_1-\lambda_2)^2}{\lambda_1 (\lambda_1-1)a^2} \int_0^t e^{-\eta_1(t-s)} |\tx_{N,2}(s)-\tx_2(s)| (\tx_{N,2}(s) + \tx_2(s))ds \\
& + & \frac{(\lambda_1 - \lambda_2)\lambda_1}{a(\lambda_1-1)}\int_0^t e^{-\eta_1 (t-s)} |\tx_{N,1}(s)-\tx_1(s)| \tx_{N,2}(s)ds\\
& + & \frac{(\lambda_1 - \lambda_2)\lambda_1}{a(\lambda_1-1)}\int_0^t e^{-\eta_1(t-s)} |\tx_{N,2}(s)-\tx_2(s)| |\tx_1(s)|ds,\\
%& + & \Big |\int_0^t e^{-\eta_1(t-s)} d M_{N,1}(s) \Big |,
\end{eqnarray*}
and
\begin{eqnarray*}
\lefteqn{|\tx_{N,2}(t)-\tx_2(t)|} \\
& \le & |\tx_{N,2}(0) - \tx_2(0)|e^{-t\eta_2} +  \lambda_2 \int_0^t e^{-(t-s) \eta_2}  |\tx_{N,2}(s) - \tx_2(s)||\tx_{N,1}(s)|ds\\
& + & \lambda_2 \int_0^t e^{-(t-s) \eta_2}  |\tx_{N,1}(s) - \tx_1(s)|\tx_2(s)ds + \Big |\int_0^t e^{-(t-s) \eta_2} d M_{N,2}(s)\Big |\\
& + & \frac{\lambda_2}{\lambda_1 a} \frac{\lambda_1 - \lambda_2}{\lambda_1-1} \int_0^t e^{-(t-s) \eta_2} |\tx_{N,2}(s)-\tx_2(s)| (\tx_{N,2}(s) + \tx_2(s)) ds.
%\\
% & + & \Big |\int_0^t e^{-(t-s) \eta_2} d M_{N,2}(s)\Big |.
\end{eqnarray*}

Let
$$f_N(t) = \max \Big \{|\tx_{N,1}(t)-\tx_1(t)| \frac{\lambda_1-1}{\lambda_1-\lambda_2}, |\tx_{N,2}(t)-\tx_2(t)| \Big \},$$
and
let $g_N(t) = e^{t\eta_2} f_N(t)$.
Then
\begin{eqnarray*}
\lefteqn{\frac{\lambda_1-1}{\lambda_1-\lambda_2}|\tx_{N,1}(t)-\tx_1(t)|e^{t\eta_2}} \\
& \le & g_N(0)  + \lambda_1 \int_0^t  g_N(s)|\tx_{N,1}(s)+\tx_1(s)| ds \\
&& {} +  \frac{\lambda_1-\lambda_2}{\lambda_1a^2} \int_0^t g_N(s) (\tx_{N,2}(s) + \tx_2(s))ds +
\frac{(\lambda_1 - \lambda_2)\lambda_1}{a(\lambda_1-1)}\int_0^t g_N(s) \tx_{N,2}(s)ds \\
&& {} +  \frac{\lambda_1}{a} \int_0^t g_N(s) |\tx_1 (s)|ds +
\frac{\lambda_1-1}{\lambda_1-\lambda_2}e^{t\eta_2} \Big |\int_0^t e^{-\eta_1(t-s)} d M_{N,1}(s) \Big |,
\end{eqnarray*}
and
\begin{eqnarray*}
\lefteqn{ |\tx_{N,2}(t)-\tx_2 (t)|e^{t\eta_2}} \\
& \le & g_N(0) +  \lambda_2 \int_0^t g_N(s) |\tx_{N,1}(s)|ds
 +  \lambda_2 \frac{\lambda_1-\lambda_2}{\lambda_1-1} \int_0^t g_N(s) \tx_2 (s)ds\\
& + & \frac{\lambda_2}{\lambda_1 a} \frac{\lambda_1 - \lambda_2}{\lambda_1-1} \int_0^t g_N(s) (\tx_{N,2}(s) + \tx_2 (s)) ds
  +  e^{t \eta_2} \Big |\int_0^t e^{-(t-s) \eta_2} d M_{N,2}(s) \Big |.
\end{eqnarray*}
It follows that
\begin{eqnarray*}
\lefteqn{\frac{\lambda_1-1}{\lambda_1-\lambda_2}|\tx_{N,1}(t)-\tx_1(t)|e^{t\eta_2}}\\
& \le & g_N(0)  + 2\lambda_1 \int_0^t  g_N(s)|\tx_1 (s)| ds
 +  \frac{\lambda_1(\lambda_1-\lambda_2)}{\lambda_1-1} \int_0^t g_N(s)^2 e^{-s \eta_2}ds \\
& + & \frac{2(\lambda_1-\lambda_2)}{\lambda_1a^2} \int_0^t g_N(s) \tx_2 (s)ds
 +  \frac{\lambda_1-\lambda_2}{\lambda_1a^2} \int_0^t g_N(s)^2 e^{-s\eta_2}ds\\
& + & \frac{(\lambda_1 - \lambda_2)\lambda_1}{a(\lambda_1-1)}\int_0^t g_N(s) \tx_2 (s)ds
 +  \frac{(\lambda_1 - \lambda_2)\lambda_1}{a(\lambda_1-1)}\int_0^t g_N(s)^2 e^{-s\eta_2} ds\\
& + & \frac{\lambda_1}{a}\int_0^t g_N(s) |\tx_1 (s)|ds
 +  \frac{\lambda_1-1}{\lambda_1-\lambda_2}e^{t\eta_2} \Big |\int_0^t e^{-\eta_1(t-s)} d M_{N,1}(s) \Big |,
\end{eqnarray*}
and
\begin{eqnarray*}
\lefteqn{|\tx_{N,2}(t)-\tx_2 (t)|e^{t\eta_2}} \\
& \le & g_N(0) +  \lambda_2 \int_0^t g_N(s) |\tx_1(s)|ds + \lambda_2 \frac{\lambda_1-\lambda_2}{\lambda_1-1}\int_0^t g_N(s)^2e^{-s\eta_2}ds\\
& + &   \lambda_2 \frac{\lambda_1-\lambda_2}{\lambda_1-1} \int_0^t g_N(s) \tx_2 (s)ds
 +  \frac{2\lambda_2(\lambda_1-\lambda_2)}{a \lambda_1 (\lambda_1-1)} \int_0^t g_N(s) \tx_2(s)ds\\
& + & \frac{\lambda_2}{\lambda_1 a} \frac{\lambda_1 - \lambda_2}{\lambda_1-1} \int_0^t g_N(s)^2 e^{-s\eta_2}ds
  +  e^{t\eta_2} \Big |\int_0^t e^{-(t-s) \eta_2} d M_{N,2}(s) \Big |.
\end{eqnarray*}
So, since
%$\lambda_1 \to 1$, $\lambda_2 \to 1$, and
%GB We don't have \lambda_1tending to 1, so a number of adjustments are necessary below
$a \to 1$ as $N \to \infty$, for $N$ large enough,
\begin{eqnarray*}
\lefteqn{\frac{\lambda_1-1}{\lambda_1-\lambda_2}|\tx_{N,1}(t)-\tx_1(t)|e^{t \eta_2}}\\
& \le & g_N(0)  + 4 \lambda_1 \int_0^t  g_N(s)|\tx_1 (s)| ds +
\frac{2\lambda_1 (\lambda_1 - \lambda_2)}{\lambda_1-1}\int_0^t g_N(s) \tx_2 (s)ds\\
& + &
2\frac{\lambda_1 (\lambda_1-\lambda_2)}{\lambda_1-1} \int_0^t g_N(s)^2 e^{-s\eta_2}ds
 +  \frac{\lambda_1-1}{\lambda_1-\lambda_2}e^{t \eta_2} \Big |\int_0^t e^{-\eta_1(t-s)} d M_{N,1}(s) \Big |,
\end{eqnarray*}
and
\begin{eqnarray*}
\lefteqn{|\tx_{N,2}(t)-\tx_2 (t)|e^{t\eta_2}}\\
& \le & g_N(0) +  \lambda_2 \int_0^t g_N(s) |\tx_1(s)|ds +
4\frac{\lambda_2(\lambda_1-\lambda_2)}{\lambda_1-1} \int_0^t g_N(s) \tx_2 (s)ds\\
& + & 4\frac{\lambda_2(\lambda_1-\lambda_2)}{\lambda_1-1}\int_0^t g_N(s)^2e^{-s \eta_2}ds
  +  e^{t \eta_2} \Big |\int_0^t e^{-(t-s) \eta_2} d M_{N,2}(s) \Big |.
\end{eqnarray*}
Hence, if $N$ is large enough,
\begin{eqnarray*}
g_N(t) & \le & g_N(0)  + 4 \lambda_1 \int_0^t  g_N(s)|\tx_1(s)| ds +
4 \frac{\lambda_1(\lambda_1 - \lambda_2)}{\lambda_1-1}\int_0^t g_N(s) \tx_2 (s)ds\\
& + &
 4\frac{\lambda_1(\lambda_1-\lambda_2)}{\lambda_1-1} \int_0^t g_N(s)^2 e^{-s \eta_2}ds + e^{t \eta_2} M,
\end{eqnarray*}
where $M$ is the maximum of
$\frac{\lambda_1-1}{\lambda_1-\lambda_2}\Big |\int_0^t e^{-\eta_1(t-s)} d M_{N,1}(s)\Big |$ and
$\Big |\int_0^t e^{-(t-s) \eta_2} d M_{N,2}(s) \Big |$.

%Now, the process $\tilde{x}_N(t)$ makes jumps $(1/N,0)^T$ and $(-1/N,0)^T$ at rates $\lambda_1 x_{N,1}(t) %(1-x_{N,1}(t) - x_{N,2}(t))$ and rates $x_{N,1}(t)$ respectively, and jumps $(1/Na,1/N)^T$ and $(-1/Na,-1/N)^T$ at %rates $\lambda_2 x_{N,2}(t) (1-x_{N,1}(t) - x_{N,2}(t))$ and rates $x_{N,2}(t)$ respectively.
%
%We therefore have
%\begin{eqnarray*}
%\int_0^t q(\tilde{x}_N(t), \tilde{x}_N(t) + y) (e^{\tilde{A}(t-s)}y)_1^2 ds
%& \le & \frac{\lambda_1}{N} \int_0^t x_{N,1}(s) (1-x_{N,1}(s) - x_{N,2}(s))e^{-2(t-s)(\lambda_1-1)}ds \\
%& \le & \frac{\lambda_1 +1}{N} \int_0^t x_{N,1}(s) e^{-2(t-s)(\lambda_1-1)}ds\\
%& + &  \frac{\lambda_2}{Na^2} \int_0^t x_{N,2}(s) (1-x_{N,1}(s) - x_{N,2}(s))e^{-2(t-s)(\lambda_1-1)}ds \\
%& + & \frac{\lambda_2+1}{Na^2} \int_0^t x_{N,2}(s) e^{-2(t-s)(\lambda_1-1)}ds,
%\end{eqnarray*}
%and
%\begin{eqnarray*}
%\int_0^t q(\tilde{x}_N(t), \tilde{x}_N(t) + y) (e^{\tilde{A}(t-s)}y)_2^2 ds
%& \le & \frac{\lambda_2}{N} \int_0^t x_{N,2}(s) (1-x_{N,1}(s) - %x_{N,2}(s))e^{-2(t-s)\frac{\lambda_1-\lambda_2}{\lambda_1}}ds \\
%& \le & \frac{\lambda_2+1}{N} \int_0^t x_{N,2}(s) e^{-2(t-s)\frac{\lambda_1-\lambda_2}{\lambda_1}}ds.
%\end{eqnarray*}
Let $T_1$ be the infimum of times $t$ such that $x_{N,1}(t) + x_{N,2}(t) > 4(\lambda_1-1)/\lambda_1$. %or $x_{N,2}(t) > \frac{2\beta}{\alpha} (\lambda_1 -1)$.
%Recalling the bound on quadratic variations from
Similarly to the calculations in the proof of Lemma~\ref{lem-lt-approx},
%the proof of Lemma~\ref{lem-lt-approx-1},
on the event $t < T_1$, for $N$ large enough,
\begin{eqnarray*}
\int_0^t \sum_y q(\tilde{x}_N(s), \tilde{x}_N(s) + y) (e^{\tilde{A}(t-s)}y)_1^2 ds & \le &
%\Big [\frac{\lambda_1+1}{N\lambda_1} + \frac{\lambda_2+1}{N \lambda_1} \Big ]
\frac{4 (\lambda_1 + 1)}{N} \int_0^t \frac{\lambda_1-1}{\lambda_1} e^{-2(t-s) \eta_1} ds \\
&\le & \frac{2 (\lambda_1 +1)} {\lambda_1 N} \le \frac{4}{N}, %\le \frac{8}{N},
\end{eqnarray*}
and
\begin{eqnarray*}
\lefteqn{\int_0^t \sum_y q(\tilde{x}_N(s), \tilde{x}_N(s) + y) (e^{\tilde{A}(t-s)}y)_2^2 ds}\\
& \le & %\frac{(\lambda_2+1)(\lambda_1-1)}{N(\lambda_1-\lambda_2)}
\frac{4 (\lambda_1 +1) (\lambda_1-1)}{N (\lambda_1-\lambda_2)} \int_0^t \frac{\lambda_1-\lambda_2}{\lambda_1}e^{-2(t-s) \eta_2} ds
\le  \frac{4 \lambda_1 (\lambda_1-1)}{N (\lambda_1-\lambda_2)}.
\end{eqnarray*}
%For a function $\omega = \omega (N)$,
Given $\omega = \omega (N)$ as in the statement of the lemma, let $T_2$ be the infimum of times $t$ such that
$$\Big |\int_0^t e^{-(\lambda_1-1)(t-s)} d M_{N,1}(s) \Big |> 6 \sqrt{\frac{\omega}{N}},$$
or
$$\Big |\int_0^t e^{-(t-s) \frac{\lambda_1-\lambda_2}{\lambda_1}} d M_{N,2}(s) \Big | >
6 \sqrt{\frac{\omega \lambda_1 (\lambda_1-1)}{N(\lambda_1-\lambda_2)}}.$$
Then on the event $t < T_1 \land T_2$, for $N$ large enough,
\begin{eqnarray*}
g_N(t) & \le & g_N(0)  + 4 \lambda_1 \int_0^t  g_N(s)|\tx_1 (s)| ds +
\frac{4\lambda_1 (\lambda_1 - \lambda_2)}{\lambda_1-1}\int_0^t g_N(s) \tx_2 (s)ds\\
& + &
 \frac{4\lambda_1 (\lambda_1-\lambda_2)}{\lambda_1-1} \int_0^t g_N(s)^2 e^{-s \eta_2}ds + 6 e^{t\eta_2} \frac{\lambda_1-1}{\lambda_1-\lambda_2}\sqrt{\frac{\omega}{N}}.
\end{eqnarray*}
Let $T_3$ be the infimum of times $t$ such that
%$g_N(t) > \tx_2 (t)$. Let $T_4$ be the infimum of times $t$ such that
$$g_N(t) > 10 e^{32 (\lambda_1 \beta/\alpha+1)} e^{t \eta_2} \frac{\lambda_1-1}{\lambda_1-\lambda_2}\sqrt{\frac{\omega}{N}}.$$
On the event $t < T_1 \land T_2 \land T_3$, for $N$ large enough,
\begin{eqnarray*}
\frac{4\lambda_1 (\lambda_1-\lambda_2)}{\lambda_1-1} \int_0^t g_N(s)^2 e^{-s \eta_2}ds & \le & 400 \lambda_1^2 e^{64 (\lambda_1 \beta/\alpha +1)} \frac{(\lambda_1-1)}{(\lambda_1-\lambda_2)^2} \frac{\omega}{N}e^{t \eta_2}\\
& \le & e^{t \eta_2}\frac{\lambda_1-1}{\lambda_1-\lambda_2}\sqrt{\frac{\omega}{N}},
\end{eqnarray*}
since we have assumed that %our assumptions imply that $N(\lambda_1-\lambda_2)^2 \to \infty$ and that
$N (\lambda_1-\lambda_2)^2/\omega \to \infty$. It follows that, for $N$ large enough, on the event $t < T_1 \land T_2 \land T_3$,
\begin{eqnarray*}
g_N(t) & \le & g_N(0)  + 4 \lambda_1 \int_0^t  g_N(s)|\tx_1(s)| ds +
\frac{4\lambda_1 (\lambda_1 - \lambda_2)}{\lambda_1-1}\int_0^t g_N(s) \tx_2(s)ds \\
&& {} +
%\frac{1000\omega(\lambda_1-1)}{N(\lambda_1-\lambda_2)^2} e^{32 \nu} e^{t\frac{\lambda_1-\lambda_2}{\lambda_1}} +
7 e^{t \eta_2} \frac{\lambda_1-1}{\lambda_1-\lambda_2}\sqrt{\frac{\omega}{N}}\\
& \le & 8 e^{t \eta_2} \frac{\lambda_1-1}{\lambda_1-\lambda_2}\sqrt{\frac{\omega}{N}} + 4 \lambda_1 \int_0^t  g_N(s)|\tx_1(s)| ds \\
&& {} + \frac{4\lambda_1 (\lambda_1 - \lambda_2)}{\lambda_1-1}\int_0^t g_N(s) \tx_2(s)ds,
\end{eqnarray*}
provided
%$N(\lambda_1-\lambda_2)^2 \to \infty$ and $\omega_2$ is chosen so that $N(\lambda_1-\lambda_2)^2/\omega_2 \to \infty$, and provided that
$g_N(0) = f_N(0) \le  \frac{\lambda_1-1}{\lambda_1-\lambda_2}\sqrt{\frac{\omega}{N}}$. By Gr\"onwall's lemma, on the event $t < T_1 \land T_2 \land T_3$, for $N$ large enough,
$$g_N(t) \le 8 e^{t\eta_2} \frac{\lambda_1-1}{\lambda_1-\lambda_2}\sqrt{\frac{\omega}{N}}
e^{4 \lambda_1\int_0^t |\tx_1(s)| ds + \frac{4\lambda_1 (\lambda_1-\lambda_2)}{\lambda_1-1} \int_0^t \tx_2(s) ds}.$$
Now, by~\eqref{relation}, %since $x(t) \le x(0)$ for all $t$,%if $t \le (\lambda_1-\lambda_2)^{-1} \phi$ and $N$ is sufficiently large, then%it is easy to check that
%$$0 \le \frac{d}{dt} \Big (\frac{x_1(t)}{x_2(t)} \Big ) \le (\lambda_1-\lambda_2)\frac{x_1(t)}{x_2(t)},$$
%so
\begin{eqnarray}
\tx_2 (t) & = & x_2 (t) = (x_1(t)/x_1(0))^{\lambda_2/\lambda_1} x_2(0) e^{-t\frac{\lambda_1 - \lambda_2}{\lambda_1}} \nonumber \\
& \le &
(x_2(0)/x_1(0))^{\lambda_2/\lambda_1} x_1 (t)^{\lambda_2/\lambda_1}e^{-t\frac{\lambda_1 - \lambda_2}{\lambda_1}} \label{x2-upper} \\
& = & \Big ( \frac{\beta}{\alpha} (1+o(1)) \Big )^{\lambda_2/\lambda_1} x_1 (t)^{\lambda_2/\lambda_1} e^{-t\frac{\lambda_1 - \lambda_2}{\lambda_1}} \le 4 (\beta/\alpha)\frac{\lambda_1-1}{\lambda_1} e^{-t\frac{\lambda_1 - \lambda_2}{\lambda_1}}, \nonumber
\end{eqnarray}
where we have also used the facts that $x_1 (0) + x_2(0) \le 2 (\lambda_1-1)/\lambda_1$ implies $x_1(t) + x_2(t) \le 2 (\lambda_1-1)/\lambda_1$ for all $t$, and that $(\lambda_1-\lambda_2) \log (\lambda_1-1) \to 0$ (and so $x_1(t)^{\lambda_2/\lambda_1} \le 3 (\lambda_1-1)/\lambda_1$ for $N$ large enough). Thus
$$\frac{4\lambda_1(\lambda_1-\lambda_2)}{\lambda_1-1} \int_0^t \tx_2(s) ds \le 16 \lambda_1 (\beta/\alpha).$$
%\frac{\beta}{\alpha}(\lambda_1-1) (1+o(1)) e^{-t\frac{\lambda_1 - \lambda_2}{\lambda_1}}.$$
Also, while $\tx_1(t) \ge 0$, using~\eqref{eq-diff-eq-eigen}, we have for $N$ large enough,
\begin{eqnarray*}
\frac{d \tx_1 (t)}{dt} \le - \tx_1 (t) \Big ( (\lambda_1-1) - \frac{(\lambda_1-\lambda_2)\lambda_1}{(\lambda_1-1)a} \tx_2(t) \Big ) \le  - \frac{\lambda_1-1}{2} \tx_1 (t),
\end{eqnarray*}
since $\tx_2(t) =x_2(t) \le 2 (\lambda_1-1)/\lambda_1$ for all $t$. On the other hand, when $\tx_1 (t)$ becomes negative, and while $|\tx_1 (t)| \le (\lambda_1-1)/4\lambda_1$, then for $N$ large enough,
\begin{eqnarray*}
\frac{d \tx_1 (t)}{dt} \ge
%-\lambda_1 \tx_1 (t)^2 - \tx_1 (t) \Big ( (\lambda_1-1) - \frac{(\lambda_1-\lambda_2)\lambda_1}{(\lambda_1-1)a} \tx_2(t) \Big ) \ge
- \frac{\lambda_1-1}{2} \tx_1 (t) - \frac{(\lambda_1-\lambda_2)^2}{\lambda_1(\lambda_1-1)} \Big (\frac{\tx_2(t)}{a} \Big )^2,
\end{eqnarray*}
and so, using $x_2(t) \le 4(\beta/\alpha)\frac{\lambda_1-1}{\lambda_1} e^{-t(\lambda_1-\lambda_2)/\lambda_1}$,
%it can be shown that, if $N$ is large enough, then for all $t \ge 0$,
$$\tx_1(t) \ge -\frac{32 \beta^2}{\alpha^2 a^2 \lambda_1^2} (\lambda_1-\lambda_2)^2 e^{-2t (\lambda_1-\lambda_2)/\lambda_1}.$$
Since $\tx_1(0) \le 4 (\lambda_1-1)/\lambda_1$ for $N$ large enough, we see that, if $N$ is large enough, then
$4 \lambda_1 \int_0^t |\tx_1(s)| ds \le 32$.

It follows that if $N$ is large enough, then on the event $t < T_1 \land T_2 \land T_3$,
$$g_N(t) \le 8 e^{t(\lambda_1-\lambda_2)/\lambda_1} \frac{\lambda_1-1}{\lambda_2-\lambda_1}\sqrt{\frac{\omega}{N}} e^{32 (\lambda_1\beta/\alpha + 1)}.$$
and so
%so $x_1(t)/x_2(t)$ is increasing, and, for $t = o \Big ( \frac{1}{\lambda_1-\lambda_2} \Big )$,
%\begin{equation}
%\label{eq-relation}
%\frac{x_1(t)}{x_2(t)} = \frac{x_1(0)}{x_2(0)} (1 + o(1)) = \frac{\alpha}{\beta} (1+o(1)).
%\end{equation}
$$f_N(t) \le 8 \frac{\lambda_1-1}{\lambda_1-\lambda_2}\sqrt{\frac{\omega}{N}} e^{32 (\lambda_1\beta/\alpha + 1)}.$$

Let $T_4 = \inf \{t: f_N(t) > 8 \frac{\lambda_1-1}{\lambda_2-\lambda_1}\sqrt{\frac{\omega}{N}} e^{32 (\lambda_1\beta/\alpha + 1)} \}$.
Let $t_0 = t_0 (N) = %(\lambda_2 -1)^{-1} e^{\sqrt{N(\lambda_2-1)^2}} \le
(\lambda_1-1)^{-1} e^{\omega/8}$.
%(\lambda_1-1)^{-1} e^{2\sqrt{N(\lambda_1-1)^2}}$.
By the above,
\begin{eqnarray*}
\P (T_4 \le t_0) \le \P (T_1 \land T_2 \land T_3 \le T_4 \land t_0) \le \P(T_1 \le t_0) + \P (T_2 \le T_1 \land t_0) + \P(T_3 \le T_4).
\end{eqnarray*}
By Lemma~\ref{lem.stoch-dom}, and by Lemma~\ref{lem.supercrit-log} with $\lambda = \lambda_1$ and $\mu = 1$, %and $\omega = 16 \sqrt{N (\lambda_1-1)^2}$,
if $N$ is large enough, then $\P (T_1 \le t_0) \le 4 e^{-\omega/8}$. %4 e^{-2\sqrt{N (\lambda_1-1)^2}}$.
By Lemma~\ref{lem.mart-dev} applied to $(\tilde{x}_N(t))$, this time taking $\sigma = (\lambda_1-1)^{-1}$, $K_1 = 4/N$, $K_2 = 4 \lambda_1(\lambda_1-1)/N(\lambda_1-\lambda_2)$,
%$\omega = 16 \sqrt{N (\lambda_1-1)^2}$,
$\P (T_2 \le T_1 \land t_0) \le 8 e^{-\omega/8}$. %e^{-2\sqrt{N (\lambda_1-1)^2}}$.
Also, clearly, for $N$ large enough, $\P (T_3 \le T_4) = 0$. It follows that
$\P (T_4 \le t_0) \le 12 e^{-\omega/8}$, %e^{-2\sqrt{N (\lambda_1-1)^2}},$$
as required.
\end{proof}

\begin{lemma}
\label{lem-lt-approx-nc-acc}
Let $\omega_1 = \omega_1 (N) \to \infty$ as $N \to \infty$.
Let the assumptions of Lemma~\ref{lem-lt-approx-nc} on $X_N(0)$  and $x(0)$ be satisfied.
Assume further that
$$N (\lambda_1-\lambda_2)^2/\log \Big ( \frac{\lambda_1-1}{\lambda_1-\lambda_2}  \Big ) e^{\omega_1/2} \to \infty.$$
%$e^{\omega_1} \le \frac{\lambda_1-1}{\lambda_1-\lambda_2}$.
%$e^{\omega_1/8} \le \log^2 \Big ( \frac{\lambda_1-1}{\lambda_1-\lambda_2} \Big )$.
%
%
%Assume that $X_{N,1}(0) + X_{N,2}(0) \le 2 N (\lambda_1-1)$, $X_{N,1}(0)/X_{N,2}(0) = \alpha/\beta + \eps_N$, %and $X_{N,2} (0) = \beta_N + \eps_{N,2}$,
%where $\eps_N \to 0$ %and $\eps_{N,2} \to 0$
%as $N \to \infty$. Let $x_i(0) = N^{-1} X_{N,i}(0)$ for $i=1,2$, with $\tilde{x}_i (0)$ being derived from $x_i (0)$ according to the change of variables.
%Let $\omega = \omega (N)> 0$, where $\omega (N) \to \infty$ with $N$ and
%$N (\lambda_1-\lambda_2)^2/\omega(N) \to \infty$.
%
%Let $t_0 = t_0 (N) = \frac{2}{\lambda_1-\lambda_2} \log \Big ( \frac{\lambda_1-1}{\lambda_1-\lambda_2}  \Big )$.
%
For $t \ge 0$, let
$f_N(t) = |\tx_{N,2}(t) - \tx_2(t)|$.
Set
$$
\delta_N(t) = \Big ( 2 (\lambda_1-\lambda_2)^{-1} \log^{1/2} \Big (\frac{\lambda_1-1}{\lambda_1-\lambda_2} \Big ) e^{\omega_1/8} x_2 (t) + 8 \sqrt{\frac{\lambda_1(\lambda_1-1)}{\lambda_1-\lambda_2}} \Big ) \sqrt{\frac{\omega_1}{N}} e^{32 (1+ \beta /\alpha)}.
$$
Then, for $N$ large enough,
$$\P \Big (\sup_{t \le \frac{\lambda_1}{\lambda_1-\lambda_2}e^{\omega_1/8}} f_N(t) > \delta_N(t) \Big )
%9 \sqrt{\frac{\lambda_1-1}{\lambda_1-\lambda_2}}\sqrt{\frac{\omega_1}{N}} e^{32 \lambda_1 (1 + \frac{\beta}{\alpha})} \Big )
\le 16 e^{-\omega_1/8}.$$
\end{lemma}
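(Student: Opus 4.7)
The plan is to sharpen the conclusion of Lemma~\ref{lem-lt-approx-nc} in the $\tx_2$-direction alone, by exploiting that $x_2(s)\sim \tx_2(0)e^{-s\eta_2}$ decays exponentially (with $\eta_1=\lambda_1-1$ and $\eta_2=(\lambda_1-\lambda_2)/\lambda_1$), so the predictable quadratic variation of the relevant martingale transform is much smaller than the a priori bound used there. First I would invoke Lemma~\ref{lem-lt-approx-nc} with a confidence parameter $\omega\asymp\omega_1+8\log(\eta_1/\eta_2)$, just enough so that its horizon $\eta_1^{-1}e^{\omega/8}$ exceeds the target horizon $T:=\lambda_1 e^{\omega_1/8}/(\lambda_1-\lambda_2)=e^{\omega_1/8}/\eta_2$. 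Together with Lemmas~\ref{lem-sol-decay}--\ref{lem-sol-decay-1} this yields, off an event of probability $O(e^{-\omega_1/8})$, the uniform bounds $|\tx_{N,1}(s)-\tx_1(s)|\le C\sqrt{\omega_1/N}$ and $x_{N,2}(s)\le 2\tx_2(0)e^{-s\eta_2}$ on $[0,T]$, along with the weaker $|\tx_{N,2}(s)-\tx_2(s)|\le C(\eta_1/\eta_2)\sqrt{\omega_1/N}$ that is to be improved.

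Next, I would apply variation-of-constants to the second coordinate of~\eqref{eq-diff-eq-eigen}, writing
\begin{align*}
\tx_{N,2}(t)-\tx_2(t) &= e^{-t\eta_2}(\tx_{N,2}(0)-\tx_2(0))
-\lambda_2\int_0^t e^{-(t-s)\eta_2}[\tx_{N,1}\tx_{N,2}-\tx_1\tx_2](s)\,ds \\
&\quad + \frac{\lambda_2\eta_2}{a\eta_1}\int_0^t e^{-(t-s)\eta_2}[\tx_{N,2}^2-\tx_2^2](s)\,ds + N_{N,2}(t),
\end{align*}
with $N_{N,2}(t):=\int_0^t e^{-(t-s)\eta_2}dM_{N,2}(s)$. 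Using the step-one bounds, the dominant nonlinear contribution $\lambda_2\int_0^t e^{-(t-s)\eta_2}|\tx_{N,1}-\tx_1|\tx_{N,2}\,ds$ is at most $C\lambda_2\sqrt{\omega_1/N}\,\tx_2(0)\,te^{-t\eta_2}$, which at $t\le T$ becomes $C'(\lambda_1-\lambda_2)^{-1}e^{\omega_1/8}\sqrt{\omega_1/N}\,x_2(t)$ and so matches (up to the $\log^{1/2}(\eta_1/\eta_2)$ factor) the first term of $\delta_N(t)$.

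The crucial new input is a refined bound on $N_{N,2}(t)$. I would partition $[0,T]$ into sub-intervals $[\tau_k,\tau_{k+1}]$ of length $1/\eta_1$ (much shorter than the natural decay scale $1/\eta_2$), on each of which $x_2$ changes by at most a constant factor. Restricted to such a sub-interval, the predictable quadratic variation of $\int_{\tau_k}^t e^{-(t-s)\eta_2}dM_{N,2}(s)$ is bounded by $Cx_2(\tau_k)/(N\eta_1)$ rather than the a priori $C\eta_1/(N\eta_2)$. Applying Lemma~\ref{lem.mart-dev} with $\sigma=1/\eta_1$, $\eta=\eta_2$, $K_k=Cx_2(\tau_k)/(N\eta_1)$, and confidence $\omega'\asymp 2\omega_1+8\log(\eta_1/\eta_2)$, sufficient to union-bound over the $\sim(\eta_1/\eta_2)e^{\omega_1/8}$ sub-intervals, gives per-interval deviations of size at most $C\sqrt{\omega' x_2(\tau_k)/(N\eta_1)}$. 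The semigroup identity $N_{N,2}(t)=e^{-(t-\tau_k)\eta_2}N_{N,2}(\tau_k)+\int_{\tau_k}^t e^{-(t-s)\eta_2}dM_{N,2}(s)$ lets me telescope these into a convergent geometric series, yielding
\[
|N_{N,2}(t)|\le C\sqrt{\omega' x_2(t)/(N\eta_1)} + C'\sqrt{\eta_1/\eta_2}\sqrt{\omega_1/N},
\]
where the second (noise-floor) term is the residual Lemma~\ref{lem-lt-approx-nc} bound that cannot be improved once $x_2$ drops below it, and is exactly the second term of $\delta_N(t)$.

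Finally, assembling the pieces via a Gronwall-style bootstrap on $f_N(t)=|\tx_{N,2}(t)-\tx_2(t)|$, the remaining quadratic corrections in $f_N$ (from the nonlinear terms and from the Lipschitz term $\lambda_2\int e^{-(t-s)\eta_2}f_N(s)|\tx_1(s)|\,ds$) can be absorbed under the hypothesis $N(\lambda_1-\lambda_2)^2/(\log(\eta_1/\eta_2)e^{\omega_1/2})\to\infty$, which is precisely the condition needed both to place the concentration estimate of Lemma~\ref{lem.mart-dev} in its valid regime $\omega'<4(\log 2)^2K_k/B^2$ on every sub-interval and to guarantee that the quadratic remainders are smaller than the linear bound being proved. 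The main technical obstacle is calibrating the sub-interval length in the martingale splitting: choosing it as $1/\eta_1$ makes the per-interval QV bound proportional to $x_2(\tau_k)$, but the price is a union bound over $(\eta_1/\eta_2)e^{\omega_1/8}$ intervals, and it is the joint accounting of this union-bound cost and the horizon length $T\eta_2=e^{\omega_1/8}$ over which the Lipschitz integral accumulates that produces the precise $\log^{1/2}(\eta_1/\eta_2)e^{\omega_1/8}$ prefactor in the first term of $\delta_N(t)$.
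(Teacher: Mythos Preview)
Your overall architecture matches the paper: invoke Lemma~\ref{lem-lt-approx-nc} with a boosted confidence parameter $\omega_2=\omega_1+\log\bigl(\lambda_1(\lambda_1-1)/(\lambda_1-\lambda_2)\bigr)$ to control $|\tx_{N,1}-\tx_1|$ uniformly on $[0,T]$, write the variation-of-constants identity for $\tx_{N,2}$, identify the cross term $\lambda_2\int_0^t e^{-(t-s)\eta_2}|\tx_{N,1}-\tx_1|\,\tx_2(s)\,ds$ as the source of the $x_2(t)$-dependent piece of $\delta_N(t)$, and close with a Gr\"onwall bootstrap. That is exactly what the paper does.

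Where you diverge is the martingale treatment, and here you are over-engineering and miscalculating. The paper uses \emph{no} sub-interval refinement: it applies Lemma~\ref{lem.mart-dev} once, with $\sigma=(\lambda_1-\lambda_2)^{-1}$ and the crude quadratic-variation bound $K_2=4\lambda_1(\lambda_1-1)/(N(\lambda_1-\lambda_2))$, obtaining directly $\bigl|\int_0^t e^{-(t-s)\eta_2}dM_{N,2}(s)\bigr|\le 6\sqrt{\omega_1\lambda_1(\lambda_1-1)/(N(\lambda_1-\lambda_2))}$, which is already the second (noise-floor) term of $\delta_N(t)$. The first term of $\delta_N(t)$ comes \emph{entirely} from the drift cross term you already identified, not from any martingale refinement.

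Your sub-interval telescoping argument has a concrete flaw: over intervals of length $1/\eta_1$, the semigroup factor $e^{-\eta_2/\eta_1}$ is essentially $1$ (since $\eta_2/\eta_1\to 0$), so the ``geometric series'' you form via the triangle inequality does not converge to a single-interval contribution. Summing $\sum_k e^{-(t-\tau_k)\eta_2}\sqrt{x_2(\tau_k)/(N\eta_1)}$ with $\tau_k=k/\eta_1$ gives order $(\sqrt{\eta_1}/\eta_2)\sqrt{x_2(t)/N}$, not $\sqrt{x_2(t)/(N\eta_1)}$ as you claim---a factor $\eta_1/\eta_2$ too small. (As it happens, your claimed $\sqrt{x_2(t)/(N\eta_1)}$ piece is always dominated by your own noise-floor term anyway, so the error is harmless, but the refinement buys you nothing.) Drop the sub-interval splitting entirely and use the crude martingale bound; the proof then goes through exactly as in the paper.
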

\begin{proof}
%Note that the bound on $\omega_1$ implies that
%$$N (\lambda_1-\lambda_2)^2/\log \Big ( \frac{\lambda_1-1}{\lambda_1-\lambda_2}  \Big ) e^{2\omega_1} \to \infty.$$
Let $\omega_2 = \log \Big (\frac{\lambda_1(\lambda_1-1)}{\lambda_1-\lambda_2} \Big ) + \omega_1$, and note that $N(\lambda_1-\lambda_2)^2/\omega_2 \to \infty$. %as $N \to \infty$.

As in the proof of Lemma~\ref{lem-lt-approx-nc}, with $\eta_2 = (\lambda_1-\lambda_2)/\lambda_1$,
\begin{eqnarray*}
\lefteqn{f_N(t) = |\tx_{N,2}(t)-\tx_2(t)|} \\
 & \le & |\tx_{N,2}(0) - \tx_2(0)|e^{-t\eta_2} +  \lambda_2 \int_0^t e^{-(t-s) \eta_2}  |\tx_{N,2}(s) - \tx_2(s)||\tx_{N,1}(s)|ds\\
& + & \lambda_2 \int_0^t e^{-(t-s) \eta_2}  |\tx_{N,1}(s) - \tx_1(s)|\tx_2(s)ds + \Big |\int_0^t e^{-(t-s) \eta_2} d M_{N,2}(s)\Big |\\
& + & \frac{\lambda_2}{\lambda_1 a} \frac{\lambda_1 - \lambda_2}{\lambda_1-1} \int_0^t e^{-(t-s) \eta_2} |\tx_{N,2}(s)-\tx_2(s)| (\tx_{N,2}(s) + \tx_2(s)) ds.
\end{eqnarray*}
Let $g_N(t) = f_N(t) e^{t \eta_2}$.
Let %$\omega_1 = \log \Big (\lambda_1 (\lambda_1-1)/(\lambda_1 - \lambda_2)\Big )$, and let
$T_1$ be the infimum of times $t$ such that
$$|\tilde{x}_{N,1} (t) - \tilde{x}_1 (t)| > 8 \sqrt{\frac{\omega_2}{N}} e^{32 (\lambda_1\beta /\alpha + 1 )}.$$
Let $T_2$ be the infimum of times $t$ such that
$$\Big |\int_0^t e^{-(t-s) \eta_2} d M_{N,2}(s) \Big | >
6 \sqrt{\frac{\omega_1 \lambda_1 (\lambda_1-1)}{N(\lambda_1-\lambda_2)}}.$$
Then for $t < T_1 \land T_2$, if $N$ is large enough,
\begin{eqnarray*}
g_N(t) & \le & g_N(0) +  \lambda_2 \int_0^t g_N(s) |\tx_{N,1}(s)|ds\\
& + & 8 \lambda_2 \sqrt{\frac{\omega_2}{N}} e^{32 (\lambda_1 \beta /\alpha + 1 )} \int_0^t e^{s \eta_2}\tx_2(s)ds + 6 e^{t \eta_2} \sqrt{\frac{\omega_1 \lambda_1 (\lambda_1-1)}{N(\lambda_1-\lambda_2)}}\\
& + &  \frac{4 (\lambda_1 - \lambda_2)}{\lambda_1-1} \int_0^t g_N(s)\tx_2(s) ds + \frac{2 (\lambda_1 - \lambda_2)}{\lambda_1-1} \int_0^t g_N(s)^2 e^{-s \eta_2} ds.
\end{eqnarray*}
Let $T_3$ be the infimum of times $t$ such that
\begin{eqnarray*}
g_N(t) &>& 10 \sqrt{\frac{\lambda_1(\lambda_1-1)}{\lambda_1-\lambda_2}}\sqrt{\frac{\omega_1}{N}} e^{32 (1+ \beta /\alpha)} e^{t \eta_2} \\
&& {} + 4 \sqrt{\frac{\omega_1}{N(\lambda_1-\lambda_2)^2}} \log^{1/2} \Big ( \frac{\lambda_1-1}{\lambda_1-\lambda_2}  \Big )
e^{32 (1+ \beta /\alpha)} e^{\omega_1/8} x_2 (t) e^{t \eta_2}.
\end{eqnarray*}
Now, by~\eqref{x2-upper},
\begin{eqnarray*}
x_2(t)  \le 4 (\beta/\alpha) \frac{\lambda_1 - 1}{\lambda_1} e^{-t \eta_2},
\end{eqnarray*}
and, by~\eqref{relation} and~\eqref{x2-upper}, the fact that, for all $t$, $x_1 (t) + x_2 (t) \ge \frac12 \frac{\lambda_1-1}{\lambda_1}$ if $N$ is large enough, and the fact that $x_1(t)/x_2(t)$ is increasing, also
\begin{eqnarray}
\label{x2-lower}
x_2 (t) \ge \frac14 \frac{\beta}{\alpha + \beta} \frac{\lambda_1-1}{\lambda_1} e^{-t \eta_2}.
\end{eqnarray}
Then for $t < T_3$, if $N$ is large enough,
\begin{eqnarray*}
\lefteqn{ \frac{2 (\lambda_1 - \lambda_2)}{\lambda_1-1}\int_0^t g_N(s)^2 e^{-s \eta_2} ds} \\
  & \le & \frac{200\omega_1 \lambda^2_1}{(\lambda_1-\lambda_2)N}
 e^{64 (1 + \beta /\alpha)} e^{t \eta_2} + \frac{1024(\beta/\alpha)^2 (\lambda_1-1) \omega_1}{N(\lambda_1-\lambda_2)^2}
 e^{64 (1 + \beta /\alpha)} \log \Big ( \frac{\lambda_1-1}{\lambda_1-\lambda_2}  \Big ) e^{\omega_1/4}\\
 & \le & e^{t \eta_2} \sqrt{\frac{\omega_1 \lambda_1(\lambda_1-1)}{N(\lambda_1-\lambda_2)}} + \sqrt{\frac{\omega_1}{N(\lambda_1-\lambda_2)^2}} \log^{1/2} \Big ( \frac{\lambda_1-1}{\lambda_1-\lambda_2}  \Big ) e^{\omega_1/8} x_2 (t) e^{t \eta_2},
\end{eqnarray*}
since our assumptions imply that $N (\lambda_1 -1) (\lambda_1-\lambda_2)/\omega_1 \to \infty$ and $N (\lambda_1-\lambda_2)^2/\log \Big ( \frac{\lambda_1-1}{\lambda_1-\lambda_2} \Big ) e^{\omega_1/4}\to \infty$. So, for $t < T_1 \land T_2 \land T_3$, if $N$ is large enough and $g_N(0) \le \sqrt{\frac{\omega_1 \lambda_1(\lambda_1-1)}{N(\lambda_1-\lambda_2)}}$,
\begin{eqnarray*}
g_N(t) & \le &   \lambda_2 \int_0^t g_N(s) |\tx_1(s)|ds +  \frac{4 (\lambda_1 - \lambda_2)}{\lambda_1-1} \int_0^t g_N(s)\tx_2(s) ds\\
& + & 32 \sqrt{\frac{\omega_2}{N}} e^{32 (\lambda_1 \beta / \alpha + 1 )} \frac{\beta}{\alpha} (\lambda_1-1)t  +
8 e^{t \eta_2} \sqrt{\frac{\omega_1 \lambda_1 (\lambda_1-1)}{N(\lambda_1-\lambda_2)}}\\
& + &  8\lambda_2 \sqrt{\frac{\omega_2}{N}} e^{32 (\lambda_1 \beta /\alpha + 1 )} \int_0^t g_N(s) ds
 + \sqrt{\frac{\omega_1}{N(\lambda_1-\lambda_2)^2}} \log^{1/2} \Big ( \frac{\lambda_1-1}{\lambda_1-\lambda_2}  \Big ) e^{\omega_1/8} x_2 (t) e^{t \eta_2}.
\end{eqnarray*}
Let $t_0 = t_0 (N) = \lambda_1 (\lambda_1-\lambda_2)^{-1}e^{\omega_1/8}$.
By Gr\"onwall's lemma, for all $t \le t_0(N)$, on the event $t < T_1 \land T_2 \land T_3$,
\begin{eqnarray*}
g_N(t) & \le & \Big (32 \sqrt{\frac{\omega_2}{N}} e^{32 (\lambda_1 \beta /\alpha + 1 )} \frac{\beta}{\alpha} (\lambda_1-1)t  + 8 e^{t \eta_2} \sqrt{\frac{\omega_1 \lambda_1 (\lambda_1-1)}{N(\lambda_1-\lambda_2)}} \\
 &+ & \sqrt{\frac{\omega_1}{N(\lambda_1-\lambda_2)^2}} \log^{1/2} \Big ( \frac{\lambda_1-1}{\lambda_1-\lambda_2}  \Big ) e^{\omega_1/8} x_2 (t) e^{t \eta_2}\Big ) e^{H_N(t)},
\end{eqnarray*}
where
\begin{eqnarray*}
H_N(t) & = & \lambda_2 \int_0^t |\tx_1(s)|ds + \frac{4 (\lambda_1 - \lambda_2)}{\lambda_1-1} \int_0^t \tx_2 (s) ds +
8 \lambda_2 t\sqrt{\frac{\omega_2}{N}} e^{32 (\lambda_1 \beta /\alpha + 1 )} \\
& \le & 16 \Big (1 + \frac{\beta}{\alpha} \Big ) + \frac{8 \lambda_1 \lambda_2}{\sqrt{N} (\lambda_1 - \lambda_2)} \sqrt{\omega_2} e^{\omega_1/8}
e^{32 (\lambda_1 \beta /\alpha + 1 )}\\
& \le  & 16 \Big (1 + \frac{\beta}{\alpha} \Big ) + \frac{1}{\sqrt{N} (\lambda_1 - \lambda_2)} \sqrt{\log \Big ( \frac{\lambda_1-1}{\lambda_1-\lambda_2}  \Big )} e^{\omega_1/4}\\
& \le & 32 \Big (1 + \frac{\beta}{\alpha} \Big ),
\end{eqnarray*}
for $N$ large enough, since
$$N (\lambda_1-\lambda_2)^2/\log \Big ( \frac{\lambda_1-1}{\lambda_1-\lambda_2}  \Big ) e^{\omega_1/2} \to \infty.$$
It follows that, for $t \le t_0(N)$, on the event $t < T_1 \land T_2 \land T_3$, if $N$ is large enough, then
\begin{eqnarray*}
f_N(t) & \le & \Big (32 \sqrt{\frac{\omega_2}{N}} e^{32 (\lambda_1 \beta / \alpha + 1 )} \frac{\beta}{\alpha} (\lambda_1-1)t e^{-t\eta_2} + 8 \sqrt{\frac{\omega_1 \lambda_1 (\lambda_1-1)}{N(\lambda_1-\lambda_2)}} \\
& + &  \sqrt{\frac{\omega_1}{N(\lambda_1-\lambda_2)^2}} \log^{1/2} \Big ( \frac{\lambda_1-1}{\lambda_1-\lambda_2}  \Big ) e^{\omega_1/8} x_2 (t) \Big ) e^{32 \Big (1 + \frac{\beta}{\alpha} \Big )}.
\end{eqnarray*}
Now, using~\eqref{x2-upper} and~\eqref{x2-lower},
%\begin{eqnarray*}
%x_2(t)  \le 4 (\beta/\alpha) \frac{\lambda_1 - 1}{\lambda_1} e^{-t \eta_2},
%\end{eqnarray*}
%and, by~\eqref{relation} and~\eqref{x2-upper} and the fact that, for all $t$, $x_1 (t) + x_2 (t) \ge \frac12 \frac{\lambda_1-1}{\lambda_1}$ if $N$ is large %enough, also
%\begin{eqnarray}
%\label{x2-lower}
%x_2 (t) \ge \frac14 (\beta/\alpha) \frac{\lambda_1-1}{\lambda_1} e^{-t \eta_2}.
%\end{eqnarray}
it thus follows that, for $t \le t_0(N)$, on the event $t < T_1 \land T_2 \land T_3$, for $N$ large enough,
\begin{eqnarray*}
f_N(t) &\le&  \Big (128 \frac{\alpha + \beta}{\alpha}\sqrt{\frac{\log (\lambda_1 (\lambda_1-1)/(\lambda_1 - \lambda_2) ) + \omega_1}{N}} e^{32 (\lambda_1 \beta /\alpha + 1 )}
\lambda_1 x_2 (t)t \\
&& \qquad + 8 \sqrt{\frac{\omega_1 \lambda_1 (\lambda_1-1)}{N(\lambda_1-\lambda_2)}}
 + \sqrt{\frac{\omega_1}{N(\lambda_1-\lambda_2)^2}} \log^{1/2} \Big ( \frac{\lambda_1-1}{\lambda_1-\lambda_2}  \Big ) e^{\omega_1/8} x_2 (t) \Big ) e^{32 \Big (1 + \frac{\beta}{\alpha} \Big )},
\end{eqnarray*}
Now, for all $t \le t_0(N)$ such that $t < T_1 \land T_2 \land T_3$, for $N$ large enough,
\begin{eqnarray*}
f_N(t) \le  \Big ( 2 (\lambda_1-\lambda_2)^{-1} \log^{1/2} \Big (\frac{\lambda_1-1}{\lambda_1-\lambda_2} \Big ) e^{\omega_1/8} x_2 (t) + 8 \sqrt{\frac{\lambda_1(\lambda_1-1)}{\lambda_1-\lambda_2}} \Big ) \sqrt{\frac{\omega_1}{N}} e^{32 \Big (1 + \frac{\beta}{\alpha} \Big )},
\end{eqnarray*}
Let $T_4$ be the infimum of times $t$ such that
$$f_N(t) >  \Big ( 2 (\lambda_1-\lambda_2)^{-1} \log^{1/2} \Big (\frac{\lambda_1-1}{\lambda_1-\lambda_2} \Big ) e^{\omega_1/8} x_2 (t) + 8 \sqrt{\frac{\lambda_1(\lambda_1-1)}{\lambda_1-\lambda_2}} \Big ) \sqrt{\frac{\omega_1}{N}} e^{32 \Big (1 + \frac{\beta}{\alpha} \Big )}.$$
By the above, and as in the proof of Lemma~\ref{lem-lt-approx-nc},
\begin{eqnarray*}
\P (T_4 \le t_0) \le \P (T_1 \land T_2 \land T_3 \le T_4 \land t_0) \le \P(T_1 \le t_0) + \P (T_2 \le T_1 \land t_0) + \P(T_3 \le T_4).
\end{eqnarray*}
By Lemma~\ref{lem-lt-approx-nc}, %and $\omega = 16 \sqrt{N (\lambda_1-1)^2}$,
$\P (T_1 \le t_0) \le 12 e^{-\omega_2/8}$. %4 e^{-2\sqrt{N (\lambda_1-1)^2}}$.
By Lemma~\ref{lem.mart-dev} applied to $(\tilde{x}_N(t))$, this time taking $\sigma = (\lambda_1-\lambda_2)^{-1}$, $K_2 = 4 \lambda_1 (\lambda_1-1)/N(\lambda_1-\lambda_2)$, and $\eta = \eta_2$,
%$\omega = 16 \sqrt{N (\lambda_1-1)^2}$,
$\P (T_2 \le T_1 \land t_0) \le 4 e^{-\omega_1/8}$. %e^{-2\sqrt{N (\lambda_1-1)^2}}$.
Also, clearly, for $N$ large enough, $\P (T_3 \le T_4) = 0$. It follows that
$\P (T_4 \le t_0) \le 16 e^{-\omega_1/8}$, %e^{-2\sqrt{N (\lambda_1-1)^2}},$$
as required.
\end{proof}

%\begin{proofof}[
{\bf Proof of Theorem~\ref{thm.extinction-nc}.}\,
Let
$$t_1 = t_1 (N) = (\lambda_1-1)^{-1/2} (\lambda_1-\lambda_2)^{-1/2} = \frac{1}{\lambda_1-\lambda_2} \sqrt{\frac{\lambda_1-\lambda_2}{\lambda_1-1}} = \frac{1}{\lambda_1-1} \sqrt{\frac{\lambda_1-1}{\lambda_1-\lambda_2}}.$$
By Lemma~\ref{lem.ratio} with $\psi = (\lambda_1-1)^{1/2}(\lambda_1-\lambda_2)^{-1/2}$, for $N$ large enough,
$$\P \Big (  \Big | \frac{X_{N,1}(t_1)}{X_{N,2}(t_1)} - \frac{X_{N,1}(0)}{X_{N,2}(0)} \Big | > 2 \Big ( \frac{\lambda_1-\lambda_2}{\lambda_1-1} \Big )^{1/8} \Big ) \le 4 e^{-\sqrt{N}(\lambda_2-1)} + e^{-\Big ( \frac{\lambda_1-1}{\lambda_1-\lambda_2} \Big )^{1/8}}.$$

Assume that $x_{N,1} (0) + x_{N,2}(0) > (\lambda_1-1)/\lambda_1$.
Let $x = \min \{x_{N,1}(0) + x_{N,2}(0), N^{-1}\lfloor N (\lambda_2-1) (N (\lambda_2-1)^2)^{1/8} \rfloor \}$, and let $T$ be the infimum of times $t$ such that $x_{N,1} (t) + x_{N,2}(t) =x$.
 Lemma 4.1 in Brightwell, House and Luczak (2017) is still valid for a supercritical stochastic SIS logistic epidemic, so by that lemma, combined with Lemma~\ref{lem.stoch-dom} in the present paper and Markov's inequality, for $N$ large enough,
$$\P (x_{N,1}(t_1/4) + x_{N,2} (t_1/4) > x ) \le 2(N (\lambda_2-1)^2)^{-1/8},$$
and so $T \le t_1/4$ with probability at least $1-2(N (\lambda_2-1)^2)^{-1/8}$.

Let $y(t)$ solve equation~\eqref{ode.logistic} with $\lambda = \lambda_1$ and $\mu = 1$, $y(T) = x$, and let $z(t)$ solve equation~\eqref{ode.logistic} with $\lambda = \lambda_2$ and $\mu = 1$, and $z(T) = x$. Let $Y_N(t)$ and $Z_N(t)$ be the corresponding stochastic SIS logistic epidemics satisfying $Y_N(T) = N x$ and $Z_N(T) = N x$ respectively.
%Recalling that $\alpha_N + \beta_N \to \alpha + \beta> 0$,
It is easily seen from~\eqref{eq.log-sol2} that, if $N$ is sufficiently large and $T \le t_1/4$, then $y(t_1/2) \le 2 (\lambda_1-1)/\lambda_1$ and $z(t_1/2) \le 2 (\lambda_2-1)/\lambda_2$.  Furthermore, using (\ref{eq.log-sol2}) over the time-interval $[t_1/2,t_1]$, we see that in that case
$$|y(t_1) - (\lambda_1-1)/\lambda_1 | \le  \frac{\lambda_1-1}{\lambda_1} e^{-\frac12 \sqrt{\frac{\lambda_1-1}{\lambda_1-\lambda_2}}},
$$ and
$$|z(t_1) - (\lambda_2-1)/\lambda_2| \le  \frac{\lambda_2-1}{\lambda_2} e^{-(\lambda_2-1)t_1/2} \le  \frac{\lambda_2 -1}{\lambda_2} e^{-\frac14 \sqrt{\frac{\lambda_1-1}{\lambda_1-\lambda_2}}}.$$
We will now apply Lemma~\ref{lem.supercrit-log-1} twice, both starting at time $T$, once with $\lambda = \lambda_1$, $\mu = 1$, ending at time $\tau_1 = \inf \{t \ge T: y (t) \le 2(\lambda_1-1)/\lambda_1 \}$, and the second time with $\lambda = \lambda_2$, $\mu = 1$, ending at time $\tau_2 = \inf \{t \ge T: z (t) \le 2(\lambda_2-1)/\lambda_2 \}$. We further apply Lemma~\ref{lem.supercrit-log} twice, once to $Y_N(t)$, starting at time $\tau_1$, with $\lambda = \lambda_1$, $\mu = 1$ and $\omega = \sqrt{N (\lambda_1-1)^2}$, and once to $Z_N(t)$, starting at time $\tau_2$, with $\lambda = \lambda_2$, $\mu =1$ and $\omega = \sqrt{N (\lambda_2-1)^2}$. Additionally applying  Lemma~\ref{lem.stoch-dom}, we see that, for $N$ sufficiently large,
\begin{eqnarray*}
\P \Big ( \Big |x_{N,1}(t_1) + x_{N,2}(t_1) - \frac{\lambda_1-1}{\lambda_1} \Big | > 6 e^4 \frac{ (N (\lambda_1-1)^2)^{1/4}\sqrt{\lambda_1 +1}}{\sqrt{N\lambda_2}} + \lambda_1 - \lambda_2 \Big ) \\
\le 12 e^{-(N (\lambda_2 -1)^2)^{1/8}/8} + 2(N (\lambda_2-1)^2)^{-1/8}
\le 3 (N (\lambda_2-1)^2)^{-1/8}.
\end{eqnarray*}
% e^{-\sqrt{N (\lambda_2 -1)^2}/8}.$$
In particular, we see that with probability $1-\delta_N$ event ${\mathcal E}_N$ holds that $x_{N,1}(t_1) + x_{N,2}(t_1) \le 2 (\lambda_1-1)/\lambda_1$, $x_{N,1}(t_1)/x_{N,2}(t_2) = \alpha/\beta + \eps_N$, where $\delta_N, \eps_N \to 0$ as $N \to \infty$.
%$x_{N,1} (t_1) = \frac{\alpha}{\alpha+ \beta} (\lambda_1-1) \Big (1 + \eps_{N,1} \Big )$ and $x_{N,2} (t_1) = \frac{\beta}{\alpha + \beta} (\lambda_1-1) \Big (1 %+ \eps_{N,2} \Big )$, where $\delta_N, \eps_{N,1}, \eps_{N,2} \to 0$ as $N \to \infty$.

In the case when $x_{N,1} (0) + x_{N,2}(0) < (\lambda_1-1)/\lambda_1$ (this is only relevant when $\lambda_1$ is bounded away from 1), we can skip the first two phases and only use Lemma~\ref{lem.supercrit-log}. We omit the details.

%Let $t_2 = t_2 (N) = t_1 (N) + (\lambda_1-1)^{-1} e^{\sqrt{N(\lambda_1-1)^2}}$, and note that $(t_2-t_1) (\lambda_1-\lambda_2) \to \infty$ as $N \to \infty$, as %argued in the proof of Lemma~\ref{lem.ratio}.
%
%As argued in the proof of Lemma~\ref{lem-lt-approx},
%
%By~\eqref{x2-upper},
%$$x(t_2-t_1) \le 4 (\beta/\alpha) (\lambda_1 - 1) e^{-(t_2-t_1)(\lambda_1-\lambda_2)/\lambda_1},$$
%and so $x(t_2-t_1) (\lambda_1-\lambda_2)^{-1} \to 0$ as $N \to \infty$,
%since for $N$ large enough, using the fact that $\sqrt{N} (\lambda_1-\lambda_2)^2 (\lambda_1-1)^{-1} \to \infty$, we have
%$$\frac{\lambda_1-1}{\lambda_1-\lambda_2} \ll e^{\lambda_1^{-1}\frac{\lambda_1-\lambda_2}{\lambda_1-1} \sqrt{N(\lambda_1-1)^2}}= %e^{\lambda_1^{-1}\sqrt{N(\lambda_1-\lambda_2)^2}}.$$

Let $\omega_1 = \omega_1 (N) \to \infty$ be such that $\omega_1 \le \log \Big ( \frac{\lambda_1-1}{\lambda_1-\lambda_2} \Big )$ and let $\omega_2 = 16 \log \Big ( \frac{\lambda_1-1}{\lambda_1-\lambda_2} \Big )$. Let $t_2 = t_2 (N) = t_1 (N) + \frac{\lambda_1}{\lambda_1-\lambda_2}e^{\omega_1/8}$, and note that $t_2 - t_1 \le (\lambda_1 -1)^{-1} e^{\omega_2/8}$.

%Let $t_2 = t_2(N)$ be given by $t_2 = t_1 + \lambda_1(\lambda_1-\lambda_2)^{-1} \Big ( \log \Big ((\lambda_1-1)/(\lambda_1-\lambda_2) \Big ) + \phi \Big )$, %where $\phi = \phi (N) \to \infty$ satisfies $\phi \le \log \Big ( (\lambda_1-1)/(\lambda_1-\lambda_2)  \Big )$,
%and
%\begin{eqnarray}
%\label{eq.bound-phi}
%N (\lambda_1-\lambda_2)^4 (\lambda_1-1)^{-2}/\log \Big ( \frac{\lambda_1-1}{\lambda_1-\lambda_2} \Big ) e^{2\phi} \to \infty.
%\end{eqnarray}

Consider solution $x(t) = (x_1(t), x_2(t))^T$ to~\eqref{eq.det-comp} subject to condition $x_1 (t_1) = N^{-1}X_{N,1}(t_1)$, $x_2(t_1)= N^{-1}X_{N,2}(t_1))^T$. Let also $\tx (t) = (\tx_1(t), \tx_2(t))^T$ be the corresponding solution to~\eqref{eq-diff-eq-eigen}.
By~\eqref{x2-upper},
\begin{eqnarray*}
x_2(t_2)  \le 4 (\beta/\alpha) \frac{\lambda_1 - 1}{\lambda_1} e^{-(t_2-t_1)(\lambda_1-\lambda_2)/\lambda_1},
\end{eqnarray*}
and by~\eqref{x2-lower}
%by~\eqref{relation} and~\eqref{x2-upper} and the fact that, for all $t$, $x_1 (t) + x_2 (t) \ge \frac12 (\lambda_1-1)$ if $N$ is large enough,
\begin{eqnarray*}
%\label{x2-lower}
x_2 (t_2) \ge \frac14 \frac{\beta}{\alpha + \beta} (\lambda_1-1) e^{-(t_2-t_1) \frac{\lambda_1-\lambda_2}{\lambda_1}}.
\end{eqnarray*}
for $N$ large enough. Note that $\omega_1$ can be chosen in such a way that $(\lambda_1-\lambda_2)^{-1} x_2 (t_2) \to 0$: for instance, we choose $\omega_1$ satisfying $e^{\omega_1/8} = \log \Big (  \frac{\lambda_1-1}{\lambda_1-\lambda_2} \Big ) + \phi$ for a suitable $\phi = \phi (N) \to \infty$ such that $\phi \le \log \Big (  \frac{\lambda_1-1}{\lambda_1-\lambda_2} \Big )$. Since then $x_2 (t_2) \ge \frac{1}{4} \frac{\beta}{\alpha + \beta} (\lambda_1-\lambda_2)^2 (\lambda_1-1)^{-1}$, we further have $Nx_2(t_2) (\lambda_1-\lambda_2) \to \infty$.

%Let $t_3 = t_1 + \lambda_1/(\lambda_1-\lambda_2) e^{\omega_1/8}$.
%$t_3 \gg 2 (\lambda_1-\lambda_2)^{-1} \log N(\lambda_1-\lambda_2)^2$ and also
%$t_3 \ge t_2$.

%Let
%$$\omega_2 = \omega_2 (N) = 8 \log \Big ( \frac{\lambda_1(\lambda_1-1)}{\lambda_1-\lambda_2}  \Big ) + 8 \log \Big (\log \Big ( %\frac{\lambda_1-1}{\lambda_1-\lambda_2}\Big )+ \omega_1 \Big ),$$
%
%Let $\omega = \omega (N) = 8\sqrt{N (\lambda_1-1)^2}$,
%
%and note that $N(\lambda_1-\lambda_2)^2/\omega_2 (N) \to \infty$, due to our assumption that $N (\lambda_1-\lambda_2)^4 (\lambda_1-1)^{-2} \to \infty$. Also, %$\omega_2 (N) \le 4 (\log 2)^2 N$ for $N$ large enough.
Note that, since $N (\lambda_1-\lambda_2)^3 (\lambda_1-1)^{-1} \to \infty$, conditions of Lemmas~\ref{lem-lt-approx-nc} and~\ref{lem-lt-approx-nc-acc} are satisfied.
%Also, $\omega (N) \le 4 (\log 2)^2 N$ for $N$ large enough.
By Lemma~\ref{lem-lt-approx-nc} with $\omega = \omega_2$ and by Lemma~\ref{lem-lt-approx-nc-acc} with the value of $\omega_1$ above, with
%$\omega = \omega_2$ and
$X_{N,1}(t_1), X_{N,2}(t_1)$ as initial values, with probability at least $1-16e^{-\omega_1/8}-12e^{-\omega_2/8}$, %for any $t_1 \le t \le t_2$
the event ${\mathcal E}(t_2)$ holds that
$$|\tilde{x}_{N,1}(t_2) - \tilde{x}_1(t_2)| \le 8 \sqrt{\frac{\omega_2}{N}}e^{32 (\lambda_1\beta/\alpha +1)},$$
and
\begin{eqnarray*}
\lefteqn{|\tilde{x}_{N,2}(t_2) - \tilde{x}_2(t_2)|} \\
&\le& \Big [2(\lambda_1-\lambda_2)^{-1} \log^{1/2} \Big ( \frac{\lambda_1-1}{\lambda_1-\lambda_2} \Big ) e^{\omega_1/8} x_2 (t_2) + 8 \sqrt{\frac{\lambda_1(\lambda_1-1)}{\lambda_1-\lambda_2}} \Big ] \sqrt{\frac{\omega_1}{N}}e^{32(1+\beta/\alpha)}.
\end{eqnarray*}
%
%where $\tilde{x}(t)$ corresponds to solution $x(t)$ to~\eqref{eq.det-comp} subject to initial condition $(N^{-1}X_{N,1}(t_1), N^{-1}X_{N,2}(t_1))^T$.
Hence also, for $N$ large enough, on ${\mathcal E}(t_2)$,
\begin{eqnarray*}
\lefteqn{|x_{N,1}(t_2) - (\lambda_1-1)/\lambda_1|} \\
&\le  &\Big [2(\lambda_1-\lambda_2)^{-1}  \log^{1/2} \Big ( \frac{\lambda_1-1}{\lambda_1-\lambda_2} \Big ) e^{\omega_1/8} x_2 (t_2) + 10 \sqrt{\frac{\lambda_1(\lambda_1-1)}{\lambda_1-\lambda_2}} \Big ] \sqrt{\frac{\omega_1}{N}}e^{32(1+\beta/\alpha)}\\
 &+ &8 \frac{\beta}{\alpha} \frac{\lambda_1-1}{\lambda_1} e^{-(t_2-t_1)(\lambda_1-\lambda_2)/\lambda_1}.
 \end{eqnarray*}
%+ 4 \Big (1 + \frac{\beta}{\alpha} \Big )(\lambda_1-1) e^{-(t_2-t_1) \frac{\lambda_1-\lambda_2}{\lambda_1}}.$$
Note that
$$N (\lambda_1-\lambda_2)^3 (\lambda_1-1)^{-1}/\log \log \big(N(\lambda_1-\lambda_2)^2\big) \to \infty$$
implies that
$$N (\lambda_1-\lambda_2)^3 (\lambda_1-1)^{-1}/\log \log \Big ( \frac{\lambda_1-1}{\lambda_1-\lambda_2} \Big ) \to \infty,$$
and so we can choose $\phi$ so that
%, if we further choose $\omega$ in such a way that
$$N (\lambda_1-\lambda_2)^3 (\lambda_1-1)^{-1}/\omega_1 e^{2\phi} \to \infty.$$
With this choice of $\phi$, it follows that
$$(\lambda_1-1) e^{-(t_2-t_1) \frac{\lambda_1-\lambda_2}{\lambda_1}}  \gg \sqrt{\frac{\lambda_1-1}{\lambda_1-\lambda_2}}\sqrt{\frac{\omega_1}{N}},$$
%since we have assumed that
%$$N (\lambda_1-\lambda_2)^4 (\lambda_1-1)^{-2}/\log \Big ( \frac{\lambda_1-1}{\lambda_1-\lambda_2} \Big ) \to \infty,$$
%and
%$$N (\lambda_1-\lambda_2)^4 (\lambda_1-1)^{-2}/\log \log N(\lambda_1-\lambda_2)^2 \to \infty,$$
%and $\omega$ is chosen so that
%and
%$$N (\lambda_1-\lambda_2)^4 (\lambda_1-1)^{-2}/(\log \log N (\lambda_1-\lambda-2)^2) e^{2\omega_1} \to \infty.$$
and so, on the event ${\mathcal E}(t_2)$, both $x_{N,1}(t_2)$ and $x_{N,2}(t_2)$ are concentrated around $(\lambda_1-1)/\lambda_1$ and $x_2(t_2)$ respectively, $x_{N,1}(t_2)$ with error of size $o(\lambda_1-\lambda_2)$ and $x_{N,2}(t_2)$ with error of size $o(x_2(t_2)) = o(\lambda_1 - \lambda_2)$. Also, $\P ({\mathcal E}(t_2) ) \to 1$ as $N \to \infty$.

Let
$$t_3 = t_2 + \frac{10\lambda_1}{\lambda_1-\lambda_2} \log \big( N(\lambda_1-\lambda_2)^2\big) \le t_1 + (\lambda_1-1)^{-1} e^{\omega_3/8},$$
where $$\omega_3 = 32 \log \Big ( \frac{\lambda_1-1}{\lambda_1-\lambda_2} \Big ) + 32 \log \log \big( N(\lambda_1-\lambda_2)^2 \big).$$
Also,
$$t_3 - t_1 \le \frac{\lambda_1}{\lambda_1-\lambda_2} e^{\omega_4/8},$$
where
$$\omega_4 = 16 \log \log \Big ( \frac{\lambda_1-1}{\lambda_1-\lambda_2}  \Big ) + 2 \log \log \big( N(\lambda_1-\lambda_2)^2 \big).$$
Note that $\omega_3$ and $\omega_4$ satisfy conditions of Lemmas ~\ref{lem-lt-approx-nc} and~\ref{lem-lt-approx-nc-acc} respectively, so we can apply these Lemmas on the interval $[t_1,t_3]$.
%By these lemmas, $\P (\tilde{\mathcal E}_{t_3} \to 1$ as $N \to \infty$.

For $t \ge t_2$, let $\tilde{\mathcal E}(t)$ be the event that, for all $s \in [t_2,t]$, %$x_{N,2}(s) \le 2 x_{N,2} (t_2)$, and
%
%$$|x_{N,1}(s) - (\lambda_1-1)/\lambda_1| \le  24 \frac{\lambda_1-1}{\lambda_1-\lambda_2}\sqrt{\frac{2\omega_2}{N}}e^{32(1+\beta/\alpha)}
%+ 16 (\lambda_1-1) e^{-(t_2-t_1) \frac{\lambda_1-\lambda_2}{\lambda_1}}.$$
\begin{eqnarray*}
\lefteqn{|x_{N,1}(s) - (\lambda_1-1)/\lambda_1|} \\
&\le  &\Big [4(\lambda_1-\lambda_2)^{-1}  \Big ( \frac{\lambda_1-1}{\lambda_1-\lambda_2} \Big )^{1/4} x_2 (t_2) + 20 \sqrt{\frac{\lambda_1(\lambda_1-1)}{\lambda_1-\lambda_2}} \Big ] \sqrt{\frac{\omega_3}{N}}e^{32(1+\beta/\alpha)}\\
 &+ & 16 \frac{\beta}{\alpha} \frac{\lambda_1-1}{\lambda_1} e^{-(t_2-t_1)(\lambda_1-\lambda_2)/\lambda_1}.
 \end{eqnarray*}
and $x_{N,2}(s) \le 2 x_{N,2}(t_2)$.
Note that on the event $\tilde{\mathcal E}_t$, for all $t_2 \le s \le t$, $x_{N,1} (s)$ is concentrated around $(\lambda_1-1)/\lambda_1$ with an $o(\lambda_1-\lambda_2)$ error.

By Lemmas ~\ref{lem-lt-approx-nc} and~\ref{lem-lt-approx-nc-acc}, $\P (\tilde{\mathcal E}(t_3)) \to 1$ as $N \to \infty$.
% since
%$$N (\lambda_1-\lambda_2)^3 (\lambda_1-1)^{-1}/\log \log N(\lambda_1-\lambda_2)^2 \to \infty$$
%implies that
 %times $s \in [t_2,t]$, since
%$$N (\lambda_1-\lambda_2)^3 (\lambda_1-1)^{-1}/\log \log \Big ( \frac{\lambda_1-1}{\lambda_1-\lambda_2} \Big ) \to \infty.$$
%$$N (\lambda_1-\lambda_2)^3 (\lambda_1-1)^{-1}/\log \log N(\lambda_1-\lambda_2)^2 \to \infty.$$
%To see this, suppose that $\log \log N(\lambda_1 - \lambda_2)^2 > \log \Big ( \frac{\lambda_1-1}{\lambda_1-\lambda_2} \Big )$. Then $N(\lambda_1-\lambda_2)^2 > %e^{\frac{\lambda_1-1}{\lambda_1-\lambda_2}}$, so $\sqrt{N(\lambda_1-\lambda_2)^2}(\lambda_1-\lambda_2)(\lambda_1-1)^{-1} \to \infty$.

%Also on the event $\tilde{\mathcal E}_t$, $x_{N,2}(s) = o(\lambda_1-\lambda_2)$ for all $s \in [t_2,t]$.
On the event ${\mathcal E}(t_2)$, using a standard argument similar to the proof of Lemma~\ref{lem-final-phase} and the proof of Lemma 2.1 in Brightwell, House, and Luczak (2018), we couple the subsequent evolution of $X_{N,2}(t)$ with two linear birth-and-death chains, each with birth rate $\frac{\lambda_2}{\lambda_1} + o(\lambda_1-\lambda_2)$, and death rate $1$, so as to sandwich it between two such chains.
The next event after time $t \ge t_2$ in each of the three chains can be coupled together, as long as event $\tilde{\mathcal E}(t)$ holds.
%
% throughout controlling the evolution of $X_{N,1}(t)$ using Lemma~\ref{lem-lt-approx-nc} with $\omega = \omega_2$, starting at time $t_2$.
%We are then able to show that event $\tilde{\mathcal E}_t$ holds at each time with high probability until extinction, which happens before time $t_3$
%$x_{N,2}(s) = o(\lambda_1-\lambda_2)$, $|x_{N,1}(s) - (\lambda_1-1)/\lambda_1| = o(\lambda_1-\lambda_2)$ for all $s \in [t,t_2]$,
%$t_4 = t_2 + 10 (\lambda_1-\lambda_2)^{-1} \log N(\lambda_1-\lambda_2)^2$
Extinction happens by time $t_3$ with high probability, since
%To control $X_{N,1}(t)$, we use the fact that, on $\tilde{\mathcal E}_t$, its drift is $\lambda_1 X_{N,1}(t) (1-X_{N,1}(t)+ O(x_2 (t_2-t_1))) - X_{N,1}(t)$, and %take a value of $\omega$ of the order $\sqrt{N(\lambda_1-1^2}$, noting that
%$\sqrt{N}(\lambda_1-\lambda_2)/\log \Big (\frac{\lambda_1-1}{\lambda_1-\lambda_2} \Big ) \to \infty$, we see that
%with such a value of $\omega$, $(\lambda_1-1)^{-1} e^{\omega/8}/(t_4-t_2) \to \infty$.
%
%We also use Lemma 2.2 from~\cite{luczakbrightwell} to argue that
%$x_{N,2}(s) \le 2 x_{N,2} (t_2)$ for all $s$ after $t_2$ until extinction.
%
%we can couple the evolution of $X_{N,2}(t)$ over the interval $[t_2,t]$ with two linear birth-and-death chains, each with birth rate %$\frac{\lambda_2}{\lambda_1} + o(\lambda_1-\lambda_2)$, and death rate $1$, while using an argument similar to the one used in
%, we can sandwich $X_{N,2}(t)$ between two such chains until extinction with high probability, and
the length of the final phase is, with high probability,
\begin{eqnarray*}
\frac{\lambda_1}{\lambda_1-\lambda_2} \Big (\log N + \log x_2 (t_2) + \log \frac{\lambda_1-\lambda_2}{\lambda_1} + o(1) + G_N \Big )\\
= \frac{\lambda_1}{\lambda_1-\lambda_2} \Big (\log N (\lambda_1-\lambda_2)^2 - \phi (N) + O(1) \Big ),
\end{eqnarray*}
where $G_N$ converges to a Gumbel random variable $G$ as $N \to \infty$.
% and note that, since $\omega_1 \ll \log \Big (\frac{\lambda_1-1}{\lambda_1-\lambda_2}  \Big )$, also necessarily $\log N (\lambda_1-\lambda_2)^2/\omega_1 (N) %\to \infty$.

The length of the `fluid-limit' phase can be expressed as
\begin{eqnarray*}
\frac{\lambda_2}{\lambda_1-\lambda_2} \log (x_1(t_2)/x_1 (t_1)) - \frac{\lambda_1}{\lambda_1-\lambda_2}  \log (x_2(t_2)/x_2 (t_1)),
\end{eqnarray*}
and the length of the first phase is $t_1 = (\lambda_1-\lambda_2)^{-1} \sqrt{(\lambda_1-\lambda_2)/(\lambda_1-1)} = o((\lambda_1-\lambda_2)^{-1})$.

Hence, using the fact that $(\lambda_1-\lambda_2) \log (\lambda_1-1) \to 0$, the total time to extinction is, with high probability,
\begin{eqnarray*}
\frac{\lambda_1}{\lambda_1-\lambda_2} \Big ( \log \Big (\frac{N(\lambda_1-1) (\lambda_1-\lambda_2) \beta}{\lambda_1^2 \alpha}  + o(1) + G_N\Big ) \Big),
\end{eqnarray*}
thus proving Theorem~\ref{thm.extinction-nc}.

\subsection{Relaxing the assumption on separation from criticality}

\label{sub.discussion}

As stated above, we believe Theorem~\ref{thm.extinction-nc} is in fact valid under the weaker condition $N (\lambda_1-\lambda_2)^2 \to \infty$ (still assuming $\mu_1 = \mu_2 = 1$ and $(\lambda_1-\lambda_2)(\lambda_1 - 1)^{-1} \to \infty$).
Here is a sketch of how one might go about proving such an extension. The differential equation approximation phase can be split into a number of subphases, each corresponding to a refined version of Lemma~\ref{lem-lt-approx-nc-acc} with a smaller value of $x_{N,2}(0)$ and thus a smaller bound on the quadratic variation of the martingale term. Roughly speaking the first subphase would have $x_{N,2}(0)$ of order $\lambda_1-1$ and the martingale quadratic variation $N^{-1/2} (\lambda_1-\lambda_2)^{-1/2} (\lambda_1-1)^{1/2}$. The first subphase would last until $x_{N,2} (t)$ is of size about $N^{-1/2}  (\lambda_1-\lambda_2)^{-1/2} (\lambda_1-1)^{1/2} \omega^{3/4}$, for a suitable $\omega (N) \to \infty$, and would thus take time just slightly less than $(\lambda_1-\lambda_2)^{-1} \log \sqrt{N (\lambda_1-1) (\lambda_1-\lambda_2)}$.
The second subphase would have $x_{N,2}(0)$ of order $N^{-1/2} (\lambda_1-\lambda_2)^{-1/2}(\lambda_1-1)^{1/2} \omega^{3/4}$ and the martingale quadratic variation $N^{-3/4} (\lambda_1-\lambda_2)^{-3/4} (\lambda_1-1)^{1/4}  \omega^{3/8}$. It would last until $x_{N,2} (t)$ is of size about $N^{-3/4} (\lambda_1-\lambda_2)^{-3/4} (\lambda_1-1)^{1/4}  \omega^{15/16}$, and would thus take time just slightly less than $(\lambda_1-\lambda_2)^{-1} \log (N (\lambda_1-1) (\lambda_1-\lambda_2))^{1/4}$. The third subphase would have $x_{N,2}(0)$ of order $N^{-3/4} (\lambda_1-\lambda_2)^{-3/4} (\lambda_1-1)^{1/4}  \omega^{15/16}$ and the martingale quadratic variation $N^{-7/8} (\lambda_1-\lambda_2)^{-7/8}(\lambda_1-1)^{1/8}  \omega^{15/32}$. It would last until $x_{N,2} (t)$ is of size about $N^{-7/8} (\lambda_1-\lambda_2)^{-7/8}(\lambda_1-1)^{1/8}  \omega^{63/64}$, for a suitable $\omega (N) \to \infty$, and would thus take time just slightly less than $(\lambda_1-\lambda_2)^{-1} \log (N (\lambda_1-1) (\lambda_1-\lambda_2))^{1/8}$. And, in principle, one should be able to carry on this process. The phases would be joined together using the end value of $x_{N,2}(t)$ from the previous phase as initial condition for the differential equation in the next phase, and the various deterministic solutions with different random initial conditions would become closer and closer together over time.

As many phases would be used as needed to `reach' $x_{N,2}(t)$ of order $o(\lambda_1-\lambda_2)$, while keeping the deviation smaller than the mean. Since the `limiting' quadratic variation in the above process is $N^{-1} (\lambda_1-\lambda_2)^{-1}$, this should in principle be possible as long as $N (\lambda_1-\lambda_2)^2 \to \infty$, and as  $N (\lambda_1-\lambda_2)^2$ tends to infinity more and more slowly, the time spent in the differential equation phase becomes closer and closer to $(\lambda_1-\lambda_2)^{-1} \log (N(\lambda_1-1)(\lambda_1-\lambda_2))$.

After the condition  $N (\lambda_1-\lambda_2)^2 \to \infty$ fails, one can still carry out the differential equation phase from the time when $x_{N,2}$ if of the order $\lambda_1-1$ through the various phases until it is of the order about $N^{-1} (\lambda_1-\lambda_2)^{-1}$, which takes time of the order about $(\lambda_1-\lambda_2)^{-1} \log (N(\lambda_1-1)(\lambda_1-\lambda_2))$. After that, the fluctuations dominate, and the remaining time is about $(\lambda_1-\lambda_2)^{-2}$ steps, translating to a time of order $N^{-1}(\lambda_1-\lambda_2)^{-2} = o((\lambda_1-\lambda_2)^{-1})$.

A differential equation approximation phase is possible as long as the initial quadratic variation $N^{-1/2} (\lambda_1-\lambda_2)^{-1/2} (\lambda_1- 1)^{1/2}$ is $o(\lambda_1-1)$, which is as long as $N(\lambda_1-1)(\lambda_1-\lambda_2) \to \infty$.

This would join up our result nicely with that of Kogan et al. (2014), showing that when the two basic reproductive ratios are equal, then the time to extinction is of order $N$.

We hope all the details above can be filled in to yield a complete proof, but we leave this till the next paper.

\section{Acknowledgement}

The authors are grateful to Graham Brightwell for helpful comments that inspired Lemma~\ref{lem-lt-approx-nc-acc}, and as a result helped us achieve an improved understanding of near-critical phenomena in this model.

%Let $\omega_3 = 10 \log \Big ( \frac{\lambda_1-1}{\lambda_1-\lambda_2} \Big ) + 10 \log \log N(\lambda_1-\lambda_2)^2$. Let $t_3 = (\lambda_1-1)^{-1} %e^{\omega_3/8} \gg 2 (\lambda_1-\lambda_2)^{-1} \log N(\lambda_1-\lambda_2)^2$.
%By Lemma~\ref{lem-lt-approx} with $\omega = \omega_3$ and $X_{N,1}(t_1), X_{N,2}(t_1)$ as initial values, with probability at least $1-12 e^{-\omega_3/8}$, for %any $t_1 \le t \le t_4$ the event ${\mathcal D}(t)$ holds that
%$$|\tilde{x}_{N,1}(t) - \tilde{x}_1(t-t_1)| \le 8 \sqrt{\frac{2\omega_3}{N}}e^{32(1+\beta/\alpha)},$$
%and
%$$|\tilde{x}_{N,2}(t) - \tilde{x}_2(t-t_1)| \le 8 \frac{\lambda_1-1}{\lambda_1-\lambda_2}\sqrt{\frac{2\omega_3}{N}}e^{32(1+\beta/\alpha)},$$
%and, in particular, for $N$ large enough,
%$$|x_{N,1}(t) - (\lambda_1-1)/\lambda_1| \le  8 \sqrt{\frac{2\omega_3}{N}}e^{32(1+\beta/\alpha)} + 16 %\frac{\lambda_1-1}{\lambda_1-\lambda_2}\sqrt{\frac{2\omega_3}{N}}e^{32(1+\beta/\alpha)}.$$

%\end{proofof}

%\section{Extensions}

%\label{sec:ext}

%Further near-critical regimes will be investigated in future, where we aim to get a complete understanding of near-critical phenomena for this model. We will %consider other initial conditions (e.g. where the stronger species starts with a very small population), and we will also extend our results to a model with $k$ %competing species for $k > 2$.

%\bibliographystyle{plain}
%\bibliography{mart}

\end{document}